\newtheorem{theorem}{Theorem}[section]
\newtheorem{corollary}{Corollary}[section]
\newtheorem{lemma}{Lemma}[section]
\newtheorem{proposition}{Proposition}[section]
\theoremstyle{definition}
\numberwithin{equation}{section}
\pgfplotsset{compat=1.8}
\newcommand{\bbr}{\mathbb{R}}
\newcommand{\ve}{\varepsilon}
\newcommand{\N}{{\mathbb N}}
\newcommand{\R}{{\mathbb R}}
\newcommand{\D}{\mathcal{D}}
\title{Optimal decay and regularity for a Thomas--Fermi type variational problem}
\author{Damiano Greco \footnote{Swansea University, Fabian Way, Swansea SA1 8EN, Wales, UK,
		email: damiano.greco@swansea.ac.uk}}
\date{\today}
\begin{document}

%\keywords{Interpolation inequalities, fractional Sobolev inequality; Coulomb energy; Riesz potential; radial symmetry; Pohozaev identity}

%\subjclass[2010]{35B09; 35B33; 35J20; 35J60.}
\maketitle

\begin{abstract}
We study existence and qualitative properties of the minimizers for a Thomas--Fermi type energy functional defined by 
$$\mathcal{E}_\alpha(\rho):=\frac{1}{q}\int_{\mathbb{R}^d}|\rho(x)|^q dx+\frac{1}{2}\iint_{\mathbb{R}^d\times\mathbb{R}^d}\frac{\rho(x)\rho(y)}{|x-y|^{d-\alpha}}dx dy-\int_{\mathbb{R}^d}V(x)\rho(x)dx,$$
where $d\ge 2$, $\alpha\in (0,d)$ and $V$ is a potential. Under broad assumptions on $V$ we establish existence, uniqueness and qualitative properties such as positivity, regularity and decay at infinity of the global minimizer. The decay at infinity depends in a non--trivial way on the choice of $\alpha$ and $q$.  If $\alpha\in (0,2)$ and $q>2$ the global minimizer is proved to be positive under mild regularity assumptions on $V$, unlike in the local case $\alpha=2$ where the global minimizer  has typically compact support. We also show that if $V$ decays sufficiently fast the global minimizer is sign--changing even if $V$ is non--negative. In such regimes we establish a relation between the positive part of the global minimizer and the support of the minimizer of the energy, constrained on the non--negative functions. Our study is motivated by recent models of charge screening in graphene, where sign--changing minimizers appear in a natural way.
%The associated Euler Lagrange written in terms of $u_V:=\text{sign}(\rho_V){|\rho_V|}^{q-1}$ reads as the semilinear fractional PDE
%\begin{equation*}
%    (-\Delta)^{\frac{\alpha}{2}}u_V+|u_V|^{\frac{2-q}{q-1}}u_V=(-\Delta)^{\frac{\alpha}{2}}V\quad in\ \bbr^d,
%\end{equation*}
%which is proved to satisfy a weak comparison principle.
%Next, implementing a barrier technique, we establish  the main result of the paper which is to find asymptotic behaviour of $u_V$ at infinity.
%Asymptotic results in the local case $\alpha=2$ have been extensively by L. Veron in \cite{Veron}, whereas V. Moroz and C.B. Muratov examined the fractional case $\alpha=1$, $d=2$ and $q=3/2$ in an unpublished manuscript \cite{graphene}. However, the author is not aware of similar results for more general values of the parameters $q,d$ and $\alpha$.
\end{abstract}

{\bf Keywords}:  Thomas-Fermi energy, fractional Laplacian, Riesz potential, Asymptotic decay.

{\bf 2020 Mathematics Subject Classification}: 35J61, 35B09, 35B33, 35B40 
\tableofcontents

\newpage

\section{Introduction}
\noindent In this paper we analyse minimizers of a Thomas--Fermi type energy of the form
% motivated by a two dimensional model for graphene. The Thomas--Fermi (TF) energy takes the form proposed in \cite{Katsnelson}
\begin{equation}\label{TF_initial}
\mathcal{E}^{TF}_\alpha(\rho)=\frac{1}{q}\int_{\mathbb{R}^d}|\rho(x)|^qdx-\int_{\bbr^d}\rho(x)V(x)dx +\frac{1}{2} \int_{\bbr^d}(I_\alpha*\rho)(x)\rho(x)dx
\end{equation}
where  $d\ge 2$, $0<\alpha<d$, $\rho:\bbr^d\rightarrow \bbr$ is a measurable function, $V:\bbr^d\rightarrow \bbr$ is a potential and
$I_\alpha$ is defined as
\begin{equation}\label{coulomb}
 I_\alpha(x)=\frac{A_\alpha}{|x|^{d-\alpha}},\qquad A_\alpha=\frac{\Gamma(\tfrac{d-\alpha}{2})}{\Gamma(\tfrac{\alpha}{2})\pi^{d/2}2^{\alpha}}.
\end{equation}
The quantity  $I_\alpha : \R^d\setminus\left\{0\right\} \to \R$ is the Riesz potential of order $\alpha$ and
the choice of the normalisation constant $A_\alpha$ ensures that the convolution kernel $I_\alpha$ satisfies the semigroup property $I_{\alpha+\beta}=I_\alpha*I_\beta$ for each $\alpha,\beta\in(0,d)$ such that $\alpha+\beta<d$ (see  for example \cite{Stein}*{eq. $(6.6)$, p. $118$}).
Furthermore, under suitable assumptions, the Riez potential operator can be associated to the inverse fractional Laplace operator $(-\Delta)^{-\frac{\alpha}{2}}$ (see  \cite{Stein}*{Lemma $2$}).

Our study is motivated by the Thomas--Fermi type model of charge screening in graphene \cite{graphene} (see also  \cites{Katsnelson,orbital}) that corresponds to the special two--dimensional case $q=\tfrac{3}{2}$ and $\alpha=1$,  
\begin{equation}\label{graphene}
    \mathcal{E}_{1}^{TF}(\rho)=\frac{2}{3}\int_{\mathbb{R}^2}|\rho(x)|^{\frac{3}{2}}dx-\int_{\bbr^2}\rho(x)V(x)dx +\frac{1}{4\pi}\int_{\bbr^2}\int_{\bbr^2}\frac{\rho(x)\rho(y)}{|x-y|}dxdy.
\end{equation}
The function $\rho(x)$ here has a meaning of a charge density of fermionic quasiparticles (electrons and holes) in a two--dimensional graphene layer. In general (and unlike in the classical Thomas--Fermi models of atoms and molecules \cite{Lieb-Loss}), the density $\rho$ is a sign--changing function, with $\rho>0$ representing electrons and $\rho<0$ representing holes. The first term in \eqref{graphene} is an approximation of the kinetic energy of a uniform gas of non--interacting particles. The middle term is the interaction with an external potential $V$, and the last quantity is a non--local Coulombic interaction between quasi--particles. In \cite{graphene}, the authors  prove the existence, uniqueness, and decay of the minimizer of the energy \eqref{graphene}. In particular, they establish a \textit{log--correction} to the decay rates of the minimizer, previously identified by M. Katsnelson in \cite{Katsnelson}.

%\indent In $\mathcal{E}^{TF}$, the parameter $\alpha=1$, $d=2$ and $q=\frac{3}{2}$ play a crucial role. As a matter of fact, all three terms in $\mathcal{E}^{TF}$ balance each other leading to a \textit{log-correction} on the decay rates of the minimizer previously identified by M. Katsnelson in \cite{Kats}.
%In our general setting, the exponent $q=\tfrac{3}{2}$ is replaced by $q=\frac{2d-\alpha}{d}$ and consistently with the graphene model, the same criticality occurs. Moreover, another \textit{log-correction} appears  in correspondence of the exponent $q=\tfrac{2d+\alpha}{d+\alpha}$, which separates the regions of different polynomial decay. \\

\indent The main goal of this paper is to deduce existence, uniqueness and qualitative properties of the minimizer in the general admissible range
\begin{equation}\label{cond_iniziali}
    \frac{2d}{d+\alpha}<q<\infty,
\end{equation}
where $\frac{2d}{d+\alpha}$ is a critical exponent with respect to the Hardy--Littlewood--Sobolev inequality (see \cite{Lieb-Loss}*{Theorem $4.3$, p. $106$}).
Our main result  is Theorem \ref{decay_intro} where we establish five different asymptotic regimes for the minimizer, depending on the values of $d$, $\alpha$ and $q$. Two of the regimes are entirely new and not visible in the ``local'' case $\alpha=2$, when the non--local term  is the standard Newtonian potential.

\indent Before presenting our results in details, let's emphasise the main differences between  our case and the classical three dimensional TF--theory of atoms and molecules \cites{lieb_3, lieb_dim3,Lieb-Simon} (see also  \cites{goldestein,Rieder} for generalisations of the mentioned three dimensional models).

Unlike in \cites{lieb_3, lieb_dim3,Lieb-Simon}, we are looking at 
\begin{itemize}
	\item global minimizers without a mass constraint since there is no a priori reason for the density function $\rho(x)$ to be integrable;
	\item potentially sign--changing profiles  $\rho(x)$, since electrons and holes could co--exist in a graphene layer;
	\item general range of $\alpha\in (0,d)$, as for istance in \eqref{graphene} $\alpha=1$.
\end{itemize}
%Before explaining our results, we highlights the main techinical differences with the classical TF-models of atoms and molecules.
We further mention that several difficulties arise if sign--changing profiles $\rho(x)$ are allowed.
 To explain why, we first introduce the Banach space $\mathring{H}^{-\frac{\alpha}{2}}(\bbr^d)$ which can be defined as the space of tempered distributions such that $\hat{\rho}\in L^1_{loc}(\bbr^d)$ and 
\begin{equation}
	\left\|\rho\right\|^2_{\mathring{H}^{-\frac{\alpha}{2}}(\bbr^d)}:=\int_{\bbr^d}{|\xi|}^{-\alpha}|\widehat{\rho}(\xi)|^2 d\xi<\infty,
\end{equation} 
where $\hat{\rho}$ stands for the Fourier transform of $\rho$ (see \cite{classicbook}*{Definition 1.31}). 
%First of all, the term $\mathcal{D}_{\alpha}(\rho,\rho)$ is associated to the Green function of the fractional Laplacian $(-\Delta)^{\frac{\alpha}{2}}$. 
%implying that  the Euler-Lagrange equation for $\mathcal{E}^{TF}_\alpha$ is represented by a non linear non local PDE involving the fractional Laplace operator.
It's easy to see (cf. \cite{Stein}*{Lemma 2, Section 5}) that for smooth functions we have the equality 
\begin{equation}\label{riesz_sobolev_norm}
	\D_\alpha(\rho,\rho)=A_\alpha \int_{\bbr^d}\int_{\bbr^d}\frac{\rho(y)\rho(x)}{|x-y|^{d-\alpha}}dxdy=\left\|\rho\right\|^2_{\mathring{H}^{-\frac{\alpha}{2}}(\bbr^d)}.
\end{equation}
%However, it can be proved that there exist Cauchy sequences in $\mathring{H}^{-\frac{\alpha}{2}}(\bbr^d)$ of smooth sign--changing functions $\left\{\rho_n\right\}_n $ that do not converge  to a signed measure (see \cite{orbital}*{Example $4.1$},  \cite{cartan} and references therein).
%This can not happen when considering a smooth sequence of non--negative functions  (see \cite{landkof}*{Theorem $1.18$, p. $90$})  or  a sequence in $L^{\frac{2d}{d+\alpha}}(\R^d)$ (this follows from \cite{Lieb-Loss}*{Theorem $4.3$, p. $106$}).
%Hence, by only requiring  the density  $\rho$ to be a regular distribution  belonging to  $\mathring{H}^{-\frac{\alpha}{2}}(\bbr^d)$ (see  Section \ref{prel} for the definition of regular distributions) without prescribing any sign or fast decay condition we don't expect the integral in  \eqref{riesz_sobolev_norm} to be convergent.
%We show that these issues do not arise for a certain class of non--negative functions (not necessarily smooth or fast decaying) because the Brezis--Browder type results for Sobolev spaces of integer order (see \cite{potential}*{Theorems $3.4.1$, $3.4.2$}) ensure Lebesgue integrability. 
However, in general $\mathcal{D}_\alpha$ can not be extended as an integral to the whole of $\mathring{H}^{-\frac{\alpha}{2}}(\R^d)$ without any fast decay or sign restriction. We refer to Section 5 in \cite{BB} and references therein for a more detailed analysis of the topic.
%We infact extend this type of results to the case of fractional Sobolev spaces (see Corollary  \ref{Lq_loc} and  \ref{prod_integr} in Sect. \ref{prel}) proving that $\D_\alpha(\rho,\rho)$ is well defined as a Lebesgue integral. See also \cite{Fukushima}*{Example $2.2.1$}.
%However, we emphasise again that in general $\mathcal{D}_\alpha$ can not be extended as an integral to the whole of $\mathring{H}^{-\frac{\alpha}{2}}(\R^d)$ and 
As a consequence, the term $\D_\alpha$ is replaced by $\left\|\cdot\right\|^2_{\mathring{H}^{-\frac{\alpha}{2}}(\mathbb{R}^d)}$ in the sequel.
%Another way to get rid of the issue is to require $\rho\in L^{\frac{2d}{d+\alpha}}(\bbr^d)$. As a matter of fact, by \cite[Corollary $1.39$] {classicbook} and \cite[Theorem $4.3$]{Lieb-Loss},
% $\rho\in \mathring{H}^{-\frac{\alpha}{2}}(\bbr^d)$ and
%\begin{equation}\label{eq19}
%\D_\alpha(|\rho|,|\rho|)\leq \mathscr{C}_{N,\alpha}\|\rho\|^2_{L^{\frac{2d}{d+\alpha}}(\bbr^d)},
%\end{equation}
%where $\mathscr{C}_{N,\alpha}>0$ is a given sharp constant. 

To be precise, we consider the energy functional
\begin{equation}\label{energy_general}
	\mathcal{E}^{TF}_\alpha(\rho):=\frac{1}{q}\int_{\bbr^d}|\rho(x)|^q dx-\langle \rho, V \rangle+\frac{1}{2}\left\|\rho\right\|^2_{\mathring{H}^{-\frac{\alpha}{2}}(\bbr^d)}
\end{equation}
 on the  domain $\mathcal{H}_\alpha$  defined as follows
 \begin{equation}\label{dominio}
	\mathcal{H}_{\alpha}:=L^{q}(\mathbb{R}^d)\cap \mathring{H}^{-\frac{\alpha}{2}}(\mathbb{R}^d),
\end{equation}
and endowed with the norm $\left\|\cdot\right\|_{\mathcal{H}_\alpha}:= \left\|\cdot\right\|_{\mathring{H}^{-\frac{\alpha}{2}}(\mathbb{R}^d)}+\left\|\cdot\right\|_{L^{q}(\mathbb{R}^d)}$. In \eqref{energy_general} the function $V$ belongs to the dual space $(\mathcal{H}_{\alpha})'$ and $\langle \cdot, \cdot \rangle$ denotes the duality between $\mathcal{H}_\alpha$ and $(\mathcal{H}_{\alpha})'$. We recall that  $(\mathcal{H}_{\alpha})'$ identifies with $L^{q'}(\mathbb{R}^d)+\mathring{H}^{\frac{\alpha}{2}}(\mathbb{R}^d)$. More precisely, for every function $V=V_1+V_2\in L^{q'}(\mathbb{R}^d)+\mathring{H}^{\frac{\alpha}{2}}(\mathbb{R}^d)$, $V_1\in \mathring{H}^{\frac{\alpha}{2}}(\mathbb{R}^d)$ and $V_2\in L^{q'}(\mathbb{R}^d) $ we define 
\begin{equation}\label{l_p_duality}
	 \langle \rho, V\rangle:=\langle \rho,
V_1\rangle+\int_{\mathbb{R}^d}\rho(x) V_2(x)dx,
\end{equation}
where in \eqref{l_p_duality} by $\langle \cdot,\cdot \rangle$ we identify the duality between $\mathring{H}^{\frac{\alpha}{2}}(\R^d)$ and $\mathring{H}^{-\frac{\alpha}{2}}(\R^d)$.
\footnote{We further recall that the space $L^{q'}(\bbr^d)+\mathring{H}^{\frac{\alpha}{2}}(\bbr^d)$ is a Banach space  endowed with the norm
\begin{equation*}\label{duality_energy}
	\left\|V\right\|_{L^{q'}(\bbr^d)+\mathring{H}^{\frac{\alpha}{2}}(\bbr^d)}\!=\!\inf\left\{\left\|V_1\right\|_{\mathring{H}^{\frac{\alpha}{2}}(\bbr^d)}\!+\!\left\|V_2\right\|_{L^{q'}(\bbr^d)}:V=V_1+V_2, V_1\in \mathring{H}^{\frac{\alpha}{2}}(\bbr^d), V_2\in L^{q'}(\bbr^d)   \right\}.
\end{equation*}}

For the precise definitions and properties of the  fractional homogeneous Sobolev space $\mathring{H}^{\frac{\alpha}{2}}(\bbr^d)$ we refer to Section \ref{prel}.
%Here, the other  parameters satisfy conditions \eqref{cond_iniziali}. 

The first result is to establish existence and uniqueness of a minimizer for $\mathcal{E}^{TF}_\alpha$.
\begin{theorem}\label{minimo_esiste}
Assume $q>\frac{2d}{d+\alpha}$. Then, for every $V\in L^{q'}(\mathbb{R}^{d})+ \mathring{H}^{\frac{\alpha}{2}}(\mathbb{R}^{d})$, the functional $\mathcal{E}^{TF}_\alpha$ admits a unique minimizer $\rho_V\in \mathcal{H}_\alpha$ that satisfies \begin{equation}\label{eq01}
\int_{\mathbb{R}^d} \text{sign}(\rho_V(x))|\rho_V(x)|^{q-1} \varphi(x)dx -\langle \varphi, V\rangle+\langle \rho_V, \varphi\rangle_{\mathring{H}^{-\frac{\alpha}{2}}(\bbr^d)}=0\quad
\forall \varphi\in \mathcal{H}_\alpha.
\end{equation}
\end{theorem}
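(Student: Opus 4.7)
The plan is to apply the direct method of the calculus of variations on the reflexive Banach space $\mathcal{H}_\alpha$. Reflexivity is first observed: the space $\mathring{H}^{-\alpha/2}(\mathbb{R}^d)$ is a Hilbert space (with inner product defined via Fourier multiplier $(2\pi|\xi|)^{-\alpha}$) and $L^q(\mathbb{R}^d)$ is reflexive for $q\in(1,\infty)$ (which is guaranteed by $q>\frac{2d}{d+\alpha}>1$ since $\alpha<d$); the diagonal embedding realises $\mathcal{H}_\alpha$ as a closed subspace of the reflexive product $L^q(\mathbb{R}^d)\times\mathring{H}^{-\alpha/2}(\mathbb{R}^d)$, hence $\mathcal{H}_\alpha$ is reflexive.

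Next I establish coercivity. Using the duality between $\mathcal{H}_\alpha$ and $(\mathcal{H}_\alpha)'$ and the splitting $V=V_1+V_2$ with $V_1\in\mathring{H}^{\alpha/2}(\mathbb{R}^d)$ and $V_2\in L^{q'}(\mathbb{R}^d)$, the Cauchy--Schwarz and H\"older inequalities yield
\begin{equation*}
|\langle\rho,V\rangle|\le \|\rho\|_{\mathring{H}^{-\alpha/2}(\mathbb{R}^d)}\|V_1\|_{\mathring{H}^{\alpha/2}(\mathbb{R}^d)}+\|\rho\|_{L^q(\mathbb{R}^d)}\|V_2\|_{L^{q'}(\mathbb{R}^d)}.
\end{equation*}
Applying Young's inequality with exponents $(2,2)$ to the first summand and $(q,q')$ to the second allows me to absorb both contributions into the quadratic and $L^q$ terms of $\mathcal{E}^{TF}_\alpha$, producing a bound $\mathcal{E}^{TF}_\alpha(\rho)\ge \frac{1}{2q}\|\rho\|_{L^q}^q+\frac{1}{4}\|\rho\|_{\mathring{H}^{-\alpha/2}}^2-C(V)$. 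This simultaneously gives boundedness from below and coercivity on $\mathcal{H}_\alpha$.

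The functional is strictly convex: the map $\rho\mapsto \frac{1}{q}\int|\rho|^q$ is strictly convex on $L^q$ because $t\mapsto |t|^q/q$ is strictly convex for $q>1$; the quadratic Hilbert-norm term is convex; and the linear term $-\langle\rho,V\rangle$ preserves convexity. This gives uniqueness immediately, once existence is established. For existence I take a minimizing sequence $(\rho_n)\subset\mathcal{H}_\alpha$; coercivity gives $\sup_n\|\rho_n\|_{\mathcal{H}_\alpha}<\infty$, and by reflexivity (passing to a subsequence) $\rho_n\rightharpoonup\rho_V$ weakly in $\mathcal{H}_\alpha$, hence weakly in both $L^q(\mathbb{R}^d)$ and $\mathring{H}^{-\alpha/2}(\mathbb{R}^d)$. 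The norms on these two spaces are weakly lower semicontinuous (convex and continuous), while the linear functional $\rho\mapsto\langle\rho,V\rangle$ is weakly continuous by the very definition of weak convergence in $\mathcal{H}_\alpha$. Therefore $\mathcal{E}^{TF}_\alpha(\rho_V)\le\liminf_n\mathcal{E}^{TF}_\alpha(\rho_n)=\inf_{\mathcal{H}_\alpha}\mathcal{E}^{TF}_\alpha$, so $\rho_V$ is the (unique) minimizer.

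Finally, the Euler--Lagrange equation is obtained by a standard perturbation argument: for any $\varphi\in\mathcal{H}_\alpha$ the function $t\mapsto \mathcal{E}^{TF}_\alpha(\rho_V+t\varphi)$ is differentiable at $t=0$. Differentiability of the $L^q$ term produces $\int\operatorname{sign}(\rho_V)|\rho_V|^{q-1}\varphi\,dx$ (justified by the pointwise derivative of $s\mapsto |s|^q/q$ and a dominated-convergence argument using $|(|a+t\varphi|^q-|a|^q)/t|\le C(|a|^{q-1}|\varphi|+|\varphi|^q)$ with $\rho_V,\varphi\in L^q$); differentiability of the quadratic term yields the Hilbert pairing $\langle\rho_V,\varphi\rangle_{\mathring{H}^{-\alpha/2}}$; and the linear term contributes $-\langle\varphi,V\rangle$. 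Setting the derivative to zero gives \eqref{eq01}. The most delicate point will be cleanly handling the dual-norm inequality in the coercivity step, but this is bookkeeping rather than a conceptual obstacle; everything else follows from the convexity--reflexivity framework.
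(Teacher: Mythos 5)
Your proof is correct and follows the same direct method plus strict-convexity strategy as the paper, and in fact supplies two steps the paper dismisses as standard (the coercivity estimate via Young's inequality, and the Gateaux-derivative computation giving the Euler--Lagrange equation). The only structural difference is cosmetic: you identify the weak limit by realising $\mathcal{H}_\alpha$ as a closed (hence weakly closed) diagonal subspace of $L^q(\mathbb{R}^d)\times\mathring{H}^{-\alpha/2}(\mathbb{R}^d)$, whereas the paper extracts separate weak limits $\rho_V$ in $L^q(\mathbb{R}^d)$ and $F$ in $\mathring{H}^{-\alpha/2}(\mathbb{R}^d)$ and then identifies $F=\rho_V$ by testing both against $\varphi\in L^{q'}(\mathbb{R}^d)\cap\mathring{H}^{\alpha/2}(\mathbb{R}^d)$; the two arguments are equivalent.
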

An equivalent way to read \eqref{eq01} is by  the following relation 
\begin{equation}\label{rho_a.e}
    \text{sign}(\rho_V)|\rho_V|^{q-1}=V-(-\Delta)^{-\frac{\alpha}{2}}\rho_V \quad \text{in}\ \mathcal{D}'(\bbr^d),
\end{equation}
where by $(-\Delta)^{-\frac{\alpha}{2}}\rho_V $ we understand the unique element of $\mathring{H}^{\frac{\alpha}{2}}(\R^d)$ weakly solving 
$$(-\Delta)^{\frac{\alpha}{2}}u=\rho_V.$$ See Section \ref{prel} for all the details.

If, e.g., $\rho_V\in L^{q}(\bbr^d)$ with $q<\frac{d}{\alpha}$, the Riesz potential of $\rho_V$ is well defined (see \cite{Stein}*{Theorem $1$, Sect. $5$}) and  identifies  with $(-\Delta)^{-\frac{\alpha}{2}}\rho_V$ (cf. \cite{BB}*{Corollary 5.3} and \cite{mazya}*{Lemma 1.8}). Moreover, without any restriction on $q$, 
if  $\rho_V\ge 0$,  the  Riesz potential $I_\alpha*\rho_V$ can be identified again  with the operator $(-\Delta)^{-\frac{\alpha}{2}}\rho_V$  (see again \cite{Fukushima}*{Example $2.2.1$} for the case $\alpha\in (0,2]$ and \cite{BB}*{Remark 5.1} or \cite{mazya}*{Lemma 1.1} for the general case). Thus, in the above cases,  \eqref{rho_a.e} reads also pointwisely  as
\begin{equation}\label{u_quasi_ovunque}
	u_V(x)=V(x)-A_\alpha \int_{\bbr^d}\frac{\rho_V(y)}{|x-y|^{d-\alpha}}dy\quad \text{a.e. in}\  \bbr^d, 
\end{equation}
where $u_V$ is defined by 
 \begin{equation}\label{u_def}
	u_V:=\text{sign}(\rho_V)|\rho_V|^{q-1}.
\end{equation}
Such function $u_V$ plays an important role for the analysis of the sign and decay at infinity of the minimizer. As a matter of fact, if  $V\in \mathring{H}^{\frac{\alpha}{2}}(\mathbb{R}^d)$ we prove that  the Euler--Lagrange equation \eqref{eq01} is equivalent to the fractional semilinear PDE 
\begin{equation}\label{PDE_lap_frac}(-\Delta)^{\frac{\alpha}{2}}u+ \text{sign}(u)|u|^{\frac{1}{q-1}}=(-\Delta)^{\frac{\alpha}{2}}V\quad \text{in}\ \mathring{H}^{-\frac{\alpha}{2}}(\mathbb{R}^d) 
\end{equation}
in the sense that $u_V$ weakly solves it.
We further prove that  \eqref{PDE_lap_frac} satisfies a weak comparison principle provided $\alpha\in (0,2]$. See  for example  \cite{Brezis_1}*{Lemma $2$,  $(27)$, p. $8$} for some general results in the case $\alpha=2$.

Section \ref{sec.4} is devoted to the study of the regularity of the minimizer. The way in which we obtain the regularity is based on a classical bootstrap argument similar to the one in \cite{Carrillo-CalcVar}*{Theorem $8$}. However, some differences occur. The main one is the presence of the function $V$ which clearly might be an obstruction for the regularity of the minimizer. Indeed,  from \eqref{rho_a.e}, it's clear that in general the best regularity that $u_V$ can achieve is the same regularity as $V$. Such  regularity is achieved for example in Corollary \ref{non_local_regularity}.
% Indeed, in Corollary \ref{reg_non_pos} and Corollary \ref{V_cont}, we state our results taking into account regularity of $V$.
We further summarise below the basic regularity result for the minimizer $\rho_V$. We stress that such regularity can be improved and specialised according to the regularity assumptions we impose on $V$.
\begin{theorem}\label{regularity_for_intro}
	Assume $q>\frac{2d}{d+\alpha}$. If $V\in \mathring{H}^{\frac{\alpha}{2}}(\R^d)\cap C_b(\bbr^d)$ then $\rho_V\in C(\R^d)$.
\end{theorem}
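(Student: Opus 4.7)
The plan exploits the Euler--Lagrange identity \eqref{rho_a.e} from Theorem \ref{minimo_esiste}. Writing $u_V := \text{sign}(\rho_V)|\rho_V|^{q-1}$ and $w := (-\Delta)^{-\alpha/2}\rho_V$, one has $u_V = V - w$ in $\mathcal{D}'(\mathbb{R}^d)$, and inverting the continuous bijection $t \mapsto \text{sign}(t)|t|^{q-1}$ yields $\rho_V = \text{sign}(V-w)|V-w|^{1/(q-1)}$ almost everywhere. Since $t \mapsto \text{sign}(t)|t|^{1/(q-1)}$ is continuous on $\mathbb{R}$ and $V \in C_b(\mathbb{R}^d)$ by hypothesis, it suffices to prove that $w$ is continuous on $\mathbb{R}^d$. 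The approach is a bootstrap on the integrability of $\rho_V$, in the spirit of \cite{Carrillo-CalcVar}*{Theorem~8}.

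Two pieces of initial information are available: $\rho_V \in L^q(\mathbb{R}^d)$ by construction, and $w \in L^{2d/(d-\alpha)}(\mathbb{R}^d)$, via the isometry $(-\Delta)^{-\alpha/2}\colon \mathring{H}^{-\alpha/2}(\mathbb{R}^d) \to \mathring{H}^{\alpha/2}(\mathbb{R}^d)$ combined with the Sobolev embedding $\mathring{H}^{\alpha/2}(\mathbb{R}^d) \hookrightarrow L^{2d/(d-\alpha)}(\mathbb{R}^d)$. The key pointwise bound
\begin{equation*}
|\rho_V|^{q-1} \leq \|V\|_\infty + |w|
\end{equation*}
follows from $u_V = V - w$ and $|u_V| = |\rho_V|^{q-1}$. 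Splitting $\mathbb{R}^d$ into the set $\{|\rho_V|^{q-1} \geq 2\|V\|_\infty\}$ (on which $|\rho_V|^{q-1} \leq 2|w|$) and its complement (on which $|\rho_V| \leq (2\|V\|_\infty)^{1/(q-1)}$), and interpolating with $\rho_V \in L^q$, one extracts the following implication: if $w \in L^r(\mathbb{R}^d)$ for some $r \geq q'$, then $\rho_V \in L^s(\mathbb{R}^d)$ for all $s \in [q,(q-1)r]$.

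The initial application with $r = 2d/(d-\alpha)$ gives $\rho_V \in L^s$ for $s \in [q, q_1]$ with $q_1 := 2d(q-1)/(d-\alpha)$; the strict improvement $q_1 > q$ is exactly the assumption $q > 2d/(d+\alpha)$. For the subsequent iterations, whenever the current exponent $p$ lies in $(q, d/\alpha)$ the Hardy--Littlewood--Sobolev inequality yields $w \in L^{p^*}$ with $p^* = dp/(d-\alpha p)$, and the same implication upgrades $\rho_V$ to $L^s$ for $s \in [q, (q-1)p^*]$. The recursion $p_{k+1} := g(p_k)$, with $g(p) := (q-1)dp/(d-\alpha p)$ and $p_0 := q$, is thus well-defined on $[0, d/\alpha)$; the only fixed points of $g$ there are $p = 0$ and $p_* := d(2-q)/\alpha$, with $p_* < q$ precisely because $q > 2d/(d+\alpha)$, and $g(p) \to \infty$ as $p \nearrow d/\alpha$. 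Hence $\{p_k\}$ is strictly increasing and must exceed $d/\alpha$ after finitely many steps, yielding $p_1 < d/\alpha < p_2$ with $\rho_V \in L^{p_1}(\mathbb{R}^d) \cap L^{p_2}(\mathbb{R}^d)$.

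Splitting $I_\alpha = I_\alpha \chi_{B_1} + I_\alpha \chi_{B_1^c}$ and applying H\"older's inequality ($I_\alpha \chi_{B_1} \in L^{p_2'}$ since $p_2 > d/\alpha$, and $I_\alpha \chi_{B_1^c} \in L^{p_1'}$ since $p_1 < d/\alpha$) gives $w \in L^\infty(\mathbb{R}^d)$. Continuity of $w$ then follows by a density argument: approximating $\rho_V$ in the norm of $L^{p_1} \cap L^{p_2}$ by $\rho_n \in C_c^\infty(\mathbb{R}^d)$, each $I_\alpha \ast \rho_n$ is continuous, and the same H\"older estimate applied to $\rho_V - \rho_n$ yields uniform convergence $I_\alpha \ast \rho_n \to w$. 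Consequently $u_V = V - w$ is continuous, and hence so is $\rho_V$. The main obstacle will be the bootstrap itself, and in particular verifying that the recursion $g$ drives the exponent past the HLS threshold $d/\alpha$ in finitely many steps; this is the point at which the sharp hypothesis $q > 2d/(d+\alpha)$ is used in an essential way. A secondary subtlety occurs in the regime $q \geq d/\alpha$, where no iterate ever falls strictly below $d/\alpha$ and one must argue locally, splitting $\rho_V = \rho_V\chi_{B_R} + \rho_V\chi_{B_R^c}$ and exploiting the $\mathring{H}^{-\alpha/2}$--control on the tail convolution.
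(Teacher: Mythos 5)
Your bootstrap argument for the subcritical regime $q<\frac{d}{\alpha}$ is essentially the same as the paper's Lemma~\ref{rho regularity} combined with Corollary~\ref{V_cont}: the pointwise Euler--Lagrange bound $|\rho_V|^{q-1}\le\|V\|_\infty+|w|$, the recursion $p_{k+1}=(q-1)dp_k/(d-\alpha p_k)$, the fixed-point analysis showing the iterates escape $(0,d/\alpha)$ precisely because $q>\frac{2d}{d+\alpha}$, and then the H\"older splitting of the Riesz kernel once $\rho_V\in L^{p_1}\cap L^{p_2}$ with $p_1<\frac{d}{\alpha}<p_2$. Your way of concluding continuity of $w$ from $L^\infty$ via density approximation differs slightly from the paper's direct appeal to a H\"older regularity theorem for Riesz potentials, but both are fine.

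The genuine gap is the regime $q\ge\frac{d}{\alpha}$, which you relegate to a closing remark. There the recursion has no valid starting point ($p_0=q$ already sits at or above the HLS threshold), and more fundamentally the identification $w=(-\Delta)^{-\alpha/2}\rho_V=I_\alpha*\rho_V$ breaks down: for $\rho_V\in L^q$ with $q>\frac{d}{\alpha}$ the convolution $\int|\rho_V(y)|\,|x-y|^{\alpha-d}\,dy$ diverges, so the entire convolution-based framework is unavailable. Your suggestion to ``split $\rho_V=\rho_V\chi_{B_R}+\rho_V\chi_{B_R^c}$ and exploit the $\mathring H^{-\alpha/2}$-control on the tail convolution'' does not close this: the $\mathring H^{-\alpha/2}$ membership of the tail tells you nothing pointwise about its inverse Laplacian inside $B_R$. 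What is actually needed, and what the paper does in Proposition~\ref{prop_23} and Corollaries~\ref{reg_non_pos}, \ref{q=d_over_alpha}, is to observe that $w-u_R$, where $u_R$ is the Riesz potential of the compactly supported piece $\rho_V\chi_{B_R}$, is $\alpha$-harmonic in $B_R$, and then invoke interior smoothness of $\alpha$-harmonic functions in $\mathcal L^1_\alpha$ to conclude that the tail contribution is smooth on $B_{R/2}$. Without the $\alpha$-harmonicity step (and the separate treatment of the integer case $\alpha-\frac{d}{q}\in\mathbb N$, handled in the paper by a small perturbation of $q$), the argument does not go through for large $q$.
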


 In Section \ref{sec.5} we focus  on the relation between unconstrained minimizers and minimizers of $\mathcal{E}_{\alpha}^{TF}$ subject to contraint to non--negative functions, defined by $$\mathcal{H}^{+}_{\alpha}:=\left\{f\in \mathcal{H}_\alpha: f\ge 0 \right\}.$$
Then, we establish existence and uniqueness of a minimizer in $\mathcal{H}^{+}_\alpha$.
 Namely, denoted by $[x]_{+}:=~\max\left\{0,x\right\}$, we prove the following:
\begin{theorem}\label{second_theorem}
Assume $q>\frac{2d}{d+\alpha}$. Then, for every $V\in L^{q'}(\mathbb{R}^d)+\mathring{H}^{\frac{\alpha}{2}}(\mathbb{R}^d) $ the function $\mathcal{E}^{TF}_\alpha$ admits a unique minimizer  $\rho^{+}_V$ belonging to $\mathcal{H}^{+}_\alpha$.  Moreover, the minimizer $\rho^{+}_V$ satisfies 
\begin{equation}\label{ineq_5}
	\int_{\mathbb{R}^d}{(\rho^{+}_V)}^{q-1}\varphi-\langle \varphi, V\rangle+\langle \rho^{+}_V, \varphi \rangle_{\mathring{H}^{-\frac{\alpha}{2}}(\mathbb{R}^d)}\ge 0\quad \forall \varphi \in  \mathcal{H}^{+}_\alpha,
\end{equation}
and 
\begin{equation}\label{eq_intro}
\rho^{+}_V=\left[V-(-\Delta)^{-\frac{\alpha}{2}}\rho^{+}_{V}\right]^{\frac{1}{q-1}}_{+}\quad \text{in}\ \mathcal{D}'(\mathbb{R}^d)
.\end{equation}
\end{theorem}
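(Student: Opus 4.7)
I plan to apply the direct method of the calculus of variations on the closed convex cone $\mathcal{H}^+_\alpha$ to obtain existence and uniqueness, use the cone structure to extract the variational inequality \eqref{ineq_5}, and then convert it into the pointwise identity \eqref{eq_intro} through a complementary slackness argument. To begin, I would check that $\mathcal{H}^+_\alpha$ is a closed convex cone in $\mathcal{H}_\alpha$: strong convergence in $\mathcal{H}_\alpha$ yields, along a subsequence, a.e.\ pointwise convergence of the $L^q$ component, so non-negativity is preserved in the limit; convexity then upgrades strong closedness to weak closedness. Since $\mathcal{E}^{TF}_\alpha$ is coercive on $\mathcal{H}_\alpha$ (as established in the proof of Theorem \ref{minimo_esiste}), strictly convex (as the sum of $\tfrac{1}{q}\|\cdot\|_{L^q}^q$ with $q>1$, the Hilbertian $\tfrac{1}{2}\|\cdot\|_{\mathring{H}^{-\alpha/2}}^2$, and a continuous linear functional), and weakly lower semi-continuous, the direct method on the weakly closed set $\mathcal{H}^+_\alpha$ yields a unique minimizer $\rho^+_V$.

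For \eqref{ineq_5} I would exploit that $\mathcal{H}^+_\alpha$ is stable under addition and non-negative scaling: for every $\varphi\in\mathcal{H}^+_\alpha$ and every $t\geq 0$, one has $\rho^+_V+t\varphi\in\mathcal{H}^+_\alpha$, hence
$$\left.\frac{d}{dt}\right|_{t=0^+}\mathcal{E}^{TF}_\alpha(\rho^+_V+t\varphi)\geq 0.$$
Computing this one-sided Gâteaux derivative exactly as in Theorem \ref{minimo_esiste} and using $\rho^+_V\ge 0$ to replace $\text{sign}(\rho^+_V)|\rho^+_V|^{q-1}$ by $(\rho^+_V)^{q-1}$ yields \eqref{ineq_5}. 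In addition, since $\lambda\mapsto \mathcal{E}^{TF}_\alpha(\lambda\rho^+_V)$ attains its minimum on $[0,+\infty)$ at the interior point $\lambda=1$, differentiating at $\lambda=1$ provides the complementary slackness identity
$$\int_{\R^d}(\rho^+_V)^q\,dx-\langle \rho^+_V,V\rangle+\|\rho^+_V\|^2_{\mathring{H}^{-\alpha/2}(\R^d)}=0.$$

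To derive \eqref{eq_intro}, set $u^+_V:=(\rho^+_V)^{q-1}\in L^{q'}(\R^d)$ and $\Phi_V:=V-(-\Delta)^{-\alpha/2}\rho^+_V\in(\mathcal{H}_\alpha)'$. Rewriting \eqref{ineq_5} as $\langle u^+_V-\Phi_V,\varphi\rangle\geq 0$ for all $\varphi\in\mathcal{H}^+_\alpha$ and testing against $\varphi\in C^\infty_c(\R^d)$ with $\varphi\geq 0$ shows that $\Phi_V\leq u^+_V$ in $\mathcal{D}'(\R^d)$; by standard distribution theory $u^+_V-\Phi_V$ is thus a non-negative Radon measure on $\R^d$. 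The complementary slackness above becomes $\langle u^+_V-\Phi_V,\rho^+_V\rangle=0$, so this non-negative measure, paired with the non-negative function $\rho^+_V$, has zero total mass; hence $(u^+_V-\Phi_V)\rho^+_V=0$. Consequently, on $\{\rho^+_V>0\}$ one has $\Phi_V=u^+_V=(\rho^+_V)^{q-1}$, i.e.\ $\rho^+_V=[\Phi_V]_+^{1/(q-1)}$, while on $\{\rho^+_V=0\}$ one has $u^+_V=0$ and $\Phi_V\leq 0$, so $[\Phi_V]_+^{1/(q-1)}=0=\rho^+_V$; the two cases merge into \eqref{eq_intro}.

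The main obstacle I expect is precisely this last step: rigorously converting the distributional inequality $\Phi_V\leq u^+_V$ together with the complementary slackness into the pointwise identification \eqref{eq_intro}. The pairing $\langle u^+_V-\Phi_V,\rho^+_V\rangle$ must be interpreted as a bona fide integral against a Radon measure, and the partition of $\R^d$ into $\{\rho^+_V>0\}$ and $\{\rho^+_V=0\}$ must be compatible with the distributional identities. This is exactly the kind of issue handled by the fractional Brezis--Browder extensions announced alongside Corollaries \ref{Lq_loc} and \ref{prod_integr}, which I would invoke to close the argument.
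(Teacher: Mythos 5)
Your proposal is correct and follows essentially the same route as the paper, which handles existence/uniqueness and the variational inequality \eqref{ineq_5} by the direct method on the weakly closed cone $\mathcal{H}^+_\alpha$, and then defers the derivation of \eqref{eq_intro} to references (\cite{Lieb-Loss}*{Theorem 11.13}, \cite{Carrillo-NA}*{Proposition 3.6}) that implement exactly the complementary-slackness argument you spell out. Two small observations that would tighten your last step: first, since $u^+_V\in L^{q'}(\R^d)$ and $\Phi_V=V-(-\Delta)^{-\alpha/2}\rho^+_V\in L^{q'}(\R^d)+L^{\frac{2d}{d-\alpha}}(\R^d)$, the non-negative distribution $u^+_V-\Phi_V$ is in fact a locally integrable function, so the Radon-measure framing and the worry about $\{\rho^+_V>0\}$ not being open are avoidable — once you know $(u^+_V-\Phi_V)\rho^+_V=0$ Lebesgue-a.e.\ the case analysis on $\{\rho^+_V>0\}$ vs.\ $\{\rho^+_V=0\}$ works verbatim. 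Second, the duality pairing $\langle u^+_V-\Phi_V,\rho^+_V\rangle$ equals the Lebesgue integral $\int(u^+_V-\Phi_V)\rho^+_V$ precisely because Theorem~\ref{integr_alpha_big} applies with $T=\rho^+_V\in L^q_{loc}$ and $u=(-\Delta)^{-\alpha/2}\rho^+_V-V_1\in\mathring{H}^{\alpha/2}$: writing $Tu=\rho^+_V(u^+_V-\Phi_V)-\rho^+_V(u^+_V-V_2)$ gives the one-sided $L^1$ lower bound the theorem requires, since the first term is non-negative and the second is in $L^1$ by Hölder. (Note that Theorem~\ref{integr_alpha_big} rather than Corollary~\ref{prod_integr} is the right tool, since the latter is restricted to $\alpha\le 2$ whereas the statement covers all $\alpha\in(0,d)$.)
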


Next, we discuss the relation between the non--negative and free minimizer. We also prove that the equality  $[\rho_V]_{+}=\rho^{+}_V$ holds only in the trivial case in which $\rho_V\ge 0$ (see Theorem \ref{minimi diversi}) while we obtain the inequality $[\rho_V]_{+}\ge \rho^{+}_V$ provided the potential $V$ is regular and decays sufficiently fast (see e.g., Remark \ref{not_sharp_V}). Namely, the following holds:
\begin{theorem}\label{parte pos}
Assume $d\ge 2$, $0<\alpha\le 2$. Let  $V\in \mathring{H}^{\frac{\alpha}{2}}(\R^d)\cap C(\mathbb{R}^d)$ be compactly supported and not identically zero.
Then, 
  \begin{equation}\label{ineq_pointwise}
[\rho_V]_{+}\ge \rho^{+}_V\quad \text{in}\ \bbr^d,
\end{equation}
the inequality being strict in a set of positive measure.
\end{theorem}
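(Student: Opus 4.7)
The plan is to reformulate the pointwise inequality \eqref{ineq_pointwise} in terms of the potentials $u_V := V - (-\Delta)^{-\alpha/2}\rho_V$ and $u^+_V := V - (-\Delta)^{-\alpha/2}\rho^+_V$, both of which belong to $\mathring{H}^{\frac{\alpha}{2}}(\bbr^d)$, and then apply the weak comparison principle for the fractional semilinear equation \eqref{PDE_lap_frac}. By \eqref{rho_a.e} one has $u_V = \mathrm{sign}(\rho_V)|\rho_V|^{q-1}$, so $[\rho_V]_+ = [u_V]_+^{\frac{1}{q-1}}$; similarly \eqref{eq_intro} yields $\rho^+_V = [u^+_V]_+^{\frac{1}{q-1}}$. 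Since $s\mapsto [s]_+^{\frac{1}{q-1}}$ is non-decreasing, the desired inequality follows from the pointwise bound $u_V\ge u^+_V$ in $\bbr^d$.

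To establish $u_V\ge u^+_V$, I would compare the two weak formulations. From \eqref{PDE_lap_frac}, $u_V$ weakly solves $(-\Delta)^{\frac{\alpha}{2}}u + \phi(u) = (-\Delta)^{\frac{\alpha}{2}}V$ with $\phi(s):=\mathrm{sign}(s)|s|^{\frac{1}{q-1}}$, while combining \eqref{eq_intro} with the definition of $u^+_V$ shows that $u^+_V$ weakly solves the analogous equation with $\psi(s):=[s]_+^{\frac{1}{q-1}}$ in place of $\phi$. Since $\phi\le\psi$ pointwise, $u_V$ is a weak supersolution of the equation solved by $u^+_V$. Testing the difference of the two equations against the non-negative function $w_+ := (u^+_V - u_V)_+\in \mathring{H}^{\frac{\alpha}{2}}(\bbr^d)$, and combining the Stroock--Varopoulos-type inequality $\langle(-\Delta)^{\frac{\alpha}{2}}w, w_+\rangle \ge \|w_+\|^2_{\mathring{H}^{\frac{\alpha}{2}}(\bbr^d)}$ (valid for $\alpha\in(0,2]$) with the pointwise estimate $\phi(u_V)\le \psi(u^+_V)$ on $\{w_+>0\}$ (which follows from $\phi\le\psi$ and the monotonicity of $\psi$), one concludes that $\|w_+\|_{\mathring{H}^{\frac{\alpha}{2}}(\bbr^d)}=0$, hence $u_V\ge u^+_V$ in $\bbr^d$.

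For the strict inequality I would argue by contradiction. If $[\rho_V]_+ = \rho^+_V$ a.e., Theorem \ref{minimi diversi} forces $\rho_V\ge 0$ a.e.; moreover $\rho_V\not\equiv 0$, otherwise the Euler--Lagrange equation \eqref{eq01} would give $V\equiv 0$, against the hypothesis. The Riesz potential of a non-trivial non-negative function is strictly positive on $\bbr^d$, so $(-\Delta)^{-\frac{\alpha}{2}}\rho_V>0$ everywhere. Since $V$ is compactly supported, $u_V=-(-\Delta)^{-\frac{\alpha}{2}}\rho_V<0$ on $\bbr^d\setminus\mathrm{supp}(V)$, and \eqref{u_quasi_ovunque} then yields $\rho_V<0$ there, contradicting $\rho_V\ge 0$. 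Hence \eqref{ineq_pointwise} is strict on a set of positive measure. The main obstacle will be the rigorous implementation of the weak comparison principle across the two pointwise-ordered nonlinearities $\phi\le\psi$: this requires care both in identifying $u_V$ as a supersolution of the $\psi$-equation and in handling the fractional Sobolev pairing via the Stroock--Varopoulos bound, which is precisely the framework behind the comparison principle for \eqref{PDE_lap_frac} announced in the introduction.
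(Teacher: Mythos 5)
Your argument is correct, but it takes a genuinely different route from the paper's. The paper passes to the potentials $\varphi_V:=(-\Delta)^{-\frac{\alpha}{2}}\rho_V$ and $\varphi^{+}_V:=(-\Delta)^{-\frac{\alpha}{2}}\rho^{+}_V$, invokes a fractional Kato inequality to show that $[\varphi_V]_{+}$ is a subsolution of the monotone equation $(-\Delta)^{\frac{\alpha}{2}}w=[V-w]_{+}^{\frac{1}{q-1}}$, and then compares $[\varphi_V]_{+}$ with the nonnegative solution $\varphi^{+}_V$ via Lemma \ref{confronto_generale}, obtaining $\varphi^{+}_V\ge[\varphi_V]_{+}$ and reading off \eqref{ineq_pointwise} on the set $\{\rho^{+}_V>0\}$. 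You instead work directly with $u_V=V-\varphi_V$ and $u^{+}_V=V-\varphi^{+}_V$: observing that $u_V$ solves the $\phi$-equation and $u^{+}_V$ solves the $\psi$-equation with $\phi\le\psi$, you note $u_V$ is a supersolution of the $\psi$-equation and close the argument with a single test against $[u^{+}_V-u_V]_{+}$. This bypasses the Kato inequality entirely, at the price of proving only the weaker potential inequality $\varphi^{+}_V\ge\varphi_V$ rather than the paper's $\varphi^{+}_V\ge[\varphi_V]_{+}$; both suffice for \eqref{ineq_pointwise}, since $u_V\ge u^{+}_V$ and the monotonicity of $s\mapsto[s]_{+}^{\frac{1}{q-1}}$ already give $[\rho_V]_{+}\ge\rho^{+}_V$. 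Your treatment of the strictness via Theorem \ref{minimi diversi} and the Riesz-potential positivity is essentially the paper's Proposition \ref{segno}.

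One step you flag as an "obstacle" does need to be made explicit. When you test the subtracted equation against $w_{+}:=[u^{+}_V-u_V]_{+}$, the nonlinear term produces the $\mathring{H}^{-\frac{\alpha}{2}}$--$\mathring{H}^{\frac{\alpha}{2}}$ duality $\langle\rho^{+}_V-\rho_V,w_{+}\rangle$; to conclude that this pairing is $\ge 0$ from the pointwise estimate $(\rho^{+}_V-\rho_V)w_{+}\ge 0$, you must identify the duality with the Lebesgue integral. This is exactly what Corollary \ref{prod_integr} (the Brezis--Browder type result, applicable with $f=0$ since the product is nonnegative, $T:=\rho^{+}_V-\rho_V\in L^q\cap\mathring{H}^{-\frac{\alpha}{2}}$, and $w_{+}\in\mathring{H}^{\frac{\alpha}{2}}$) supplies; you should cite it. Likewise the lower bound $\langle u^{+}_V-u_V,w_{+}\rangle_{\mathring{H}^{\frac{\alpha}{2}}}\ge\|w_{+}\|^2_{\mathring{H}^{\frac{\alpha}{2}}}$ is the inequality \eqref{scalar_H}, which is where the restriction $\alpha\le 2$ enters, as you correctly note. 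With these two ingredients made precise, the argument is complete and, as a bonus, the direct comparison shows that compact support of $V$ is only used in the strictness step and not in the non-strict inequality $[\rho_V]_{+}\ge\rho^{+}_V$ itself, consistent with Remark \ref{not_sharp_V}.
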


%Finally, we can control the set in which the free minimizer is negative by combining Theorem \ref{parte pos} and \ref{minimi diversi}. 
%In particular, since the minimizer $\rho_V$ is shown to be sign changing in the case of sufficiently fast $V$ decay (see Proposition \ref{segno}), there is no reason to rule out an oscillating behaviour at infinity. Nonetheless,
%In particular in the case of $V$ being compactly supported on a set $K$, we prove that $\rho_V$ has constant sign on the complementary set $\bbr^d\setminus K$ (see Corollary \ref{supporto_c} for the precise statement).

In Section \ref{sec.6} we study decay properties of  $\rho_V$ in the fractional framework $\alpha\in (0,2)$. 
First of all we study positivity of $\rho_V$. If $q\le 2$, the total power of $u$ in \eqref{PDE_lap_frac} is bigger than $1$ and  equation \eqref{PDE_lap_frac} is superlinear. Then,  the positivity follows from Proposition \ref{u_pos}.
%In this setting, by Proposition \ref{prop_intro} the function $u_V$ defined in \eqref{u_def} is non negative and  weakly solves
%\begin{equation}\label{u_PDE_pos}
%    (-\Delta)^{\frac{\alpha}{2}}u_V+u^{\frac{1}{q-1}}_V=(-\Delta)^{\frac{\alpha}{2}}V\quad  in\ \mathring{H}^{-\frac{\alpha}{2}}(\bbr^d).
%\end{equation}
%To obtain the desired asymptotic estimates, we will concentrate our efforts on exterior domains of the form $\Omega:=\overline{B(0,R)}^{c}$ where  the radius $R>0$ is sufficiently large.
If $q>2$ then equation \eqref{PDE_lap_frac} is sublinear which implies that studying the positivity of $\rho_V$ is more delicate and strongly relates to the non--local nature of the fractional Laplacian (see Proposition \ref{superlinear} and Lemma \ref{lower_bound_48}). Indeed, in the local case $\alpha=2$ the support of a non--negative solution of \eqref{PDE_lap_frac} could be compact (see \cite{maximum}*{Theorem $1.1.2$} and  \cite{Diaz}*{Corollary $1.10$, Remark $1.5$}).

%given by 
%$$(-\Delta)^{\frac{\alpha}{2}}u_V(x)=\frac{C_{d,\alpha}}{2}\int_{\bbr^d}\frac{2u_V(x)-u_V(x+y)-u_V(x-y)}{|y|^{d+\alpha}}dy$$
Next,  in Theorem \ref{decay_intro} below, we outline the decay of the minimizer $\rho_V$. However,  for the convenience of the reader, we state the result by requiring that $V$ satisfies the stronger decay assumption \eqref{laplac_V_fast} and $C^{\alpha+\ve}_{loc}(\R^d)$ regularity even though such assumptions are not sharp and can be weakened in some regimes of $q$ (see  Section \ref{prel} eq. \eqref{holder_integrable} for the defintion of the the latter space and Remarks \ref{ass_reg_weak}, \ref{restrizione_deca}). The same considerations applies to Theorem \ref{decay_intro2}.
\begin{theorem}\label{decay_intro}
	Assume $d\ge 2$, $0<\alpha<2$ and $q>\frac{2d}{d+\alpha}$ . If $V$ is  a non--indentically zero element of $\mathring{H}^{\frac{\alpha}{2}}(\mathbb{R}^d)\cap C_{loc}^{\alpha+\ve}(\mathbb{R}^d)$ and $ (-\Delta)^{\frac{\alpha}{2}}V$ is a non--negative function such that \begin{equation}\label{laplac_V_fast}\limsup_{|x|\to +\infty}|x|^{d+\alpha}(-\Delta)^{\frac{\alpha}{2}}V(x)<+\infty,
	\end{equation}
	then $\rho_V\in L^{1}(\R^d)$. Moreover, the following cases occur:
	\begin{itemize}
		\item[$(i)$] If $ \frac{2d}{d+\alpha}<q<\frac{2d-\alpha}{d}$ then 
		\begin{equation*}
			0<\liminf_{|x|\to +\infty}|x|^{\frac{d-\alpha}{q-1}}\rho_V(x)\le \limsup_{|x|\to +\infty}|x|^{\frac{d-\alpha}{q-1}}\rho_V(x)<+\infty;
		\end{equation*}
	
		\item[$(ii)$] If $q=\frac{2d-\alpha}{d}$ and, either $1<\alpha<2$ and $d>\alpha+1$ or $q=\tfrac{3}{2}$, then
		\begin{equation*}
			0<\liminf_{|x|\to +\infty}|x|^{d}(\log|x|)^{\frac{d}{\alpha}}\rho_V(x)\le \limsup_{|x|\to +\infty}|x|^{d}(\log|x|)^{\frac{d}{\alpha}}\rho_V(x)<+\infty;
		\end{equation*}
		\item[$(iii)$] If $\frac{2d-\alpha}{d}<q<\frac{2d+\alpha}{d+\alpha}$ then 
		\begin{equation*}
			0<\liminf_{|x|\to +\infty}|x|^{\frac{\alpha}{2-q}}\rho_V(x)\le\limsup_{|x|\to +\infty}|x|^{\frac{\alpha}{2-q}}\rho_V(x)<+\infty;  
		\end{equation*}
		\item[$(iv)$] If $q=\frac{2d+\alpha}{d+\alpha}$ then
		$$0<\liminf_{|x|\to +\infty}|x|^{d+\alpha}(\log|x|)^{-\frac{d+\alpha}{\alpha}}\rho_V(x)\le  \limsup_{|x|\to +\infty}|x|^{d+\alpha}(\log|x|)^{-\frac{d+\alpha}{\alpha}}\rho_V(x)<+\infty; $$
		\item[$(v)$] If $q>\frac{2d+\alpha}{d+\alpha}$ then
		$$0<\liminf_{|x|\to +\infty}|x|^{d+\alpha}\rho_V(x)\le \limsup_{|x|\to +\infty}|x|^{d+\alpha}\rho_V(x)<+\infty.$$
	\end{itemize}
In particular, if $ \frac{2d}{d+\alpha}<q<\frac{2d-\alpha}{d}$ then 
\begin{equation*}\label{sharp_decay}
	\lim_{|x|\to +\infty}|x|^{\frac{d-\alpha}{q-1}}\rho_V(x)=\left[A_{\alpha}\left(\big| \! \big|(-\Delta)^{\frac{\alpha}{2}}V\big| \!\big|_{L^{1}(\mathbb{R}^d)}-\left\|\rho_V\right\|_{L^{1}(\mathbb{R}^d)}\right)\right]^{\frac{1}{q-1}}>0.
\end{equation*}
\end{theorem}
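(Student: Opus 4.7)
The plan is to exploit the integral form of the Euler--Lagrange equation: since $(-\Delta)^{\alpha/2}V\ge 0$ and $V=I_\alpha*(-\Delta)^{\alpha/2}V$ by Riesz representation yields $V\ge 0$, and since $q\le 2$ places us in the superlinear regime where Proposition \ref{u_pos} forces $\rho_V\ge 0$, \eqref{u_quasi_ovunque} reduces to the scalar identity
\begin{equation*}
\rho_V(x)^{q-1}+A_\alpha\int_{\bbr^d}\frac{\rho_V(y)}{|x-y|^{d-\alpha}}\,dy=V(x).
\end{equation*}
All five decay regimes are to be read off from this single equation by carefully comparing the two Riesz-type terms on either side.

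First I would establish $\rho_V\in L^{1}(\bbr^d)$ and compute its total mass. The pointwise bound $\rho_V\le V^{1/(q-1)}$, together with the $|x|^{-(d-\alpha)}$ decay of $V$ inherited from \eqref{laplac_V_fast} via the Riesz representation, already gives $\rho_V\in L^{1}$ when $q<(2d-\alpha)/d$; for the larger values of $q$ the integrability comes from integrating the PDE $(-\Delta)^{\alpha/2}u_V+\rho_V=(-\Delta)^{\alpha/2}V$, noting that $\widehat{(-\Delta)^{\alpha/2}u_V}(0)=0$ as soon as $(-\Delta)^{\alpha/2}u_V\in L^{1}$, so that $\|\rho_V\|_{L^1}\le M_V:=\|(-\Delta)^{\alpha/2}V\|_{L^1}$. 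A Taylor expansion of the Riesz kernel $|x-y|^{\alpha-d}$ around $y=0$ then gives
\begin{equation*}
V(x)-I_\alpha*\rho_V(x)=\frac{A_\alpha\bigl(M_V-\|\rho_V\|_{L^1}\bigr)}{|x|^{d-\alpha}}+\text{higher--order moment terms},
\end{equation*}
and the case analysis is dictated by how this leading term interacts with the actual decay of $\rho_V$.

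Case (i) then falls out directly: for $q<(2d-\alpha)/d$ one argues $M_V>\|\rho_V\|_{L^1}$ (strict equality would force $u_V=o(|x|^{\alpha-d})$, hence $\rho_V=o(|x|^{-(d-\alpha)/(q-1)})$, which is inconsistent with matching the sub-leading balance in the integral identity), and extracting the $(q-1)$-th root of the leading term yields both the claimed rate and the claimed limiting constant. In cases (ii)--(v) the leading coefficient vanishes and $\|\rho_V\|_{L^1}=M_V$; I would bootstrap with an ansatz $\rho_V(x)\sim c|x|^{-\gamma}$, feeding it back into the integral equation and imposing self-consistency -- this selects $\gamma=\alpha/(2-q)$ in (iii) by balancing the first-moment correction of $I_\alpha*\rho_V$ against the nonlinear term $\rho_V^{q-1}$, and $\gamma=d+\alpha$ in (v) by balancing it against the tail of $V$. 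The critical exponents in (ii) and (iv) lie precisely where these two mechanisms cross and where the Riesz convolution $I_\alpha*|y|^{-\gamma}$ becomes logarithmically divergent, so the ansatz must be replaced by $c|x|^{-\gamma}(\log|x|)^\beta$ with $\beta$ pinned down by the logarithmic balance. The main obstacle is exactly this bootstrap in cases (ii)--(v) -- producing matching upper and lower pointwise bounds with sharp constants and tracking the logarithmic corrections, particularly in case (ii), where, as in the graphene argument of \cite{graphene}, one must iterate the integral equation several times to extract the exponent $d/\alpha$ on the log factor.
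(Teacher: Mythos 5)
Your plan correctly identifies the pointwise Euler--Lagrange identity $\rho_V^{q-1}+I_\alpha*\rho_V=V$ together with non-negativity of $\rho_V$ as the starting point, and the intended use of Lemma \ref{newton_theorem} to expand $V$ and $I_\alpha*\rho_V$ is indeed what the paper does to obtain the final sharp limit in case $(i)$. But the core of your argument --- reading off all five decay rates from the integral identity via a self-consistency bootstrap --- is a different route from the paper's and, as written, leaves the hard parts open.

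There are three concrete gaps. First, your $L^1$ argument for $q\ge \frac{2d-\alpha}{d}$ is circular: the moment identity $\|\rho_V\|_{L^1}\le \|(-\Delta)^{\frac{\alpha}{2}}V\|_{L^1}$ is obtained by integrating the PDE, which presupposes $\rho_V\in L^1$ (and $(-\Delta)^{\frac{\alpha}{2}}u_V\in L^1$). In the paper integrability is a corollary of the decay rates, which are established first. Second, in case $(i)$ the crucial strict inequality $\|(-\Delta)^{\frac{\alpha}{2}}V\|_{L^1}>\|\rho_V\|_{L^1}$, equivalently the lower bound on $\rho_V$, is not established by your remark about ``matching the sub-leading balance''; it requires a genuine positive subsolution. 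The paper builds one from the capacity optimizer $\bar u$ plus a power function (Lemma \ref{range_1}) and invokes the weak comparison principle, Lemma \ref{comparison}. Third, for cases $(ii)$--$(v)$, feeding an ansatz $\rho_V\sim c|x|^{-\gamma}(\log|x|)^{\beta}$ back into the integral equation is a formal computation, not a proof, and you acknowledge it yourself as ``the main obstacle''. The paper avoids it entirely: it constructs explicit radial barriers with prescribed polynomial or log-corrected decay, computes the asymptotics of their fractional Laplacian via the Ferrari--Verbitsky representation (Theorem \ref{lapl_ferrari}, used in Lemmas \ref{laplaciano2}, \ref{alfa_grande}, \ref{2_limit}, \ref{max_decay}), rescales to obtain super- and subsolutions of \eqref{PDE_lap_frac} in exterior balls, and then sandwiches $u_V$ via comparison (Lemmas \ref{range_2}, \ref{soglia_1}, \ref{soglia_2}, \ref{m_minore}, \ref{q=2}). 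That barrier and comparison machinery is what actually delivers the matching two-sided bounds; making the integral-equation bootstrap rigorous, especially at the two logarithmic thresholds $q=\frac{2d-\alpha}{d}$ and $q=\frac{2d+\alpha}{d+\alpha}$, would be substantially harder than the PDE approach.
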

\vspace{0.2cm}
%Since the restrictions on the parameters in case $(ii)$ are technical 
%we conjecture that they can be removed (see for example Remark \ref{Gamma_fact} in Sect.\ref{sec.6} for a more detailed analysis).

Before showing some applications of Theorem \ref{decay_intro}, we briefly underline how the proof develops. 
We intend to construct positive super and subsolutions of  \eqref{PDE_lap_frac} in exterior domains of the form $\Omega=\overline{B_R}^{c}$. The scenario is the following:
\begin{itemize}
\item If $q\in \big(\frac{2d}{d+\alpha},\frac{2d+\alpha}{d+\alpha}\big)\setminus\left\{\frac{2d-\alpha}{d}\right\}$, we employ  polynomial--decaying functions as barriers. Namely, we use a linear combination of the functions  $f$ and $g$ where    $f(x)=(1+|x|^2)^{-\beta}$, for some suitable $\beta>0$, and  $g$ is defined as the optimizer for  the following problem 
\begin{equation}\label{inf_sobolev_intro}
    \mathcal{I}:=\inf_{u\in \mathring{H}^{\frac{\alpha}{2}}(\bbr^d)}\left\{\|u\|^2_{\mathring{H}^{\frac{\alpha}{2}}(\bbr^d)}:u\ge \chi_{B_1}\right\}
\end{equation}
(see Lemma \ref{range_1}, eq. \eqref{v_intro}). Decay estimates for such funtions were derived in  \cite{Ferrari}*{equation 
	$(4.4)$} and  \cite{optimal}*{Proposition $3.6$};
\item If $q\in \left\{\frac{2d-\alpha}{d},\frac{2d+\alpha}{d+\alpha}\right\}$, polynomial--behaving barriers are not enough to obtain sharp estimates. Indeed, we apply 
\cite{Ferrari}*{Theorem $1.1$} to deduce asymptotic decay for functions with logarithmic behaviour (see Lemma \ref{alfa_grande} and \ref{2_limit});
\item If $q\in \big(\frac{2d+\alpha}{d+\alpha},\infty\big)\setminus \left\{2\right\}$, by applying again  \cite{Ferrari}*{Theorem $1.1$},  we employ polynomial--behaving functions with fast decay (see Lemma \ref{max_decay});
\item If $q=2$, the conclusion is essentially a consequence of \cite{positive solutions}*{Lemmas 4.2--4.3}.
\end{itemize}
Note also that, differently from the local case $\alpha=2$, the exponent   $q=\frac{2d+\alpha}{d+\alpha}$ represents a critical value. Furthermore,  if $q=\frac{2d-\alpha}{d}$ we have the slowest decay and if $q>\frac{2d+\alpha}{d+\alpha}$ the decay is the same as  in \cite{positivesolutions}. 
%Moreover, we point out that the critical case $q=\frac{2d-\alpha}{d}$  corresponds to the slowest decay possible.
 We also recall that  log--corrections appear at the critical values $\frac{2d-\alpha}{d}$ and  $\frac{2d+\alpha}{d+\alpha}$.
%where the author proves that if $1<m<\frac{d}{d-2}$ the solution decays ``\textit{strongly linearly}", if $m>\frac{d}{d-2}$ the decay is ``\textit{weakly linear}" while if $m=\frac{d}{d-2}$ a log correction appears.

	\usetikzlibrary{
		calc,
		patterns,
		positioning
	}
	\pgfplotsset{
		compat=1.16,
		samples=200,
		clip=false,
		my axis style/.style={
			axis x line=middle,
			axis y line=middle,
			legend pos=outer north east,
			axis line style={
				->,
			},
			legend style={
				font=\footnotesize
			},
			label style={
				font=\footnotesize
			},
			tick label style={
				font=\footnotesize
			},
			xlabel style={
				at={
					(ticklabel* cs:1)
				},
				anchor=west,
				font=\footnotesize,
			},
			ylabel style={
				at={
					(ticklabel* cs:1)
				},
				anchor=west,
				font=\footnotesize,
			},
			xlabel=$q$
		
		},
	}
	\tikzset{
		>=stealth
	}

%%% Tables and figures packages

	\captionsetup{
		format=plain,
		labelfont=bf,
		font=small,
		justification=centering
	}
	
%%% Numbers and sets

%%% Title, author, date

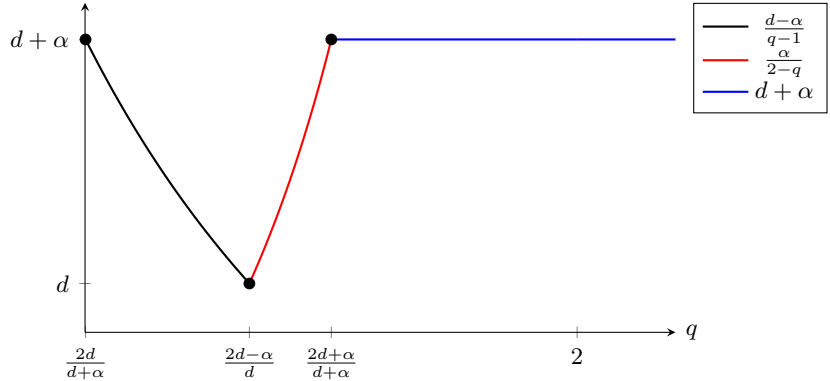
\begin{figure}[ht]
	\centering
	\begin{tikzpicture}
	\begin{axis}[
		my axis style,
		width=.55\textwidth,
		height=.35\textwidth,
            xtick={3/2,5/3,7/4,2},
            xticklabels={$\frac{2d}{d+\alpha}$, $\frac{2d-\alpha}{d}$, $\frac{2d+\alpha}{d+\alpha}$, $2$},
            ymin=2.8,
            ytick={3,4},
            yticklabels={$d$,$d+\alpha$},
            ymax=4.15,
		legend entries={
			$\frac{d-\alpha}{q-1}$,
                $\frac{\alpha}{2-q}$,
                $d+\alpha$
		},
        legend pos= outer north east
	]
	
	\addplot[
		domain=3/2-1/2000:5/3,
		thick,
	]
	{2/(x-1)};

	\addplot[
                domain=5/3:7/4,
                thick,
                red,
	]
	{1/(2-x)};

 \addplot[
                domain=7/4:2,
                thick,
                blue,
	]
	{4};

 \addplot[
domain=2:2.1,
thick,
blue,
]
{4};	

 \addplot[only marks] coordinates {(3/2,4)};
 \addplot[only marks] coordinates {(5/3,3)};
 \addplot[only marks] coordinates {(7/4,4)};
% \addplot[only marks] coordinates {(2,4)};
	\end{axis}
	\end{tikzpicture}
	\caption{Plot of the decay's exponent in the range $q\in \big(\frac{2d}{d+\alpha},\infty\big)\setminus\left\{\frac{2d-\alpha}{d},\frac{2d+\alpha}{d+\alpha}\right\}$.}
	\label{fig:my-awesome-graph}
\end{figure}
%Subsequently, we focus our attention on the case $q>2$. 
	As we have already stressed above, in the superlinear regime $q>2$ even positivity of the minimizer  is not trivial. 
%	We provide two independent ways to prove it. The first result is Proposition \ref{superlinear} where we prove positivity of the minimizer. The second result is Proposition~\ref{lower_bound_48} where,  by requiring positivity of $(-\Delta)^{\frac{\alpha}{2}}V$ on the unit ball, we find a  lower bound on its  decay  matching the one of  the linear case $q=2$. In particular, under this assumption, we deduce that
%\begin{equation}\label{inf_intro}
%	\liminf_{|x|\to +\infty}|x|^{d+\alpha}\rho_V(x)>0.
%\end{equation}
%Indeed, we stress again that in the local case $\alpha=2$ the support is compact (cf.  \cite{maximum}*{Theorem $1.1.2$}). 
Let's further recall that when $\alpha=2$ the decay of non--negative solutions of \eqref{PDE_lap_frac} is polynomial provided $q<2$ and exponential provided $q=2$ (see \cite{Veron}).

%In Figure~\ref{fig:my-awesome-graph},  we graphically show what Theorem \ref{decay_intro} and  Proposition \ref{lower_bound_48} tell us in terms of decay. 
%If $q>2$ the line  is represented as dashed because of the lack of an upper bound matching the lower bound given by \eqref{inf_intro}.

%The idea behind the proof of \eqref{inf_intro} (see Proposition \ref{lower_bound_48}) is to find  a positive function, behaving like $|x|^{-(d+\alpha)(q-1)}$ at infinity,  which is a subsolution of  \eqref{PDE_lap_frac} in $\R^d$.

An important class of potentials satisfying the assumptions of Theorem \ref{decay_intro} is the following:
\begin{equation}\label{family_minimizers}
    V_Z(x):=Z A_\alpha\left(1+|x|^2\right)^{-\frac{d-\alpha}{2}}, 
\end{equation}
where $A_\alpha$ is defined in \eqref{coulomb} and $Z$ is a positive constant. Then, we can prove that
%Indeed, it's well known that (see for example \cite{critical_exp}*{Theorem $1$} and \cite{minimizer}*{Theorem $1.1$}),

%provides all the minimizers for the problem
%\begin{equation}
%    \mathscr{S}:=\inf_{u\in \mathring{H}^{\frac{\alpha}{2}}(\bbr^d)\setminus \left\{0\right\}}\frac{\big|\!\big|(-\Delta)^{\frac{\alpha}{4}}u\big|\!\big|^2_{L^{2}(\bbr^d)}}{\left(\int_{\bbr^d}|u|^{\frac{2d}{d-\alpha}}\right)^{\frac{d-\alpha}{d}}},
%\end{equation}
%and that, 

%after choosing an appropriate constant $\mathcal{C}=\mathcal{C}(d,\alpha)$, they are the only positive solutions of the equation 
%\begin{equation}\label{PDE_critica}
%    (-\Delta)^{\frac{\alpha}{2}}u=|u|^{\frac{2\alpha}{d-\alpha}}\,u\quad in\ \mathring{H}^{-\frac{\alpha}{2}}(\bbr^d).
%\end{equation}

%\indent Asymptotic results for a PDE of the form
%\begin{equation}\label{PDE_estesa}
%(-\Delta)^{\frac{\alpha}{2}}u+u^{m}=f\quad in\ \bbr^d,
%\end{equation}
%have been studied in several papers.Moreover, in the fractional linear case  corresponding to $\alpha\in (0,2)$ and $m=1$ it has been proved that any positive classical solution of \eqref{PDE_estesa} decays like $|x|^{-(d+\alpha)}$ at infinity  provided  $f:\bbr^d\times \bbr\rightarrow \bbr $ satisfies some conditions. This result is totally different from what happens in the local case $\alpha=2$ (see for example \cites{positivesolutions, maximum}).\\

%Then, 
%%we apply  Theorem \ref{decay_intro} to estimate the $L^1$ norm of the minimizer $\rho_V$. 
%%In particular, 
%if we consider $V_Z(x):=Z V(x)$, for some $Z>0$, we prove that:
\begin{itemize}
\item[$(i)$] If $\frac{2d}{d+\alpha}<q<\frac{2d-\alpha}{d}$ then $$\left\|\rho_{V_Z}\right\|_{L^{1}(\bbr^d)}<Z;$$
    \item[$(ii)$] If $q>\frac{2d-\alpha}{d}$ then $$\left\|\rho_{V_Z}\right\|_{L^{1}(\bbr^d)}=Z.$$
\end{itemize}
For the precise statements we refer to Corollaries \ref{L_1_bound} and  \ref{V_z}.

Finally, we study asymptotic decay of sign--changing minimizers where we don't expect the existence of a universal lower bound. Namely, we obtain a similar version of Theorem \ref{decay_intro} with  upper bounds only. Note that, in view of Remark \ref{remark_soglia1}, case $(ii)$ of Theorem \ref{decay_intro2} requires less restrictions than case $(ii)$ of Theorem \ref{decay_intro}.
\begin{theorem}\label{decay_intro2}
	Assume $d\ge 2$, $0<\alpha<2$ and $q>\frac{2d}{d+\alpha}$. Assume that  $V$ belongs to $\mathring{H}^{\frac{\alpha}{2}}(\R^d)\cap C^{\alpha+\ve}_{loc}(\mathbb{R}^d)$ and  \begin{equation}\label{laplac_V_fast_2}\limsup_{|x|\to +\infty}|x|^{d+\alpha}|(-\Delta)^{\frac{\alpha}{2}}V(x)|<+\infty.
	\end{equation}
	If $|\rho_V|\in \mathring{H}^{-\frac{\alpha}{2}}(\R^d)\cap L^{\infty}(\R^d)$  then the following possibilities hold:
	\begin{itemize}
		\item[$(i)$] If $ \frac{2d}{d+\alpha}<q<\frac{2d-\alpha}{d}$ then 
		\begin{equation*}
			\limsup_{|x|\to +\infty}|x|^{\frac{d-\alpha}{q-1}}|\rho_V(x)|<+\infty;
		\end{equation*}
		\item[$(ii)$] If $q=\frac{2d-\alpha}{d}$ and, either $1<\alpha<2$ or $q=\tfrac{3}{2}$, then
		\begin{equation*}
			\limsup_{|x|\to +\infty}|x|^{d}(\log|x|)^{\frac{d}{\alpha}}|\rho_V(x)|<+\infty;
		\end{equation*}
		\item[$(iii)$] If $\frac{2d-\alpha}{d}<q<\frac{2d+\alpha}{d+\alpha}$ then 
		\begin{equation*}
			\limsup_{|x|\to +\infty}|x|^{\frac{\alpha}{2-q}}|\rho_V(x)|<+\infty;  
		\end{equation*}
		\item[$(iv)$] If $q=\frac{2d+\alpha}{d+\alpha}$ then
		$$ \limsup_{|x|\to +\infty}|x|^{d+\alpha}(\log|x|)^{-\frac{d+\alpha}{\alpha}}|\rho_V(x)|<+\infty; $$
		\item[$(v)$] If $q>\frac{2d+\alpha}{d+\alpha}$ then
		$$\limsup_{|x|\to +\infty}|x|^{d+\alpha}|\rho_V(x)|<+\infty.$$
	\end{itemize}
\end{theorem}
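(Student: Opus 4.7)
The plan is to reduce the sign--changing case to the non--negative framework of Theorem~\ref{decay_intro} by applying Kato's inequality to $u_V=\text{sign}(\rho_V)|\rho_V|^{q-1}$. For $\alpha\in(0,2)$ the singular--integral representation of $(-\Delta)^{\alpha/2}$ combined with the pointwise bound $|u(x)|-|u(y)|\le \text{sign}(u(x))[u(x)-u(y)]$ yields the fractional Kato inequality
$$(-\Delta)^{\alpha/2}|u_V|\le \text{sign}(u_V)(-\Delta)^{\alpha/2}u_V\qquad\text{in}\ \D'(\R^d).$$
Plugging this into the Euler--Lagrange equation \eqref{PDE_lap_frac} produces the distributional inequality
$$(-\Delta)^{\alpha/2}|u_V|+|u_V|^{\frac{1}{q-1}}\le |(-\Delta)^{\alpha/2}V|\qquad\text{in}\ \D'(\R^d).$$
The assumption $|\rho_V|\in \mathring{H}^{-\alpha/2}(\R^d)$, together with the identity $|\rho_V|=|u_V|^{1/(q-1)}$, guarantees that $|u_V|$ belongs to $\mathring{H}^{\alpha/2}(\R^d)$, so the above inequality holds in the weak sense required by the comparison results of Section~\ref{sec.6}.

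Next, I would let $\widetilde V\in\mathring{H}^{\alpha/2}(\R^d)$ be defined by $(-\Delta)^{\alpha/2}\widetilde V=|(-\Delta)^{\alpha/2}V|$. The local integrability hypothesis $(-\Delta)^{\alpha/2}V\in L^{2d/(d+\alpha)}_{loc}(\R^d)$ and the decay assumption \eqref{laplac_V_fast_2} ensure that $\widetilde V$ satisfies the hypotheses of Theorem~\ref{decay_intro}. Consequently, the super--solutions of \eqref{PDE_lap_frac} constructed in the proof of that theorem---suitable combinations of the profile $g$ of \eqref{inf_sobolev_intro}, the polynomial function $f(x)=(1+|x|^2)^{-\beta}$ for the exponent $\beta$ appropriate to cases $(i)$ and $(iii)$, the log--corrected barriers from \cite{Ferrari}*{Theorem~$1.1$} in cases $(ii)$ and $(iv)$, and the fast--decay barrier for case $(v)$---remain super--solutions of the inequality satisfied by $|u_V|$ on an exterior domain $\overline{B_R}^c$, provided $R$ is chosen large enough that the boundary condition on $\partial B_R$ is met (using continuity of $u_V$ from Theorem~\ref{regularity_for_intro}).

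The weak comparison principle for \eqref{PDE_lap_frac} (valid in the full range $\alpha\in(0,2]$ as stated after that equation) then yields the pointwise bound $|u_V|(x)\le W(x)$ on $\overline{B_R}^c$, where $W$ is the barrier selected according to the value of $q$. Raising this bound to the power $1/(q-1)$ gives exactly the upper estimates $(i)$--$(v)$, because the decay exponents $(d-\alpha)/(q-1)$, $\alpha/(2-q)$, $d+\alpha$ and the accompanying logarithmic factors are precisely the asymptotics of $W^{1/(q-1)}$ as $|x|\to\infty$. I expect the main obstacle to lie in the rigorous distributional justification of Kato's inequality and of the comparison principle for $|u_V|$ when $u_V$ is only in the energy space; this is exactly where the standing hypothesis $|\rho_V|\in\mathring{H}^{-\alpha/2}(\R^d)$ is indispensable. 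The weakening in case $(ii)$ with respect to Theorem~\ref{decay_intro}$(ii)$ is natural within this scheme, since the extra constraint $d>\alpha+1$ there served only to validate the matching sub--solution producing the lower bound, which has no counterpart in the present one--sided statement.
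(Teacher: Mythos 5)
Your proposal matches the paper's proof in both structure and substance: both apply the fractional Kato inequality to $u_V$ to obtain the one-sided inequality $(-\Delta)^{\alpha/2}|u_V|+|u_V|^{1/(q-1)}\le|(-\Delta)^{\alpha/2}V|$, use the hypothesis $|\rho_V|\in\mathring{H}^{-\alpha/2}(\R^d)$ to upgrade this from a distributional to a weak inequality in the energy space, and then invoke the comparison principle against the same supersolution barriers used in the proof of Theorem~\ref{decay_intro}. Your introduction of $\widetilde V$ with $(-\Delta)^{\alpha/2}\widetilde V=|(-\Delta)^{\alpha/2}V|$ is a tidy way to package the reduction, but it is only a repackaging of the same idea, not a different route. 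The one point worth flagging is the endpoint $q=2$: the polynomial barrier $U$ of Lemma~\ref{max_decay} with $\gamma=(d+\alpha)(q-1)$ makes $(-\Delta)^{\alpha/2}U$ and $U^{1/(q-1)}$ decay at exactly the same rate $|x|^{-(d+\alpha)}$ when $q=2$, so one cannot tune $\lambda$ to make the supersolution inequality hold as in Lemma~\ref{m_minore}; the paper handles $q=2$ separately via the Green-function argument of Lemma~\ref{q=2}, and your proof should do likewise rather than fold it into the ``fast-decay barrier'' for case $(v)$.
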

The proof is essentialy based on two facts:
\begin{itemize}
	\item similarly to the proof of Theorem \ref{parte pos}, we employ a Kato--type inequality of the form 
	$$ (-\Delta)^{\frac{\alpha}{2}}|u|\le \text{sign}(u)(-\Delta)^{\frac{\alpha}{2}} u\quad \text{in}\  \mathcal{D}'(\bbr^d),$$
	which allows us to conclude that the function $|u_V|$ is a non--negative weak subsolution of the semilinear PDE
	\begin{equation}\label{PDE_abs}
		(-\Delta)^{\frac{\alpha}{2}}u+\text{sign}(u)|u|^{\frac{1}{q-1}}= |(-\Delta)^{\frac{\alpha}{2}}V|\quad \text{in}\ \mathring{H}^{-\frac{\alpha}{2}}(\R^d);
	\end{equation}
	\item we employ  the same barriers used for the proof of Theorem \ref{decay_intro} as supersolutions of \eqref{PDE_abs} in exterior domains of the form $\Omega=\overline{B_R}^{c}$.
\end{itemize}

\vspace{0.3cm}

\noindent \textbf{Organization of the paper.} In Section \ref{prel} we introduce some definitions useful for the rest of the paper. In Sections \ref{sec.3} and \ref{sec.5} we prove respectively Theorem \ref{minimo_esiste} and \ref{second_theorem}. Section \ref{sec.4} is entirely dedicated to prove regularity of the minimizer.  In Section \ref{sec.6} we  study the asymptotic behaviour of the minimizer and we prove Theorems \ref{decay_intro}, \ref{decay_intro2}. As a result, in Corollary \ref{L_1_bound} we also estimate the $L^1$--norm of the minimizer in terms of the $L^{1}$--norm of $(-\Delta)^{\frac{\alpha}{2}}V$. Finally, in the Appendix \ref{Appendix}, we derive asymptotic estimates for the fractional Laplacian of power--type functions.
\vspace{0.3cm}

\noindent \textbf{\large{Notations.}} 
\noindent \underline{Sets}:
\noindent 
\begin{itemize}
	\item For every $x\in \R^d$ and $R>0$, by $B_R(x)$ we understand the set defined as 
	$$B_R(x)=\left\{y\in \bbr^d:|x-y|<R\right\}.$$ We will often set  ${B_R:=B_R(0)}$.
	As it is customary we denote by $\omega_d$ the measure of the unit ball in $\R^d$. 
	\item Let $\Omega$ be a subset of $\R^d$.  By  $|\Omega|$, $\Omega^c$ and $\overline{\Omega}$
	we understand respectively the $d$--dimensional Lebsegue measure, the complementary and the closure of $\Omega$. Moreover, we say that $\Omega$ is smooth if its boundary $\partial \Omega$ locally identifies with a smooth function.
\end{itemize}

\noindent \underline{Function spaces}:
Let $\Omega$ be an open set in $\R^d$.
\begin{itemize}
    \item  $C^{\infty}_c(\Omega)$ denotes the space of smooth functions having compact support contained in $\Omega$; 
    \item If $k\in \N$, the space $C^{k}(\Omega)$ (respectively $C^{k}_b(\Omega)$)  represents the space of functions whose derivatives are continuous up to order $k$ on $\Omega$ (respectively continuous and bounded);
    \item $\mathcal{D}'(\Omega)$ denotes the dual space of $C^{\infty}_c(\Omega)$. Moreover, if $T\in \mathcal{D}'(\Omega)$, by the the notation $T\ge 0$ in $\mathcal{D}'(\Omega)$ we understand the following inequality
    \begin{equation}\label{ineq_8}
    	\langle T,\varphi \rangle\ge 0,\quad \forall \varphi\in C^{\infty}_c(\Omega), \, \varphi\ge 0,
    \end{equation}
where by the symbol $\langle \cdot , \cdot  \rangle $ we denote the action of $T$ against the test function $\varphi$;
    \item If $\gamma\in (0,1]$ and  $k\in \N$, we define  $C^{k,\gamma}({\Omega})$ (respectively $C^{k,\gamma}_{loc}({\Omega}))$ by
    \begin{equation}
    \begin{split}	
    	C^{0,\gamma}({\Omega})& :=\left\{u\in C_b({\Omega}): 
    	\sup_{x, y \in \Omega,\, x\neq y}\frac{|u(x)-u(y)|}{|x-y|^{\gamma}}<\infty \right\}, \\
  		C^{k,\gamma}({\Omega})& :=\left\{u\in C^{k}_b({\Omega}): D^{\beta}u\in C^{0,\gamma}({\Omega})\ \text{for}\ \text{all}\ |\beta|=k \right\}, \ k\ge 1,
    \end{split}
    \end{equation}
endowed with the norm 
$$\|u\|_{C^{k,\gamma}({\Omega})}=\sum_{|\beta|\le k}\|D^{\beta}u\|_{L^{\infty}({\Omega})}+\sum_{|\beta|=k}[D^{\beta}u]_{C^{0,\gamma}({\Omega})};$$
    \item $L^{p}(\Omega)$ with $p\in [1,+\infty]$ is the standard Lebesgue space and $p'$ denotes the H\"older conjugate of $p$.
%    \item  If $\Omega=\R^d$, $\mathscr{S}(\R^d)$ denotes the class of Schwartz functions while by $\mathscr{S}'(\R^d)$ we denote the space of tempered distributions.
\end{itemize}
\noindent \underline{Asymptotic notations}:
\vspace{0.2cm}
\noindent
Let $f,g:\mathbb{R}^d\rightarrow \mathbb{R}$.
We write
\begin{itemize}
\item
$f(x)\lesssim g(x) $ in $\Omega$ 
if there exists $C>0$ such that 
$f(x)\le C g(x)$ for every $x\in \Omega.$
\item 
$g(x)\simeq f(x)$ in $\Omega$ if both
$f(x)\lesssim g(x)$ and $g(x)\lesssim f(x)$ in $\Omega$.
\end{itemize}
In particular, by writing $f(x)\lesssim g(x)$ as $|x|\to +\infty$ we mean
that there exists $C>0$ such that 
$f(x)\le C g(x)$ for every $x$ sufficiently large and by writing $f(x)\simeq g(x)$ as $|x|\to +\infty$ we understand that $f(x)\lesssim g(x)$ and $g(x)\lesssim f(x)$ as $|x|\to +\infty$.
Similarly we write
$f(x)\sim g(x) $ as $|x|\to +\infty$
if 
$$\lim_{|x|\to +\infty}\frac{f(x)}{g(x)}=1.$$

\section{Preliminaries}\label{prel}

\subsection*{Fractional Sobolev spaces}
\noindent Let $d\ge 2$ and $0<\alpha<d$. The fractional Laplacian $(-\Delta)^{\frac{\alpha}{2}}$  on $\mathbb{R}^d$ is defined by means of the Fourier transform  (cf. %\cite{Samko}*{eq $(25.2)$, p. $486$},
 \cite{mazya}*{eq $(1.1$)}
  %and \cite{book_triebel}*{eq $(6.11)$, p. $2$})
 ) as follows
\begin{equation}\label{laplac_fourier}
\widehat{(-\Delta)^{\frac{\alpha}{2}}u}(\xi):={|\xi|}^{\alpha}\widehat{u}(\xi),
\end{equation}
where as usual, if $u$ is smooth enough, $\widehat{u}$ denotes the Fourier transform of $u$.
% the Fourier transform is defined by the formula $$\widehat{u}(\xi):=\int_{\mathbb{R}^d}e^{-2\pi ix\cdot \xi} u(x) dx,$$
%where by $x\cdot \xi$ we denote the standard inner product in $\bbr^d$. 
%Moreover, the Fourier transform operator can be naturally extended to the class of tempered distributions (see \cite{classicbook}*{Definition $1.20$,  Proposition $1.23$}).

The homogeneous fractional Sobolev space $\mathring{H}^{\frac{\alpha}{2}}(\bbr^d)$ can be defined 
as the completion of $C^{\infty}_c(\R^d)$ under the norm
\begin{equation}\label{norma_H}
\left\|u\right\|^2_{\mathring{H}^{\frac{\alpha}{2}}(\bbr^d)}:=\int_{\bbr^d} |\xi|^{\alpha}|\widehat{u}(\xi)|^2 d\xi,
\end{equation}
or equivalently as
\begin{equation*}
	\mathring{H}^{\frac{\alpha}{2}}(\R^d)=\left\{u\in L^{\frac{2d}{d-\alpha}}(\R^d): |\xi|^{\frac{\alpha}{2}}\hat{u}(\xi)\in L^{2}(\R^d) \right\}
\end{equation*}
(see \cite{classicbook}*{Definition $1.31$} and \cite{multiplier}*{p.10}).
The space  $\mathring{H}^{\frac{\alpha}{2}}(\bbr^d)$ defined above is a Hilbert space (\cite{classicbook}*{Proposition 1.34})
with inner product given by 
\begin{equation}\label{inner_10}
    \langle u,v\rangle_{\mathring{H}^{\frac{\alpha}{2}}(\bbr^d)}=\int_{\bbr^d}|\xi|^{\alpha}\widehat{u}(\xi)\overline{\widehat{v}(\xi)} d\xi = \langle (-\Delta)^{\frac{\alpha}{4}}u, (-\Delta)^{\frac{\alpha}{4}}v\rangle_{L^{2}(\bbr^d)},
\end{equation}
and it is continuosly embedded into $L^{\frac{2d}{d-\alpha}}(\bbr^d)$ (\cite{classicbook}*{Theorems $1.38$--$1.43$}).
The dual space to $\mathring{H}^{\frac{\alpha}{2}}(\bbr^d)$ identifies with $\mathring{H}^{-\frac{\alpha}{2}}(\bbr^d)$ (\cite{classicbook}*{Proposition $1.36$}) and, by the Riesz representation theorem, for every $T\in \mathring{H}^{-\frac{\alpha}{2}}(\bbr^d)$ there exists a unique element ${U}^{\alpha}_{T}\in \mathring{H}^{\frac{\alpha}{2}}(\bbr^d)$ such that 
\begin{equation}\label{weak_laplac}
\langle T,\varphi \rangle=\langle {U}^{\alpha}_{T}, \varphi \rangle_{\mathring{H}^{\frac{\alpha}{2}}(\bbr^d)}\quad  \forall \varphi\in \mathring{H}^{\frac{\alpha}{2}}(\bbr^d),
\end{equation}
where by $\langle \cdot , \cdot  \rangle$ we understand the duality between $\mathring{H}^{-\frac{\alpha}{2}}(\bbr^d)$ and $\mathring{H}^{\frac{\alpha}{2}}(\bbr^d)$. Moreover, 
$$\left\|{U}^{\alpha}_{T}\right\|_{\mathring{H}^{\frac{\alpha}{2}}(\bbr^d)}=\left\|T\right\|_{\mathring{H}^{-\frac{\alpha}{2}}(\bbr^d)},$$
so that the duality \eqref{weak_laplac} is an isometry. In particular, for every $T_1,T_2\in \mathring{H}^{-\frac{\alpha}{2}}(\R^d)$ we have
\begin{equation}\label{isometry}
	\langle {U}^{\alpha}_{T_1} ,{U}^{\alpha}_{T_2} \rangle_{\mathring{H}^{\frac{\alpha}{2}}(\R^d)}=\langle T_1, T_2 \rangle_{\mathring{H}^{-\frac{\alpha}{2}}(\R^d)}.
\end{equation}
The function ${U}^{\alpha}_T$ solving \eqref{weak_laplac} is called a weak solution of the equation 
\begin{equation}\label{weak_10}
    (-\Delta)^{\frac{\alpha}{2}}u=T \quad \text{in}\  \mathring{H}^{-\frac{\alpha}{2}}(\bbr^d).
\end{equation}
In what follows, we will often replace the notation ${U}^{\alpha}_T$ with the more explicit one $(-\Delta)^{-\frac{\alpha}{2}}T$.
Assume now $0<\alpha<2$.
If $u$ is a smooth function, we have a pointwise representation of the operator $(-\Delta)^{\frac{\alpha}{2}}$ given by 
%\begin{equation}\label{laplac_for_intro}
%    (-\Delta)^{\frac{\alpha}{2}}u(x)=C_{d,\alpha}P.V\int_{\bbr^d}\frac{u(x)-u(y)}{|x-y|^{d+\alpha}}dy:=C_{d,\alpha}\lim_{\varepsilon\to 0}\int_{\left\{|x-y|>\varepsilon\right\}}\frac{u(x)-u(y)}{|x-y|^{d+\alpha}}dy
%\end{equation}
%or by the singular integral 
\begin{equation}\label{sing_laplac}
    (-\Delta)^{\frac{\alpha}{2}}u(x)=\frac{C_{d,\alpha}}{2}\int_{\bbr^d}\frac{2u(x)-u(x+y)-u(x-y)}{|y|^{d+\alpha}}dy
\end{equation}
where $$C_{d,\alpha}=\left(\int_{\bbr^d}\frac{1-\cos(y_d)}{|y|^{d+\alpha}}dy\right)^{-1}$$
(see \cite{Valdinoci}*{Sect. $3$, (3.1)--(3.2)}).
% The integral representation of the fractional Laplacian \eqref{sing_laplac} holds when $u\in C^{\alpha+\ve}_{loc}(\R^d)\cap \mathcal{L}^{1}_{\alpha}(\R^d)$ (see \eqref{} and \eqref{})

If $\ve>0$ is sufficiently small, we define the space $C^{\alpha+\varepsilon}_{loc}(\R^d)$ as
\begin{equation}\label{holder_integrable}
    C^{\alpha+\varepsilon}_{loc}(\bbr^d):=\begin{cases} C^{0,\alpha+\varepsilon}_{loc}(\bbr^d), & \mbox{if }\ 0<\alpha<1 \\ C^{1,\alpha+\varepsilon-1}_{loc}(\bbr^d), & \mbox{if}\,\,\, 1\le \alpha<2,
    \end{cases}
\end{equation}
and 

\begin{equation}\label{L_pesato}
    \mathcal{L}^{1}_{\alpha}(\bbr^d):=\left\{u\in L^1_{loc}(\bbr^d):\int_{\bbr^d}\frac{|u(x)|}{(1+|x|)^{d+\alpha}}dx<+\infty\right\}.
\end{equation}
By the Sobolev embedding, we further have  \begin{equation}\label{incl_11}
	\mathring{H}^{\frac{\alpha}{2}}(\bbr^d)\subset \mathcal{L}^{1}_{\alpha}(\bbr^d),\quad \alpha>0.
	\end{equation}
In particular, if $u\in \mathcal{L}^{1}_{\alpha}(\bbr^d)$, $\alpha>0$, the distributional fractional Laplacian $(-\Delta)^{\frac{\alpha}{2}}u$ can be defined as follows
\begin{equation}\label{11_lap}
	\langle (-\Delta)^{\frac{\alpha}{2}}u, \varphi \rangle :=\int_{\R^d}u(x)(-\Delta)^{\frac{\alpha}{2}}\varphi(x) dx\quad \forall \varphi\in C^{\infty}_c(\R^d),
\end{equation}
See  e.g., \cite{garofalo}*{Corollary $2.16$} for further details. In view of \eqref{11_lap}, we emphasise that
 if ${U}^{\alpha}_{T}$ solves \eqref{weak_10} then  it also solves 
\begin{equation}\label{distr_10}
	(-\Delta)^{\frac{\alpha}{2}}{U}^{\alpha}_{T}=T \quad \text{in}\  \mathcal{D'}(\bbr^d).
\end{equation}
Indeed, by combining \eqref{inner_10} with \eqref{weak_laplac}, for every $\varphi\in C^{\infty}_c(\R^d)$ we have 
\begin{equation*}
	\langle T,\varphi \rangle=\langle (-\Delta)^{\frac{\alpha}{4}}{U}^{\alpha}_{T}, (-\Delta)^{\frac{\alpha}{4}}\varphi\rangle_{L^{2}(\bbr^d)}=\langle {U}^{\alpha}_{T}, (-\Delta)^{\frac{\alpha}{2}}\varphi \rangle\!=\!\int_{\R^d} {U}^{\alpha}_{T}(x) (-\Delta)^{\frac{\alpha}{2}}\varphi (x)dx,
\end{equation*}
where $\langle \cdot, \cdot \rangle$ denotes the duality between $\mathring{H}^{\frac{\alpha}{2}}(\R^d)$ and $\mathring{H}^{-\frac{\alpha}{2}}(\R^d)$ and  the last equality follows from the inclusion \eqref{incl_11}.

 Moreover, the following facts are classical:
\begin{itemize}
	\item if $u\in \mathcal{L}^{1}_{\alpha}(\bbr^d)\cap C^{\alpha+\varepsilon}_{loc}(\bbr^d)$, $\alpha>0$,  then the operator $(-\Delta)^{\frac{\alpha}{2}}$ can be understood pointwise since the integral \eqref{sing_laplac} is convergent, and $(-\Delta)^{\frac{\alpha}{2}}u\in C(\bbr^d)$
(\cite{garofalo}*{Proposition $2.15$});
\item if $u\in \mathcal{L}^{1}_{-\alpha}(\bbr^d)$, $\alpha>0$, then the Riesz potential of $u$ is a convergent integral for almost every $x\in \R^d$. Conversely, if $u\notin  \mathcal{L}^{1}_{-\alpha}(\bbr^d)$ then $(I_{\alpha}*|u|)(x)=\infty$ for a.e. $x\in \R^d$ (\cite{landkof}*{equation $(1.3.10)$}).
\end{itemize}
 If $0<\alpha<2$, instead of using the characterisation of the $\mathring{H}^{\frac{\alpha}{2}}(\bbr^d)$--norm given by \eqref{norma_H}, we have the following equality 
\begin{equation}\label{seminorm}
	\|u\|^2_{\mathring{H}^{\frac{\alpha}{2}}(\bbr^d)}=\frac{C_{d,\alpha}}{2}\int_{\bbr^d}\int_{\bbr^d}\frac{|u(x)-u(y)|^2}{|x-y|^{d+\alpha}}dxdy
\end{equation}
see \cite{classicbook}*{Proposition $1.37$}. 
%Namely, the space $\mathring{H}^{\frac{\alpha}{2}}(\bbr^d)$ coincide with the completion of $C^{\infty}_c(\bbr^d)$ with respect to $[u]_{\mathring{H}^{\frac{\alpha}{2}}(\bbr^d)}$.
%In particular
%\begin{equation}
%	\mathring{H}^{\frac{\alpha}{2}}(\bbr^d)=\left\{u\in L^{\frac{2d}{d-\alpha}}(\bbr^d):[u]_{\mathring{H}^{\frac{\alpha}{2}}(\bbr^d)}<\infty\right\},
%\end{equation}
%and the inner product is now given by 
%\begin{equation}\label{scalar}
%	\langle u,v \rangle_{\mathring{H}^{\frac{\alpha}{2}}(\bbr^d)}=\frac{C_{d,\alpha}}{2}\int_{\bbr^d}\int_{\bbr^d}\frac{(u(x)-u(y))(v(x)-v(y))}{|x-y|^{d+\alpha}}dxdy.
%\end{equation}
%The expression  allows us to deduce some useful properties. 
In particular, if we denote by $[\cdot]_{+},[\cdot]_{-}$ respectively  positive and negative part, we have that  \begin{equation}\label{scalar_H}\|[u]_{\pm}\|_{\mathring{H}^{\frac{\alpha}{2}}(\bbr^d)}\le \|u\|_{\mathring{H}^{\frac{\alpha}{2}}(\bbr^d)},\quad 
	\langle [u]_{+} ,[u]_{-} \rangle_{\mathring{H}^{\frac{\alpha}{2}}(\bbr^d)}\le 0 \quad \forall u\in \mathring{H}^{\frac{\alpha}{2}}(\bbr^d)
\end{equation}
(cf. \cite{Musina}*{page $3$}).

\vspace{0.2cm}

In the following lines, before introducing the notion of regular distribution, we recall that, in view of \cite{BB}*{Lemma 5.1}, the space defined in \eqref{dominio} is the natural domain  where to extend our Thomas--Fermi energy \eqref{energy_general}. 
%As a matter of fact, if $T\in C^{\infty}_c(\R^d)$ and $\alpha>0$, by  \cite{Stein}*{Lemma $2$, Sect. $5$}, we clearly have that 
%
%\begin{equation}\label{riesz_energy}
%	\begin{split}
%	\left\|T\right\|^2_{\mathring{H}^{-\frac{\alpha}{2}}(\bbr^d)}=\int_{\bbr^d}(2\pi |x|)^{-\alpha}\,\widehat{T}(x) \overline{\widehat{T}(x)} dx& =
%	\int_{\R^d}(I_{\alpha}*T)(x)T(x) dx\\
%	& =A_{\alpha}\int_{\R^d}\int_{\R^d}\frac{T(x)T(y)}{|x-y|^{d-\alpha}}dxdy.
%	\end{split}
%\end{equation}
 Furthermore, if  $T\in \mathring{H}^{-\frac{\alpha}{2}}(\bbr^d)\cap {L}^{q}(\bbr^d)$, $\alpha>0$, and  $q<\frac{d}{\alpha}$ the Riesz potential of $T$ is well defined and the function ${U}^{\alpha}_{T}$ solving \eqref{weak_laplac} identifies with it. Namely, 
\begin{equation}\label{riesz_pote}
{U}^{\alpha}_{T}(x)=(I_{\alpha}*T)(x)=A_{\alpha}\int_{\bbr^d}\frac{T(y)}{|x-y|^{d-\alpha}}dy\quad \text{a.e.}\ \text{in}\ \R^d
\end{equation}
(see for example 
%\cite{landkof}*{equation $(1.3.10)$}, 
\cite{mazya}*{Lemma 1.8} and \cite{BB}*{Corollary 5.3}).

Similarly, if $T\in \mathring{H}^{-\frac{\alpha}{2}}(\bbr^d)\cap L^{1}_{loc}(\R^d)$, $T\ge 0$,  the quantity $I_{\alpha}*T$ identifies with an element of $\mathring{H}^{\frac{\alpha}{2}}(\R^d)$ coinciding with ${U}^{\alpha}_{T}$ and 
\begin{equation}\label{key_norm}
	\left\|T\right\|^2_{\mathring{H}^{-\frac{\alpha}{2}}(\R^d)}=A_{\alpha}\int_{\R^d}\int_{\R^d}\frac{T(x)T(y)} {|x-y|^{d-\alpha}}dxdy.
	\end{equation}
  See \cite{Fukushima}*{Example $2.2.1$, p. 87} and \cite{BB} for a deep analysis on the topic.
%Thus, the Riesz potential solves \begin{equation}\label{inverse_laplac}
%(-\Delta)^{s}(I_{2s}*\rho)=\rho
%\end{equation}
%in the sense of \eqref{weak_laplac}.

%We refer also to \cite{holderloc}*{Proposition $2.6$} for a quite general notion of distributional solution of an equation of the form 
%$$(-\Delta)^{s}{u}=T\quad \text{in}\ \R^d.$$

In addition, by further assuming continuity of $T$, the equation 
$(-\Delta)^{\frac{\alpha}{2}}(I_{\alpha}*T)=T$
holds pointwisely (see \cite{dyda}*{Proposition 1}),
where now $(-\Delta)^{\frac{\alpha}{2}}$ is understood by means of \eqref{sing_laplac}.

However, if $T\in \mathring{H}^{-\frac{\alpha}{2}}(\bbr^d)\cap \mathcal{L}^1_{-\alpha}(\bbr^d)$ (which again implies that  $I_{\alpha}*T$ is an $L^{1}_{loc}(\R^d)$ function by \cite{landkof}*{equation $(1.3.10)$}) we can not conclude that $T(I_{\alpha}*T)$ is integrable or equivalently that equality \eqref{key_norm} holds (see \cite{BB}*{Example 5.1, Corollary 5.2}).
% Such problem is related to the notion of $\mathring{H}^{\frac{\alpha}{2}}$--regularity which has been studied inbe mentioned in the following subsections.

\subsection*{Regular distributions}
\noindent
Assume now $0<\alpha<d$. Recall that by an element  $T$ belonging to $\mathring{H}^{-\frac{\alpha}{2}}(\bbr^d)\cap L^{1}_{loc}(\bbr^d) $ we understand a tempered distribution such that 
\begin{equation}\label{rho_distribution}
\langle T,\varphi\rangle=\int_{\bbr^d}T(x)\varphi(x)dx\quad \forall \varphi\in C^{\infty}_c(\bbr^d),
\end{equation}
and there exists a positive constant $C$ independent of $T$  such that
\begin{equation}\label{ext_linear}
   |\langle T,\varphi\rangle|\le C\left\|\varphi\right\|_{\mathring{H}^{\frac{\alpha}{2}}(\bbr^d)} \quad \forall \varphi\in C^{\infty}_c(\bbr^d).
\end{equation}
From \eqref{ext_linear}, we conclude that $T$ can be identified as the unique continuous extension with respect to the $\mathring{H}^{\frac{\alpha}{2}}(\bbr^d)$--norm of the linear functional defined by \eqref{rho_distribution}.
An element $T$ satisfying \eqref{rho_distribution} is called a \textit{regular distribution}.

Next,  it's useful to recall some well known properties of the Riesz potential operator when it acts on $L^{q}$ functions.
\subsection*{Regularity of Riesz potential}
\indent Let $0<\alpha<d$, $1<q<\frac{d}{\alpha}$ and $\frac{1}{t}=\frac{1}{q}-\frac{\alpha}{d}$. Then $I_{\alpha}*(\cdot)$ defined by
\begin{equation*}
\begin{split}
    I_{\alpha}*(\cdot):  L^q(\R^d)& \longrightarrow L^t (\R^d)\\
 \rho& \longmapsto I_{\alpha}*\rho
\end{split}
\end{equation*}
is a bounded linear operator. Namely there exists $C>0$ independent of $\rho$ such that
\begin{equation}\label{riesz_l_p}
    \left\|I_{\alpha}*\rho\right\|_{L^{t}(\bbr^d)}\le C\left\|\rho\right\|_{L^{q}(\bbr^d)}\quad \forall \rho\in L^{q}(\bbr^d)
\end{equation}
(see \cite{Stein}*{Theorem $1$, Section $5$}).
If $q\ge\frac{d}{\alpha}$ then $I_{\alpha}*(\cdot)$ is  in general not well defined on the whole space $L^q(\R^d)$.

%Let's consider the space of function of bounded mean oscillation (BMO) as the subspace of $L^1_{loc}(\bbr^d)$ for which 
%\begin{equation}
%   |\rho|_{BMO(\bbr^d)}=\sup_{Q}\int_{Q}|\rho(x)-\rho_{Q}|dx<+\infty,
%\end{equation}
%where the supremum is taken among all the cube $Q\subset \bbr^d$ and by $\rho_Q$ we understand the mean integral defined by
%$\frac{1}{|Q|}\int_{Q}\rho(x)dx.$
%It is well known that if $\rho\in L^{\frac{d}{2s}}(\R^d)$ and $I_{2s}*\rho$ is  finite for almost every $x\in \R^d$ then $I_{2s}*\rho\in BMO(\bbr^d)$.
 %\citelist{\cite{SteinZygmund1967}*{theorem~2}\cite{MuckenhouptWheeden1974}*{theorem~7}}.
However, if  $I_{\alpha}*\rho$ is almost everywhere finite on $\R^d$, $f\in L^{q}(\R^d)$ and $\frac{d}{q}<\alpha<\frac{d}{q}+1$ then
$I_{\alpha}*\rho\in L^\infty(\R^d)$ and is H\"older continuous of order $\alpha-\frac{d}{q}$ (see \cite{P55}*{Theorem $2$} or \cite{holderloc}*{Theorem $3.1$} for a local version). 

%We finally mention that in the case $q\ge \frac{d}{\alpha}$, even if the Riesz potential is not well defined as a Lebesgue integral, the quantity $I_{\alpha}*\rho$ can be understood in the distributional sense via the formula
%\begin{equation*}
%	\langle I_{\alpha}*\rho,\phi \rangle=\langle \rho, I_{\alpha}*\phi\rangle\quad \forall \phi\in \Phi,
%\end{equation*}
%where $\Phi$ is defined as the subspace of Schwarz functions ortogonal to the set of all the polynomials (see e.g. \cite{sam}).
%Furthermore, assuming $f\in L^{q}(\bbr^d)\cap L^{p}(\bbr^d)$, where $q<\frac{d}{2s}<p$, then $I_{2s}*f$ is H\"older continuous and \begin{equation}\label{riesz_zero}
%    \lim_{|x|\to +\infty}(I_{2s}*f)(x)=0.
%\end{equation}
%This fact can be easily seen by writing 
%\begin{equation}
%    I_{2s}*f=(|x|^{2s-d}\chi_{B_1})*f+(|x|^{2s-d}\chi_{B^{c}_1})*f,
%\end{equation}
%and noting that $|x|^{2s-d}\chi_{B_1}\in L^{p'}(\bbr^d)$ while $|x|^{2s-d}\chi_{B^{c}_1}\in L^{q'}(\bbr^d)$. Thus, \eqref{riesz_zero} follows by a standard property of convolutions.

\section{Existence of a minimizer}\label{sec.3}
In the following section, we prove existence and uniqueness of a minimizer for the Thomas--Fermi energy $\mathcal{E}^{TF}_{\alpha}$. Furthermore, we derive the fractional semilinear PDE \eqref{eq01} which will play a key role for the entire paper and especially for the study of asymptotic decay of the minimizer (see Sect. \ref{sec.6}).
We also recall that the basic  assumptions throughout  all the paper are
$$d\ge 2, \quad 0<\alpha<d,\quad  \frac{2d}{d+\alpha}<q<\infty,$$
and 
$$ V\in \big(L^{q'}(\bbr^d)+\mathring{H}^{\frac{\alpha}{2}}(\R^d)\big)\setminus \left\{0\right\}$$ where as usual $q'=\frac{q}{q-1}$. We also recall that 
$\mathcal{H}_{\alpha}:= L^{q}(\mathbb{R}^d)\cap \mathring{H}^{-\frac{\alpha}{2}}(\mathbb{R}^d)$, the energy functional $\mathcal{E}^{TF}_{\alpha}$ is defined by  $$\mathcal{E}^{TF}_\alpha(\rho):=\frac{1}{q}\int_{\bbr^d}|\rho(x)|^q dx-\langle \rho, V \rangle+\frac{1}{2}\left\|\rho\right\|^2_{\mathring{H}^{-\frac{\alpha}{2}}(\bbr^d)}\quad \forall \rho\in \mathcal{H}_\alpha,$$ 
and by $\rho_V$ we denote  a minimizer (unique by Theorem \ref{minimo_esiste}) of $\mathcal{E}^{TF}_\alpha$ in $\mathcal{H}_\alpha$ emphasising the dependence on the potential $V$. Note that we don't consider $V\equiv 0$ since in this case we trivially have $\rho_V\equiv 0.$ Further restrictions and notations  will be explicitly written. We refer to \cite{graphene}*{Proposition 3.1} for a special case of Theorem \ref{minimo_esiste}.
\vspace{0.3cm}

\noindent \textbf{\large{Proof of Theorem \ref{minimo_esiste}}.}
\begin{proof}
It is standard to see that $\inf_{\mathcal{H}_{\alpha}}\mathcal{E}^{TF}_\alpha(\rho)>-\infty.$
Moreover, If $\left\{\rho_n\right\}_n\subset \mathcal{H}_\alpha$ is a minimizing sequence, then
\begin{equation}
    \sup_{n\in \N}\left(\left\|\rho_n\right\|_{L^{q}(\bbr^d)}+\left\|\rho_n\right\|_{\mathring{H}^{-\frac{\alpha}{2}}(\bbr^d)}\right)<+\infty.
\end{equation}
In particular, up to subsequence we can assume that 
\begin{equation}
\begin{split}
    &\rho_n\rightharpoonup \rho_V\quad \text{in}\ L^{q}(\bbr^d),\\
    & \rho_n\rightharpoonup F\quad \text{in}\ \mathring{H}^{-\frac{\alpha}{2}}(\bbr^d).
\end{split}
\end{equation}
Thus, 
\begin{equation*}
\begin{split}
    \int_{\mathbb{R}^d}\rho_n(x)\varphi(x)dx & \rightarrow  \int_{\mathbb{R}^d}\rho_V(x)\varphi(x)dx\quad \forall \varphi\in L^{q'}(\mathbb{R}^d),\\
     \langle \rho_{n},\varphi\rangle & \rightarrow \langle F,\varphi \rangle \quad \forall \varphi\in \mathring{H}^{\frac{\alpha}{2}}(\bbr^d),
\end{split}
\end{equation*}
from which 
\begin{equation*}
    \int_{\mathbb{R}^d}\rho_V(x)\varphi(x)dx=\langle F,\varphi \rangle\quad \forall\varphi\in L^{q'}(\mathbb{R}^d)\cap \mathring{H}^{\frac{\alpha}{2}}(\bbr^d).
\end{equation*}
In particular, the distribution $F$ is $\mathcal{D}(\R^d)$--regular  and we can indentify $F=\rho_V$ with an element of $L^{q}(\mathbb{R}^d)\cap \mathring{H}^{-\frac{\alpha}{2}}(\bbr^d)$ such that 
$\rho_n\rightharpoonup \rho_V$ in $L^{q}(\mathbb{R}^d)\cap \mathring{H}^{-\frac{\alpha}{2}}(\bbr^d).$
Finally, from the weak lower semicontinuity of $\left\|\cdot\right\|_{L^{q}(\bbr^d)}$ and $\left\|\cdot\right\|_{\mathring{H}^{-\frac{\alpha}{2}}(\bbr^d)}$, and the weak continuity of the linear operator $\cdot\mapsto~  \langle \cdot , V \rangle$.   we obtain
\begin{equation*}
    \mathcal{E}^{TF}_\alpha(\rho_V)\le \liminf_{n\to +\infty}\mathcal{E}^{TF}_\alpha(\rho_n).
\end{equation*}
Uniqueness is a consequence of the strict convexity of $\mathcal{E}^{TF}_\alpha$ while the derivation of the Euler-Lagrange equation is standard.
\end{proof}
\remark Note that Theorem \ref{minimo_esiste} in particular implies that the map 
\begin{equation*}
	\begin{split}
		L^{q'}(\bbr^d)+\mathring{H}^{\frac{\alpha}{2}}(\bbr^d) & \longrightarrow L^{q}(\bbr^d)\cap \mathring{H}^{-\frac{\alpha}{2}}(\bbr^d)\\
		V & \longmapsto \rho_V
	\end{split} 
\end{equation*}
is a bijection.

\remark As we pointed out in the introduction, by testing the  Euler--Lagrange equation \eqref{eq01} against $\varphi\in C^{\infty}_c(\R^d)$ we derive that 
\begin{equation}\label{distribution_19}
	\text{sign}(\rho_V)|\rho_V|^{q-1}=V-(-\Delta)^{-\frac{\alpha}{2}}\rho_V \quad \text{in}\ \mathcal{D}'(\bbr^d),
\end{equation}
where we recall that $(-\Delta)^{-\frac{\alpha}{2}}\rho_V\in \mathring{H}^{\frac{\alpha}{2}}(\R^d)$ is the unique solution of \eqref{weak_10}.

\vspace{0.3cm}

Let $\rho_V\in \mathcal{H}_\alpha$ be the unique minimizer of $\mathcal{E}^{TF}_\alpha$. We introduce the function
\begin{equation}\label{U} 
u_V:=\text{sign}(\rho_V)|\rho_V|^{q-1}.
\end{equation}
Then, writing the Euler--Lagrange equation \eqref{eq01} in terms of $u_V$ yields
\begin{equation}\label{PDE_int}\int_{\mathbb{R}^d} u_V(x) \varphi(x) dx- \langle \varphi, V \rangle +\langle \text{sign}(u_V)|u_V|^{\frac{1}{q-1}} , \varphi\rangle_{\mathring{H}^{-\frac{\alpha}{2}}(\R^d)}=0\quad \forall \varphi\in \mathcal{H}_\alpha.
\end{equation}
As a consequence of \eqref{PDE_int}, in Proposition \ref{V ristretto} we deduce that the function $u_V$ weakly solves the semilinear PDE defined in \eqref{PDE_lap_frac}.
\begin{proposition}\label{V ristretto}
Let $V \in \mathring{H}^{\frac{\alpha}{2}}(\mathbb{R}^d)$ and $u_V$ defined by \eqref{U}. Then the function $u_V$ belongs to  $\mathring{H}^{\frac{\alpha}{2}}(\mathbb{R}^d)$ and  weakly solves
\begin{equation}\label{PDE_lap_fra}(-\Delta)^{\frac{\alpha}{2}}u+ \text{sign}(u)|u|^{\frac{1}{q-1}}=(-\Delta)^{\frac{\alpha}{2}}V\quad  in\ \mathring{H}^{-\frac{\alpha}{2}}(\mathbb{R}^d).
\end{equation}
\end{proposition}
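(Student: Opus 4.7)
The plan is to promote the distributional identity \eqref{distribution_19}, namely
\begin{equation*}
u_V = V - (-\Delta)^{-\frac{\alpha}{2}}\rho_V \quad \text{in}\ \mathcal{D}'(\R^d),
\end{equation*}
to an equality inside $\mathring{H}^{\frac{\alpha}{2}}(\R^d)$, and then apply the isometry $(-\Delta)^{\frac{\alpha}{2}}\colon \mathring{H}^{\frac{\alpha}{2}}(\R^d) \to \mathring{H}^{-\frac{\alpha}{2}}(\R^d)$ to read off \eqref{PDE_lap_fra}.

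First I would verify that $u_V$ lies in $\mathring{H}^{\frac{\alpha}{2}}(\R^d)$. The right hand side of the identity above is already of this type: by hypothesis $V\in \mathring{H}^{\frac{\alpha}{2}}(\R^d)$, and $(-\Delta)^{-\frac{\alpha}{2}}\rho_V$ is, by construction, the unique element of $\mathring{H}^{\frac{\alpha}{2}}(\R^d)$ associated with $\rho_V\in \mathring{H}^{-\frac{\alpha}{2}}(\R^d)$ through \eqref{weak_laplac}. Via the Sobolev embedding $\mathring{H}^{\frac{\alpha}{2}}(\R^d)\hookrightarrow L^{\frac{2d}{d-\alpha}}(\R^d)$, this right hand side is in particular an $L^{1}_{loc}(\R^d)$ function. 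On the other side, $u_V = \mathrm{sign}(\rho_V)|\rho_V|^{q-1}$ belongs to $L^{q'}(\R^d)$, since $\|u_V\|_{L^{q'}(\R^d)}^{q'}=\|\rho_V\|_{L^{q}(\R^d)}^{q}$, and hence to $L^{1}_{loc}(\R^d)$. Two locally integrable functions agreeing as distributions coincide a.e., so $u_V$ is almost everywhere equal to a representative of the right hand side, and the identity lifts to an equality in $\mathring{H}^{\frac{\alpha}{2}}(\R^d)$.

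To finish, I would pair this lifted identity with an arbitrary $\varphi\in \mathring{H}^{\frac{\alpha}{2}}(\R^d)$ using the inner product of $\mathring{H}^{\frac{\alpha}{2}}(\R^d)$. The Riesz identification \eqref{weak_laplac} in the form
\begin{equation*}
\langle (-\Delta)^{-\frac{\alpha}{2}}\rho_V,\varphi\rangle_{\mathring{H}^{\frac{\alpha}{2}}(\R^d)} = \langle \rho_V,\varphi\rangle,
\end{equation*}
where the right hand side is the duality pairing between $\mathring{H}^{-\frac{\alpha}{2}}(\R^d)$ and $\mathring{H}^{\frac{\alpha}{2}}(\R^d)$, yields
\begin{equation*}
\langle u_V,\varphi\rangle_{\mathring{H}^{\frac{\alpha}{2}}(\R^d)} + \langle \rho_V,\varphi\rangle = \langle V,\varphi\rangle_{\mathring{H}^{\frac{\alpha}{2}}(\R^d)}.
\end{equation*}
Substituting $\rho_V=\mathrm{sign}(u_V)|u_V|^{\frac{1}{q-1}}$ and applying \eqref{weak_laplac} once more to read the $\mathring{H}^{\frac{\alpha}{2}}$ inner products as dual pairings with $(-\Delta)^{\frac{\alpha}{2}}u_V$ and $(-\Delta)^{\frac{\alpha}{2}}V$, respectively, gives precisely the weak formulation of \eqref{PDE_lap_fra}. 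The only mildly delicate step is the lifting from $\mathcal{D}'(\R^d)$ to $\mathring{H}^{\frac{\alpha}{2}}(\R^d)$, but it reduces to uniqueness of the $L^{1}_{loc}$ representative of a distribution and requires no tools beyond those gathered in Section \ref{prel}.
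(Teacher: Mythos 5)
Your argument is correct, and it takes a genuinely shorter route than the paper. The paper tests the Euler--Lagrange identity against $(-\Delta)^{\frac{\alpha}{2}}\psi$ for $\psi\in C^{\infty}_c(\R^d)$ to derive the distributional PDE $(-\Delta)^{\frac{\alpha}{2}}u_V+\rho_V=(-\Delta)^{\frac{\alpha}{2}}V$ in $\mathcal{D}'(\R^d)$, and then must reconcile the distributional solution $u_V$ with the unique weak solution $\mathcal{U}_T$; this is done by noting that $\mathcal{U}_T-u_V$ is $\alpha$-harmonic, invoking a Liouville-type theorem to deduce that it is a polynomial of degree $<\alpha$, and ruling out nonzero polynomials by integrability. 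You instead start from the zero-order identity \eqref{distribution_19}, $u_V=V-(-\Delta)^{-\frac{\alpha}{2}}\rho_V$ in $\mathcal{D}'(\R^d)$, observe that both sides are $L^1_{loc}$ (the left via $u_V\in L^{q'}$, the right via the Sobolev embedding $\mathring{H}^{\frac{\alpha}{2}}\hookrightarrow L^{\frac{2d}{d-\alpha}}$), and conclude a.e.\ equality by uniqueness of the $L^1_{loc}$ representative of a regular distribution. Since the right-hand side is precisely $\mathcal{U}_T=(-\Delta)^{-\frac{\alpha}{2}}\big((-\Delta)^{\frac{\alpha}{2}}V-\rho_V\big)$, this gives $u_V\in\mathring{H}^{\frac{\alpha}{2}}(\R^d)$ at once, and the weak PDE then falls out of the inner-product pairing and \eqref{weak_laplac}. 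The advantage of your approach is that it bypasses the Liouville theorem for $\alpha$-harmonic functions entirely; it works because you kept the equation in its Riesz-potential form rather than hitting it with $(-\Delta)^{\frac{\alpha}{2}}$, which erases the direct comparison between $u_V$ and an already-constructed $\mathring{H}^{\frac{\alpha}{2}}$ function. The one thing worth stating explicitly (and which you only allude to) is that $(-\Delta)^{-\frac{\alpha}{2}}$ and $(-\Delta)^{\frac{\alpha}{2}}$ are mutually inverse isometries, so that the right-hand side of \eqref{distribution_19} is indeed $\mathcal{U}_T$; but this is immediate from \eqref{weak_laplac} and \eqref{isometry}.
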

\begin{proof}
Let $\psi\in C^{\infty}_c(\bbr^d)$. Then, the quantity $\varphi:=(-\Delta)^{\frac{\alpha}{2}}\psi$ belongs to $C^{\infty}(\bbr^d)\cap L^{1}(\bbr^d)$ and
$${|(-\Delta)^{\frac{\alpha}{2}}\psi(x)|\lesssim |x|^{-(d+\alpha)}}\quad \text{as}\ |x|\rightarrow +\infty$$ (see \cite{garofalo}*{Proposition $2.9$, Corollary $2.10$}, \cite{mazya}*{Lemma $1.2$} or \cite{abatangelo_2}*{Lemma 3.9} for the details).
We can therefore test \eqref{PDE_int} against $\psi$ to get 
\begin{equation}\label{PDE_int_2}
    \int_{\bbr^d}u_V(x)(-\Delta)^{\frac{\alpha}{2}}\psi(x)dx-\int_{\bbr^d} V(x)(-\Delta)^{\frac{\alpha}{2}}\psi(x)dx+\langle  \rho_V ,  (-\Delta)^{\frac{\alpha}{2}}\psi \rangle_{\mathring{H}^{-\frac{\alpha}{2}}(\bbr^d)}=0.
\end{equation}
%Note that  the inclusion
%$\mathring{H}^{\frac{\alpha}{2}}(\bbr^d)\subset \mathcal{L}^{1}_{\alpha}(\bbr^d)$ implies $$\int_{\bbr^d}|V(x)(-\Delta)^{\frac{\alpha}{2}}\psi(x)|dx<+\infty. $$ 
Furthermore, by combining \eqref{weak_laplac} with \eqref{isometry} we obtain
\begin{equation}\label{quasi_PDE}
    \langle  \rho_V ,  (-\Delta)^{\frac{\alpha}{2}}\psi \rangle_{\mathring{H}^{-\frac{\alpha}{2}}(\bbr^d)}=\langle (-\Delta)^{-\frac{\alpha}{2}}\rho_V, \psi \rangle_{\mathring{H}^{\frac{\alpha}{2}}(\R^d)}=
%    \langle (-\Delta)^{\frac{\alpha}{2}}\psi,\rho_V  \rangle_{\mathring{H}^{-\frac{\alpha}{2}}(\bbr^d)}=
\langle \rho_V, \psi \rangle =\int_{\bbr^d}\rho_V(x)\psi(x)dx,
\end{equation}
where by $\langle \cdot, \cdot  \rangle$ we understand the duality between $\mathring{H}^{\frac{\alpha}{2}}(\bbr^d)$ and $\mathring{H}^{-\frac{\alpha}{2}}(\bbr^d)$.
By combining \eqref{PDE_int_2} with  \eqref{quasi_PDE} we obtain that $u_V$ solves the equation
\begin{equation}\label{PDE_distr}
    (-\Delta)^{\frac{\alpha}{2}}u_V+ \text{sign}(u_V)|u_V|^{\frac{1}{q-1}}= (-\Delta)^{\frac{\alpha}{2}}V\quad \text{in}\ \mathcal{D}'(\mathbb{R}^d),
\end{equation}
where $\text{sign}(u_V)|u_V|^{\frac{1}{q-1}}=\rho_V\in \mathring{H}^{-\frac{\alpha}{2}}(\R^d)$ and  $(-\Delta)^{\frac{\alpha}{2}}V\in \mathring{H}^{-\frac{\alpha}{2}}(\R^d)$. 
%Then, let us define $T:=~(-\Delta)^{\frac{\alpha}{2}}V-\rho_V\in \mathring{H}^{-\frac{\alpha}{2}}(\R^d)$.
 Then, from \eqref{PDE_distr} we infer that $(-\Delta)^{\frac{\alpha}{2}}u_V\in \mathring{H}^{-\frac{\alpha}{2}}(\R^d)$ and  (cf. \cite{BB}*{Theorem 1.1}) $u_V$ identifies with $U^{\alpha}_{T}$, with $T=(-\Delta)^{\frac{\alpha}{2}}V-\rho_V$. In particular,  $u_V\in \mathring{H}^{\frac{\alpha}{2}}(\R^d)$ and weakly solves \eqref{PDE_distr}.
%By \eqref{weak_laplac} and \eqref{distr_10} there exists a unique element $\mathcal{U}_T$ belonging to $\mathring{H}^{\frac{\alpha}{2}}(\R^d)$ solving  \eqref{PDE_distr}. In particular, the function $\mathcal{U}_T-u_V$ is $\alpha$--harmonic in $\R^d$ and, by arguing as in the proof of, we deduce that $\mathcal{U}_T-u_V$ is a polynomial of degree $n$, for some $n<\alpha$. Since by the Sobolev embedding $\mathcal{U}_T-u_V\in L^{\frac{2d}{d-\alpha}}(\R^d)+L^{q'}(\R^d)$, the polynomial must be zero.
%We have therefore proved that $u_V=\mathcal{U}_T$. 
%As a consequence $u_V$ identifies with the unique element of $\mathring{H}^{\frac{\alpha}{2}}(\bbr^d)$ weakly solving \eqref{PDE_distr}.
\end{proof}

\indent In what follows, assuming $\alpha\in (0,2]$, we prove that the PDE defined by \eqref{PDE_lap_fra} satisfies a weak comparison principle (Lemma \ref{comparison}) which will allow us to deduce information about the minimizer such as asymptotic behaviour and non negativity (under some suitable assumptions). 
%Note that, avoiding the cases where  $\alpha>2$ is crucial for the following reasons. First,  given any element $u\in \mathring{H}^{\frac{\alpha}{2}}(\bbr^d)$, it's not in general true that the quantity $[u]_{+}$ (respectively $[u]_{-}$) belongs to $\mathring{H}^{\frac{\alpha}{2}}(\bbr^d)$ unless $\alpha\in (0,3)$ (see \cite{Musina}*{p.$3$}, \cite{abatangelo}*{p. $12,13$} and references therein). On the other hand, if $\alpha\in (2,3)$, it's not in general true that $\langle [u]_{+}, [u]_{-} \rangle_{\mathring{H}^{\frac{\alpha}{2}}(\R^d)}\le 0$ which is fundamental for the proofs of Proposition \ref{prop_intro} and Lemma \ref{comparison} below.
\begin{proposition}\label{prop_intro} Let $V \in \mathring{H}^{\frac{\alpha}{2}}(\mathbb{R}^d)$, $\alpha\in (0,2]$. If $(-\Delta)^{\frac{\alpha}{2}}V\ge 0$,  then  $u_V\ge 0$.
\begin{proof} Let $\alpha\in (0,2)$.
From the decomposition 
$u_V=[u_V]_{+}-[u_V]_{-}$ and \eqref{PDE_lap_fra}, we have  
\begin{equation}\label{PDE_piu_meno}
	(-\Delta)^{\frac{\alpha}{2}}([u_V]_{+}-[u_V]_{-} )+  \text{sign}(u_V)|u_V|^{\frac{1}{q-1}}=(-\Delta)^{\frac{\alpha}{2}}V\ge 0\quad \text{in}\ \mathring{H}^{-\frac{\alpha}{2}}(\mathbb{R}^d) .
\end{equation}
Consequently, testing \eqref{PDE_piu_meno} by $[u_V]_{-}$ (which belongs to $\mathring{H}^{\frac{\alpha}{2}}(\mathbb{R}^d)\cap L^{q'}(\mathbb{R}^d)$) we have
\begin{equation}
    \label{pos_no_confronto}
    \langle(-\Delta)^{\frac{\alpha}{2}}([u_V]_{+}-[u_V]_{-}),[u_V]_{-}\rangle+  \int_{\mathbb{R}^d}\text{sign}(u_V)|u_V|^{\frac{1}{q-1}}[u_V(x)]_{-}\ge 0,
\end{equation}
where the above integral representation is due to the obvious inclusions
$$[u_V]_{-}\in L^{q'}(\mathbb{R}^d),\quad \text{sign}(u_V)|u_V|^{\frac{1}{q-1}}\in L^{q}(\mathbb{R}^d).$$ 
Therefore, by combining  \eqref{pos_no_confronto} with $\langle [u_V]_{+},[u_V]_{-} \rangle_{\mathring{H}^{\frac{\alpha}{2}}(\R^d)}\le 0$, we obtain
$$\left\|[u_V]_{-}\right\|_{\mathring{H}^{\frac{\alpha}{2}}(\R^d)}\le 0 ,$$
which implies that $[u_V]_{-}=0$.
The case $\alpha=2$ easily follows as well by combining the above argument with the equality $$\langle [u_V]_{+},[u_V]_{-} \rangle_{\mathring{H}^{1}(\R^d)}=\int_{\R^d}\nabla [u_V]_{+} \nabla [u_V]_{-}=0.$$
%Moreover, from non negativity of $u_V$, Corollary \ref{prod_integr}, and equation 
%\eqref{rho_a.e} we have that 
%$$V=u_V+ I_{\alpha}*\rho_V\ge I_{\alpha}*\rho_V>0,$$
%where the last inequality holds since $\rho_V$ is non negative and not identically zero.
\end{proof}
\end{proposition}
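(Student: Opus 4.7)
The strategy is a cut-off argument: I would test the equation \eqref{PDE_lap_fra}, which $u_V$ satisfies in $\mathring{H}^{-\frac{\alpha}{2}}(\mathbb{R}^d)$ by Proposition \ref{V ristretto}, against the negative part $[u_V]_{-}$ and show that this part must vanish. The restriction $\alpha\in(0,2]$ is precisely what guarantees, via \eqref{scalar_H}, that $[u_V]_{\pm}\in \mathring{H}^{\frac{\alpha}{2}}(\mathbb{R}^d)$, so $[u_V]_{-}$ is an admissible test element in the $\mathring{H}^{-\frac{\alpha}{2}}$--$\mathring{H}^{\frac{\alpha}{2}}$ duality. (It also lies in $L^{q'}(\mathbb{R}^d)$, which legitimizes pairing the nonlinear term with it.)

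Writing $u_V = [u_V]_{+} - [u_V]_{-}$ and pairing both sides of \eqref{PDE_lap_fra} with $[u_V]_{-}$, the left-hand side becomes
$$\langle [u_V]_{+},[u_V]_{-}\rangle_{\mathring{H}^{\frac{\alpha}{2}}(\mathbb{R}^d)} - \|[u_V]_{-}\|_{\mathring{H}^{\frac{\alpha}{2}}(\mathbb{R}^d)}^{2} + \int_{\mathbb{R}^d}\mathrm{sign}(u_V)\,|u_V|^{\frac{1}{q-1}}\,[u_V]_{-}\,dx,$$
while the right-hand side equals $\langle (-\Delta)^{\frac{\alpha}{2}}V,[u_V]_{-}\rangle$, which is non-negative by the hypothesis $(-\Delta)^{\frac{\alpha}{2}}V \ge 0$ together with $[u_V]_{-}\ge 0$.

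Two sign facts then close the argument. First, by \eqref{scalar_H}, $\langle [u_V]_{+},[u_V]_{-}\rangle_{\mathring{H}^{\frac{\alpha}{2}}(\mathbb{R}^d)} \le 0$. Second, on $\{u_V < 0\}$ one has $\mathrm{sign}(u_V)|u_V|^{\frac{1}{q-1}} = -[u_V]_{-}^{\frac{1}{q-1}}$, so the nonlinear integral equals $-\int_{\mathbb{R}^d}[u_V]_{-}^{\frac{q}{q-1}}\,dx \le 0$. Combining these, the non-negative right-hand side is bounded above by $-\|[u_V]_{-}\|_{\mathring{H}^{\frac{\alpha}{2}}(\mathbb{R}^d)}^{2}$, which forces $\|[u_V]_{-}\|_{\mathring{H}^{\frac{\alpha}{2}}(\mathbb{R}^d)}=0$ and hence $[u_V]_{-}\equiv 0$ via the Sobolev embedding $\mathring{H}^{\frac{\alpha}{2}}(\mathbb{R}^d)\hookrightarrow L^{\frac{2d}{d-\alpha}}(\mathbb{R}^d)$; equivalently, $u_V\ge 0$.

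The only subtle point is the endpoint $\alpha = 2$, where the fractional Gagliardo seminorm characterization is unavailable. There I would run an identical argument but replace \eqref{scalar_H} by the classical identity $\int_{\mathbb{R}^d}\nabla [u_V]_{+}\cdot\nabla[u_V]_{-}\,dx = 0$, so the cross term vanishes outright and the conclusion is unchanged. I do not anticipate genuine difficulty: Proposition \ref{V ristretto} already delivers the PDE in the appropriate weak form, and the very reason the hypothesis restricts to $\alpha\in(0,2]$ is to make both truncations $[u_V]_{\pm}$ admissible and to enforce the sign of their inner product, which is precisely what the truncation method needs.
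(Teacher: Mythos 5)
Your argument is correct and follows essentially the same route as the paper's: test the weak PDE \eqref{PDE_lap_fra} against $[u_V]_{-}$, use $\langle [u_V]_{+},[u_V]_{-}\rangle_{\mathring{H}^{\alpha/2}(\mathbb{R}^d)}\le 0$ from \eqref{scalar_H}, observe the nonlinear term contributes with a good sign, and conclude $\|[u_V]_{-}\|_{\mathring{H}^{\alpha/2}(\mathbb{R}^d)}=0$, treating $\alpha=2$ by the orthogonality $\int\nabla[u_V]_{+}\cdot\nabla[u_V]_{-}=0$. You are slightly more explicit than the paper in spelling out that the nonlinear integral equals $-\int[u_V]_{-}^{q/(q-1)}\,dx\le 0$, but the structure and all key ingredients coincide.
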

Next, we prove that the PDE \eqref{PDE_lap_frac} satisfies a weak comparison principle on smooth domains $\Omega$. In this paper, such result will be applied on domains of the form $\Omega=\overline{B_R}^{c}$, $R>0$ (see for example Lemma \ref{range_2}), or $\Omega=\R^d$ (Theorem \ref{parte pos}).
%In the following comparison result we will require the functions to belong to $\mathring{H}^{\frac{\alpha}{2}}(\bbr^d)\cap L^{q'}(\bbr^d)$. 
%This is not restrictive since our function $u_V$ we want to prove is the unique solution of \eqref{PDE_lap_fra} belongs to that space. 
%Moreover, we aim to apply Lemma \ref{comparison} below to either $\Omega=\bbr^d$ or $\Omega=\overline{B_R}^{c}$ for some $R>0$. Hence, it won't be restrictive for our purposes to assume that $\partial \Omega$ is regular.

 \begin{lemma}\label{comparison} Let $V\in \mathring{H}^{\frac{\alpha}{2}}(\bbr^d)$, $\alpha\in (0,2]$. Let $u,v\in \mathring{H}^{\frac{\alpha}{2}}(\mathbb{R}^d)\cap L^{q'}(\mathbb{R}^d)$ such that $\text{sign}(u)|u|^{\frac{1}{q-1}}$ and $\text{sign}(v)|v|^{\frac{1}{q-1}}$ belong to $\mathring{H}^{-\frac{\alpha}{2}}(\bbr^d)$. Assume that $u,v$ are respectively super and subsolution of \eqref{PDE_lap_fra} in a smooth domain $\Omega \subset \mathbb{R}^d$.  Namely,
\begin{equation}\label{sup_18}
	(-\Delta)^{\frac{\alpha}{2}}u + \text{sign}(u)|u|^{\frac{1}{q-1}}\ge (-\Delta)^{\frac{\alpha}{2}}V\qquad in\ \mathcal{D}'(\Omega), 
\end{equation}
\begin{equation}\label{sub_18}
	(-\Delta)^{\frac{\alpha}{2}}v + \text{sign}(v)|v|^{\frac{1}{q-1}}\le (-\Delta)^{\frac{\alpha}{2}}V\qquad in\ \mathcal{D}'(\Omega).
\end{equation}
  If $\mathbb{R}^d\setminus \Omega\neq \emptyset$ we further require that $u\ge v$ in $\mathbb{R}^d\setminus \Omega$.  Then  $u\ge v$ in $\bbr^d$.
 \end{lemma}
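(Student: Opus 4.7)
The plan is to subtract the two distributional inequalities and test the result against $\varphi=[v-u]_{+}$. Subtracting \eqref{sub_18} from \eqref{sup_18} yields
\begin{equation*}
(-\Delta)^{\frac{\alpha}{2}}(v-u)+\bigl(\text{sign}(v)|v|^{\frac{1}{q-1}}-\text{sign}(u)|u|^{\frac{1}{q-1}}\bigr)\le 0\quad \text{in}\ \mathcal{D}'(\Omega).
\end{equation*}
The first task is to verify that $[v-u]_{+}$ is an admissible test function. Since $\alpha\in(0,2]$, the inequality $\|[\,\cdot\,]_{\pm}\|_{\mathring{H}^{\alpha/2}(\bbr^d)}\le \|\cdot\|_{\mathring{H}^{\alpha/2}(\bbr^d)}$ from \eqref{scalar_H} gives $[v-u]_{+}\in \mathring{H}^{\frac{\alpha}{2}}(\bbr^d)$, and the hypothesis $u\ge v$ on $\bbr^d\setminus\Omega$ (or the trivial case $\Omega=\bbr^d$) forces $[v-u]_{+}=0$ a.e. on $\bbr^d\setminus\Omega$. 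Since $\Omega$ is smooth, Remark \ref{H_0} identifies $\widetilde{\mathring{H}^{\alpha/2}}(\Omega)$ with $\mathring{H}^{\alpha/2}_0(\Omega)$, and Remark \ref{H_00} provides a sequence $\{\varphi_n\}_n \subset C^{\infty}_c(\Omega)$ of non-negative functions converging to $[v-u]_{+}$ in $\mathring{H}^{\frac{\alpha}{2}}(\bbr^d)$.

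Testing the distributional inequality against $\varphi_n$ and passing to the limit, I obtain
\begin{equation*}
\langle v-u,[v-u]_{+}\rangle_{\mathring{H}^{\alpha/2}(\bbr^d)}+\int_{\bbr^d}\bigl(\text{sign}(v)|v|^{\frac{1}{q-1}}-\text{sign}(u)|u|^{\frac{1}{q-1}}\bigr)[v-u]_{+}\,dx\le 0.
\end{equation*}
The Sobolev pairing passes to the limit from the definition of the distributional action; the Lebesgue integral is well defined because $u,v\in L^{q'}(\bbr^d)$ gives $\text{sign}(u)|u|^{1/(q-1)},\text{sign}(v)|v|^{1/(q-1)}\in L^{q}(\bbr^d)$, while $[v-u]_{+}\in L^{q'}(\bbr^d)$, so one may localize with compact supports and invoke dominated convergence combined with the Sobolev embedding to identify the limits.

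Next I estimate both terms from below. For the nonlinear one, the map $t\mapsto \text{sign}(t)|t|^{\frac{1}{q-1}}$ is strictly increasing on $\bbr$, hence the integrand is pointwise non-negative (it vanishes where $v\le u$ and is positive where $v>u$), so the integral is $\ge 0$. For the Sobolev pairing, writing $v-u=[v-u]_{+}-[v-u]_{-}$ and using \eqref{scalar_H},
\begin{equation*}
\langle v-u,[v-u]_{+}\rangle_{\mathring{H}^{\alpha/2}(\bbr^d)}=\|[v-u]_{+}\|^{2}_{\mathring{H}^{\alpha/2}(\bbr^d)}-\langle [v-u]_{-},[v-u]_{+}\rangle_{\mathring{H}^{\alpha/2}(\bbr^d)}\ge \|[v-u]_{+}\|^{2}_{\mathring{H}^{\alpha/2}(\bbr^d)}.
\end{equation*}
Combining both lower bounds forces $\|[v-u]_{+}\|_{\mathring{H}^{\alpha/2}(\bbr^d)}=0$, so $[v-u]_{+}\equiv 0$, i.e., $u\ge v$ in $\bbr^d$.

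The main obstacle is the justification of the test-function step: the distributional inequality in $\mathcal{D}'(\Omega)$ must be upgraded to a global pairing against $[v-u]_{+}$, which requires the density result $\widetilde{\mathring{H}^{\alpha/2}}(\Omega)=\mathring{H}^{\alpha/2}_0(\Omega)$ from Remark \ref{H_0} together with the positivity-preserving bound \eqref{scalar_H}. This is precisely the point at which the restriction $\alpha\in(0,2]$ is essential, since both the preservation of $\mathring{H}^{\alpha/2}(\bbr^d)$ under $[\,\cdot\,]_{\pm}$ and the sign of $\langle [u]_{+},[u]_{-}\rangle_{\mathring{H}^{\alpha/2}(\bbr^d)}$ can fail for larger $\alpha$.
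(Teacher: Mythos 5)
Your proof is correct and follows essentially the same route as the paper: subtract the two distributional inequalities, use Remarks \ref{H_0} and \ref{H_00} to show $[v-u]_{+}\in \mathring{H}^{\frac{\alpha}{2}}_0(\Omega)$ so it is an admissible test function, identify the nonlinear pairing with a Lebesgue integral via the $L^{q}$--$L^{q'}$ duality, and then combine the monotonicity of $t\mapsto \operatorname{sign}(t)|t|^{\frac{1}{q-1}}$ with the inequality \eqref{scalar_H} to obtain $\|[v-u]_{+}\|^2_{\mathring{H}^{\frac{\alpha}{2}}(\R^d)}\le 0$. This is precisely the paper's argument.
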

\begin{proof}
Assume that $\alpha\in (0,2)$. Then,  by subtracting  \eqref{sup_18} to \eqref{sub_18} we obtain
\begin{equation}\label{PDE_ineq}
    (-\Delta)^{\frac{\alpha}{2}}(v-u)+ (\text{sign}(v)|v|^{\frac{1}{q-1}}-\text{sign}(u)|u|^{\frac{1}{q-1}})\le 0\qquad\ \text{in}\ \mathcal{D}'(\Omega).
\end{equation}
Next, we define the space $\mathring{H}_{0}^{\frac{\alpha}{2}}(\Omega)$ as the completion of $C^{\infty}_c(\Omega)$ w.r.t. the $\|\cdot\|_{\mathring{H}^{\frac{\alpha}{2}}(\R^d)}$--norm.
Then,  by density of $C^{\infty}_c(\Omega)$ in $\mathring{H}_{0}^{\frac{\alpha}{2}}(\Omega) $ and  \eqref{PDE_ineq} we conclude that
\begin{equation}\label{PDE_0}
    \langle v-u,  \varphi \rangle_{\mathring{H}^{\frac{\alpha}{2}}(\bbr^d)}+ \langle \text{sign}(v)|v|^{\frac{1}{q-1}}-\text{sign}(u)|u|^{\frac{1}{q-1}} , \varphi \rangle \le 0\ \, \forall \varphi\in \mathring{H}_{0}^{\frac{\alpha}{2}}(\Omega), \varphi\ge 0, 
\end{equation}
where by $\langle \cdot, \cdot \rangle $ we denoted the duality between $\mathring{H}^{\frac{\alpha}{2}}(\R^d)$ and $\mathring{H}^{-\frac{\alpha}{2}}(\R^d)$.  
Recall that $[v-u]_{+}\in \mathring{H}^{\frac{\alpha}{2}}(\bbr^d)$.
Moreover, since $u\ge v$ in $\bbr^d\setminus \Omega$ and $\partial \Omega$ is regular, the function  $[v-u]_{+} \in~\mathring{H}_{0}^{\frac{\alpha}{2}}(\Omega)$ (see e.g., \cite{BB}*{eq. (2.12)--(2.13)}). 

Hence, we can  test \eqref{PDE_0} against $[v-u]_{+}$. Note also that, since $\text{sign}(v)|v|^{\frac{1}{q-1}}-\text{sign}(u)|u|^{\frac{1}{q-1}}\in \mathring{H}^{-\frac{\alpha}{2}}(\R^d)\cap L^{q}(\R^d)$ and $[v-u]_{+}\in \mathring{H}^{\frac{\alpha}{2}}_0(\Omega)\cap  L^{q'}(\R^d)$, we have the equality
\begin{equation}\label{norm_equality}
\langle \text{sign}(v)|v|^{\frac{1}{q-1}}-\text{sign}(u)|u|^{\frac{1}{q-1}} , [v-u]_{+} \rangle =\int_{\mathbb{R}^d}(\text{sign}(v)|v|^{\frac{1}{q-1}}-\text{sign}(u)|u|^{\frac{1}{q-1}}) [v-u]_{+}.
\end{equation}
Thus by combining \eqref{PDE_0}, \eqref{norm_equality}  and \eqref{scalar_H} we infer
\begin{equation*}
\begin{split}
    0& \ge  \langle v-u,  [v-u]_{+} \rangle_{\mathring{H}^{\frac{\alpha}{2}}(\bbr^d)}+ \int_{\mathbb{R}^d}(\text{sign}(v)|v|^{\frac{1}{q-1}}-\text{sign}(u)|u|^{\frac{1}{q-1}}) [v-u]_{+} \ge\\ 
& \ge \langle [v-u]_{+}, [v-u]_{+}\rangle_{\mathring{H}^{\frac{\alpha}{2}}(\bbr^d)}=\left\| [v-u]_{+}\right\|^2_{\mathring{H}^{\frac{\alpha}{2}}(\mathbb{R}^d)},
\end{split}
\end{equation*}
from which $[v-u]_{+}=0$. This concludes the proof for the case $\alpha\in (0,2)$.

The case $\alpha=2$ follows by the same argument.
\end{proof} 

\begin{corollary}\label{optimizer_1}
	 Let $\alpha\in (0,2]$.
For every $f\in \mathring{H}^{-\frac{\alpha}{2}}(\bbr^d)$ there exists a unique $u_f\in L^{q'}(\bbr^d)\cap \mathring{H}^{\frac{\alpha}{2}}(\bbr^d)$ weakly solving 
\begin{equation}\label{PDE_f}(-\Delta)^{\frac{\alpha}{2}}u+\text{sign}(u)|u|^{\frac{1}{q-1}}=f\quad  in\ \mathring{H}^{-\frac{\alpha}{2}}(\mathbb{R}^d).
\end{equation}
\end{corollary}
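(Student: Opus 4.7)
The plan is to reduce existence to the variational result already proved (Theorem~\ref{minimo_esiste}, Proposition~\ref{V ristretto}) via the identification of $\mathring{H}^{-\frac{\alpha}{2}}(\mathbb{R}^d)$ with $\mathring{H}^{\frac{\alpha}{2}}(\mathbb{R}^d)$ supplied by the fractional Laplacian, and to extract uniqueness from the comparison principle of Lemma~\ref{comparison}.

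For existence, I would start from the Riesz-representation isometry \eqref{weak_laplac}: given $f\in \mathring{H}^{-\frac{\alpha}{2}}(\mathbb{R}^d)$, there is a unique $V:=(-\Delta)^{-\frac{\alpha}{2}}f\in \mathring{H}^{\frac{\alpha}{2}}(\mathbb{R}^d)$ with $(-\Delta)^{\frac{\alpha}{2}}V=f$ in $\mathring{H}^{-\frac{\alpha}{2}}(\mathbb{R}^d)$. Such a $V$ fits the framework of Theorem~\ref{minimo_esiste}, producing the unique minimizer $\rho_V\in\mathcal{H}_\alpha$. I then set $u_f:=\mathrm{sign}(\rho_V)|\rho_V|^{q-1}$. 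Because $(q-1)q'=q$, the containment $\rho_V\in L^q(\mathbb{R}^d)$ gives $u_f\in L^{q'}(\mathbb{R}^d)$, and Proposition~\ref{V ristretto} yields simultaneously $u_f\in \mathring{H}^{\frac{\alpha}{2}}(\mathbb{R}^d)$ and that $u_f$ weakly solves \eqref{PDE_f}.

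For uniqueness, suppose $u_1,u_2\in L^{q'}(\mathbb{R}^d)\cap \mathring{H}^{\frac{\alpha}{2}}(\mathbb{R}^d)$ are both weak solutions of \eqref{PDE_f}. The equation itself forces
\[
\mathrm{sign}(u_i)|u_i|^{\frac{1}{q-1}}=f-(-\Delta)^{\frac{\alpha}{2}}u_i\in \mathring{H}^{-\frac{\alpha}{2}}(\mathbb{R}^d),\qquad i=1,2,
\]
so every hypothesis of Lemma~\ref{comparison} is in place. I would then apply that lemma on $\Omega=\mathbb{R}^d$, for which the exterior constraint is vacuous: treating $u_1$ as a supersolution and $u_2$ as a subsolution of \eqref{PDE_lap_fra} with the same $V=(-\Delta)^{-\frac{\alpha}{2}}f$ gives $u_1\ge u_2$, and exchanging the roles gives $u_2\ge u_1$. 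Hence $u_1=u_2=u_f$.

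The main point worth being careful about is the integrability of the sign-power nonlinearity: the naive embedding $L^q\hookrightarrow \mathring{H}^{-\frac{\alpha}{2}}$ only holds at the critical exponent $q=\frac{2d}{d+\alpha}$, so $\mathrm{sign}(u_i)|u_i|^{\frac{1}{q-1}}\in \mathring{H}^{-\frac{\alpha}{2}}(\mathbb{R}^d)$ is not automatic from $u_i\in L^{q'}(\mathbb{R}^d)$; however, it is immediate from the equation itself, as noted above. Beyond this verification, no genuinely new work is required — the statement is essentially a packaging of the existence and uniqueness already established for the minimization problem together with the comparison principle applied in the full space.
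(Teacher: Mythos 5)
Your proof is correct and follows the same route as the paper's (very terse) argument: set $V:=(-\Delta)^{-\frac{\alpha}{2}}f$, obtain $\rho_V$ from Theorem~\ref{minimo_esiste}, and invoke Proposition~\ref{V ristretto} for existence. Your explicit uniqueness argument via Lemma~\ref{comparison} on $\Omega=\mathbb{R}^d$ (applying it twice with the roles of super- and subsolution exchanged) is precisely what the paper's cryptic phrase ``applied to $\Omega=\mathbb{R}^d$'' alludes to, and your observation that the membership $\mathrm{sign}(u_i)|u_i|^{\frac{1}{q-1}}\in \mathring{H}^{-\frac{\alpha}{2}}(\mathbb{R}^d)$ must be read off from the equation itself (rather than from an embedding) is a genuine and necessary hypothesis check that the paper silently elides.
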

\begin{proof}
Let  $V:=(-\Delta)^{-\frac{\alpha}{2}}f$, and $\rho_V$ be  the corresponding unique minimizer provided by Theorem \eqref{minimo_esiste}. Then, Proposition \ref{V ristretto} applied to $\Omega=\bbr^d$ implies that $u_V$ is the desired unique solution of \eqref{PDE_f}.
\end{proof}

\begin{corollary}\label{V_sopra}
Let $V\in \mathring{H}^{\frac{\alpha}{2}}(\mathbb{R}^d)\cap L^{q'}(\mathbb{R}^d)$, $\alpha\in (0,2]$. If $V^{\frac{1}{q-1}}\in \mathring{H}^{-\frac{\alpha}{2}}(\R^d)$, $V\ge 0$ then
$$u_V\le V\quad \text{in}\ \mathbb{R}^d.$$
\end{corollary}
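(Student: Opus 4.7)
The plan is to apply the weak comparison principle (Lemma \ref{comparison}) on the full domain $\Omega=\mathbb{R}^d$, taking $V$ itself as a supersolution and $u_V$ as a (sub)solution of the semilinear equation \eqref{PDE_lap_fra}. Since $\mathbb{R}^d\setminus\Omega=\emptyset$, no exterior boundary comparison is needed, and the conclusion $u_V\le V$ in $\mathbb{R}^d$ will follow directly.

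First I would check that $V$ is a supersolution of \eqref{PDE_lap_fra} in $\mathcal{D}'(\mathbb{R}^d)$. Indeed, since $V\ge 0$, we have $\text{sign}(V)|V|^{\frac{1}{q-1}}=V^{\frac{1}{q-1}}\ge 0$, and therefore
\begin{equation*}
(-\Delta)^{\frac{\alpha}{2}}V+\text{sign}(V)|V|^{\frac{1}{q-1}}\ge (-\Delta)^{\frac{\alpha}{2}}V\quad \text{in}\ \mathcal{D}'(\mathbb{R}^d),
\end{equation*}
which is exactly the supersolution inequality \eqref{sup_18}. At the same time, by Proposition \ref{V ristretto}, $u_V$ solves \eqref{PDE_lap_fra} in $\mathring{H}^{-\frac{\alpha}{2}}(\mathbb{R}^d)$ and is in particular a subsolution in $\mathcal{D}'(\mathbb{R}^d)$.

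Next I would verify that the regularity hypotheses required by Lemma \ref{comparison} are in place. The assumptions $V\in \mathring{H}^{\frac{\alpha}{2}}(\mathbb{R}^d)\cap L^{q'}(\mathbb{R}^d)$ and $V^{\frac{1}{q-1}}\in \mathring{H}^{-\frac{\alpha}{2}}(\mathbb{R}^d)$ take care of $V$. For $u_V$, Proposition \ref{V ristretto} gives $u_V\in \mathring{H}^{\frac{\alpha}{2}}(\mathbb{R}^d)$, while the identity $|u_V|^{q'}=|\rho_V|^{(q-1)q'}=|\rho_V|^{q}$ together with $\rho_V\in L^q(\mathbb{R}^d)$ yields $u_V\in L^{q'}(\mathbb{R}^d)$; and $\text{sign}(u_V)|u_V|^{\frac{1}{q-1}}=\rho_V\in \mathring{H}^{-\frac{\alpha}{2}}(\mathbb{R}^d)$ by definition of $\mathcal{H}_\alpha$.

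Since all hypotheses of Lemma \ref{comparison} with $u=V$, $v=u_V$, $\Omega=\mathbb{R}^d$ are fulfilled and the exterior comparison is vacuous, the lemma delivers $u_V\le V$ in $\mathbb{R}^d$. There is no real obstacle here; the only mildly delicate point is making sure the sign assumption $V\ge 0$ is used precisely where needed, namely to drop the absolute value in $|V|^{\frac{1}{q-1}}\ge 0$ so that $V$ is genuinely a supersolution of \eqref{PDE_lap_fra}.
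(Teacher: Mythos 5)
Your proof is correct and matches the paper's argument: both treat $V$ as a supersolution of \eqref{PDE_lap_fra} (using $V\ge 0$ to get $V^{\frac{1}{q-1}}\ge 0$) and $u_V$ as a solution, then invoke Lemma~\ref{comparison} with $\Omega=\mathbb{R}^d$. The only difference is that you spell out the verification of the regularity hypotheses of Lemma~\ref{comparison}, which the paper leaves implicit.
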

\begin{proof}
Since $V$ is non negative it clearly satisfies
$$(-\Delta)^{\frac{\alpha}{2}}V+V^{\frac{1}{q-1}}\ge (-\Delta)^{\frac{\alpha}{2}}V\quad
\text{in}\ \mathcal{D}{'}(\mathbb{R}^{d}).$$ 
Then, Lemma \ref{comparison} implies
$$u_V\le V\quad \text{in}\ \mathbb{R}^d.  $$
\end{proof}
Next we prove that the minimizer $\rho_V$ is non negative and radially non increasing provided $(-\Delta)^{\frac{\alpha}{2}}V$ is non negative \and radially non increasingas well. In particular Corollary \ref{rad_laplac} below  generalises \cite{graphene}*{Corollary 4.4} to every $\alpha\in (0,2]$ without requiring the additional decay assumption $(-\Delta)^{\frac{\alpha}{2}}V\in L^{\frac{2d}{d+\alpha}}(\R^d)$.

\begin{corollary}\label{rad_laplac}
Let $\alpha\in (0,2]$ and $V\in \mathring{H}^{\frac{\alpha}{2}}(\bbr^d)$. If  $(-\Delta)^{\frac{\alpha}{2}}V$ is a non negative, locally integrable, and  radially non increasing function then $u_V$ is the unique minimizer for the following problem
\begin{equation}
	\mathcal{J}:=\inf_{u\in \mathring{H}^{\frac{\alpha}{2}}(\bbr^d)\cap L^{q'}(\bbr^d)}J(u),
\end{equation}
where $J$ is defined by
\begin{equation}\label{energy_J}
J(u):=\frac{1}{2}\left\|u\right\|^2_{\mathring{H}^{\frac{\alpha}{2}}(\bbr^d)}+\frac{1}{q'}\int_{\bbr^d}|u(x)|^{q'}dx-\langle u, V\rangle_{\mathring{H}^{\frac{\alpha}{2}}(\bbr^d)}.
\end{equation}
Furthermore, $u_V$ is non negative and radially non increasing.
\end{corollary}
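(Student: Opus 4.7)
The plan is two-fold: first identify $u_V$ as the unique minimizer of $J$ through its Euler--Lagrange equation, and then derive non-negativity and radial monotonicity by a symmetrization argument exploiting the assumptions on $(-\Delta)^{\frac{\alpha}{2}}V$.

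For the first step, I would argue that $J$ is continuous, strictly convex (the quadratic and the $L^{q'}$--terms are strictly convex since $q'>1$; the map $u\mapsto \langle u,V\rangle_{\mathring{H}^{\frac{\alpha}{2}}(\bbr^d)}$ is linear hence convex), and coercive on $\mathring{H}^{\frac{\alpha}{2}}(\bbr^d)\cap L^{q'}(\bbr^d)$, the coercivity following from $|\langle u,V\rangle_{\mathring{H}^{\frac{\alpha}{2}}(\bbr^d)}|\le \|u\|_{\mathring{H}^{\frac{\alpha}{2}}(\bbr^d)}\|V\|_{\mathring{H}^{\frac{\alpha}{2}}(\bbr^d)}$ combined with Young's inequality. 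The direct method of the calculus of variations then produces a unique minimizer $u^{*}$, whose Euler--Lagrange equation, obtained by a standard Gateaux differentiation and using $q'-1=\tfrac{1}{q-1}$, coincides with the weak formulation of \eqref{PDE_lap_fra}. Since Corollary \ref{optimizer_1} yields uniqueness for this PDE, and Proposition \ref{V ristretto} asserts that $u_V$ weakly solves it, I may conclude $u^{*}=u_V$.

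For the sign and the symmetry, my goal is to establish $J(|u|^{*})\le J(u)$ for every admissible $u$, where $u^{*}$ denotes the symmetric decreasing rearrangement. The inequality $J(|u|)\le J(u)$ follows from three ingredients: the bound $\||u|\|_{\mathring{H}^{\frac{\alpha}{2}}(\bbr^d)}\le \|u\|_{\mathring{H}^{\frac{\alpha}{2}}(\bbr^d)}$ deducible from \eqref{scalar_H}; the identity $\||u|\|_{L^{q'}(\bbr^d)}=\|u\|_{L^{q'}(\bbr^d)}$; and the identification $\langle w,V\rangle_{\mathring{H}^{\frac{\alpha}{2}}(\bbr^d)}=\int_{\bbr^d} w\,(-\Delta)^{\frac{\alpha}{2}}V\,dx$, which holds by Corollary \ref{prod_integr} whenever the product admits a non-negative Lebesgue extension---and this is granted by the hypothesis $(-\Delta)^{\frac{\alpha}{2}}V\ge 0$ applied to $w=[u]_{\pm}$. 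Since $|u|-u\ge 0$ and $(-\Delta)^{\frac{\alpha}{2}}V\ge 0$, passing from $u$ to $|u|$ makes the linear term decrease. Next, for $u\ge 0$, I would invoke the fractional Pólya--Szegő inequality (a consequence of the Riesz rearrangement inequality applied to the Gagliardo representation \eqref{seminorm}) to obtain $[u^{*}]_{\mathring{H}^{\frac{\alpha}{2}}(\bbr^d)}\le [u]_{\mathring{H}^{\frac{\alpha}{2}}(\bbr^d)}$, equimeasurability of the $L^{q'}$--norm, and the Hardy--Littlewood rearrangement inequality to get
\begin{equation*}
\int_{\bbr^d} u\,(-\Delta)^{\frac{\alpha}{2}}V\,dx\le \int_{\bbr^d} u^{*}\,(-\Delta)^{\frac{\alpha}{2}}V\,dx,
\end{equation*}
where the right-hand side does not require rearranging $(-\Delta)^{\frac{\alpha}{2}}V$ since it is already radially non-increasing by hypothesis. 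Combining these two reductions yields $J(|u|^{*})\le J(u)$ for every admissible $u$.

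The uniqueness established in the first step then forces $u_V=|u_V|^{*}$ almost everywhere, which is precisely the claimed non-negativity and radial monotonicity of $u_V$. The main technical obstacle is the rigorous identification of $\langle \cdot,V\rangle_{\mathring{H}^{\frac{\alpha}{2}}(\bbr^d)}$ with a Lebesgue integral against $(-\Delta)^{\frac{\alpha}{2}}V$ on sign-changing test functions, where the duality pairing need not agree pointwise with a convergent integral; the assumption $(-\Delta)^{\frac{\alpha}{2}}V\ge 0\in L^{1}_{loc}(\bbr^d)$ is exactly what renders Corollary \ref{prod_integr} applicable to $[u]_{\pm}$ separately and thus enables the symmetrization reduction.
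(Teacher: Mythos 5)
Your proposal is correct, and the first half (identification of $u_V$ as the unique minimizer of $J$ via the Euler--Lagrange equation) follows the paper exactly. Where you diverge is in establishing non-negativity: the paper invokes Proposition~\ref{prop_intro}, which is a maximum-principle argument obtained by testing the Euler--Lagrange equation~\eqref{PDE_piu_meno} against $[u_V]_{-}$ and using $\langle [u_V]_+,[u_V]_-\rangle_{\mathring{H}^{\frac{\alpha}{2}}(\R^d)}\le 0$. You instead derive non-negativity from the purely variational inequality $J(|u|)\le J(u)$, obtained from $\||u|\|_{\mathring{H}^{\frac{\alpha}{2}}(\R^d)}\le\|u\|_{\mathring{H}^{\frac{\alpha}{2}}(\R^d)}$ (both inequalities are in fact equivalent consequences of \eqref{scalar_H}), the $L^{q'}$-equimeasurability, and the monotonicity of the linear term under the sign hypothesis on $(-\Delta)^{\frac{\alpha}{2}}V$. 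The crucial technical step in your version---extending the identification $\langle u,V\rangle_{\mathring{H}^{\frac{\alpha}{2}}(\R^d)}=\int u\,(-\Delta)^{\frac{\alpha}{2}}V$ from non-negative $u$ to general $u$ by splitting into $[u]_\pm$ and applying Corollary~\ref{prod_integr} to each part---is exactly right and is what the paper's restriction to the cone $\mathcal{P}$ is designed to avoid addressing. Your approach is marginally more self-contained since it bypasses Proposition~\ref{prop_intro} and Corollary~\ref{optimizer_1}, deriving everything from the structure of $J$; the paper's is a bit shorter because it can lean on results already proved. The remaining symmetrization step (Pólya--Szegő plus Hardy--Littlewood against the radially non-increasing function $(-\Delta)^{\frac{\alpha}{2}}V$) is identical in both.
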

\begin{proof} 
The unique minimizer $u$ in $\mathring{H}^{\frac{\alpha}{2}}(\R^d)\cap L^{q'}(\R^d)$ for the convex energy functional $J$ is  characterised by 
\begin{equation}
	\langle u, \varphi\rangle_{\mathring{H}^{\frac{\alpha}{2}}(\R^d)}+\int_{\R^d}\text{sign}(u(x))|u(x)|^{\frac{1}{q-1}}\varphi(x)dx-\langle \varphi, V\rangle_{\mathring{H}^{\frac{\alpha}{2}}(\R^d)}=0\quad \forall  \varphi\in  \mathring{H}^{\frac{\alpha}{2}}(\R^d)\cap L^{q'}(\R^d).
\end{equation}
Hence, by Proposition \ref{V ristretto}, we conclude that $u=u_V$ . In particular, by Proposition \ref{prop_intro}, the function $u_V$  is the unique minimizer of $J$ on the set $\mathcal{P}$ defined as follows  $$\mathcal{P}:=\left\{u\ge 0: u\in \mathring{H}^{\frac{\alpha}{2}}(\bbr^d)\cap L^{q'}(\bbr^d)\right\}.$$ Thus, by combining \eqref{weak_laplac} with \cite{BB}*{Corollary 4.1} we have that $u \,(-\Delta)^{\frac{\alpha}{2}}V\in L^{1}(\bbr^d)$ and 
\begin{equation*}
    \langle u, V\rangle_{\mathring{H}^{\frac{\alpha}{2}}(\bbr^d)}=\langle  (-\Delta)^{\frac{\alpha}{2}}V, u \rangle= \int_{\bbr^d}(-\Delta)^{\frac{\alpha}{2}}V(x)u(x)dx\quad \forall u\in \mathcal{P}.
\end{equation*}
This implies that the  functional $J$ restriced on $\mathcal{P}$ takes the form of
\begin{equation}
    J(u)=\frac{1}{2}\left\|u\right\|^2_{\mathring{H}^{\frac{\alpha}{2}}(\bbr^d)}+\frac{1}{q'}\int_{\bbr^d}|u(x)|^{q'}dx-\int_{\bbr^d}(-\Delta)^{\frac{\alpha}{2}}V(x)u(x)dx\quad \forall u\in \mathcal{P}.
\end{equation}
We now claim that $u^{*}_V=u_V$, where by $u^{*}_V$ we denote the radially symmetric rearrangement of $u_V$. It holds that (see  \cite{riarrangio_1}*{Theorem $9.2$} or \cite{Vincenzo}*{Proposition $2.1$} and \cite{Lieb-Loss}*{Lemma $7.17$,  eq $(4)$ p.$81$})
\begin{equation}\label{eq_20}
\begin{split}
\left\|u^{*}_V\right\|_{\mathring{H}^{\frac{\alpha}{2}}(\bbr^d)}\le \left\|u_V\right\|_{\mathring{H}^{\frac{\alpha}{2}}(\bbr^d)}, \quad \left\|u^*_V\right\|_{L^{q'}(\bbr^d)}= \left\|u_V\right\|_{L^{q'}(\bbr^d)},
\end{split}
\end{equation}
and 
\begin{equation}\label{eq_21}
    \int_{\bbr^d}(-\Delta)^{\frac{\alpha}{2}}V(x)u_V(x)dx  \le \int_{\bbr^d}(-\Delta)^{\frac{\alpha}{2}}V(x)u^{*}_V(x)dx,
\end{equation}
where \eqref{eq_21} follows by radial symmetry and monotonicity of $(-\Delta)^{\frac{\alpha}{2}}V$. Then, 
 putting together \eqref{eq_20} and \eqref{eq_21} we deduce that $u^{*}_V\in \mathcal{P}$ and 
$$J(u^*_V)\le J(u_V)=\mathcal{J}.$$
Hence,  $u^{*}_V$ is also a minimizer and  $u^*_V=u_V$ (by uniqueness).
\end{proof}
\section{Regularity}\label{sec.4}
This section is entirely devoted to prove regularity of the minimizer $\rho_V$. 
% We split the analysis in three regimes of $q$:
%\begin{itemize}
%	\item supercritical: $q>\frac{d}{\alpha}$;
%	\item critical: $q=\frac{d}{\alpha}$;
%	\item subcritical: $q<\frac{d}{\alpha}$.
%\end{itemize}
The idea is to take advantage of the Euler--Lagrange equation
\begin{equation}\label{distributional_20}
	\text{sign}(\rho_V)|\rho_V|^{q-1}=V-(-\Delta)^{-\frac{\alpha}{2}}\rho_V \quad \text{in}\ \mathcal{D}'(\bbr^d),
\end{equation}
where we recall that by $(-\Delta)^{-\frac{\alpha}{2}}\rho_V$ we denote the unique element of $\mathring{H}^{\frac{\alpha}{2}}(\R^d)$ whose laplacian is $\rho_V$ in the sense of \eqref{weak_10}.
Indeed, in view of \eqref{distributional_20}, it's readily seen that the regularity of $(-\Delta)^{-\frac{\alpha}{2}}\rho_V$ implies regularity of $\rho_V$ (assuming some regularity of the potential $V$).
Thus, in the first two subsections (see \ref{reg_non_pos} and \ref{q=d_over_alpha} below), we focus our attention on the case  $q\ge \frac{d}{\alpha}$ where in general the Riesz potential $I_\alpha*\rho_V$ is not well defined as a Lebesgue integral and can not be identified with the operator $(-\Delta)^{-\frac{\alpha}{2}}\rho_V$.

In the last subsection, we study regularity of the minimizer in the subcritical regime $q<\frac{d}{\alpha}$ (Corollaries \ref{V_cont} and  \ref{non_local_regularity}), where \eqref{distributional_20} reads as 
\begin{equation}\label{quasi_ovunque_sec_4}
	\text{sign}(\rho_V)|\rho_V|^{q-1}=V(x)-A_\alpha \int_{\bbr^d}\frac{\rho_V(y)}{|x-y|^{d-\alpha}}dy\quad \text{a.e.}\ \text{in}\  \bbr^d,
\end{equation}
(see again  \cite{BB}*{Corollary 5.3}).
We further point out that, from now on,  the basic assumption on $V$ is to be a non zero element of $\mathring{H}^{\frac{\alpha}{2}}(\bbr^d)$ while other assumptions will be explicitly written when necessary. 
\remark\label{more_regular}
Before presenting the regularity results of $\rho_V$ on the subcritical regime, we emphasise the fact that Corollaries \ref{reg_non_pos} and \ref{q=d_over_alpha} can be improved by simply requiring more regularity (e.g.,  H\"older regularity) of the potential $V$ in the spirit of Corollary  \ref{non_local_regularity}.
\subsection*{Supercritical cases:  $q> \frac{d}{\alpha},\  \alpha-\frac{d}{q}\notin \N$}
In this regime, the Euler--Lagrange equation reads as \eqref{distributional_20} and, by a classical localisation argument that, we can easily see (Proposition \ref{prop_23}) that the function $(-\Delta)^{-\frac{\alpha}{2}}\rho_V$ is regular. 
\begin{proposition}\label{prop_23}
		Assume $0<\alpha<d$, $q> \frac{d}{\alpha}$ and $\alpha-\frac{d}{q}\notin \N$. Let $u\in \mathcal{L}^{1}_{\alpha}(\R^d)$ be a function solving 
		$$(-\Delta)^{\frac{\alpha}{2}}u=f\quad \text{in}\ \mathcal{D}'(B_R)
		$$
		for some $f\in L^{q}(B_R)$, $R>0$. Then  $u\in C^{\alpha-\frac{d}{q}}(\overline{B_{R/2}})$.
\end{proposition}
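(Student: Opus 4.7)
The plan is to decompose $u$ into a Riesz potential part carrying the nonzero right-hand side and a fractionally harmonic remainder. Fix $R'\in (R/2,R)$ and choose a cutoff $\eta\in C^{\infty}_c(B_R)$ with $\eta\equiv 1$ on $\overline{B_{R'}}$. Set $g:=\eta f\in L^{q}(\R^{d})$. Since $g$ is compactly supported, it lies in every $L^{s}(\R^{d})$ with $s\le q$; in particular $g\in L^{1}(\R^{d})$, so by the hypothesis $q>d/\alpha$ we may form
\begin{equation*}
u_{1}(x):=(I_{\alpha}*g)(x),
\end{equation*}
which is finite for every $x\in\R^{d}$, satisfies $u_{1}(x)\lesssim |x|^{-(d-\alpha)}$ at infinity, and hence belongs to $\mathcal{L}^{1}_{\alpha}(\R^{d})$.

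Next I would establish two properties of this splitting. First, by the regularity of the Riesz potential of $L^{q}$ functions recalled in Section~\ref{prel} (in the form of \cite{P55}*{Theorem 2}, together with its higher-order extension when $\alpha-d/q>1$), $u_{1}\in C^{\alpha-d/q}(\R^{d})$ in the sense of \eqref{holder_integrable}, using the standing assumption $\alpha-d/q\notin\N$. Second, because the semigroup identity $I_{\alpha}*(-\Delta)^{\alpha/2}\varphi=\varphi$ holds on $\mathscr{S}$, one checks that $(-\Delta)^{\alpha/2}u_{1}=g$ as tempered distributions. Consequently the function $w:=u-u_{1}\in\mathcal{L}^{1}_{\alpha}(\R^{d})$ satisfies
\begin{equation*}
(-\Delta)^{\alpha/2}w=f-g=0\quad\text{in}\ \mathcal{D}'(B_{R'}),
\end{equation*}
that is, $w$ is $\alpha$-harmonic in $B_{R'}$. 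By the classical interior regularity of distributional $\alpha$-harmonic functions in $\mathcal{L}^{1}_{\alpha}(\R^{d})$ (Bogdan--Byczkowski--Kulczycki or Silvestre style mean-value characterisations), $w\in C^{\infty}(B_{R'})$, and in particular $w\in C^{\alpha-d/q}(\overline{B_{R/2}})$.

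Adding the two contributions, $u=u_{1}+w\in C^{\alpha-d/q}(\overline{B_{R/2}})$, which is the claim. The main obstacle is the Hölder bound on the Riesz potential when $\alpha-d/q>1$: the statement recalled in Section~\ref{prel} only treats $d/q<\alpha<d/q+1$, so one must either cite the higher-order version in \cite{P55} or iterate by differentiating the convolution and applying the sub-unit case to $\nabla^{k}u_{1}$ with $k=\lfloor\alpha-d/q\rfloor$. The exclusion $\alpha-d/q\notin\N$ enters precisely here to avoid the borderline loss of Hölder regularity. All other steps—$\alpha$-harmonic smoothness of $w$ and the fact that $u_{1}\in\mathcal{L}^{1}_{\alpha}$—are routine once the cutoff decomposition is set up.
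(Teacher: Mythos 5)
Your proposal is correct and follows essentially the same route as the paper's proof: decompose $u$ as the Riesz potential of a localisation of $f$ plus an $\alpha$-harmonic remainder, invoke interior smoothness for the $\alpha$-harmonic part, and invoke Hölder regularity of the Riesz potential of a compactly supported $L^{1}\cap L^{q}$ function for the other part. The paper localises with the indicator $\chi_{B_R}$ rather than a smooth cutoff and cites \cite{abatangelo_2}*{Remark 5.18} (and \cite{garofalo}*{Theorem 12.17} for $\alpha$-harmonic smoothness) in place of \cite{P55}, but these are cosmetic differences only.
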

\begin{proof}
	 Let's define the function 
	$$u_R(x):=A_{\alpha}\int_{\R^d} \frac{f(y)\chi_{B_R}(y)}{|x-y|^{d-\alpha}}dy.$$
	Then, $u-u_R\in \mathcal{L}^{1}_{\alpha}$ is $\alpha$--harmonic in $B_{R}$. In particular, from  \cite{BB}*{Lemma 3.1}, the latter function  is smooth in $B_{R/2}$.
	(We mention also the more general result \cite{garofalo}*{Theorem 12.19} providing regularity in a more general framework of pseudodifferential operators and \cite{Silvestre}*{Proposition 2.2}.) Moreover, since $f\in L^q(B_R)$, the function $f \chi_{B_R}$ has compact support and belongs to $ ~{L^{1}(\R^d)\cap L^{q}(\R^d)}$. Then, since $\alpha-\frac{d}{q}$ is not an integer, we infer $u_R\in C^{\alpha-\frac{d}{q}}(\R^d)$ (cf. \cite{abatangelo_2}*{Remark 5.18} and references therein). In particular, $u\in C^{\alpha-\frac{d}{q}}(\overline{B_{R/2}})$ concluding the proof.
\end{proof}

In view of Proposition \ref{prop_23}, in Corollary \ref{reg_non_pos} we prove the regularity of  $\rho_V$ when $\alpha-\frac{d}{q}$ is not an integer.

\begin{corollary}\label{reg_non_pos}
	Assume $0<\alpha<d$, $q> \frac{d}{\alpha}$ and $\alpha-\frac{d}{q}\notin \N$. If $V\in \mathring{H}^{\frac{\alpha}{2}}(\R^d)\cap C(\bbr^d)$ then the minimizer $\rho_V\in C(\bbr^d)$.
\end{corollary}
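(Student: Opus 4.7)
The plan is to exploit the Euler--Lagrange identity \eqref{rho_a.e}, rewrite it so that Proposition \ref{prop_23} applies to the auxiliary function $V-u_V$, and then transfer the resulting local H\"older regularity back to $\rho_V$ via the bijection $t\mapsto \mathrm{sign}(t)|t|^{1/(q-1)}$.

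More precisely, recall that $u_V=\mathrm{sign}(\rho_V)|\rho_V|^{q-1}$ and that \eqref{rho_a.e} can be read as
\[
(-\Delta)^{-\frac{\alpha}{2}}\rho_V = V-u_V \qquad \text{in } \mathcal{D}'(\bbr^d).
\]
By the very definition of $(-\Delta)^{-\alpha/2}\rho_V$ (the unique element of $\mathring{H}^{\alpha/2}(\bbr^d)$ weakly solving \eqref{weak_10}), the function $w:=V-u_V$ belongs to $\mathring{H}^{\alpha/2}(\bbr^d)$ and satisfies $(-\Delta)^{\alpha/2}w=\rho_V$ in $\mathcal{D}'(\bbr^d)$; by the inclusion \eqref{incl_11}, in particular $w\in \mathcal{L}^{1}_\alpha(\bbr^d)$. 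Since $\rho_V\in L^{q}(\bbr^d)\subset L^{q}(B_R)$ for every $R>0$, Proposition \ref{prop_23} (applied on $B_R$ with $f=\rho_V$) yields $w\in C^{\alpha-\frac{d}{q}}(\overline{B_{R/2}})$ for every $R>0$. Hence $w=V-u_V$ is locally H\"older continuous on $\bbr^d$, and in particular continuous.

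Combining this with the assumption $V\in C(\bbr^d)$, we obtain $u_V=V-w\in C(\bbr^d)$. Finally, because $q>d/\alpha>1$, the map $t\mapsto \mathrm{sign}(t)|t|^{1/(q-1)}$ is continuous on $\bbr$, and the relation $\rho_V=\mathrm{sign}(u_V)|u_V|^{1/(q-1)}$ (which is equivalent to \eqref{U}) gives $\rho_V\in C(\bbr^d)$, as desired.

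I do not expect any serious obstacle here: the whole argument is a short chain of implications, and the only point requiring a bit of care is the reformulation of the Euler--Lagrange equation so that the unknown to which Proposition \ref{prop_23} is applied is $w=V-u_V$ rather than $u_V$ or $\rho_V$ directly (the latter has only $L^q$ regularity a priori and is not known to be in $\mathcal{L}^{1}_\alpha(\bbr^d)$ without first using the equation). The hypothesis $\alpha-d/q\notin\N$ is used exclusively to invoke Proposition \ref{prop_23}; it plays no further role in the argument.
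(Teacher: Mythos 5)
Your proof is correct and takes essentially the same route as the paper: the auxiliary function $w=V-u_V$ is, by the Euler--Lagrange equation \eqref{distributional_20}, precisely $(-\Delta)^{-\frac{\alpha}{2}}\rho_V$, and the paper applies Proposition \ref{prop_23} to exactly this function to get its continuity before unwinding $u_V=V-w$ and $\rho_V=\mathrm{sign}(u_V)|u_V|^{1/(q-1)}$. You merely spell out the hypotheses of Proposition \ref{prop_23} (membership of $w$ in $\mathring{H}^{\alpha/2}\subset\mathcal{L}^1_\alpha$ and $\rho_V\in L^q_{loc}$) that the paper leaves implicit.
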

\begin{proof}
By Proposition \ref{prop_23} we obtain continuity of $(-\Delta)^{-\frac{\alpha}{2}}\rho_V$. Then, if $V\in C(\R^d)$, the Euler--Lagrange equation \eqref{distributional_20} implies continuity of $\rho_V$.
\end{proof}
%\remark \label{elliptic_reg}
%As we have already pointed out in Remark \ref{more_regular}, we can get more regularity of $\rho_V$ by simply requiring more regularity of $V$. This can be done equivalently either by working with the distributional Euler--Lagrange equation \eqref{distributional_20} or by using the PDE defined in \eqref{PDE_lap_frac}.  As a matter of fact, assume that $V\in \mathring{H}^{\frac{\alpha}{2}}(\R^d)$. Then, by \eqref{incl_21} we obtain that $(-\Delta)^{-\frac{\alpha}{2}}\rho_V\in \mathring{\Lambda}^{\alpha-\frac{d}{q}}(\R^d)$. In particular, by further assuming that $V\in \mathring{\Lambda}^{\alpha-\frac{d}{q}}(\R^d)$, from \eqref{distributional_20} we obtain that $u_V$ belongs to the same space.
%On the other hand, from \eqref{PDE_lap_frac}, if we require that $(-\Delta)^{\frac{\alpha}{2}}V\in L^{q}(\R^d)$ we obtain that $(-\Delta)^{\frac{\alpha}{2}}u_V\in L^q(\R^d)$. As a consequence, from \eqref{incl_21}, we deduce that $u_V\in \mathring{\Lambda}^{\alpha-\frac{d}{q}}(\R^d)$. However, again by \eqref{incl_21}, requiring $(-\Delta)^{\frac{\alpha}{2}}V\in L^{q}(\R^d)$ implies that $V\in \mathring{\Lambda}^{\alpha-\frac{d}{q}}(\R^d)$. Hence, in both cases we end up by requiring the same regularity of $V$.
Next, we prove the regularity of the minimizer $\rho_V$ in the cases $\alpha-\frac{d}{q}$ being an integer. This completes the study of regularity in the regime $q\ge \frac{d}{\alpha}$. 
\subsection*{Critical cases: $\alpha-\frac{d}{q}\in \N$}
In this cases we can  reduce the analysis of the regularity to the supercritical case already analysed above. Namely, we prove the following:
    \begin{corollary}\label{q=d_over_alpha}
        Assume $0<\alpha<d$ and $\alpha-\frac{d}{q}\in \N$. If $V\in \mathring{H}^{\frac{\alpha}{2}}(\R^d)\cap C(\bbr^d)$ then $\rho_V\in C(\bbr^d)$.
    \end{corollary}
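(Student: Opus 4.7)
The strategy is to reduce the borderline case $\alpha - d/q \in \N$ to the non--integer case already handled by Proposition \ref{prop_23} (equivalently by Corollary \ref{reg_non_pos}), via a small perturbation of the integrability exponent. Write $k := \alpha - d/q$ and note that $k \geq 1$: indeed, $\alpha - d/q > 0$ is needed for $\alpha - d/q$ to be a positive integer, which is equivalent to $q = d/(\alpha - k) > d/\alpha$. The key observation is that the set $\{\tilde q > d/\alpha : \alpha - d/\tilde q \notin \N\}$ is open and contains values arbitrarily close to $q$ on its left: indeed, for $\tilde q \in (d/\alpha, q)$ sufficiently close to $q$, the quantity $\alpha - d/\tilde q$ lies in the open interval $(k-1, k)$, which in particular keeps it out of $\N$.

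Next I introduce $w := (-\Delta)^{-\alpha/2}\rho_V$. By definition $w \in \mathring{H}^{\alpha/2}(\R^d)$, so the embedding \eqref{incl_11} places it in $\mathcal{L}^1_\alpha(\R^d)$, and by construction (see \eqref{weak_10} and the subsequent equation \eqref{distr_10}) it satisfies $(-\Delta)^{\alpha/2} w = \rho_V$ in $\mathcal{D}'(\R^d)$. Since $\rho_V \in L^q(\R^d)$ embeds into $L^{\tilde q}_{\mathrm{loc}}(\R^d)$ by H\"older's inequality, I apply Proposition \ref{prop_23} to $w$ on every ball $B_R$ with the exponent $\tilde q$, obtaining $w \in C^{\alpha - d/\tilde q}(\overline{B_{R/2}})$ for each $R > 0$. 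Letting $R$ be arbitrary yields $w \in C(\R^d)$.

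To conclude, I use the Euler--Lagrange identity \eqref{distributional_20}, which reads $\mathrm{sign}(\rho_V)|\rho_V|^{q-1} = V - w$ as distributions on $\R^d$. The right-hand side is now a continuous function, since both $V$ and $w$ are continuous; and as the real map $t \mapsto \mathrm{sign}(t)|t|^{q-1}$ is a homeomorphism of $\R$, I can invert to recover the pointwise identity $\rho_V = \mathrm{sign}(V-w)\,|V-w|^{1/(q-1)}$ on $\R^d$, which exhibits $\rho_V$ as a continuous function. No genuine obstacle arises here: the whole argument reduces to the exponent-perturbation step, which is available precisely because $k \geq 1$.
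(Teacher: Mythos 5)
Your argument is correct and takes a genuinely different route from the paper on the range where it applies. The paper upgrades the \emph{global} integrability of $\rho_V$: by the Sobolev embedding, $V$ and $(-\Delta)^{-\alpha/2}\rho_V$ lie in $L^{2d/(d-\alpha)}(\R^d)$, so the Euler--Lagrange identity $\mathrm{sign}(\rho_V)|\rho_V|^{q-1}=V-(-\Delta)^{-\alpha/2}\rho_V$ gives $\rho_V\in L^{2d(q-1)/(d-\alpha)}(\R^d)$, hence $\rho_V\in L^{q+\varepsilon}(\R^d)$ for small $\varepsilon>0$ (this is where the standing hypothesis $q>\tfrac{2d}{d+\alpha}$ enters, since it is exactly what makes $\tfrac{2d(q-1)}{d-\alpha}>q$), and Proposition~\ref{prop_23} is then applied at the slightly \emph{larger} exponent $q+\varepsilon$. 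You instead downgrade the \emph{local} integrability: $\rho_V\in L^q(\R^d)\subset L^{\tilde q}_{loc}(\R^d)$ for $\tilde q<q$, and since Proposition~\ref{prop_23} only requires $f\in L^{\tilde q}(B_R)$ you apply it at a slightly \emph{smaller} exponent. Your route is more economical where it works: it does not invoke the Euler--Lagrange equation or the Sobolev embedding at all, and the final inversion via the continuity of $s\mapsto\mathrm{sign}(s)|s|^{1/(q-1)}$ is fine.

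There is, however, a gap. You assert $k=\alpha-\tfrac{d}{q}\ge 1$, which forces $q>\tfrac{d}{\alpha}$ strictly, and you then choose $\tilde q$ in the open interval $\bigl(\tfrac{d}{\alpha},q\bigr)$. But the borderline value $q=\tfrac{d}{\alpha}$, i.e.\ $k=0$, is not reached by your perturbation: the interval is empty. This value is meant to be covered by the corollary. The supercritical Corollary~\ref{reg_non_pos} requires $q>\tfrac{d}{\alpha}$ strictly, the subcritical case requires $q<\tfrac{d}{\alpha}$, and the paper's own proof of the present corollary nowhere assumes $q>\tfrac{d}{\alpha}$ (indeed the label itself points at $q=\tfrac{d}{\alpha}$). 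So the critical case is precisely where $q=\tfrac{d}{\alpha}$ must be treated, and there your construction gives nothing. At that endpoint one cannot move the exponent down; the integrability gain from the Euler--Lagrange identity is essential. To complete the argument you would need to first run the paper's bootstrap to land $\rho_V$ in some $L^{q+\varepsilon}(\R^d)$, after which your perturbation (now in $(\tfrac{d}{\alpha},q+\varepsilon)$) becomes available, but at that point the proof coincides with the paper's.
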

\begin{proof}
 By the Sobolev embedding, $V$ and $(-\Delta)^{-\frac{\alpha}{2}}\rho_V$ belong to $ L^{\frac{2d}{d-\alpha}}(\bbr^d)$. In particular, from the Euler--Lagrange equation \eqref{distributional_20} we obtain that $\rho_V\in L^{q}(\bbr^d)\cap L^{\frac{2d}{d-\alpha}(q-1)}(\bbr^d)$. Then, $\rho_V\in ~L^{q+\varepsilon}(\bbr^d)$ for  every $\varepsilon>0$ sufficiently small.
	Consequently, from Proposition \ref{prop_23} we infer that $(-\Delta)^{-\frac{\alpha}{2}}\rho_V\in C^{\alpha-\frac{d}{q+\varepsilon}}_{loc}(\R^d)$ for every $\varepsilon>0$ small such that $\alpha-\frac{d}{q+\varepsilon}$ is not an integer. Then, by continuity of $V$ and the Euler--Lagrange equation \eqref{distributional_20}, we deduce the continuity of $\rho_V$.
\end{proof}
\subsection*{Subcritical case: $q< \frac{d}{\alpha}$}
\indent In this regime, the Riesz potential $I_{\alpha}*\rho_V$ is well defined and equation \eqref{distributional_20} reads as
\begin{equation}\label{rho_punt_2}
    \text{sign}(\rho_V)|\rho_V|^{q-1}=V-I_\alpha*\rho_V\quad \text{a.e.}\ \text{in}\ \bbr^d.
\end{equation}
We begin the subsection section by proving H\"older continuity of the Riesz potential provided that the potential $V$ belongs to $L^p(\R^d)$ for some $p$ sufficiently large. 
\begin{lemma}\label{rho regularity}
Let $\rho_V\in \mathcal{H}_\alpha$ be the minimizer and $q<\frac
{d}{\alpha}$.
Then, there exists $p>\frac{2d}{d-\alpha}$ such that if  $V\in \mathring{H}^{\frac{\alpha}{2}}(\R^d)\cap L^{p}(\bbr^d)$ then $I_\alpha*\rho_V$ belongs to  $C^{0,\gamma}(\mathbb{R}^d)$ for some $\gamma\in (0,1]$.
\end{lemma}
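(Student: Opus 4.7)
The plan is a bootstrap argument based on the Euler--Lagrange identity $|\rho_V| = |V - I_\alpha*\rho_V|^{1/(q-1)}$ a.e., combined with the Hardy--Littlewood--Sobolev (HLS) inequality, to successively improve the Lebesgue integrability of $\rho_V$ until $I_\alpha*\rho_V$ lies inside the H\"older regime for Riesz potentials recalled in the paper just before the lemma.

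First I would record the baseline integrability of the two sides of the Euler--Lagrange identity. By Sobolev embedding $V \in \mathring{H}^{\frac{\alpha}{2}}(\R^d) \subset L^{\frac{2d}{d-\alpha}}(\R^d)$, and combined with $V \in L^p(\R^d)$ this gives, by interpolation, $V \in L^s(\R^d)$ for every $s \in [\frac{2d}{d-\alpha}, p]$. Since $q < \frac{d}{\alpha}$, HLS yields $I_\alpha*\rho_V \in L^{t_0}(\R^d)$ with $t_0 := \frac{dq}{d - \alpha q}$, and the criticality assumption $q > \frac{2d}{d+\alpha}$ is exactly equivalent to $t_0 \ge \frac{2d}{d-\alpha}$, so $V$ and $I_\alpha*\rho_V$ share a common Lebesgue space in which to be added.

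I would then iterate. As long as $\rho_V \in L^{r_n}(\R^d)$ with $r_n < d/\alpha$, HLS gives $I_\alpha*\rho_V \in L^{t_n}(\R^d)$ with $t_n := d r_n/(d - \alpha r_n)$; interpolating $I_\alpha*\rho_V$ between $L^{t_0}$ and $L^{t_n}$, and $V$ between $L^{\frac{2d}{d-\alpha}}$ and $L^p$, I obtain $u_V := V - I_\alpha*\rho_V \in L^{\min\{p, t_n\}}(\R^d)$ (this is one reason why $p$ has to be taken at least as large as $t_0$, so the two interpolation intervals overlap). Via the Euler--Lagrange identity this upgrades to $\rho_V \in L^{r_{n+1}}(\R^d)$ with $r_{n+1} := (q-1)\min\{p, t_n\}$. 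The crucial analytic point is that this sequence exits $(0, d/\alpha)$ in finitely many steps: a direct computation shows that the map $r \mapsto (q-1) d r/(d - \alpha r)$ has at most one positive fixed point $r^* = d(2-q)/\alpha$ (relevant only when $q<2$), that it is repelling, and that the starting value $r_0 = q$ lies strictly above $r^*$ precisely because $q > \frac{2d}{d+\alpha}$. Hence the $t_n$-branch strictly increases with no fixed point in $(r^*, d/\alpha)$, and must cross $d/\alpha$ after finitely many iterations, unless the truncation by $p$ stalls it. To rule that out I would choose $p$ additionally so large that $(q-1) p > d/\alpha$; then, whenever the $p$-branch is active, it immediately outputs $r_{n+1} > d/\alpha$.

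Once $\rho_V \in L^{r}(\R^d)$ with $r > d/\alpha$, interpolating with $\rho_V \in L^q(\R^d)$ gives $\rho_V \in L^{r'}(\R^d)$ for every $r' \in [q, r]$, so I can pick $r'$ with $\gamma := \alpha - d/r' \in (0,1)$, and the classical H\"older regularity for Riesz potentials cited in the paper (\cite{P55}*{Theorem 2}) yields $I_\alpha*\rho_V \in C^{0,\gamma}(\R^d)$, concluding the proof. The main obstacle is the finite--step termination of the bootstrap, which reduces to the fixed--point analysis sketched above together with the twofold size condition on $p$: namely $p \ge t_0$ (to make the interpolation intervals overlap) and $(q-1)p > d/\alpha$ (so the $p$-branch of the $\min$ cannot trap the sequence below $d/\alpha$).
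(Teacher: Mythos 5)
Your proposal is correct and follows essentially the same bootstrap as the paper: iterate the Euler--Lagrange identity together with HLS, and show the resulting exponent sequence cannot stall below $d/\alpha$ because the only positive fixed point of $r\mapsto (q-1)dr/(d-\alpha r)$ is $d(2-q)/\alpha$, which lies strictly below the starting value $q$ precisely because $q>\tfrac{2d}{d+\alpha}$ (the paper's ``Case 1'' and ``Case 2'' are exactly this fixed-point dichotomy, including the sub-case that the limit equals $d/\alpha$). The only difference is bookkeeping: the paper tacitly strengthens the integrability assumption on $V$ at each iteration and reads off $p$ at the end, whereas you fix $p$ upfront (with the twofold size condition) and interpolate, which is a slightly cleaner presentation of the same mechanism.
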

\begin{proof}
By \eqref{riesz_l_p},  $I_\alpha*\rho_V\in L^{t_1}(\mathbb{R}^d)$ where $\frac{1}{t_1}=\frac{1}{q}-\frac{\alpha}{d}$. Assume that $V\in L^{t_1}(\mathbb{R}^d)$. Then $\rho_V\in L^{t_1(q-1)}(\mathbb{R}^d).$ 
Moreover, we set $s_1:=t_1(q-1)>q:=s_0$
and we note that  $t_1>\frac{2d}{d-\alpha}$. 

Let's start now from $s_1$ above defined. We split the analysis into three cases:
\begin{itemize}
\item[$(i)$]  $s_1>\frac{d}{\alpha}$,
\item[$(ii)$]  $s_1=\frac{d}{\alpha}$,
\item[$(iii)$]  $s_1<\frac{d}{\alpha}$.
\end{itemize} 
If $(i)$ holds, we have that $\rho_V\in L^{q}(\R^d)\cap L^{s_1}(\R^d)$ from which  \cite{P55}*{Theorem $2$} implies H\"older continuity of $I_\alpha*\rho_V$.

\noindent If $(ii)$ holds,  we have that $\rho_V\in L^{q}(\bbr^d)\cap L^{\frac{d}{\alpha}}(\bbr^d)$ which in turn implies that $\rho_V\in L^{\frac{d}{\alpha}-\varepsilon}(\bbr^d)$ for every $\varepsilon>0$ small.  Hence, \eqref{rho_punt_2} implies that  $I_{\alpha}*\rho_V\in L^{t_{\varepsilon}}(\bbr^d)$ where $t_\varepsilon$ is defined by 
$$t_{\varepsilon}=\frac{d(d-\alpha \varepsilon)}{\alpha^2 \varepsilon}. $$  Thus, if we assume that $V\in L^{t_{\varepsilon}}(\mathbb{R}^d)$ we obtain  $\rho_V\in L^{(q-1)t_{\varepsilon}}(\mathbb{R}^d)$. Consequently, if we choose $\varepsilon$ small enough  (i.e. $0<\varepsilon<\frac{d(q-1)}{\alpha q }$) it holds that $(q-1)t_{\varepsilon}>\frac{d}{\alpha}$. Again from \cite{P55}*{Theorem $2$}   the desired H\"older regularity holds.

\noindent  If  $(iii)$ holds  then we iterate the process.
 In this way we construct a sequence $\left\{s_k\right\}_k$, $k\in \mathbb{N}$ following the rules $(i)-(ii)-(iii).$
 Thus, in view of \cite{P55}*{Theorem $2$}, it's enough to prove that the process stops, or equivalently that there exists $k\in \N$ such that $s_k\ge \frac{d}{\alpha}$. The proof goes by contradiction. Assume that the process doesn't stop. Then,  we obtain a sequence $\left\{s_k\right\}_k$ such that $s_k<\frac{d}{\alpha}$ for every $k\in \N.$ Since
the function $f$ defined by $$x\mapsto\frac{d(q-1)x}{d-\alpha x}:=f(x)$$ is increasing in the interval $(0,\tfrac{d}{\alpha})$, the inequality
    $s_{k+1}>s_{k}$ is equivalent to  the inequality $f(s_k)>f(s_{k-1})$ that is  in turn true by induction since $s_1>s_0=q$.
Therefore, the sequence $\left\{s_k\right\}_{k}$ is increasing and there exists $L:=\lim_{k\to +\infty}s_k.$

\noindent\underline{Case 1.} If $L=\frac{d}{\alpha}$ then
$$\frac{d}{\alpha}=\lim_{k\to +\infty}\frac{ds_k}{d-\alpha s_k}(q-1)=+\infty$$
which is a contradiction.

\noindent\underline{Case 2.} If $0<L<\frac{d}{\alpha}$, passing to the limit on the equation
   $$s_{k+1}=\frac{ds_{k}}{d-\alpha s_{k}}(q-1),$$
   we obtain the following relation
\begin{equation}\label{Limit_regul}
	L=\frac{d(2-q)}{\alpha}.
	\end{equation}
 On the other hand, since the sequence $\left\{s_k\right\}_k$ is increasing, the left hand side of \eqref{Limit_regul} is strictly bigger than $q$ and (by the assumption $q>\frac{2d}{d+\alpha}$) the right hand side of \eqref{Limit_regul} is strictly smaller than $\frac{2d}{d+\alpha}$ which contradicts the assumption $q>\frac{2d}{d+\alpha}$.
 
  Thus the process must end and  \cite{P55}*{Theorem $2$}  implies the regularity. Moreover, $I_\alpha*\rho_V\in L^{\infty}(\bbr^d)$. Indeed, $\rho_V\in L^t(\bbr^d)$ for some $t>\frac{d}{\alpha}$ and 
  \begin{equation}
  	\begin{split}
  		|(I_\alpha*\rho_V)(x)| & =A_\alpha\left|\int_{\bbr^d}\frac{\rho_V(x-y)}{|y|^{d-\alpha}}dy\right|\\
  		&    \le A_\alpha\left(\left\|\rho_V\right\|_{L^{q}(\bbr^d)}\left(\int_{\bbr^d\setminus B_1}\frac{dy}{|y|^{(d-\alpha)q'}}\right)^{\frac{1}{q'}}+\left\|\rho_V\right\|_{L^{t}(\bbr^d)}\left(\int_{ B_1}\frac{dy}{|y|^{(d-\alpha)t'}}\right)^{\frac{1}{t'}}\right)\\
  		& <\infty,
  	\end{split}
  \end{equation}
from which there exists $\gamma\in (0,1]$ such that $\|I_\alpha*\rho_V\|_{C^{0,\gamma}(\R^d)}<\infty$. Furthermore, from the inclusion $I_{\alpha}*\rho_V\in L^{\frac{dq}{d-\alpha q}}(\R^d)\cap C^{0,\gamma}(\R^d)$ we conclude that $I_{\alpha}*\rho_V$ vanishes at infinity.
\end{proof}
Next, to simplify the notation, in Corollaries \ref{V_cont}, \ref{non_local_regularity} and Proposition \ref{holder_riesz} below we restrict the statements to the case $0<\alpha<2$. The other cases can be obtained by an iterations of the same arguments in the spirit of \cite{Carrillo-CalcVar}*{Theorem $10$}.

\begin{corollary}\label{V_cont}
Let $0<\alpha<2$ and $q<\frac{d}{\alpha}$. Let $\rho_V\in \mathcal{H}_\alpha$ be the global minimizer. If  $V\in \mathring{H}^{\frac{\alpha}{2}}(\R^d)\cap C_b(\bbr^d)$ then $\rho_V\in C_b(\R^d)$ and  the following possibilities hold:
\begin{itemize}
	\item[$(i)$] If $\alpha\le 1$   then $I_\alpha*\rho_V\in C^{0,\gamma}(\bbr^d)$ for every $\gamma<\alpha$;
	\item [$(ii)$] If $\alpha>1$ then $I_\alpha*\rho_V\in C^{1,\gamma}(\bbr^d)$ for every $\gamma<\alpha-1$.
	\end{itemize}
In particular,  $\rho_V(x)\rightarrow 0$ as $|x|\rightarrow +\infty$ provided $V(x)\rightarrow 0$ as $|x|\rightarrow +\infty$.
\end{corollary}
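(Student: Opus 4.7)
My plan is to apply Lemma \ref{rho regularity} first to obtain an initial H\"older regularity of $I_\alpha*\rho_V$, then use the Euler--Lagrange equation to promote $\rho_V$ itself to a bounded continuous function, and finally bootstrap the regularity of the Riesz potential using the improved integrability of $\rho_V$.

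Since $V\in C_b(\R^d)\cap \mathring{H}^{\frac{\alpha}{2}}(\R^d)$, the Sobolev embedding gives $V\in L^{\frac{2d}{d-\alpha}}(\R^d)$, and interpolation with $V\in L^\infty(\R^d)$ yields $V\in L^p(\R^d)$ for every $p\ge \frac{2d}{d-\alpha}$. Hence the hypothesis of Lemma \ref{rho regularity} is satisfied and I obtain $I_\alpha*\rho_V\in C^{0,\gamma_0}(\R^d)\cap L^\infty(\R^d)$ for some $\gamma_0\in(0,1]$, vanishing at infinity (this last property is already noted in the proof of the lemma). The Euler--Lagrange identity \eqref{rho_punt_2} then reads
$$\text{sign}(\rho_V)|\rho_V|^{q-1}=V-I_\alpha*\rho_V\qquad \text{a.e.\ in}\ \R^d,$$
with a continuous and bounded right-hand side. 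Inverting through the continuous map $t\mapsto \text{sign}(t)|t|^{\frac{1}{q-1}}$ I obtain $\rho_V\in C_b(\R^d)$, and the vanishing assertion at infinity follows at once from the corresponding behaviour of $V$ and of $I_\alpha*\rho_V$.

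For the bootstrap on the regularity of $I_\alpha*\rho_V$, I exploit the fact that $\rho_V\in L^q(\R^d)\cap L^\infty(\R^d)\subset L^p(\R^d)$ for every $p\in[q,\infty]$. If $\alpha\le 1$, given $\gamma<\alpha$, I pick $p>\frac{d}{\alpha}$ sufficiently large so that $\alpha-\frac{d}{p}>\gamma$ and $\alpha-\frac{d}{p}$ is not an integer, and apply the Riesz potential regularity result \cite{P55}*{Theorem 2} (already used in Lemma \ref{rho regularity}) to deduce $I_\alpha*\rho_V\in C^{0,\alpha-\frac{d}{p}}(\R^d)\subset C^{0,\gamma}(\R^d)$. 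If $\alpha>1$, given $\gamma<\alpha-1$, I select $p>\frac{d}{\alpha-1-\gamma}$ so that $\alpha-\frac{d}{p}\in(1+\gamma,2)$, and appeal to the analogous statement in the spirit of \cite{abatangelo_2}*{Remark 5.18} to obtain $I_\alpha*\rho_V\in C^{1,\alpha-\frac{d}{p}-1}(\R^d)\subset C^{1,\gamma}(\R^d)$; the required differentiation under the integral sign is justified by the local integrability of the kernel $\nabla I_\alpha$ together with the $L^q\cap L^\infty$ control on $\rho_V$.

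The main obstacle is that the cited Riesz potential regularity estimates are typically phrased for compactly supported integrands, whereas our $\rho_V$ has global support. I would resolve this by a standard localization argument: for each $x_0\in\R^d$, split $\rho_V=\rho_V\chi_{B_2(x_0)}+\rho_V\chi_{B_2(x_0)^c}$. The first piece has compact support with $L^p$ norm uniform in $x_0$, so the compact-support theorem applies with uniform estimates. The second piece contributes a convolution integrand that is smooth on $B_1(x_0)$, uniformly bounded together with its derivatives in terms of the $L^q\cap L^\infty$ norms of $\rho_V$; this is exactly the local-type argument used in \cite{holderloc}*{Theorem 3.1}. Patching the two uniform estimates with the boundedness from step one yields the claimed regularity on the whole of $\R^d$.
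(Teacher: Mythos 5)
Your proposal is correct and follows the same overall structure as the paper's proof: Lemma~\ref{rho regularity} gives initial H\"older regularity of $I_\alpha*\rho_V$, the Euler--Lagrange identity \eqref{rho_punt_2} promotes $\rho_V$ to $C_b(\R^d)$, and the vanishing at infinity follows from the corresponding property of the Riesz potential established in Lemma~\ref{rho regularity}. The divergence is in the final bootstrap from ``$\rho_V\in C_b(\R^d)$'' to the sharp conclusions $(i)$--$(ii)$. At that point the paper simply invokes \cite{Silvestre}*{Proposition~2.9} (or \cite{holderloc}*{Theorem~3.1}), which state directly that the Riesz potential of order $\alpha$ of a bounded function gains H\"older regularity: $C^{0,\gamma}$ for every $\gamma<\alpha$ when $\alpha\le1$, and $C^{1,\gamma}$ for every $\gamma<\alpha-1$ when $\alpha>1$. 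Your route instead reuses the $L^p$-based estimates — \cite{P55}*{Theorem~2} for $\alpha\le1$ and the spirit of \cite{abatangelo_2}*{Remark~5.18} for $\alpha>1$ — together with a localization (decompose $\rho_V$ around each $x_0$ into a compactly supported piece plus a tail) to compensate for $\rho_V$ being only in $L^q\cap L^\infty$ rather than compactly supported. This reaches the same endpoint but is longer; the paper's citations already yield global estimates for bounded densities, sidestepping the splitting. Two small remarks that would tighten your version: for part $(i)$ the localization is actually superfluous, since the form of \cite{P55}*{Theorem~2} quoted in the Preliminaries is already a global statement on $\R^d$ for $L^p$ densities with a.e.\ finite Riesz potential; and for part $(ii)$ you should make explicit that \cite{P55}*{Theorem~2} itself is unusable (its hypothesis $d/p<\alpha<d/p+1$ forces $\alpha-d/p<1$), so the step to $C^{1,\gamma}$ must rest entirely on the \cite{abatangelo_2}-type result plus your localization, which is precisely why that extra argument is needed.
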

\begin{proof}
Assuming $V$ is also bounded continuous we obtain that $V\in L^{s}(\bbr^d)$ for every $s\ge \frac{2d}{d-\alpha}$. Thus, by Lemma \ref{rho regularity}, $I_\alpha*\rho_V$ is H\"older continuous of some exponent $\gamma$. 
Then, from \eqref{rho_punt_2} we obtain $\rho_V\in C_b(\bbr^d)$. This precisely implies the desired regularity of  $I_\alpha*\rho_V $ (see e.g. \cite{Silvestre}*{Proposition $2.9$} or \cite{holderloc}*{Theorem $3.1$}).
Furthermore, since $I_\alpha*\rho_V$ goes to zero, the same holds for $\rho_V$ provided $V$ goes to zero. 
\end{proof}

Next, we recall the following well known result whose proof can be found in  %\cite{Vincenzo_book}*{Proposition $1.3.1$, p. $14$} 
\cite{Ros}*{Theorem $1.1$--$(b)$, Corollary $3.5$} or \cite{Silvestre}*{Proposition $2.8$}. (See also \cite{Carrillo-CalcVar}*{eq. (3.23)}).
\begin{proposition}\label{holder_riesz} Let $0<\alpha<2.$
%and $\rho\in \mathcal{L}^{1}_{-\alpha}(\bbr^d)\cap L^{\infty}(\bbr^d)$ such that $I_\alpha*\rho\in L^{\infty}(\bbr^d)$. 
Let $\beta$ be a positive number, $[\beta]:=\max\left\{n\in \N_{0}: n<\beta\right\}$ and  $\left\{\beta\right\}:=\beta-[\beta]$.  If $\rho\in C^{[\beta],\left\{\beta\right\}}(B_1)$ with $\beta+\alpha\notin \mathbb{N}$ and $u$ solves
$$(-\Delta)^{\frac{\alpha}{2}}u=\rho\quad \text{}in\ \mathcal{D}'(B_1) $$ then there exists $C>0$ such that
\begin{equation}\label{ineq_locale}
  \|u\|_{C^{[\beta+\alpha], \left\{\beta+\alpha\right\}}({B_{1/2}})}\le C\left(\|u\|_{L^{\infty}(\bbr^d)}+\left\|\rho\right\|_{C^{[\beta],\left\{\beta\right\}}(B_1)}\right).
\end{equation}
\end{proposition}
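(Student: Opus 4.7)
The plan is a classical localisation-and-splitting argument combined with two well known ingredients: sharp Hölder lifting for Riesz potentials of compactly supported data, and interior smoothness of $\alpha$-harmonic functions. Fix a cutoff $\eta\in C_c^\infty(B_{7/8})$ with $\eta\equiv 1$ on $B_{3/4}$ and $0\le\eta\le 1$, and decompose $u=v+w$ where
$$v(x):=A_\alpha\int_{\mathbb{R}^d}\frac{\eta(y)\rho(y)}{|x-y|^{d-\alpha}}\,dy=\bigl(I_\alpha*(\eta\rho)\bigr)(x).$$
Then $(-\Delta)^{\alpha/2}v=\eta\rho$ in $\mathcal{D}'(\mathbb{R}^d)$ (by the identification of $I_\alpha$ with $(-\Delta)^{-\alpha/2}$ on compactly supported bounded data, already recalled in Section~\ref{prel}), and consequently $(-\Delta)^{\alpha/2}w=(1-\eta)\rho$ in $\mathcal{D}'(B_1)$, which vanishes identically on $B_{3/4}$.

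I would treat the two pieces separately. For the near part $v$: the density $\eta\rho$ is compactly supported and lies in $C^{[\beta],\{\beta\}}(\mathbb{R}^d)$ with norm controlled by $\|\rho\|_{C^{[\beta],\{\beta\}}(B_1)}$ (up to a constant depending on $\eta$). The classical sharp lifting result for Riesz potentials of Hölder functions — exactly where the hypothesis $\beta+\alpha\notin\mathbb{N}$ enters, to rule out the borderline Zygmund scale — yields
$$\|v\|_{C^{[\beta+\alpha],\{\beta+\alpha\}}(\mathbb{R}^d)}\le C\|\eta\rho\|_{C^{[\beta],\{\beta\}}(\mathbb{R}^d)}\le C'\|\rho\|_{C^{[\beta],\{\beta\}}(B_1)}.$$
In particular $v$ is globally bounded, with $\|v\|_{L^\infty(\mathbb{R}^d)}\lesssim\|\rho\|_{C^{[\beta],\{\beta\}}(B_1)}$.

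For the far part $w$: it is $\alpha$-harmonic on $B_{3/4}$ and satisfies the global bound $\|w\|_{L^\infty(\mathbb{R}^d)}\le\|u\|_{L^\infty(\mathbb{R}^d)}+\|v\|_{L^\infty(\mathbb{R}^d)}$. Interior regularity for $\alpha$-harmonic functions (e.g.\ \cite{garofalo}*{Theorem 12.17}, already cited in the paper, or equivalently the Poisson-kernel representation on $B_{3/4}$) gives $w\in C^\infty(B_{1/2})$ together with an estimate of the form
$$\|w\|_{C^k(B_{1/2})}\le C_k\|w\|_{L^\infty(\mathbb{R}^d)}\qquad\forall\,k\in\mathbb{N},$$
which controls in particular $\|w\|_{C^{[\beta+\alpha],\{\beta+\alpha\}}(B_{1/2})}$. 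Combining the two bounds yields the desired inequality~\eqref{ineq_locale}.

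The main obstacle is the sharp Hölder lifting for the Riesz convolution in the first step: it is easy to gain regularity $\alpha-\varepsilon$ by elementary kernel estimates, but gaining exactly $\alpha$ and landing in the correct Hölder class requires a dyadic/Littlewood--Paley decomposition of $I_\alpha*(\eta\rho)$ and the non-integer condition on $\beta+\alpha$. Since this is precisely the content of \cite{Silvestre}*{Proposition 2.8} (and of \cite{Ros}*{Corollary 3.5}), I would simply invoke that statement rather than reprove it, and direct the reader there for the technical core.
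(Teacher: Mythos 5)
The paper does not prove this proposition at all: it is stated as a ``well known result'' and handled entirely by citation to \cite{Ros}*{Theorem 1.1(b), Corollary 3.5} and \cite{Silvestre}*{Proposition 2.8}. Your sketch is nevertheless correct and, more to the point, it reproduces exactly the localisation strategy the paper \emph{does} use in the adjacent Proposition~\ref{prop_23} for $L^q$ data: write $u=v+w$ with $v=I_\alpha*(\eta\rho)$, note that $(-\Delta)^{\alpha/2}w=(1-\eta)\rho$ vanishes on the inner ball so $w$ is $\alpha$-harmonic there and hence smooth by \cite{garofalo}*{Theorem 12.17}, and control $v$ by the sharp global Hölder lifting for Riesz potentials of compactly supported $C^{[\beta],\{\beta\}}$ data. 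The bookkeeping is right: $v$ is globally bounded by $\|\rho\|_{C^{[\beta],\{\beta\}}(B_1)}$, so $\|w\|_{L^\infty}\lesssim\|u\|_{L^\infty}+\|\rho\|_{C^{[\beta],\{\beta\}}(B_1)}$, and summing the two interior estimates gives \eqref{ineq_locale}. One small inaccuracy in your final paragraph: \cite{Ros}*{Corollary 3.5} is already the \emph{interior} estimate (i.e.\ the full statement of the proposition), not merely the global Riesz lifting that your step (1) needs; the global ingredient is closer to \cite{Silvestre}*{Propositions 2.8--2.9}. In short, the proposal is a correct and self-contained reduction of the proposition to standard facts, whereas the paper simply invokes the literature; the two are compatible, and your route parallels what the paper itself does one proposition earlier.
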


\remark Proposition \ref{holder_riesz} will be applied to the Riesz potential of $\rho$. Note also that, if $\beta=1$, the space 
$C^{[\beta],\left\{\beta\right\}}(B_1)$ is understood as the space of bounded Lipschitz continuous functions on $B_1$ (cf.  \cite{Carrillo-CalcVar}*{Section 3}).  Moreover  as it has been already discussed in \cite{Carrillo-CalcVar}*{eq. $(3.24)$}, 
if we further assume that $\rho\in C^{[\beta],\left\{\beta\right\}}(\R^d)$, inequality \eqref{ineq_locale} becomes
\begin{equation}\label{ineq_globale}
	\left\|I_\alpha*\rho\right\|_{C^{[\beta+\alpha], \left\{\beta+\alpha\right\}}(\R^d)}\le C\left(\left\|I_\alpha*\rho\right\|_{L^{\infty}(\bbr^d)}+\left\|\rho\right\|_{C^{[\beta],\left\{\beta\right\}}(\R^d)}\right).
\end{equation}

%\remark \label{remark_for_reg} Under the assumptions of Proposition \ref{holder_riesz},  we can still deduce some regularity if $\alpha\in [2,d)$.  Indeed we can write
%$\alpha=\sum_{i=1}^{k}\alpha_i$, $\alpha_i\in (0,2)\setminus \mathbb{N}$. Now, let's argue in the following way. If $\beta+\sum_{i=1}^{l}\alpha_i\notin\mathbb{N}$ for every $l\in [1,k]$ we gain at least Lipschitz regularity of $I_\alpha*\rho_V$. If there exists $l\in [1,k)$ such that $\beta+\sum_{i=1}^{l}\alpha_i\in \mathbb{N}$ we consider $\bar{l}:=\inf\left\{l\in [1,k):\beta+\sum_{i=1}^{l}\alpha_i\in \mathbb{N}\right\}$. Then, we can replace $\alpha_{\bar{l}}$ with $\alpha_{\bar{l}}-\varepsilon$ and $\alpha_{\bar{l}+1}$ with $\alpha_{\bar{l}+1}+\varepsilon$. Note that since $\alpha_{\bar{l}+1}$ is not an integer $\beta+\sum_{i=1}^{\bar{l}+1}\alpha_i\notin \mathbb{N}$. Thus, we can repeat the process and build a family $\alpha_i\in (0,2)\setminus \mathbb{N}$ such that 
%$\beta+\sum_{i=1}^{l}\alpha_i\notin \mathbb{N}$ for every $l\in [1,k]$. Hence, by applying Proposition \ref{holder_riesz} we get (at least) Lipschitz regularity.

\vspace{0.3cm}

In what follows, by applying Corollary \ref{V_cont} and Proposition \ref{holder_riesz}, we show that assuming  some additional regularity of the potential $V$ yields more regularity of $\rho_V$.
In particular, Corollary \ref{non_local_regularity} proves that  if $V$ is bounded locally H\"older continuous of exponent $\gamma\in (0,1]$, and $q<q_*$ where $q_*$ is defined in \eqref{upper_q},  then $\rho_V$ is locally  H\"older continuous of exponent $\min\left\{\tfrac{\gamma}{q-1},1\right\}$, which is in general the best regularity achievable.
We also notice that if $V$ is locally Lipschitz continuous then the exponent $q_*$ coincides with the critical exponent already mentioned  in \cite{Carrillo-CalcVar}*{Theorem $8$}.

Let $0<\alpha<d$, $0<\gamma\le 1$ and $q>\frac{2d}{d+\alpha}$. We define the quantities $q_*,\gamma_*$ as follows
\begin{equation}\label{upper_q}
	q_*:=\begin{cases} \frac{2\gamma-\alpha}{\gamma-\alpha}, & \mbox{if }\alpha<\gamma, \\ +\infty, & \mbox{if }\alpha\ge \gamma,
	\end{cases}
\end{equation}
\begin{equation}\label{exp_hold}
	\gamma_*:=\text{min}\left\{\frac{\gamma}{q-1},1\right\}.
\end{equation}
Taking into account the above notations, we formulate the main regularity result of the section requiring low regularity of the potential $V$.

\begin{corollary}[H\"older regularity]\label{non_local_regularity}
Let $0<\alpha<2$. Assume $V\in \mathring{H}^{\frac{\alpha}{2}}(\R^d)\cap C^{0,\gamma}_{loc}(\mathbb{R}^d)\cap L^{\infty}(\bbr^d)$ for some $\gamma\in (0,1]$. If $q<\frac{d}{\alpha}$ then  $\rho_V$ is bounded and  locally H\"older continuous. Moreover, if we further assume $q<q_*$, then $\rho_V\in C^{0,\gamma_*}_{loc}(\bbr^d)\cap L^{\infty}(\R^d)$
where $q_*,\gamma_*$ are defined in \eqref{upper_q}-\eqref{exp_hold}.
\end{corollary}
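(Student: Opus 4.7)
The plan is a bootstrap based on the pointwise Euler--Lagrange identity \eqref{rho_punt_2}, alternating between the Riesz-potential regularity gain of Proposition \ref{holder_riesz} and composition with $F(t):=\text{sign}(t)|t|^{1/(q-1)}$. For the \emph{baseline}, since $V\in\mathring{H}^{\alpha/2}(\R^d)\cap C_b(\R^d)\cap C^{0,\gamma}_{loc}(\R^d)$ and $q<d/\alpha$, Corollary \ref{V_cont} provides $\rho_V\in C_b(\R^d)$ together with $I_\alpha\ast\rho_V\in L^\infty(\R^d)$ globally H\"older (with a $C^1$ gain when $\alpha>1$) of some positive exponent $\gamma_0$. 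Thus the right hand side of \eqref{rho_punt_2} lies in $C^{0,\min(\gamma,\gamma_0)}_{loc}$, and because $F$ is uniformly H\"older on any bounded subset of $\R$, $\rho_V$ inherits a positive local H\"older exponent $\beta_1>0$: this already proves the first assertion.

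For the \emph{bootstrap step}, assume inductively that $\rho_V\in C^{0,\beta_n}_{loc}(\R^d)\cap L^\infty(\R^d)$. Applying Proposition \ref{holder_riesz} on a family of balls produces $I_\alpha\ast\rho_V\in C^{[\beta_n+\alpha],\{\beta_n+\alpha\}}_{loc}(\R^d)$. Combined with $V\in C^{0,\gamma}_{loc}$, the right hand side of \eqref{rho_punt_2} lies in $C^{0,\min(\gamma,\beta_n+\alpha)}_{loc}$. Composing with $F$, which is locally H\"older of exponent $1/\max(1,q-1)$ on any bounded subset of $\R$, upgrades the regularity to
\begin{equation*}
\beta_{n+1}=\min\!\Bigl\{1,\ \tfrac{\min(\gamma,\beta_n+\alpha)}{\max(1,q-1)}\Bigr\}.
\end{equation*}
The sequence $(\beta_n)$ is non-decreasing and bounded above by $\gamma_*$, hence converges.

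For \emph{identifying the limit}: when $q\le 2$, $\max(1,q-1)=1$ and the recursion reduces to $\beta_{n+1}=\min(1,\gamma,\beta_n+\alpha)$, which reaches $\min(1,\gamma)=\gamma$ in finitely many steps. When $q>2$, the un-capped map $\beta\mapsto(\beta+\alpha)/(q-1)$ is an affine contraction toward $\alpha/(q-2)$, and a direct algebraic computation yields
\begin{equation*}
q<q_*\ \iff\ \alpha(q-1)>\gamma(q-2)\ \iff\ \tfrac{\alpha}{q-2}>\tfrac{\gamma}{q-1}
\end{equation*}
(automatic when $\alpha\ge\gamma$, corresponding to $q_*=+\infty$); this places the fixed point strictly above the cap $\gamma/(q-1)$, so the capped iteration is pushed through the cap and terminates at $\gamma_*=\min(1,\gamma/(q-1))$. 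The delicate point is precisely this convergence analysis for $q>2$: one must verify that $q<q_*$ is exactly the condition forcing the iteration past the cap, rather than stalling at $\alpha/(q-2)<\gamma/(q-1)$ when $q>q_*$. Some additional bookkeeping is required to interpret Proposition \ref{holder_riesz} when $\beta_n+\alpha$ crosses the integer $1$ (so that the output is $C^{1,\cdot}$ instead of $C^{0,\cdot}$), and to control the H\"older seminorm of $F$ uniformly on the bounded range of the right hand side.
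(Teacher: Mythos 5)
Your proof takes the same basic approach as the paper's: establish baseline boundedness and Hölder continuity via Lemma \ref{rho regularity} and Corollary \ref{V_cont}, then bootstrap by alternating Proposition \ref{holder_riesz} (to upgrade $I_\alpha*\rho_V$) with the Euler--Lagrange identity \eqref{rho_punt_2} and composition with $F(t)=\operatorname{sign}(t)|t|^{1/(q-1)}$ (to upgrade $\rho_V$). The presentation differs: you collapse the paper's case-by-case treatment ($\alpha<1$, $\alpha=1$, $\alpha>1$, each split into $q\le 2$ and $q>2$ with its own hand-tuned exponent sequence) into a single recursion $\beta_{n+1}=\min\bigl\{1,\ \min(\gamma,\beta_n+\alpha)/\max(1,q-1)\bigr\}$ and a fixed-point analysis, which is cleaner bookkeeping. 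Your algebraic check $q<q_*\iff\alpha/(q-2)>\gamma/(q-1)$ matches the paper's condition $\alpha(q-1)/(q-2)>\gamma$, and you correctly flag the two points that need extra care (integer crossings in Proposition \ref{holder_riesz}, and the ``does the capped iterate actually cross the cap'' analysis for $q>2$).

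One remark, which is a tension in the corollary rather than a flaw you introduced: for $q<2$ your recursion (correctly) stabilises at $\gamma$, since $\max(1,q-1)=1$; composing a $C^{0,\gamma}$ function with the $C^1$ map $F$ does not raise the Hölder exponent away from the zero set of $u_V$. Yet $\gamma_*=\min\{\gamma/(q-1),1\}$ strictly exceeds $\gamma$ whenever $q<2$ and $\gamma<1$. The paper's own proof in the $q\le 2$ subcase also terminates at $u_V\in C^{0,\gamma}_{loc}$ and then invokes \eqref{rho_punt_2}, so it proves the same $C^{0,\gamma}_{loc}$ bound you obtain; the exponent $\gamma_*$ is what the bootstrap genuinely delivers only when $q\ge 2$. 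You should state the $q<2$ conclusion as $\rho_V\in C^{0,\gamma}_{loc}$ rather than silently conflating it with $\gamma_*$.
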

\begin{proof}
The first part of the statement is a direct consequence of Lemma \ref{rho regularity} and Corollary \ref{V_cont}. For the second part, we follow the proof of  \cite{Carrillo-CalcVar}*{Theorem $8$}.

\noindent\underline{Case $\alpha<1$}. Let's start with $q\le 2$. By Lemma \ref{rho regularity} we have $I_\alpha*\rho_V\in C^{0,\beta}(\bbr^d)$ for every $\beta\in (0,\alpha).$  Then, we define the quantity
\begin{equation}\label{beta_it}
	\beta_n=\beta+(n-1)\alpha\in (1-\alpha,1) ,
\end{equation}
where $n\in \mathbb{N}$ is  such that $\frac{1}{n+1}\le \alpha\le \frac{1}{n}$. Clearly we can assume that $\beta\le \gamma$. Indeed, from \eqref{rho_punt_2}, if $\gamma<\beta$
we immediatly get $\text{sign}(\rho_V)|\rho_V|^{q-1}\in C^{0,\gamma}_{loc}(\bbr^d) $.

Assume that $\beta\le \gamma$. By the choice of $n$ and the definition of $\beta_n$ in \eqref{beta_it}, it holds that $\beta_n<1$ and $\beta_n+\alpha>1$. Thus, starting from $\beta$ and applying $(n-1)$-times Proposition \ref{holder_riesz} we get $I_\alpha*\rho_V\in C^{0,\beta_n}_{loc}(\bbr^d)$. Hence,  \eqref{rho_punt_2} yields $\text{sign}(\rho_V)|\rho_V|^{q-1}\in C^{0,\beta_n}_{loc}(\bbr^d)$. Again, if $\gamma<\beta_n$ we conclude. If not, another iteration of Proposition \ref{holder_riesz} leads to $I_\alpha*\rho_V\in C^{0,1}_{loc}(\R^d)$. The desired regularity is therefore obtained by \eqref{rho_punt_2}.

Assume now $q>2$. Let $\beta$ be any number in $(0,\alpha)$. Then, as before $I_\alpha*\rho_V\in C^{0,\beta}(\R^d)$. Moreover,  after $n$--iterations of the previous argument, we obtain that $I_\alpha*\rho_V\in C^{0,\beta_n}_{loc}(\R^d) $ where $\beta_n$ is defined by
 $$\beta_n=\sum_{j=0}^n\frac{\beta}{(q-1)^j}.$$
 Consequently, to obtain the thesis it's sufficient to prove that there exists $\beta\in (0,\alpha)$ and $\bar{n}\in \N$ such that $\beta_{\bar{n}}\ge \gamma$.
Note that the latter inequality is valid if $\beta_{\infty}>\gamma$ where $\beta_{\infty}$ is defined as
   $$\beta_{\infty}:=\sum_{j=0}^{\infty}\frac{\beta}{(q-1)^j}=\frac{\beta(q-1)}{q-2}.$$
Then,  since $\beta\in (0,\alpha)$ is arbitrary, 
it's enough to require $\alpha_{\infty}>\gamma$  which is equivalent to $q<q^*.$
Hence,  if $q<q_{*}$ then $I_{\alpha}*\rho_{V}\in C^{0,\gamma}_{loc}(\R^d)$ and so the thesis follows again from \eqref{rho_punt_2}.

\noindent\underline{Case $\alpha=1$}. By Corollary \ref{V_cont} we get $I_\alpha*\rho_V\in C^{0,\beta}_{loc}(\bbr^d)$ for every $\beta\in (0,1)$. Hence, if $\gamma<1$ the thesis follows. If $\gamma=1$ and $q\le 2$, the Euler-Lagrange equation \eqref{rho_punt_2} implies $I_{\alpha}*\rho_V\in C^{1,\beta}_{loc}(\bbr^d)$ for every $\beta\in (0,1)$ from which $\rho_V\in C^{0,1}_{loc}(\bbr^d)$ again by the Euler-Lagrange equation \eqref{rho_punt_2}. If $\gamma=1$ and $q> 2$, from \eqref{rho_punt_2} we deduce that $\rho_V\in C^{0,\frac{\beta}{q-1}}_{loc}(\R^d)$ for every $\beta\in (0,1)$. In particular, Proposition \ref{holder_riesz}  yields $I_\alpha*\rho_V\in C^{1,\frac{\beta}{q-1}}_{loc}(\bbr^d)$. The conclusion follows again by  \eqref{rho_punt_2}.

\noindent \underline{Case $\alpha>1.$} This case follows again by Proposition \ref{holder_riesz} and by using the same arguments as in the previous cases.
\end{proof}

\remark\label{regul_q_big} If $q\ge q_*$ and $\alpha<1$ we can still get information about the H\"older exponent of $I_\alpha*\rho_V$. Namely $I_\alpha*\rho_V\in C^{0,\beta}_{loc}(\bbr^d)$ for every $0<\beta<\frac{\alpha(q-1)}{q-2}$. In particular, since $\frac{\alpha(q-1)}{
q-2}>\alpha$, we improved the H\"older exponent of $I_{\alpha}*\rho_V$ from $\alpha$ to $\frac{\alpha(q-1)}{
q-2}$. This fact will be important for Proposition \ref{superlinear}, where such regularity is enough to compute the fractional Laplacian via the singular integral \eqref{sing_laplac}.

\section{Non negative minimizer}\label{sec.5}
We begin the section by minimizing the same energy functional $\mathcal{E}^{TF}_\alpha$ over the cone of non negative functions. To be precise, we minimize the energy  $\mathcal{E}^{TF}_\alpha$ over the set  $$\mathcal{H}^{+}_{\alpha}:=\left\{\rho\in \mathcal{H}_\alpha: \rho\ge 0 \right\},$$
where we recall that $\mathcal{H}_\alpha$ is defined by \eqref{dominio}.
 Note also that if $V\le 0$ in the whole of $\R^d$ we clearly obtain the inequality $$\mathcal{E}^{TF}_\alpha(\rho)\ge 0\quad \forall \rho\in \mathcal{H}^{+}_{\alpha},$$
and hence $\rho^{+}_V=0$ is the unique trivial minimizer of $\mathcal{E}^{TF}_\alpha$ in $\mathcal{H}^{+}_\alpha.$ Then, for the rest of the section we will always assume
$V$ to be an element of $\mathring{H}^{\frac{\alpha}{2}}(\R^d)+L^{q'}(\R^d)$ which is positive on a set of positive Lebesgue measure. Further restrictions will be emphasised when needed.

%Recall also that, by Corollary \ref{Lq_loc}, the energy $\mathcal{E}^{TF}_\alpha$ reads as 
%\begin{equation}\label{energy_integrable}
%    \mathcal{E}^{TF}_\alpha(\rho)=\frac{1}{q}\int_{\bbr^d}\rho^q(x)dx-\langle\rho,V\rangle+\frac{A_\alpha}{2}\int_{\bbr^d}\frac{\rho(x)\rho(y)}{|x-y|^{d-\alpha}}dxdy\quad \forall \rho\in \mathcal{H}^{+}_\alpha.
%\end{equation}
%Moreover, if  $\rho(x) V(x)\ge -|f(x)|$ for some $f\in L^{1}(\bbr^d)$, also the middle term in \eqref{energy_integrable} can be represented by an integral leading to the equality 
%\begin{equation*}
%    \mathcal{E}^{TF}_\alpha(\rho)=\frac{1}{q}\int_{\bbr^d}\rho^q(x)dx-\int_{\bbr^d}\rho(x)V(x)dx+\frac{A_\alpha}{2}\int_{\bbr^d}\frac{\rho(x)\rho(y)}{|x-y|^{d-\alpha}}dxdy.
%\end{equation*}
%To see this, we write $V$ as  $V=V_1+V_2$, where $V_1\in L^{q'}(\bbr^d)$ and $V_2\in \mathring{H}^{\frac{\alpha}{2}}(\bbr^d)$. Then, by the duality from $\mathcal{H}_\alpha$ to $(\mathcal{H}_\alpha)'$, 
%\begin{equation*}
%    \langle \rho,V \rangle=\int_{\bbr^d}\rho(x)V_1(x)dx+\langle \rho, V_2\rangle=\langle \rho,V_2\rangle =\int_{\bbr^d}\rho(x)V_1(x)dx+\int_{\R^d}\rho(x)V_2(x)dx,
%\end{equation*}
%where the last equality follows from Theorem \ref{integr_alpha_big}.

Next, in Theorem \ref{parte pos} and Theorem \ref{minimi diversi}, we study the relation between the free and  constrained minimizer

\vspace{0.3cm}

\noindent \textbf{\large{Proof of Theorem \ref{second_theorem}}.}
\begin{proof}
Once noticed that $\mathcal{H}^{+}_{\alpha}$ is weakly closed in $\mathcal{H}_\alpha$, the proofs of the existence and uniqueness follow the same lines of the proof of Theorem \ref{minimo_esiste}. In particular, \eqref{ineq_5} follows.  Moreover, by arguing as in \cite{Lieb-Loss}*{Theorem $11.13$} or  \cite{Carrillo-NA}*{Proposition 3.6}, we further obtain that the minimizer $\rho^{+}_V$ solves 
$$\rho^{+}_V=\left[V-(-\Delta)^{-\frac{\alpha}{2}}\rho^{+}_{V}\right]^{\frac{1}{q-1}}_{+}\quad \text{in}\ \mathcal{D}'(\mathbb{R}^d)$$
which conludes the proof.

\end{proof}
%\remark
%Now, similarly to what has been done in \cite{Volzone}, we can relate the pointwise equation \eqref{eq_intro} with the fractional semilinear equation
%\begin{equation}\label{PDE_intro_finale}
%    (-\Delta)^{\frac{\alpha}{2}}\varphi^{+}_{V}=[V-\varphi^{+}_{V}]^{\frac{1}{q-1}}_+\quad in\ \mathring{H}^{-\frac{\alpha}{2}}(\bbr^d).
%\end{equation}
%where $\varphi^{+}_{V}:=(-\Delta)^{-\frac{\alpha}{2}}\rho^{+}_V$. This, will be useful for proving Theorem \ref{parte pos} below.\\
%\indent By defintion $\varphi^{+}_{V}\in\mathring{H}^{-\frac{\alpha}{2}}(\bbr^d)$ and by Plancharel theorem 
%\begin{equation*}
%\begin{split}
%    \int_{\bbr^d}[V-\varphi^{+}_{V}]^{\frac{1}{q-1}}_+\psi& =\int_{\bbr^d}\rho^{+}_V \psi =\int_{\bbr^d}((-\Delta)^{-\frac{\alpha}{4}}\rho^{+}_V) (-\Delta)^{\frac{\alpha}{4}}\psi\\
%    &=\int_{\bbr^d}(-\Delta)^{\frac{\alpha}{4}}((-\Delta)^{-\frac{\alpha}{2}}\rho^{+}_V)(-\Delta)^{\frac{\alpha}{4}}\psi\\
%    & =\langle (-\Delta)^{-\frac{\alpha}{2}}\rho^{+}_V, \psi\rangle_{\mathring{H}^{\frac{\alpha}{2}}(\bbr^d)}=\langle \varphi^{+}_{V},\psi \rangle_{\mathring{H}^{\frac{\alpha}{2}}(\bbr^d)}\quad  \forall \psi\in C^{\infty}_c(\bbr^d)
%\end{split}
%\end{equation*}
%proving \eqref{PDE_intro_finale} by density of $C^{\infty}_c(\bbr^d)$ in $\mathring{H}^{\frac{\alpha}{2}}(\bbr^d)$.
%Moreover, \eqref{PDE_intro_finale} holds also pointiwse if $\rho^{+}_V$ is continuous (see \cite{ten}*{Proposition $7.2$}).

\begin{theorem}\label{minimi diversi}
Assume $q>\frac{2d}{d+\alpha}$. Let $V$ be a function in $ L^{q'}(\mathbb{R}^d)+\mathring{H}^{\frac{\alpha}{2}}(\mathbb{R}^d)$.
Let $\rho_V$, $\rho^{+}_{V}$ be respectively  the minimizer of $\mathcal{E}^{TF}_\alpha$ in $\mathcal{H}_\alpha$ and in $\mathcal{H}^{+}_{\alpha}$. Then,  $[\rho_{V}]_{+}=\rho^{+}_{V}$ if and only if $\rho_V\ge 0$.
\end{theorem}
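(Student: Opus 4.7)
The plan splits into the two implications. The direction $(\Leftarrow)$ is immediate from uniqueness: if $\rho_V\ge 0$ then $\rho_V\in\mathcal{H}^+_\alpha$ and, already minimizing $\mathcal{E}^{TF}_\alpha$ on the larger class $\mathcal{H}_\alpha$, it also minimizes on $\mathcal{H}^+_\alpha$; the uniqueness part of Theorem~\ref{second_theorem} then forces $\rho_V=\rho^+_V=[\rho_V]_+$. For the main direction $(\Rightarrow)$ the strategy is to derive a contradiction from $\rho^-:=[\rho_V]_-\not\equiv 0$ by comparing the Euler--Lagrange relations satisfied by $\rho_V$ and by $\rho^+_V$.

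Concretely, assume $[\rho_V]_+=\rho^+_V$, set $\rho^\pm:=[\rho_V]_\pm$, and introduce
\begin{equation*}
u:=V-(-\Delta)^{-\frac{\alpha}{2}}\rho_V,\qquad u^+:=V-(-\Delta)^{-\frac{\alpha}{2}}\rho^+_V.
\end{equation*}
Reading the identities \eqref{rho_a.e} and \eqref{eq_intro} at the level of a.e.-defined functions yields $(\rho^+)^{q-1}=[u]_+$ and $(\rho^+_V)^{q-1}=[u^+]_+$, so the assumption $[\rho_V]_+=\rho^+_V$ translates into $[u]_+=[u^+]_+$ a.e. By linearity of $(-\Delta)^{-\frac{\alpha}{2}}$ one has $u-u^+=(-\Delta)^{-\frac{\alpha}{2}}\rho^-=:w$, where $\rho^-=\rho^+_V-\rho_V\in\mathring{H}^{-\frac{\alpha}{2}}(\mathbb{R}^d)$ because both $\rho^+_V$ and $\rho_V$ do.

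The next step is to show $w\ge 0$ a.e., with $w(x)>0$ at every point where it is finite whenever $\rho^-\not\equiv 0$. Since $\rho^-\ge 0$ lies in $\mathring{H}^{-\frac{\alpha}{2}}(\mathbb{R}^d)\cap L^q_{loc}(\mathbb{R}^d)$ with $q>\frac{2d}{d+\alpha}$, Corollary~\ref{Lq_loc} identifies $w$ with the Riesz potential $I_\alpha\ast\rho^-$, and the strict positivity of the Riesz kernel supplies the pointwise strict positivity of $w$ once $\rho^-$ is not identically zero.

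Finally I combine $u=u^++w$, $w\ge 0$ and $[u]_+=[u^+]_+$. On $\{u^+>0\}$ we have $[u^+]_+=u^+$ while $u=u^++w\ge u^+>0$, so $[u]_+=u^++w$; comparing the two expressions forces $w=0$ a.e.\ on $\{u^+>0\}=\{\rho^+_V>0\}$. The standing sign assumption on $V$ (namely that $V$ is positive on a set of positive measure, made explicit at the start of Section~\ref{sec.5}) rules out $\rho^+_V\equiv 0$, so $\{\rho^+_V>0\}$ has positive Lebesgue measure. This is incompatible with the pointwise strict positivity of $w$ unless $\rho^-\equiv 0$, which is the desired conclusion $\rho_V\ge 0$. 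The main technical hurdle in the argument is precisely the identification $w=I_\alpha\ast\rho^-$ together with its pointwise interpretation, handled by the Brezis--Browder type tools of Section~\ref{prel}, in particular Corollary~\ref{Lq_loc}.
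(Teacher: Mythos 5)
Your argument is correct in spirit but takes a genuinely different route from the paper, and the route you chose carries restrictions that the paper's proof avoids.

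The paper's proof is purely variational and never passes to a pointwise representation. Starting from the Euler--Lagrange relation \eqref{prima eq} for $\rho_V$ and the variational inequality \eqref{seconda eq} for $\rho^+_V$, it eliminates the term $\langle\varphi,V\rangle$ and uses the algebraic identities $\rho^+_V-\rho_V=[\rho_V]_-$ and $(\rho^+_V)^{q-1}-\operatorname{sign}(\rho_V)|\rho_V|^{q-1}=[\rho_V]_-^{q-1}$ to arrive at
\begin{equation*}
\int_{\mathbb{R}^d}[\rho_V]_-^{q-1}\varphi+\langle[\rho_V]_-,\varphi\rangle_{\mathring{H}^{-\frac{\alpha}{2}}(\mathbb{R}^d)}\ge 0\qquad\forall\varphi\in\mathcal{H}^+_\alpha.
\end{equation*}
This is exactly the Euler--Lagrange inequality for the strictly convex functional $\mathcal{F}(\rho)=\tfrac{1}{q}\int|\rho|^q+\tfrac{1}{2}\|\rho\|^2_{\mathring{H}^{-\alpha/2}}$ on $\mathcal{H}^+_\alpha$, whose unique minimizer is $0$, and strict convexity forces $[\rho_V]_-=0$. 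No constraint on $\alpha$, no positivity hypothesis on $V$, no pointwise structure needed.

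Your proposal reaches the same conclusion but buys it with two extra ingredients that the stated hypotheses do not deliver in full generality. First, you need $(-\Delta)^{-\frac{\alpha}{2}}\rho^-$ to coincide a.e.\ with the Riesz potential $I_\alpha\ast\rho^-$ so that strict positivity of the kernel gives $w>0$. The paper establishes this identification only when $q<\tfrac{d}{\alpha}$ (via \eqref{riesz_pote}) or when $\alpha\in(0,2]$ and $\rho^-\ge 0$ (via the Fukushima reference around \eqref{key_norm}); Corollary~\ref{Lq_loc}, which you cite, gives finiteness of $I_\alpha\ast\rho^-$ and the energy identity $\|\rho^-\|^2=\mathcal{D}_\alpha(\rho^-,\rho^-)$, but not by itself the pointwise equality $I_\alpha\ast\rho^-=(-\Delta)^{-\alpha/2}\rho^-$. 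The theorem as stated allows all $\alpha\in(0,d)$ and all $q>\tfrac{2d}{d+\alpha}$, so there is a residual range (roughly $\alpha>2$ and $q\ge d/\alpha$) where your key step is not covered by the paper's tools. Second, your contradiction needs $\{\rho^+_V>0\}$ to have positive measure, which you get from the standing positivity-of-$V$ assumption announced at the start of Section~\ref{sec.5}; the paper's argument does not touch this assumption. Your approach is therefore correct under those restrictions, but the paper's variational argument is both shorter and more general, since it sidesteps the pointwise representation of $(-\Delta)^{-\alpha/2}$ entirely.
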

\begin{proof}
If $\rho_V$ is nonnegative the conclusion is trivial since $\rho_V$ is a minimizer in the larger set $\mathcal{H}_\alpha$ and by uniqueness it has to coincide with $\rho^{+}_V$.  On the contrary,  assume $[\rho_{V}]_{+}=\rho^{+}_{V}$.  By definition $\rho_V$ and $\rho^{+}_{V}$ satisfy respectively 
\begin{equation}\label{prima eq}
\int_{\mathbb{R}^d}\text{sign}(\rho_V(x))|\rho_V(x)|^{q-1}\varphi(x)dx-\langle \varphi, V\rangle+\langle \rho_V, \varphi\rangle_{\mathring{H}^{-\frac{\alpha}{2}}(\bbr^d)}=0\quad \forall \varphi\in \mathcal{H}_{\alpha},
\end{equation}
\begin{equation}\label{seconda eq}
\int_{\mathbb{R}^d}(\rho^{+}_V)^{q-1}(x)\varphi(x)dx-\langle \varphi, V\rangle+  \langle \rho^{+}_V, \varphi\rangle_{\mathring{H}^{-\frac{\alpha}{2}}(\bbr^d)}\ge 0\quad \forall\varphi\in \mathcal{H}^{+}_\alpha.
\end{equation}
In view of \eqref{prima eq}, we rewrite the quantity $\langle\varphi, V\rangle$ and we replace it into equation \eqref{seconda eq} to obtain
$$
\int_{\mathbb{R}^d}[\rho_V]^{q-1}_{+}\varphi+ \langle [\rho_V]_{+}, \varphi\rangle_{\mathring{H}^{-\frac{\alpha}{2}}(\R^d)}- \langle \rho_V, \varphi\rangle_{\mathring{H}^{-\frac{\alpha}{2}}(\bbr^d)}-\int_{\mathbb{R}^d}\text{sign}(\rho_V)|\rho_V|^{q-1}\varphi\ge 0\quad \forall\varphi\in \mathcal{H}^{+}_\alpha,
$$
which implies that 
\begin{equation}\label{terza eq}
	\int_{\mathbb{R}^d}[\rho_V]^{q-1}_{-}\varphi+ \langle [\rho_V]_{-}, \varphi\rangle_{\mathring{H}^{-\frac{\alpha}{2}}(\bbr^d)}\ge 0\quad\forall\varphi\in \mathcal{H}^{+}_\alpha.
\end{equation}
Hence, if $[\rho_V]_{+}=\rho^{+}_V$,  the function $[\rho_V]_{-}$ belongs to $ \mathring{H}^{-\frac{\alpha}{2}}(\bbr^d)\cap L^{q}(\bbr^d)$ and  satisfies \eqref{terza eq} that is the Euler--Lagrange equation related to the convex functional 
$$\mathcal{F}(\rho)=\frac{1}{q}\int_{\mathbb{R}^d}{|\rho(x)|}^q+\frac{1}{2}\left\|\rho\right\|_{\mathring{H}^{-\frac{\alpha}{2}}(\bbr^d)}^{2}.$$
Clearly,  the unique minimizer of $\mathcal{F}$ in $\mathcal{H}^{+}_{\alpha}$ is the zero function and so $[\rho_V]_{-}$ (which is the minimizer since it satisfies \eqref{terza eq}) must be the zero function.
\end{proof}

Similarly to what has been in the previous section, we can prove regularity of $\rho^{+}_V$. We formulate the following:
\begin{corollary}\label{regularity_pos}
Let $\rho^{+}_V\in \mathcal{H}^{+}_\alpha$ be the non negative minimizer.  If  $V\in\mathring{H}^{\frac{\alpha}{2}}(\R^d)\cap C_b(\bbr^d)$ 
then 
%$I_\alpha*\rho^{+}_V\in C^{0,\gamma}(\bbr^d)$ for every $\gamma\in (0,\min\left\{1,\alpha\right\})$ and 
$\rho^{+}_V\in C(\bbr^d)$.
\end{corollary}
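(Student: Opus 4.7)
The plan is to re-run the regularity argument used for the free minimizer in Theorem \ref{regularity_for_intro}, but starting from the alternative Euler--Lagrange identity
\begin{equation}\label{EL_pos_sketch}
\rho^{+}_V=\bigl[V-(-\Delta)^{-\frac{\alpha}{2}}\rho^{+}_{V}\bigr]^{\frac{1}{q-1}}_{+}\quad \text{in}\ \mathcal{D}'(\R^d)
\end{equation}
furnished by Theorem \ref{second_theorem}. Since $\rho^{+}_V\in \mathcal{H}_\alpha=L^{q}(\R^d)\cap \mathring{H}^{-\frac{\alpha}{2}}(\R^d)$, every statement about $(-\Delta)^{-\frac{\alpha}{2}}\rho_V$ established in Section \ref{sec.4} applies verbatim to $(-\Delta)^{-\frac{\alpha}{2}}\rho^{+}_V$: those arguments only use the membership of the minimizer in this space. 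The single structural modification is the replacement of the outer composition $t\mapsto \mathrm{sign}(t)|t|^{q-1}$ by $t\mapsto [t]^{\frac{1}{q-1}}_{+}$, which is likewise continuous on $\R$, so continuity transfers through the composition in \eqref{EL_pos_sketch}.

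In the supercritical regime $q\ge \frac{d}{\alpha}$, I would appeal to Proposition \ref{prop_23} (iterated as in the proof of Corollary \ref{q=d_over_alpha} when $\alpha-\frac{d}{q}\in \N$) to conclude $(-\Delta)^{-\frac{\alpha}{2}}\rho^{+}_V\in C(\R^d)$. Combining this with continuity of $V$ and of $[\cdot]^{\frac{1}{q-1}}_{+}$ in \eqref{EL_pos_sketch} yields $\rho^{+}_V\in C(\R^d)$.

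In the subcritical regime $q<\frac{d}{\alpha}$, I would repeat the bootstrap of Lemma \ref{rho regularity} applied to \eqref{EL_pos_sketch}. Since $V\in L^{\infty}(\R^d)\cap L^{\frac{2d}{d-\alpha}}(\R^d)$ belongs to every intermediate $L^{p}(\R^d)$ with $p\ge \frac{2d}{d-\alpha}$, and since the pointwise bound $\bigl|[t]^{\frac{1}{q-1}}_{+}\bigr|\le |t|^{\frac{1}{q-1}}$ ensures that at each step the non-negative minimizer picks up exactly the same $L^p$-integrability as in the free case, the sequence $\{s_n\}$ constructed there terminates at some $s_n\ge \frac{d}{\alpha}$. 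An application of \cite{P55}*{Theorem $2$} then gives H\"older (hence continuous) regularity of $I_{\alpha}*\rho^{+}_V$, and \eqref{EL_pos_sketch} together with continuity of $V$ and of $[\cdot]^{\frac{1}{q-1}}_{+}$ concludes that $\rho^{+}_V\in C(\R^d)$.

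The proof is essentially a line-by-line transcription of Section \ref{sec.4}, and the only point that requires verification --- the sole potential obstacle --- is that the $L^p$-integrability gains at each step of the Lemma \ref{rho regularity} bootstrap survive the substitution of the composition; this is immediate from the pointwise inequality above, which makes every integral estimate in that chain still valid when $\mathrm{sign}(\cdot)|\cdot|^{q-1}$ is replaced by $[\cdot]^{\frac{1}{q-1}}_{+}$.
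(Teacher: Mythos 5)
Your proof is correct and follows essentially the same route as the paper: both reduce the constrained case to the regularity machinery of Section \ref{sec.4} (Proposition \ref{prop_23}, Corollaries \ref{reg_non_pos}, \ref{q=d_over_alpha}, \ref{V_cont}, Lemma \ref{rho regularity}), observing that the substitution of the composition $\mathrm{sign}(\cdot)|\cdot|^{1/(q-1)}$ by $[\cdot]^{1/(q-1)}_{+}$ preserves both the pointwise bounds used in the bootstrap and continuity at the final step. The paper states this more tersely by noting that $[\cdot]_{+}$ is Lipschitz; your explicit pointwise inequality $\bigl|[t]^{1/(q-1)}_{+}\bigr|\le |t|^{1/(q-1)}$ is the same observation.
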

\begin{proof}
First of all let's notice that $\cdot \mapsto [\cdot]_{+}$ is a Lipschitz continuous functions. Thus, 
if $q<\frac{d}{\alpha}$ the proof follows the same argument as in Corollary \ref{V_cont} while if $q\ge \frac{d}{\alpha}$, the continuity is a  direct consequence of Corollaries \ref{reg_non_pos} and \ref{q=d_over_alpha}.
\end{proof}

\remark In view of the argument provided in Corollary \ref{regularity_pos}, it's easy to see that we can prove that the analogous version of Corollary \ref{non_local_regularity} for the constrained minimizer $\rho^{+}_V$.
%\begin{corollary}[H\"older regularity]\label{non_local_regularity_2}
%	Assume $V\in C^{0,\gamma}_{loc}(\mathbb{R}^d)\cap L^{\infty}(\bbr^d)$ for some $\gamma\in (0,1]$. If $q<\frac{d}{\alpha}$ then  $\rho^{+}_V$ is locally H\"older continuous. Moreover, if we further assume $q<q_*$, then $\rho^{+}_V\in C^{0,\gamma_*}_{loc}(\bbr^d)$
%	where $q_*,\gamma_*$ are defined  respectively in \eqref{upper_q}-\eqref{exp_hold}.
%\end{corollary}

Next, we prove a more general version of Lemma \ref{comparison} that will be used for the proof of Theorem \ref{parte pos}.
\begin{lemma}\label{confronto_generale}
Let $u,v\in \mathring{H}^{\frac{\alpha}{2}}(\mathbb{R}^d)$, $\alpha\in (0,2]$, and $f:\R^d\times \R\to \R_{+}$ such that 
$t\mapsto f(x,t)$ is nondecreasing for a.e. $x\in \R^d$.
Assume that $f(x,u(x)), f(x,v(x))\in  L^{\frac{2d}{d+\alpha}}(\bbr^d)$  and that there exists a smooth domain  $\Omega\subset \mathbb{R}^d$ where 
\begin{equation}\label{ineq_1}
(-\Delta)^{\frac{\alpha}{2}}u+f(x,u)\ge 0\quad \text{in}\ \mathcal{D'}(\Omega), 
\end{equation}
\begin{equation} \label{ineq_2}
(-\Delta)^{\frac{\alpha}{2}}v+f(x,v)\le 0\quad \text{in}\ \mathcal{D}'(\Omega).
\end{equation}
If $\mathbb{R}^d\setminus \Omega\neq \emptyset$ we further require $u\ge v$ in $\mathbb{R}^d\setminus \Omega$. Then,  $u\ge v$ in $\R^d$.
\end{lemma}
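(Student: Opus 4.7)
The plan is to adapt the argument used in the proof of Lemma \ref{comparison}, replacing the nonlinear term $\mathrm{sign}(u)|u|^{1/(q-1)}$ by the general $f(x,u)$. Since only the monotonicity of $t\mapsto f(x,t)$ was used in that earlier proof, the overall strategy should carry through with minor modifications.

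First I would subtract \eqref{ineq_2} from \eqref{ineq_1} to obtain
\begin{equation*}
(-\Delta)^{\frac{\alpha}{2}}(v-u)+\big(f(x,v)-f(x,u)\big)\le 0\qquad \text{in } \mathcal{D}'(\Omega).
\end{equation*}
By density of $C^{\infty}_c(\Omega)$ in $\mathring{H}^{\alpha/2}_0(\Omega)$ and the fact that the difference $f(x,v)-f(x,u)$ lies in $L^{2d/(d+\alpha)}(\R^d)\subset \mathring{H}^{-\alpha/2}(\R^d)$, this distributional inequality extends to
\begin{equation*}
\langle v-u,\varphi\rangle_{\mathring{H}^{\frac{\alpha}{2}}(\R^d)}+\langle f(\cdot,v)-f(\cdot,u),\varphi\rangle\le 0\qquad \forall\varphi\in \mathring{H}^{\frac{\alpha}{2}}_0(\Omega),\ \varphi\ge 0,
\end{equation*}
where the second pairing is the $\mathring{H}^{\alpha/2}$--$\mathring{H}^{-\alpha/2}$ duality.

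Next, the plan is to test against $\varphi=[v-u]_+$. Since $\alpha\in(0,2]$ the inequality $[\,[v-u]_+]_{\mathring{H}^{\alpha/2}}\le [v-u]_{\mathring{H}^{\alpha/2}}$ from \eqref{scalar_H} guarantees $[v-u]_+\in \mathring{H}^{\alpha/2}(\R^d)$, and the hypothesis $u\ge v$ on $\R^d\setminus\Omega$ implies that $[v-u]_+$ is supported in $\overline{\Omega}$; smoothness of $\partial\Omega$ together with Remarks \ref{H_0} and \ref{H_00} then places it in $\mathring{H}^{\alpha/2}_0(\Omega)$. Using monotonicity of $f(x,\cdot)$, the term $\bigl(f(x,v)-f(x,u)\bigr)[v-u]_+$ is non-negative pointwise, and the integrability assumption on $f(x,u)$, $f(x,v)$ together with $[v-u]_+\in L^{2d/(d-\alpha)}(\R^d)$ (Sobolev embedding) allows us to identify
\begin{equation*}
\langle f(\cdot,v)-f(\cdot,u),[v-u]_+\rangle=\int_{\R^d}\bigl(f(x,v(x))-f(x,u(x))\bigr)[v-u]_+(x)\,dx\ge 0.
\end{equation*}

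Combining, we obtain
\begin{equation*}
0\ge \langle v-u,[v-u]_+\rangle_{\mathring{H}^{\frac{\alpha}{2}}(\R^d)}\ge \bigl\|[v-u]_+\bigr\|_{\mathring{H}^{\frac{\alpha}{2}}(\R^d)}^2,
\end{equation*}
where the last inequality uses $\langle [v-u]_+,[v-u]_-\rangle_{\mathring{H}^{\alpha/2}(\R^d)}\le 0$ from \eqref{scalar_H} (and the corresponding equality in the case $\alpha=2$). Hence $[v-u]_+\equiv 0$, i.e.\ $u\ge v$ in $\R^d$. The main obstacle I expect is the careful justification that $[v-u]_+\in \mathring{H}^{\alpha/2}_0(\Omega)$ (needed to extend the distributional inequality to this test function) when $\Omega$ is unbounded—this is precisely where smoothness of $\partial \Omega$ and the identification $\widetilde{\mathring{H}^{\alpha/2}}(\Omega)=\mathring{H}^{\alpha/2}_0(\Omega)$ from Remark \ref{H_0} enter, and where the restriction $\alpha\in(0,2]$ is essential so that $[\cdot]_{\pm}$ acts boundedly on $\mathring{H}^{\alpha/2}(\R^d)$.
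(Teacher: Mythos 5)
Your proposal is correct and follows essentially the same approach as the paper, which simply states that the proof is analogous to that of Lemma \ref{comparison}; you have carried out that adaptation faithfully, with the monotonicity of $f(x,\cdot)$ playing the role of the monotonicity of $t\mapsto \text{sign}(t)|t|^{1/(q-1)}$, and the integrability hypothesis $f(x,u),f(x,v)\in L^{2d/(d+\alpha)}(\R^d)$ replacing the explicit $L^q$--$L^{q'}$ pairing in identifying the duality with a Lebesgue integral.
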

\begin{proof}
	The proof is similar to the one of Lemma \ref{comparison}.
\end{proof}
\begin{proposition}\label{segno}
Let $V\in  \mathring{H}^{\frac{\alpha}{2}}(\R^d)\cap C_b(\mathbb{R}^d)$, $\alpha\in (0,d)$. Assume that
\begin{equation}\label{decay}\lim_{|x|\to +\infty}|x|^{d-\alpha}V(x)=0.
\end{equation}
Then $\rho_V$ is sign changing.
\end{proposition}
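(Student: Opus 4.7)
The plan is to argue by contradiction. Suppose $\rho_V$ does not change sign. By the bijectivity of the map $V\mapsto \rho_V$ noted in the remark following Theorem \ref{minimo_esiste}, replacing $V$ by $-V$ if needed (which preserves \eqref{decay}), I may assume $\rho_V\ge 0$ in $\R^d$. Under the standing convention $V\not\equiv 0$, this bijection also forces $\rho_V\not\equiv 0$, so there exists a ball $B_r(x_0)$ on which $m:=\int_{B_r(x_0)}\rho_V(y)\,dy>0$.

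The core idea is to use the pointwise form of the Euler--Lagrange equation \eqref{u_quasi_ovunque} to produce a lower bound on $V$ that is incompatible with \eqref{decay}. Since $\alpha\in(0,2]$ and $\rho_V\ge 0$, Corollary \ref{cor_s_small} together with the identification of $I_\alpha*\rho_V$ with $(-\Delta)^{-\frac{\alpha}{2}}\rho_V$ discussed before Corollary \ref{Lq_loc} ensures that $I_\alpha*\rho_V$ is a convergent integral that coincides with $(-\Delta)^{-\frac{\alpha}{2}}\rho_V$. The regularity results of Section \ref{sec.4} (Corollary \ref{V_cont} if $q<\frac{d}{\alpha}$, Corollaries \ref{reg_non_pos}--\ref{q=d_over_alpha} otherwise) applied to $V\in \mathring{H}^{\frac{\alpha}{2}}(\R^d)\cap C_b(\R^d)$ give $\rho_V\in C(\R^d)$, so \eqref{u_quasi_ovunque} holds pointwise everywhere:
\[
\rho_V(x)^{q-1}=V(x)-(I_\alpha*\rho_V)(x)\qquad \forall\,x\in\R^d.
\]
As $\rho_V\ge 0$, the left--hand side is non--negative, and hence $V(x)\ge (I_\alpha*\rho_V)(x)$ on all of $\R^d$.

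It remains to lower bound the Riesz potential by a positive multiple of $|x|^{-(d-\alpha)}$. Keeping only the contribution coming from $B_r(x_0)$, for every $x$ with $|x|>|x_0|+r$ I obtain
\[
(I_\alpha*\rho_V)(x)\ge \int_{B_r(x_0)}\frac{A_\alpha\,\rho_V(y)}{|x-y|^{d-\alpha}}\,dy\ge \frac{A_\alpha\, m}{(|x|+|x_0|+r)^{d-\alpha}}.
\]
Multiplying by $|x|^{d-\alpha}$ and letting $|x|\to+\infty$ yields $\liminf_{|x|\to+\infty}|x|^{d-\alpha}V(x)\ge A_\alpha m>0$, contradicting \eqref{decay}. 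The only subtle step I anticipate is the justification of the pointwise Euler--Lagrange equation without a priori restriction on $q$, but this is delivered by the identification $I_\alpha*\rho_V=(-\Delta)^{-\frac{\alpha}{2}}\rho_V$ valid for $\alpha\in(0,2]$ and non--negative densities, combined with the continuity of $\rho_V$ furnished by Section \ref{sec.4}.
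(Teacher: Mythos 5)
Your proof is correct and follows essentially the same route as the paper's: reduce to a non-negative minimizer, invoke continuity from Section \ref{sec.4} and the identification $I_\alpha*\rho_V=(-\Delta)^{-\frac{\alpha}{2}}\rho_V$ (Corollaries \ref{Lq_loc}, \ref{cor_s_small}) to read the Euler--Lagrange equation pointwise, bound the Riesz potential from below by a positive multiple of $|x|^{-(d-\alpha)}$, and contradict the decay hypothesis \eqref{decay}. The only cosmetic difference is that you dispatch the case $\rho_V\le 0$ via the explicit symmetry $\rho_{-V}=-\rho_V$ (a perfectly valid reduction, since both \eqref{decay} and the conclusion are invariant under $V\mapsto -V$), whereas the paper simply remarks that the same contradiction arises.
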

\begin{proof}
First of all we recall that the basic assumption on $V$ is to be positive on a set of positive Lebesgue measure. In particular, $V$ (and hence $\rho_V$) is not identically zero.

Assume now that $\rho_V\ge 0$.  From \cite{BB}*{Corollary 5.2}, \ref{reg_non_pos}, \ref{q=d_over_alpha} and \ref{V_cont} we infer  that $\rho_V$ is a continuous solution of 
\begin{equation}\label{rho_puntuale}
    \rho^{q-1}_V=V-I_{\alpha}*\rho_V\quad \text{in}\ \bbr^d.
\end{equation}
Moreover, if $|x|$ is sufficiently large, there exist $C,R>0$ independent on $x$ such that
\begin{equation}\label{liminf}
\begin{split}
    (I_{\alpha}*\rho_V)(x)& =A_\alpha\int_{\bbr^d}\frac{\rho_V(y)}{|x-y|^{d-\alpha}}dy\ge \frac{A_\alpha}{(2|x|)^{d-\alpha}}\int_{B(x,2|x|)}\rho_V(y)dy\\
    & \ge \frac{C}{|x|^{d-\alpha}}\int_{B_R}\rho_V(y)dy.
\end{split}
    \end{equation}
   In view of \eqref{liminf} we obtain 
   
   \begin{equation}
       \liminf_{|x|\to +\infty}|x|^{d-\alpha} (I_\alpha*\rho_V) (x)>0.
   \end{equation}
   Hence by, \eqref{decay}, \eqref{rho_puntuale} and \eqref{liminf}
   \begin{equation}
       0\le \limsup_{|x|\to +\infty}|x|^{d-\alpha}\rho_V^{q-1}(x)=\lim_{|x|\to +\infty}|x|^{d-\alpha}V(x)-\limsup_{|x|\to +\infty}|x|^{d-\alpha}(I_{\alpha}*\rho_V(x))<0,
   \end{equation}
   that is a contradiction.
   The same contradiction arises assuming $\rho_V\le 0$.
\end{proof}
  
 \indent  In what follows we are going to prove Theorem \ref{parte pos} by further assuming $V$ being a continuous function with compact support.
% It's useful to recall that by Kato--type inequality we understand a relation  of the form
% \begin{equation}\label{kato_non_locale}
% 	(-\Delta)^{\frac{\alpha}{2}}[\varphi]_{+}\le \chi_{\left\{\varphi\ge 0\right\}}(-\Delta)^{\frac{\alpha}{2}} \varphi\quad \text{in}\  \mathcal{D}'(\bbr^d).
% \end{equation}
% In \cite{kato2} and \cite{kato-ineq}*{Theorem $1.2$}, the authors  proved \eqref{kato_non_locale} provided the pointwise relation \eqref{laplac_for_intro} holds  and references therein).
To this aim, we will employ a fractional Kato--type inequality. For the proof of such inequality in our setting we refer to the proof \cite{Vincenzo}*{Theorem $3.3$} which adapts to our case by simply taking $m=0$.
Moreover,  for a local version  (case $\alpha=2$) we refer for istance to \cite{Brezis_1}*{Lemma $A.1$}.
 Note also that, the condition on $V$ being compactly supported in particular implies
  \begin{equation}\label{V_green}
 	\lim_{|x|\to +\infty}|x|^{d-\alpha}V(x)=0,
 \end{equation}
from which  $\rho_V$ is sign changing (Proposition \ref{segno}) and the statement of Theorem \ref{parte pos} is not trivial.
 (Note that if $\rho_V$ is non negative then Theorem \ref{parte pos} is trivial since $\rho_V=\rho^{+}_V$ by uniqueness.)
% On the other hand, if $$\liminf_{|x|\to +\infty}|x|^{d-\alpha}V(x)>0$$
% then $(-\Delta)^{\frac{\alpha}{2}}V$ can not be non negative (recall also that $(-\Delta)^{\frac{\alpha}{2}}V\ge 0$ implies non negativity of $\rho_V$ by Proposition \ref{prop_intro}). 
% 
% Before doing so, let's  briefly underline the meaning and purpose of some additional assumptions. Some of them will be slighlty relaxed in Remark \ref{V_slow}. The condition  $\alpha\in (0,1)$ ensurs that the functions $I_\alpha*\rho_V$, $I_\alpha*\rho^{+}_V$ are enough regular.
 
%  Moreover, the  result in  \cite{Frank}*{(C.$8$)} will be important to prove a Kato-type inequality. 

\vspace{0.3cm}

\noindent \textbf{\large{Proof of Theorem \ref{parte pos}}.}
\begin{proof}
By Corollaries \ref{reg_non_pos}, \ref{q=d_over_alpha}, \ref{V_cont} and \ref{regularity_pos}, the functions $\rho_V$ and $\rho^{+}_V$ are continuous. Moreover, by Proposition \ref{segno}, we have that $[\rho_V]_{+}\not\equiv \rho^{+}_V$ and they solve
\begin{equation}\label{sist_minimi}
\begin{split}
    \rho^{+}_V &=[V-(-\Delta)^{-\frac{\alpha}{2}}\rho^+_V]^{\frac{1}{q-1}}_{+}\quad \text{in}\ \mathcal{D}'(\bbr^d),\\
    \text{sign}(\rho_V)|\rho_V|^{q-1} &=V-(-\Delta)^{-\frac{\alpha}{2}}\rho_V\quad\quad \ \ \, \,  \text{in}\ \mathcal{D}'(\bbr^d).
\end{split}
\end{equation}
Now, we introduce the functions $\varphi_V:=(-\Delta)^{-\frac{\alpha}{2}}\rho_V$ and  $\varphi_V^+:=(-\Delta)^{-\frac{\alpha}{2}}\rho^{+}_V$.  Consequently, we obtain
%Note that by combining Corollary \ref{V_cont} with  \cite{ten}*{Proposition $7.2$} and  \cite{garofalo}*{Proposition $2.15$},  we  get the pointwise relations
%\begin{equation}\label{point_laplac}
%    \begin{split}
%        (-\Delta)^{\frac{\alpha}{2}}\varphi_V & =\rho_V\quad \forall x\in \bbr^d,\\
%         (-\Delta)^{\frac{\alpha}{2}}\varphi_V^+ &=\rho^+_V\quad \forall x\in \bbr^d.
%    \end{split}
%\end{equation}
\begin{equation}\label{sistema_diff}
	\begin{split}
		(-\Delta)^{\frac{\alpha}{2}}\varphi_V  &\le [V-\varphi_V]_{+}^{\frac{1}{q-1}}\quad\ \text{in}\ \mathcal{D}'(\bbr^d),\\
		(-\Delta)^{\frac{\alpha}{2}}\varphi^{+}_V & =[V-\varphi^{+}_V]_{+}^{\frac{1}{q-1}}\quad\ \text{in}\ \mathcal{D}'(\bbr^d).
	\end{split}
\end{equation}
%Moreover, since by assumption $[\rho_V]_{+}\in \mathring{H}^{-\frac{\alpha}{2}}(\R^d)$, we have that $\varphi_V, \varphi^{+}_V \in \mathring{H}^{\frac{\alpha}{2}}(\R^d)$ (see again \cite{Fukushima}*{Example $2.2.1$, p. $87$}) and \eqref{point_laplac} reads also in the weak sense.
Then, if $\alpha\in (0,2)$, by combining an adaptation of  the proof of \cite{Vincenzo}*{Theorem $3.3$} with \eqref{sistema_diff} we deduce that 
\begin{equation}\label{kato_full}
	(-\Delta)^{\frac{\alpha}{2}}[\varphi_V]_{+}\le \chi_{\left\{\varphi_V\ge 0\right\}}\,\rho_V\le [V-[\varphi_V]_+]_{+}^{\frac{1}{q-1}} \quad  \text{in}\ \mathcal{D}'(\bbr^d).
\end{equation}
On the other hand, the case $\alpha=2$ is classical and follows the same lines.
%follows by combining  \cite{Brezis_1}*{Lemma $A.1$} with \eqref{sistema_diff}.
Note also that the assumption on the support of $V$ combined with its continuity ensures that
 \begin{equation}\label{V_sobolev}
	\begin{split}
		[V-[\varphi_V]_{+}]^{\frac{1}{q-1}}_{+} & \in L^{1}(\bbr^d)\cap L^{\infty}(\R^d)\subset  L^{\frac{2d}{d+\alpha}}(\bbr^d),\\
		[V-\varphi^{+}_V]^{\frac{1}{q-1}}_{+} & \in  L^{1}(\bbr^d)\cap L^{\infty}(\R^d)\subset L^{\frac{2d}{d+\alpha}}(\bbr^d).
	\end{split}
\end{equation} 
Moreover, from \eqref{sist_minimi} and non negativity of $\varphi^{+}_V$, we deduce that $\rho^{+}_V$ has compact support.
% In particular, since $\rho^{+}_V$ is bounded with compact support, Lemma \ref{newton_theorem} yields the asymptotic behaviour at infinity
%\begin{equation}\label{decay_riesz_30}
%	\varphi^{+}_V(x)=\frac{A_\alpha\|\rho^{+}_V\|_{L^1(\R^d)}}{|x|^{d-\alpha}}+o\left(\frac{1}{|x|^{d-\alpha}}\right).
%\end{equation}
Thus, by  \eqref{scalar_H} and the above analysis,  the functions $[\varphi_V]_+$, $\varphi_V^+$ belong to $\mathring{H}^{\frac{\alpha}{2}}(\bbr^d)$ and satisfy
\begin{equation}
	\begin{split}
		(-\Delta)^{\frac{\alpha}{2}}[\varphi_V]_{+} & \le \left[V-[\varphi_V]_{+}\right]_{+}^{\frac{1}{q-1}}\quad \text{in}\ \mathcal{D}'(\bbr^d), \\
		(-\Delta)^{\frac{\alpha}{2}}\varphi_V^+ & =[V-\varphi_V^+]_{+}^{\frac{1}{q-1}}\quad\,\,\,\,\,\,\,\ \text{in}\ \mathcal{D}'(\bbr^d) .
	\end{split}
\end{equation}
Then, Lemma \ref{confronto_generale} applied to $\Omega=\R^d$ and $f(x,u)=-[V(x)-u]^{\frac{1}{q-1}}_{+}$ implies the inequality
\begin{equation}
	\varphi_V^+\ge [\varphi_V]_{+}\quad \text{in}\ \bbr^d.
\end{equation}
Hence, from \eqref{sist_minimi},  inside the open set   $\left\{\rho^{+}_V>0\right\}$ we have
\begin{equation*}
	\begin{split}
		\text{sign}(\rho_V)|\rho_V|^{q-1} - (\rho^{+}_V)^{q-1} & = 	(-\Delta)^{-\frac{\alpha}{2}}\rho^{+}_V-	(-\Delta)^{-\frac{\alpha}{2}}\rho_V\\	
		&= \varphi_V^+-\varphi_V\\
		& \ge  \varphi_V^+ -[\varphi_V]_{+}\ge 0,
	\end{split}
\end{equation*}
proving the desired inequality. 

To conclude, it remains to prove  the existence of a set of positive Lebesgue measure where the inequality is strict. To this aim we argue by contradiction. 

Assume that $[\rho_V]_{+}=\rho^{+}_V$ in the whole of $\R^d$. Then, by Theorem \ref{minimi diversi} we deduce that $\rho_V$ is non negative. However, since $V$ has compact support, Proposition \ref{segno} implies that $\rho_V$ must be sign changing leading to a contradiction.

\end{proof}
\remark\label{not_sharp_V} Note that the assumptions on $V$ to prove the inequality \eqref{ineq_pointwise} can be relaxed. Indeed, the above proof remains the same as long  as \eqref{V_sobolev} is valid and this is true for example if $[V]_{+}\in L^{\frac{2d}{(d+\alpha)(q-1)}}(\R^d)$. 

\vspace{0.3cm}

%As an immediate Corollary we state the following result:
%\begin{corollary}\label{supporto_c} Assume that the same conditions of Theorem \ref{parte pos} hold. If  $V$ is non negative function supported in compact set $K$ of $\R^d$, then
%
%$$\text{supp}(\rho^{+}_V)\subseteq \text{supp}([\rho_V]_{+})\subseteq K.$$
%In particular, $\rho_V\le 0$ in $K^{c}$.
%\end{corollary}
%\begin{proof}
%By Corollary \ref{V_sopra} and Theorem \ref{parte pos} 
%\begin{equation*}
%    0\le \rho^{+}_V\le [\rho_V]_{+}\le V^{\frac{1}{q-1}}
%\end{equation*}
%from which the thesis follows.
%\end{proof}
%\remark\label{oss_supporto}
%By Theorem \ref{parte pos}, \ref{minimi diversi} and Corollary \ref{supporto_c} we have that
%\begin{itemize}
%\item $\rho^{+}_V=[\rho_V]_{+}=0$ in  $K^{c}$;
%    \item $\rho^{+}_V(x)<[\rho_V(x)]_{+}$ for some $x\in K$.
%\end{itemize}
%In particular, even if the minimizer is sign chancing, and so a priori it could be oscillating  on the entire space, we conclude that such oscillations can occur only inside the support of $V$.

Next, we move our attention on the study of the asymptotic behaviour of the minimizer.

\section{Decay estimates}\label{sec.6}

The main goal of the following section is to study the asymptotic behaviour of the minimizer $\rho_V$ extending the results of \cite{graphene} concerning the special case $d=2$, $\alpha=1$ and $q=\tfrac{3}{2}$. In order to take advantage of the elliptic formulation \eqref{PDE_lap_frac}, we focus on the case $\alpha\in (0,2)$. We further  recall that, in the local case $\alpha=2$, the asymptotic behaviour of solutions for \eqref{PDE_lap_frac} has been widely studied in \cite{maximum}*{Theorem $1.1.2$}  and \cite{Veron} where log--corrections in the decay in the spirit of Theorem \ref{decay_intro} have been proved.  Furthermore, except for Theorem \ref{decay_intro2} where we study decay properties of sign--changing minimizers, we will always assume that $(-\Delta)^{\frac{\alpha}{2}}V\ge 0$ (which ensures non negativity of the minimizer by Proposition \ref{prop_intro}).

 For simplicity's sake, we summerise below the basic assumptions of this subsection. We denote by \hypertarget{condA}{$(\mathcal{A})$} the following set of conditions

\begin{equation*}\label{cond_A}
	(\mathcal{A}) =\begin{cases}  d\in \N\cap [2,\infty) ;\\  \alpha\in (0,2); \\  q\in \big(\frac{2d}{d+\alpha},\infty);
		\vspace{0.1cm}\\
		V\in \big(\mathring{H}^{\frac{\alpha}{2}}(\mathbb{R}^d)\cap C^{\alpha+\ve}_{loc}(\R^d)\cap L^{\infty}(\R^d)\big)\setminus \left\{0\right\};\\
%		(-\Delta)^{\frac{\alpha}{2}}V\in L^{1}_{loc}(\bbr^d)\  \mbox{and}\   
(-\Delta)^{\frac{\alpha}{2}}V\ge 0.
	\end{cases}
\end{equation*}

%\begin{itemize}  
%\item $d\in [2,+\infty)$,
%\vspace{0.1cm}
%\item $\alpha\in (0,2)$,
%\vspace{0.1cm}
%\item$q\in \big(\frac{2d}{d+\alpha}, 2\big],$
%\vspace{0.1cm}
%\item $V\in \mathring{H}^{\frac{\alpha}{2}}(\mathbb{R}^d)\cap C_b(\mathbb{R}^d)$,
%\vspace{0.1cm}
%\item $0\le (-\Delta)^{\frac{\alpha}{2}}V\in L^{1}_{loc}(\bbr^d)$.
% \end{itemize}

We further recall that in this context, given two non negative functions $f$ and $g$,  by writing $f(x)\lesssim g(x)$ as $|x|\to +\infty$ we mean 
that there exists $C>0$ such that 
$f(x)\le C g(x)$ for every $x$ sufficiently large and similarly by writing $f(x)\simeq g(x)$ as $|x|\to +\infty$ we understand that $f(x)\lesssim g(x)$ and $g(x)\lesssim f(x)$ as $|x|\to +\infty$.

\remark\label{ass_reg_weak} We further notice that, the regularity assumptions on $V$ imposed above can be weakened according to Proposition \ref{u_pos}.

\vspace{0.2cm}
We begin by studying the sublinear regime corresponding to $q<2$.
\subsection*{Sublinear case: $q<2$}
 \begin{proposition}\label{u_pos}
% Assume that \hyperlink{condA}{$(\mathcal{A})$}  holds. 
Let $\alpha\in (0,2)$. Let $V\in \mathring{H}^{\frac{\alpha}{2}}(\R^d)$, $V$ vanishing at infinity and such that $0\le (-\Delta)^{\frac{\alpha}{2}}V\in L^p(\R^d)$ for some $p>\frac{d}{\alpha}$. If $q<2$ then, 
\begin{equation}\label{strett_prop}
	0<u_V(x)<V(x)\quad for\ all\ x\in \bbr^d.
\end{equation}
\end{proposition}
\begin{proof} The proof is similar to \cite{orbital}*{Lemma $7.1$}. 
From Proposition \ref{prop_23} we derive continuity of $V$. This, combined with the fact that $V$ vanishes at infinity in particular implies boundedness of $V$ as well. Thus,  from regularity and superharmonicity of $V$ we derive regularity and non negativity of $u_V$ (see Corollaries \ref{reg_non_pos}, \ref{q=d_over_alpha} and \ref{V_cont} for regularity and Proposition \ref{prop_intro} for non negativity). Moreover, since $I_{\alpha}*\rho_V$ is clearly non negative,  the Euler--Lagrange equation \eqref{u_quasi_ovunque} implies that the function $u_V$ is dominated by the bounded function $V$. In particular, since $V$ vanishes at infinity the same happens to $u_V$. Next, if we define $c:=\left\|u_V\right\|^{\frac{2-q}{q-1}}_{L^{\infty}(\bbr^d)}$, the function $u_V$ solves 
\begin{equation}\label{u_strett_pos}
	((-\Delta)^{\frac{\alpha}{2}}+c)u_V=u_V(c-u^{\frac{2-q}{q-1}}_V)+(-\Delta)^{\frac{\alpha}{2}}V\ge 0\quad \text{in}\ \mathcal{D}'(\bbr^d).
\end{equation}
It is known that the operator $(-\Delta)^{\frac{\alpha}{2}}+c$ has a positive and radially non increasing  Green function $\mathcal{K}_c$ behaving as follows
\begin{equation}\label{green}
	\mathcal{K}_c(r)\lesssim\begin{cases}r^{-d+\alpha} , & \mbox{as }r\to 0,\\ r^{-d-\alpha}, & \mbox{as }r\to +\infty
	\end{cases}
\end{equation}
(see \cite{positive solutions}*{Lemma $4.2$} or \cite{Frank}*{Lemma $C.1$} for more details). From \eqref{green} we deduce that $\mathcal{K}_c\in L^1(\R^d)\cap L^t(\R^d)$ for every $t<\frac{d}{d-\alpha}$.

Let $g$ be the right hand side of \eqref{u_strett_pos}. Then $g$ takes the form of
\begin{equation*}
	g=g_1+g_2, 
\end{equation*}
where $$g_1=u_V(c-u^{\frac{2-q}{q-1}}_V),\quad g_2=(-\Delta)^{\frac{\alpha}{2}}V.$$
As a consenquence, $g_1\in L^{q'}(\R^d)\cap L^{\infty}(\R^d)$ and $g_2\in L^p(\R^d)$, $p>\frac{d}{\alpha}$.
Thus,  $\mathcal{K}_c*g$ defines a bounded uniformly continuous function which solves by construction
\begin{equation}\label{eqK}
	((-\Delta)^{\frac{\alpha}{2}}+c)(\mathcal{K}_c*g)=g\quad \text{in}\ \mathcal{D}'(\R^d),
\end{equation}
cf. \cite{orbital}*{Lemma 7.1, eq. (7.16)}.
Note also that by the properties of convolutions we have that both $\mathcal{K}_c*g_1$ and $\mathcal{K}_c*g_2$ vanish at infinity, i.e., $\mathcal{K}_c*g$ vanishes at infinity as well.
Then, by combining \eqref{u_strett_pos} with \eqref{eqK} we infer that $u_V$ can be written as
\begin{equation}\label{u_quasi_conv}
	u_V=\mathcal{K}_c*g+\bar{u},
\end{equation}
where $\bar{u}$ solves the eigenvalue problem
\begin{equation}\label{eigenv}
	(-\Delta)^{\frac{\alpha}{2}}\bar{u}=-c\bar{u}\quad \text{in}\ \mathcal{D}'(\R^d), \quad c=\|u_V\|^{\frac{2-q}{q-1}}_{L^{\infty}(\R^d)}>0.
\end{equation}
%from \eqref{green} we have that 
%\begin{equation*}
%    \int_{\bbr^d}\mathcal{K}_c(|x-y|)g_1(y)dy<\infty.
%\end{equation*}
%Moreover, since $\mathcal{K}_c$ is smooth in $\bbr^d\setminus \left\{0\right\}$, by combining \eqref{green} with Corollary \ref{prod_integr}, there exists $C>0$ such that 
%\begin{equation*}
%    0\le \int_{\bbr^d}\mathcal{K}_c(|x-y|)g_2(y)dy \le C\int_{\bbr^d}\frac{g_2(y)}{|x-y|^{d-\alpha}}dy =C (I_{\alpha}*g_2)(x)<\infty.
%\end{equation*}
%%Moreover, by \cite{Fukushima}*{Example $2.2.1$, p. 87}, $I_{\alpha}*g_2\in \mathring{H}^{\frac{\alpha}{2}}(\R^d)$.
Next, since by \eqref{u_quasi_conv} we further deduce that 
that $\bar{u}\in L^{\infty}(\R^d)$ and $\bar{u}\to 0$ as $|x|\to \infty$ we conclude that $\bar{u}=0$. See \cite{PhDthesis}*{Proposition 3.5.1} for more details. We have therefore prove that 
$u_V$ satisfies 
\begin{equation}\label{kernel}
	u_V(x)=\int_{\bbr^d}\mathcal{K}_c(|x-y|)g(y)dy \quad \forall x\in \R^d.
\end{equation}
From \eqref{kernel}, non negativity of $g$ and positivity of $\mathcal{K}_c$ we derive positivity of $u_V$.
%Then,  by combining Proposition \ref{prop_23} and \ref{holder_riesz} we conclude that $\bar{u}\in C^{\infty}(\R^d)$ and so \eqref{eigenv} is satisfied pointwisely. Namely, 
%\begin{equation}\label{point90}
%	(-\Delta)^{\frac{\alpha}{2}}\bar{u}(x)=\frac{C_{d,2s}}{2}\int_{\bbr^d}\frac{2\bar{u}(x)-\bar{u}(x+y)-\bar{u}(x-y)}{|y|^{d+2s}}dy=-c\bar{u}(x)\quad \forall x\in \R^d.
%\end{equation}
%Then, we consider three possibilities:
%\begin{itemize}
%	\item[$(i)$] $\bar{u}\ge 0$;
%	\item[$(ii)$] $\bar{u}\le 0$;
%	\item[$(iii)$] $\bar{u}$ is sign changing.
%\end{itemize}
%If $(i)$ holds, since $\bar{u}\in C^{\infty}(\R^d)\cap L^{\infty}(\R^d)$, $\bar{u}\to 0$ as $|x|\to +\infty$, we deduce that either $\bar{u}\equiv 0$ or there exists a positive global maximum at $\bar{x}$. If this is the case, from \eqref{point90} we get
%$$0<(-\Delta)^{\frac{\alpha}{2}}\bar{u}(\bar{x})=-c\bar{u}(\bar{x})<0, $$
%contradiction. Hence, $\bar{u}\equiv 0$.
%
%\noindent If $(ii)$ holds, either $\bar{u}\equiv 0$ or there exists a negative global minimum at $\bar{x}$. Then, again by \eqref{point90} we obtain 
%$$ 0<-c\bar{u}(\bar{x})=(-\Delta)^{\frac{\alpha}{2}}\bar{u}(\bar{x})<0,$$
%contradiction. Hence also in this case we have that $\bar{u}\equiv 0$.
%
%\noindent Finally, if $(iii)$ holds  we get that there exist a negative global minimum and a positive global maximum. This leads to a contradiction by arguing as in the previous cases. 
%We have therefore proved that $\bar{u}\equiv 0$
\end{proof}
 \remark \label{easy_remark} Note that the above proof trivially adapts to the linear case $q=2$.
\begin{lemma}\label{newton_theorem}
Let $ f\in L^{1}(\mathbb{R}^d)$ be non negative  and $\varphi$ be a radially non increasing function defined on  $\R^d$. If $f(x)\le \varphi(|x|)$ for every $x\in \R^d$ and 
$$\lim_{|x|\to +\infty}\varphi(|x|)|x|^{d}=0, $$
then
$$(I_{\alpha}*f)(x)= \frac{A_\alpha\left\|f\right\|_{L^{1}(\mathbb{R}^d)}}{|x|^{d-\alpha}}+o\left(\frac{1}{|x|^{d-\alpha}}\right)\quad \text{as}\ |x|\rightarrow +\infty. $$
\end{lemma}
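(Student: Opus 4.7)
The goal is to show that $|x|^{d-\alpha}(I_\alpha * f)(x) \to A_\alpha \|f\|_{L^1(\R^d)}$ as $|x|\to +\infty$. The natural strategy is to split the integral defining $I_\alpha * f$ at the scale $|x|/2$, treat the contribution from $\{|y|<|x|/2\}$ as the main term via dominated convergence, and show that the contribution from $\{|y|\ge|x|/2\}$ is negligible by combining the $L^1$-integrability of $f$ with the pointwise majorant $\varphi$.

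More precisely, the first step is to write
\begin{equation*}
|x|^{d-\alpha}(I_\alpha * f)(x) = A_\alpha \int_{|y|<|x|/2} \frac{|x|^{d-\alpha}}{|x-y|^{d-\alpha}} f(y)\,dy + A_\alpha \int_{|y|\ge|x|/2} \frac{|x|^{d-\alpha}}{|x-y|^{d-\alpha}} f(y)\,dy.
\end{equation*}
On the region $\{|y|<|x|/2\}$ one has $|x|/2 \le |x-y| \le 3|x|/2$, so $|x|^{d-\alpha}/|x-y|^{d-\alpha} \le 2^{d-\alpha}$, and for each fixed $y$ this ratio tends to $1$ as $|x|\to+\infty$. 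Since $f\in L^1(\R^d)$, dominated convergence yields that this first piece converges to $A_\alpha \|f\|_{L^1(\R^d)}$.

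For the second piece I would split further into the subregions $\{|y|\ge|x|/2,\ |x-y|\ge|x|/2\}$ and $\{|y|\ge|x|/2,\ |x-y|<|x|/2\}$. On the first subregion the kernel is controlled by $(2/|x|)^{d-\alpha}$, so the contribution is bounded by $2^{d-\alpha}\int_{|y|\ge|x|/2}f(y)\,dy$, which vanishes as $|x|\to +\infty$ by the absolute continuity of the Lebesgue integral. On the second subregion $\{|x-y|<|x|/2\}$ one has $|y|\ge |x|/2$ and, using the radial monotonicity of $\varphi$ together with the hypothesis $f(y)\le\varphi(|y|)\le\varphi(|x|/2)$, one obtains
\begin{equation*}
\int_{|x-y|<|x|/2}\frac{f(y)}{|x-y|^{d-\alpha}}\,dy \le \varphi(|x|/2)\int_{|z|<|x|/2}\frac{dz}{|z|^{d-\alpha}} = C_{d,\alpha}\,\varphi(|x|/2)\,|x|^\alpha.
\end{equation*}
Multiplying by $|x|^{d-\alpha}$ gives a quantity of order $\varphi(|x|/2)|x|^d$, which tends to $0$ by the standing assumption $\lim_{|x|\to+\infty}\varphi(|x|)|x|^d=0$. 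Combining the three estimates delivers the claim.

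The argument is essentially routine; the only subtle point is the treatment of the ball $B_{|x|/2}(x)$, where the kernel's singularity meets the tail of $f$. This is precisely where the hypothesis $\varphi(|x|)|x|^d\to 0$ enters in an essential way, ensuring that the singular contribution near $x$ is strictly weaker than the leading $|x|^{-(d-\alpha)}$ behaviour and therefore may be absorbed into the $o(|x|^{-(d-\alpha)})$ remainder.
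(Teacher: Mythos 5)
Your proof is correct. The paper itself does not spell out the argument for Lemma~\ref{newton_theorem}; it simply defers to \cite{graphene}*{Lemma 4.9}, and the splitting strategy you give (cut at the scale $|x|/2$, use dominated convergence on $\{|y|<|x|/2\}$, control the far region by the $L^1$-tail, and absorb the singular ball $B_{|x|/2}(x)$ using the majorant $\varphi$ together with the hypothesis $\varphi(r)r^d\to 0$) is exactly the standard Newton-type argument that the cited lemma employs.

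One small remark: in the singular subregion $\{|y|\ge|x|/2,\ |x-y|<|x|/2\}$ the triangle inequality actually forces $|y|\ge|x|/2$ automatically whenever $|x-y|<|x|/2$, so the two constraints are not independent, but the estimate $f(y)\le\varphi(|x|/2)$ and the computation $\int_{|z|<|x|/2}|z|^{-(d-\alpha)}\,dz = C_{d,\alpha}(|x|/2)^\alpha$ are carried out correctly, and the resulting term is indeed $O(\varphi(|x|/2)|x|^d)$, which is $o(1)$ after multiplying by $|x|^{d-\alpha}$. The dominated-convergence step on $\{|y|<|x|/2\}$ is also sound since the integrand is dominated uniformly by $2^{d-\alpha}f$ and converges pointwise to $f$.
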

\begin{proof}
The proof is similar to the one of \cite{graphene}*{Lemma 4.9}.
\end{proof}
%\remark Note that Lemma \ref{newton_theorem} it's not in general valid for sign changing functions where the decay may be faster. To see this, it's enough to consider a function $f$ such that $|f|$ satisfies  the assumptions of Lemma \ref{newton_theorem} and
%\begin{equation}\label{0_33}
%	\int_{\R^d}f(x)dx=\left\|[f]_{+}\right\|_{L^{1}(\mathbb{R}^d)}-\left\|[f]_{-}\right\|_{L^{1}(\mathbb{R}^d)}=0.
%	\end{equation} Then, by applying Lemma \ref{newton_theorem} to $[f]_{+}$ and $[f]_{-}$ we conclude the following asymptotics
%\begin{equation}\label{eq_33}
%	\begin{split}
%	(I_{\alpha}*[f]_{+})(x) & = \frac{A_\alpha\left\|[f]_{+}\right\|_{L^{1}(\mathbb{R}^d)}}{|x|^{d-\alpha}}+o\left(\frac{1}{|x|^{d-\alpha}}\right)\quad \text{as}\ |x|\rightarrow +\infty,\\
%	 	(I_{\alpha}*[f]_{-})(x) & = \frac{A_\alpha\left\|[f]_{-}\right\|_{L^{1}(\mathbb{R}^d)}}{|x|^{d-\alpha}}+o\left(\frac{1}{|x|^{d-\alpha}}\right)\quad \text{as}\ |x|\rightarrow +\infty.
%\end{split}
%\end{equation}
%Consequently,  from \eqref{0_33} and \eqref{eq_33}
%\begin{equation*}
%		(I_{\alpha}*f)(x)=	(I_{\alpha}*[f]_{+})(x)-	(I_{\alpha}*[f]_{-})(x)=o\left(\frac{1}{|x|^{d-\alpha}}\right)\quad \text{as}\ |x|\rightarrow +\infty.
%\end{equation*}

Now we state two useful results proved in \cite{Ferrari}. We also refer to \cite{special_function} for the basic definitions and properties of Hypergeometric and Gamma functions that will be used in the sequel.
\begin{theorem}\label{lapl_ferrari}
Assume  $d\ge 2$. Then,  for every  radial function $u\in C^{2}(\overline{\mathbb{R}_{+}})$ such that 
$$\int_{r=0}^{\infty}\frac{|u(r)|}{(1+r)^{d+\alpha}}r^{d-1}dr<\infty,$$
the following equality holds 
\begin{equation}\label{Ferrari_integral}
(-\Delta)^{\frac{\alpha}{2}}u(r)=c_{d,\alpha}r^{-\alpha}\int_{\tau=1}^{\infty}\left[u(r)-u(r\tau)+\left(u(r)-u(r/\tau)\right)\tau^{-d+\alpha}\right]\tau (\tau^2-1)^{-1-\alpha}H(\tau)d\tau.
\end{equation}
where $c_{d,\alpha}$ is a positive constant and $H(\tau)$ is a positive continuous function such that $H(\tau)\sim \tau^{\alpha}$ as $\tau$ goes to infinity.
\end{theorem}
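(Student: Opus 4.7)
The plan is to exploit rotational invariance to reduce the fractional Laplacian of the radial function $u$ to a one-dimensional integral in a radial variable, and then to combine the contributions from $s<r$ and $s>r$ via a reciprocal substitution, matching the two terms inside the brackets of \eqref{Ferrari_integral}. I would begin from the symmetric singular-integral representation \eqref{sing_laplac}. Since $(-\Delta)^{\alpha/2}$ commutes with rotations and $u$ is radial, the value $(-\Delta)^{\alpha/2}u(r)$ can be computed at the point $x=re_d$. Writing $y=s\sigma$ with $s>0$ and $\sigma\in S^{d-1}$, and exploiting the $\sigma\mapsto-\sigma$ symmetry together with $s^{d-1}/|y|^{d+\alpha}=s^{-1-\alpha}$, the angular integral reduces to an integral over the polar angle $\theta$ between $\sigma$ and $e_d$, weighted by $\omega_{d-2}\sin^{d-2}\theta\,d\theta$.

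Next, I would split the radial integration $s>0$ into the ranges $s<r$ and $s>r$, and in both introduce the dimensionless ratio $\tau=s/r$; this change of variables extracts the global factor $r^{-\alpha}$ appearing in \eqref{Ferrari_integral}. In the far piece ($\tau>1$) the difference $u(r)-u(r\tau)$ arises naturally, while in the near piece ($\tau<1$) I would perform the additional substitution $\tau\mapsto 1/\tau$ to map it onto $(1,\infty)$; this produces the algebraic correction $\tau^{\alpha-d}$ and the second difference $u(r)-u(r/\tau)$, exactly as in the statement. The explicit form of the kernel is obtained by performing the angular integration: setting $t^{2}=r^{2}+2rs\cos\theta+s^{2}=|re_d+s\sigma|^{2}$ and changing variables from $\theta$ to $t$ introduces a Jacobian proportional to $\bigl((t^{2}-(r-s)^{2})((r+s)^{2}-t^{2})\bigr)^{(d-3)/2}/(rs\sin\theta)$; after rescaling all lengths by $r$, the residual kernel assembles into $\tau(\tau^{2}-1)^{-1-\alpha}H(\tau)$.

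The main obstacle is the clean identification of the kernel $H(\tau)$ and the verification that it is positive, continuous on $[1,\infty)$, and satisfies $H(\tau)\sim\tau^{\alpha}$ as $\tau\to\infty$. The singular factor $(\tau^{2}-1)^{-1-\alpha}$ must be peeled off exactly so that $H(\tau)$ is left free of singularities at $\tau=1$; this is a bookkeeping computation on the Jacobians above, with a minor dimensional split between $d=2$ and $d\ge 3$ because of the exponent $(d-3)/2$. The asymptotic $H(\tau)\sim\tau^{\alpha}$ is read off the leading order of the angular integral in the regime $\tau\gg 1$, where $|re_d+s\sigma|\sim s=r\tau$ uniformly in $\sigma$, so the pre-factor $s^{-1-\alpha}$ effectively contributes $\tau^{-1-\alpha}$ and the remaining geometric Jacobian produces an extra $\tau^{2\alpha}$ upon isolating $(\tau^{2}-1)^{-1-\alpha}$. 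Convergence of the resulting integral is then a separate point: the hypothesis $u\in C^{2}(\overline{\R_{+}})$ yields $u(r)-u(r\tau)=O((\tau-1)^{2})$ as $\tau\to 1^{+}$, which absorbs the singular factor since $\alpha<2$, while the weighted integrability assumption on $u$ together with the asymptotic of $H$ controls the tail as $\tau\to\infty$.
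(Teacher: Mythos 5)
The paper does not prove Theorem~\ref{lapl_ferrari}; it quotes it directly from \cite{Ferrari} (the sentence preceding the statement reads that the result is ``proved in \cite{Ferrari}''), so there is no internal proof to compare against. Your sketch does outline the standard route to such a radial representation, and in particular the reciprocal substitution $\tau\mapsto 1/\tau$ on the near piece, together with the kernel identity $K(1/\tau)\,\tau^{-2}=K(\tau)\,\tau^{\alpha-d}$ for the radial kernel $K(\tau)=\tau^{d-1}\int_0^\pi \sin^{d-2}\theta\,(1+\tau^{2}-2\tau\cos\theta)^{-(d+\alpha)/2}\,d\theta$, is exactly the mechanism that creates the $\tau^{-d+\alpha}$ weight; you have that. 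Your $\tau\to 1^+$ and $\tau\to\infty$ balances that peel off $(\tau^2-1)^{-1-\alpha}$ and leave $H(\tau)\sim\tau^{\alpha}$ are also correct.

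There is, however, a bookkeeping slip worth flagging. Starting from the symmetric form \eqref{sing_laplac} at $x=re_d$ and writing $y=s\sigma$, the $\sigma\mapsto-\sigma$ symmetrisation leaves an integrand $u(r)-u(|re_d+s\sigma|)$, not $u(r)-u(s)$. Hence introducing $\tau=s/r$ does not directly produce $u(r)-u(r\tau)$; the argument of $u$ is the composite length, not $s$. You have two internally consistent options: either work from the principal-value form \eqref{laplac_for_intro}, where the angular integration produces a pure kernel $A(r,s)$ with $u(r)-u(s)$ already sitting in front of it, so that $\tau=s/r$ does the job directly; or keep the symmetric form, perform the change $\theta\mapsto t=|re_d+s\sigma|$ first so that $u(t)$ appears, then swap the order of the $(s,t)$ integration and set $\tau=t/r$, after which the inner $s$-integral (rescaled by $r$) becomes the function of $\tau$ alone that assembles into $\tau(\tau^2-1)^{-1-\alpha}H(\tau)$. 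Your text conflates the two: you introduce $\tau=s/r$ and claim the differences emerge, and only afterwards perform $\theta\to t$. As written, the chronology does not give the integrand $(u(r)-u(r\tau))$ times a kernel in $\tau$. Once this is ordered correctly, the rest of the sketch, including the remark on the $d=2$ vs.\ $d\ge 3$ exponent $(d-3)/2$ and the convergence check from $u\in C^2$ near $\tau=1$, is sound.
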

Next, we refer again to \cite{Ferrari}*{eq. $(4.4)$} for Lemma \ref{laplaciano2} below, providing the behaviour at infinity of the fractional Laplacian of polynomial behaving functions. We further refer to the Appendix \ref{Appendix},  Lemma \ref{general_power}, for a similar result providing the decay of the fractional Laplacian a power-type function for a wider range of exponents.
\begin{lemma}\label{laplaciano2}
Let $d\ge 2$, $0<\alpha<2$ and $\beta$ be a positive number. Let $g_\beta:\R^d\rightarrow \R$ be the function defined by $g_\beta(x):=~(1+|x|^{2})^{-\frac{\beta}{2}}$. If   $d-\alpha<\beta<d$ then  
$$(-\Delta)^{\frac{\alpha}{2}}g_\beta(x)\simeq -(1+|x|^{2})^{-\frac{\alpha+\beta}{2}}\quad \text{as}\ |x|\rightarrow +\infty.$$
\end{lemma}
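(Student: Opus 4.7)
The starting point is Ferrari's integral representation \eqref{Ferrari_integral} in Theorem \ref{lapl_ferrari}, which applies to $g_\beta$ since $g_\beta\in C^2(\overline{\mathbb{R}_+})$ is bounded and the weighted integrability $\int_0^\infty(1+r)^{-d-\alpha}r^{d-1}\,dr<\infty$ is trivial. Writing $r=|x|$, it gives
\[(-\Delta)^{\alpha/2}g_\beta(r) \;=\; c_{d,\alpha}\,r^{-\alpha}\int_1^\infty F(r,\tau)\,\tau(\tau^2-1)^{-1-\alpha}H(\tau)\,d\tau,\]
where $F(r,\tau):=g_\beta(r)-g_\beta(r\tau)+\bigl(g_\beta(r)-g_\beta(r/\tau)\bigr)\tau^{-d+\alpha}$. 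The plan is to factor $g_\beta(r)=(1+r^2)^{-\beta/2}$ out of $F$, identify the pointwise limit of $F(r,\tau)/g_\beta(r)$ as $r\to\infty$, and pass to the limit under the integral by dominated convergence. The limiting integral will be a negative, $r$-independent constant, from which the claim $(-\Delta)^{\alpha/2}g_\beta(x)\simeq-(1+|x|^2)^{-(\alpha+\beta)/2}$ follows immediately, using $r^{-\alpha}g_\beta(r)\simeq(1+r^2)^{-(\alpha+\beta)/2}$ at infinity.

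\textbf{Pointwise limit and sign.} Setting $A(r,\tau):=(1+r^2\tau^2)/(1+r^2)$ and $B(r,\tau):=(1+r^2/\tau^2)/(1+r^2)$, a direct computation gives
\[\frac{F(r,\tau)}{g_\beta(r)} \;=\; \bigl(1-A(r,\tau)^{-\beta/2}\bigr)+\bigl(1-B(r,\tau)^{-\beta/2}\bigr)\tau^{-d+\alpha}.\]
Since $A(r,\tau)\to\tau^2$ and $B(r,\tau)\to\tau^{-2}$ as $r\to\infty$ for each fixed $\tau\ge 1$, the pointwise limit is
\[\phi(\tau):=\bigl(1-\tau^{-\beta}\bigr)+\bigl(1-\tau^{\beta}\bigr)\tau^{-d+\alpha} \;=\; \bigl(1-\tau^{-\beta}\bigr)\bigl(1-\tau^{\beta-d+\alpha}\bigr),\]
the factorisation being an elementary algebraic identity. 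The hypothesis $\beta>d-\alpha$ then forces $1-\tau^{\beta-d+\alpha}<0$ for $\tau>1$, so $\phi(\tau)<0$ on all of $(1,\infty)$.

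\textbf{Dominated convergence.} I would split the $\tau$-integral at $\tau=2$. For $\tau\ge 2$, the elementary bounds $\tau^{-2}\le A(r,\tau)/\tau^2\le 1$ and $\tau^{-2}\le B(r,\tau)\le 1$ (valid for every $r\ge 0$) produce the crude majorant
\[\bigl|F(r,\tau)/g_\beta(r)\bigr| \;\le\; (1-\tau^{-\beta})+(\tau^\beta-1)\tau^{-d+\alpha} \;\lesssim\; \tau^{\beta-d+\alpha};\]
multiplied by the weight $\tau(\tau^2-1)^{-1-\alpha}H(\tau)\sim\tau^{-1}$ at infinity, this is integrable precisely when $\beta<d$. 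On $(1,2)$ this crude bound is only $O(\tau-1)$, which is insufficient against $(\tau-1)^{-1-\alpha}$ when $\alpha\ge 1$; there I would introduce $\tilde h(t):=(1+r^2)^{\beta/2}g_\beta(re^t)$ and observe by explicit computation that $\tilde h(0)=1$, $\tilde h'(0)=-\beta r^2/(1+r^2)$, etc., so that $|\tilde h^{(k)}(t)|\lesssim 1$ uniformly in $r\ge 1$ and in $t$ on compact sets, for $k\le 3$. Substituting $\tau=e^t$ and Taylor-expanding to second order yields
\[F(r,\tau)/g_\beta(r) \;=\; t^2\bigl[(\alpha-d)\tilde h'(0)-\tilde h''(0)\bigr]+O(t^3),\]
with the $O(t^3)$ remainder uniform in $r$, hence $|F(r,\tau)/g_\beta(r)|\lesssim(\tau-1)^2$ on $(1,2)$ uniformly in $r\ge 1$. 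Against $(\tau^2-1)^{-1-\alpha}\sim(\tau-1)^{-1-\alpha}$ this is an $O((\tau-1)^{1-\alpha})$ integrand, integrable for $\alpha<2$. The two regimes combine into a $\tau$-only majorant, and Lebesgue dominated convergence gives
\[\lim_{r\to\infty}\frac{1}{g_\beta(r)}\int_1^\infty F(r,\tau)\,\tau(\tau^2-1)^{-1-\alpha}H(\tau)\,d\tau \;=\; \int_1^\infty\phi(\tau)\,\tau(\tau^2-1)^{-1-\alpha}H(\tau)\,d\tau \;=:\;L\;<\;0.\]

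\textbf{Conclusion and main obstacle.} Putting the pieces together, $(-\Delta)^{\alpha/2}g_\beta(r)=c_{d,\alpha}L\,r^{-\alpha}g_\beta(r)\bigl(1+o(1)\bigr)$ as $r\to\infty$, with $c_{d,\alpha}L<0$, giving the claimed asymptotic $\simeq-(1+|x|^2)^{-(\alpha+\beta)/2}$; note that both endpoint hypotheses are sharp, $\beta<d$ being exactly what makes the tail of the majorant integrable and $\beta>d-\alpha$ exactly what makes the limiting integrand $\phi$ keep a definite sign. I expect the main technical hurdle to be the uniform-in-$r$ quadratic bound on $F/g_\beta(r)$ near $\tau=1$: the first-order cancellation between the differences $g_\beta(r)-g_\beta(r\tau)$ and $g_\beta(r)-g_\beta(r/\tau)$ against the weight $\tau^{-d+\alpha}$ has to be exploited carefully, and the rescaling through $\tilde h(t)=(1+r^2)^{\beta/2}g_\beta(re^t)$ is exactly what decouples the Taylor coefficients from $r$, making the uniform bound possible.
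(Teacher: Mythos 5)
Your proof is correct, and it takes a genuinely different route from the paper. The paper does not prove Lemma~\ref{laplaciano2} itself: it simply refers to \cite{Ferrari}*{eq.\ (4.4)}. The closest self-contained argument in the paper is Appendix Lemma~\ref{general_power}, which treats the truncated pure power $U(|x|)=|x|^{-\gamma}$ (for $|x|\ge 1$) rather than $(1+|x|^2)^{-\beta/2}$ and proceeds by splitting the Ferrari integral at $\tau=2$ and at $\tau=r$, then estimating $I^2_1$, $I^r_2$ and $I^\infty_r$ separately and adding the pieces; the split at $\tau=r$ is forced by the truncation, because the exact formula $\phi(\tau,r)=r^{-\gamma}(1-\tau^{-\gamma}-\tau^{\gamma-d+\alpha}+\tau^{-d+\alpha})$ only holds when $r/\tau\ge 1$. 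You instead keep the smooth profile $(1+r^2)^{-\beta/2}$, normalize by $g_\beta(r)$ so that the $r$-dependence sits in a bounded function of $(r,\tau)$, and pass to the limit $r\to\infty$ by a single dominated-convergence argument, which produces the sharper conclusion $(-\Delta)^{\alpha/2}g_\beta(r)\sim c_{d,\alpha}L\, r^{-\alpha}g_\beta(r)$ with an explicit limit constant $L=\int_1^\infty\phi(\tau)\,\tau(\tau^2-1)^{-1-\alpha}H(\tau)\,d\tau<0$. The price you pay is the need for a uniform-in-$r$ quadratic bound near $\tau=1$, which you handle cleanly by the rescaling $\tilde h(t)=(1+r^2)^{\beta/2}g_\beta(re^t)$ that decouples the Taylor coefficients from $r$. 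Both approaches identify the sign from the factorization $\phi(\tau)=(1-\tau^{-\beta})(1-\tau^{\beta-d+\alpha})$ and use $\alpha<2$ near $\tau=1$ and $\beta<d$ for the tail.

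One small slip: you write that $\tau(\tau^2-1)^{-1-\alpha}H(\tau)\sim\tau^{-1}$ at infinity, but since $H(\tau)\sim\tau^{\alpha}$ the weight actually decays like $\tau^{-1-\alpha}$. Your majorant $\tau^{\beta-d+\alpha}$ against the correct weight gives an integrand of order $\tau^{\beta-d-1}$, so the stated criterion $\beta<d$ for integrability is correct; the error is only in the intermediate display, not in the conclusion.
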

Before investigating the first regime corresponding to the interval $\left(\frac{2d}{d+\alpha},\frac{2d-\alpha}{d}\right)$ we recall   some well known results.

 If $\alpha\in (0,2)$ we define the following quantity
\begin{equation}\label{inf_sobolev}
    \mathcal{I}:=\inf_{u\in \mathring{H}^{\frac{\alpha}{2}}(\bbr^d)}\left\{\|u\|^2_{\mathring{H}^{\frac{\alpha}{2}}(\bbr^d)}:u\ge \chi_{B_1}\right\}.
\end{equation}
It can proved that the infimum defined in \eqref{inf_sobolev} is  achieved by a positive, continuous, and  radially non increasing function $\bar{u}$. Moreover, $\bar{u}$ weakly solves 
\begin{equation}\label{optimal_u}
    \begin{cases} (-\Delta)^{\frac{\alpha}{2}}\bar{u}=0 & \text{in}\ \overline{B_1}^{c}, \\ \quad \quad \quad \,\bar{u}=1 & \text{in}\ B_1,
\end{cases}
\end{equation}
and 
\begin{equation}\label{bar_u_decay}
    \bar{u}(x)\simeq \frac{1}{|x|^{d-\alpha}}\quad \text{as}\ |x|\rightarrow +\infty.\end{equation}

\noindent For the proof we refer for example to \cite{optimal}*{Propositions $3.5$, $3.6$}.\\
 \indent We have now all the ingredients to deduce the desired asymptotic decay in the range  $q\in \big(\frac{2d}{d+\alpha},\frac{2d+\alpha}{d+\alpha}\big)\setminus \left\{\frac{2d-\alpha}{d}\right\}.$

\begin{lemma}\label{range_1}
Assume that  \hyperlink{condA}{$(\mathcal{A})$} holds and $q<\frac{2d-\alpha}{d}$. If there exists a non negative, radially non increasing function  $g:\bbr^d\rightarrow \bbr$  such that 
 \begin{equation}\label{lim_g}
 \lim_{|x|\rightarrow +\infty}g(|x|)|x|^d=0,
 \end{equation}
 and 
 \begin{equation}\label{laplac_integr}(-\Delta)^{\frac{\alpha}{2}}V(x)\le g(|x|),
 \end{equation}
 then,  
 $$u_V(x)\simeq \frac{1}{|x|^{d-\alpha}}\quad as\ |x|\rightarrow +\infty. $$
  \end{lemma}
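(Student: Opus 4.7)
The strategy is to sandwich $u_V$ between two explicit barriers decaying like $|x|^{-(d-\alpha)}$ at infinity, and then invoke the weak comparison principle Lemma~\ref{comparison}. Throughout, $\bar u$ denotes the optimizer of the variational problem~\eqref{inf_sobolev}, which by \eqref{optimal_u}--\eqref{bar_u_decay} is $\alpha$--harmonic in $\overline{B_1}^{\,c}$, equal to $1$ on $\overline{B_1}$ and satisfies $\bar u(x)\simeq|x|^{-(d-\alpha)}$ as $|x|\to\infty$. Set $g_\beta(x):=(1+|x|^2)^{-\beta/2}$.

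\smallskip
\noindent\emph{Lower bound.} I take the subsolution $\underline U:=c\,g_{d-\alpha}$ with $c>0$ to be chosen small. By Lemma~\ref{laplaciano2}, $(-\Delta)^{\alpha/2}\underline U(x)\simeq -c|x|^{-d}$ as $|x|\to\infty$, and the hypothesis $q<\tfrac{2d-\alpha}{d}$ is equivalent to $\tfrac{d-\alpha}{q-1}>d$, hence $\underline U^{1/(q-1)}(x)=o(|x|^{-d})$. Therefore, for $R$ sufficiently large,
$$(-\Delta)^{\alpha/2}\underline U+\underline U^{1/(q-1)}\le 0\le (-\Delta)^{\alpha/2}V\qquad\text{in }\overline{B_R}^{\,c}.$$
By Proposition~\ref{u_pos} together with the regularity results of Section~\ref{sec.4}, $u_V$ is continuous and strictly positive on the compact $\overline{B_R}$, so $u_V\ge\delta>0$ there. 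Choosing $c\le\delta$ yields $\underline U\le c\le\delta\le u_V$ on $\overline{B_R}$, and Lemma~\ref{comparison} (applied on $\Omega=\overline{B_R}^{\,c}$; the required $\mathring H^{-\alpha/2}$--regularity of $\underline U^{1/(q-1)}$ is granted by the embedding $L^{2d/(d+\alpha)}\hookrightarrow\mathring H^{-\alpha/2}(\R^d)$) delivers $\underline U\le u_V$ in $\R^d$. This proves $\liminf_{|x|\to\infty}|x|^{d-\alpha}u_V(x)>0$.

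\smallskip
\noindent\emph{Upper bound.} I now use the supersolution $\bar U(x):=V(x)+C\,\bar u(x/R_0)$ with $R_0>0$ and $C$ large. On $\overline{B_{R_0}}^{\,c}$, $\bar u(\cdot/R_0)$ is $\alpha$--harmonic, so $(-\Delta)^{\alpha/2}\bar U=(-\Delta)^{\alpha/2}V$, and adding the non--negative term $\bar U^{1/(q-1)}$ shows that $\bar U$ is a supersolution there. On $\overline{B_{R_0}}$ we have $\bar u(x/R_0)\equiv 1$, so picking $C\ge\|u_V\|_{L^\infty(\overline{B_{R_0}})}$ ensures $\bar U\ge u_V$. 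Lemma~\ref{comparison} yields
$$u_V(x)\le V(x)+C\,\bar u(x/R_0)\qquad\text{in }\R^d,$$
and $\bar u(x/R_0)\lesssim|x|^{-(d-\alpha)}$ by~\eqref{bar_u_decay}. It remains to control $V$ itself. Since $V\in\mathring H^{\alpha/2}(\R^d)$ carries no polynomial ambiguity, $V=I_\alpha\ast f$ with $f:=(-\Delta)^{\alpha/2}V\ge 0$ and $f\le g$. I split the convolution $I_\alpha\ast f(x)$ along $\{|y|\le|x|/2\}$, $\{|x|/2<|y|<2|x|\}$ and $\{|y|\ge 2|x|\}$. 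In the middle zone, the monotonicity $f(y)\le g(|x|/2)$ and $\int 1/|x-y|^{d-\alpha}\,dy\lesssim |x|^\alpha$ give a contribution of order $g(|x|/2)|x|^d/|x|^{d-\alpha}=o(|x|^{-(d-\alpha)})$. In the far zone, $|x-y|\gtrsim|y|$ together with $g(r)r^d\to 0$ produces an $o(|x|^{-(d-\alpha)})$ tail. The inner zone is bounded by a constant times $|x|^{-(d-\alpha)}\int_{|y|\le|x|/2}f(y)\,dy$, which stays $O(|x|^{-(d-\alpha)})$ thanks to the local integrability of $f$ and the decay of $g$. Combining these yields $V(x)\lesssim |x|^{-(d-\alpha)}$, hence $\limsup_{|x|\to\infty}|x|^{d-\alpha}u_V(x)<+\infty$.

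\smallskip
\noindent\emph{Main obstacle.} The delicate technical step is the clean upper bound $V(x)\lesssim|x|^{-(d-\alpha)}$. The hypothesis $g(|x|)|x|^d\to 0$ is barely strong enough: it kills the far and middle zones of the three--piece split, and the radial monotonicity of $g$ is indispensable to compress the middle zone to $o(|x|^{-(d-\alpha)})$. By contrast, the lower bound is elementary once the correct polynomial barrier is identified.
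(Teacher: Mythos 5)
The \emph{upper bound} of your proof is sound in outline: on $\overline{B_{R_0}}^{\,c}$ the rescaled function $\bar u(\cdot/R_0)$ is $\alpha$--harmonic, so adding it to $V$ produces a supersolution, and the three--zone split recovers $V(x)\lesssim|x|^{-(d-\alpha)}$ exactly as in Lemma~\ref{newton_theorem}. (The paper gets this more cheaply by just reading off $u_V\le V$ from the pointwise Euler--Lagrange equation \eqref{u_quasi_ovunque}, since $I_\alpha*\rho_V\ge 0$, but your route is also valid.)

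The \emph{lower bound}, however, has a genuine gap that invalidates the argument. You appeal to Lemma~\ref{laplaciano2} to assert $(-\Delta)^{\alpha/2}g_{d-\alpha}(x)\simeq -c|x|^{-d}$, but that lemma covers $d-\alpha<\beta<d$ and \emph{explicitly excludes} the endpoint $\beta=d-\alpha$. The endpoint behaves completely differently: up to normalisation, $g_{d-\alpha}(x)=(1+|x|^2)^{-(d-\alpha)/2}$ is the Aubin--Talenti bubble, and by \cite{critical_exp}*{Theorem~1} (see also Corollary~\ref{V_z}) it satisfies
\begin{equation*}
(-\Delta)^{\frac{\alpha}{2}}g_{d-\alpha}(x)=C\,g_{d-\alpha}(x)^{\frac{d+\alpha}{d-\alpha}}=C\,(1+|x|^2)^{-\frac{d+\alpha}{2}},
\end{equation*}
which is \emph{positive} and decays like $|x|^{-(d+\alpha)}$, not negative of order $|x|^{-d}$. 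Consequently
\begin{equation*}
(-\Delta)^{\frac{\alpha}{2}}\underline U+\underline U^{\frac{1}{q-1}}\simeq C|x|^{-(d+\alpha)}+c^{\frac{1}{q-1}}|x|^{-\frac{d-\alpha}{q-1}}>0
\end{equation*}
for large $|x|$, so the claimed inequality $(-\Delta)^{\alpha/2}\underline U+\underline U^{1/(q-1)}\le 0$ fails and $\underline U$ is \emph{not} a subsolution. Tweaking the exponent does not help: $g_\beta$ with $\beta<d-\alpha$ has \emph{positive} fractional Laplacian (Lemma~\ref{general_power}$(i)$), and $g_\beta$ with $\beta>d-\alpha$ decays strictly faster than $|x|^{-(d-\alpha)}$, so neither provides a sharp subsolution on its own. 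This is exactly why the paper builds the barrier $v^\delta_\lambda=\lambda\bigl(\bar u+g_\delta\bigr)$ with $d-\alpha<\delta<d-2\alpha+p$: the $\alpha$--harmonic piece $\bar u$ supplies the correct $|x|^{-(d-\alpha)}$ decay without contributing to the fractional Laplacian on $\overline{B_1}^{\,c}$, while $g_\delta$ (with $\delta>d-\alpha$) supplies a strictly negative $(-\Delta)^{\alpha/2}$--contribution of order $-|x|^{-(\delta+\alpha)}$ that absorbs the nonlinear term; the hypothesis $q<\tfrac{2d-\alpha}{d}$ guarantees the window for $\delta$ is nonempty. Your lower bound needs to be reworked along these lines.
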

  \begin{proof}
  By \eqref{lim_g}--\eqref{laplac_integr} and \hyperlink{condA}{$(\mathcal{A})$},  we have that $(-\Delta)^{\frac{\alpha}{2}}V\in L^{1}(\bbr^d)\cap L^{\infty}(\R^d)$. In particular, $(I_{\alpha}* (-\Delta)^{\frac{\alpha}{2}}V)(x)=V(x)$ for every $x\in \R^d$ and, by Lemma \ref{newton_theorem},  \begin{equation}\label{upp_V}
  V(x)=\frac{\big| \! \big|(-\Delta)^{\frac{\alpha}{2}}V\big| \!\big|_{L^{1}(\mathbb{R}^d)}}{|x|^{d-\alpha}}+o\left(\frac{1}{|x|^{d-\alpha}}\right)\quad \text{as}\ |x|\rightarrow +\infty.
  \end{equation}
  Moreover, in view of Proposition \ref{u_pos} we obtain positivity of $u_V$. In addition, by combining  the poinwise Euler--Lagrange equation \eqref{u_quasi_ovunque} with  \eqref{upp_V} there exists $R>0$ large enough and a positive constant $C$ depending on $R$ such that 
  \begin{equation}\label{u_upp_range_1}
  u_V(x)\le C (1+|x|)^{-(d-\alpha)}\quad \text{in}\ \overline{B_R}^{c}.
  \end{equation}
% Thus  
% \begin{equation}\label{u_upp_compar}
%     (u_V(x))^{\frac{1}{q-1}}\lesssim \frac{u_V(x)}{(1+|x|)^{p}}  \quad \text{in}\ \overline{B_R}^{c}
% \end{equation}
% where $p:=\frac{(d-\alpha)(2-q)}{q-1}$.
% It's straightforward to see that continuity of $u_V$ combined with \eqref{u_upp_range_1} implies that the right hand side of \eqref{u_upp_compar} belongs to $ \mathring{H}^{-\frac{\alpha}{2}}(\bbr^d)\cap L^{q}(\bbr^d)$. From \eqref{PDE_lap_frac} and  \eqref{u_upp_compar} we get
% \begin{equation}\label{u_maggioreuguale}
%  (-\Delta)^{\frac{\alpha}{2}}u_V+C \frac{u_V}{(1+|x|)^{p}}-(-\Delta)^{\frac{\alpha}{2}}V\ge 0   \quad \text{in}\ \overline{B_R}^{c}.
%  \end{equation}
%Next, we will find a positive subsolution of \eqref{u_maggioreuguale} behaving like $|x|^{\alpha-d}$ at infinity.
Let  $\delta$ be a positive real number  satisfying $d-\alpha<\delta<d$, and $\lambda>0$. We define  the function \begin{equation}\label{v_intro}
	v^{\delta}_{\lambda}(x):=\lambda \left(\bar{u}(|x|)+(1+|x|^{2})^{-\frac{\delta}{2}}\right),
 \end{equation}
 where $\bar{u}$ is the solution of \eqref{optimal_u}.
Note that the inequality $q<\frac{2d-\alpha}{d}$ implies that  $v^{\delta}_\lambda(x)\in L^{q'}(\bbr^d)$ and $(v^{\delta}_\lambda)^{\frac{1}{q-1}}\in L^{\frac{2d}{d+\alpha}}(\R^d)\subset \mathring{H}^{\frac{\alpha}{2}}(\R^d)$.
% and $v^{\delta}_\lambda\in L^{q'}(\bbr^d)$. Furthermore,  if we choose $\delta$ satisfying
% \begin{equation}\label{delta_range}
% 	d-\alpha<\delta<d-2\alpha+p,
% \end{equation} 
%by combining 
Moreover, by putting together \cite{entire}*{Corollary 1.4} with Lemma \ref{laplaciano2} we infer $v^{\delta}_{\lambda}\in \mathring{H}^{\frac{\alpha}{2}}(\R^d)$. Furthermore, there exist $R>0$ large enough, $c_1,c_2$ and $c_3$ positive constants  such that for all $\lambda$ sufficiently small
  \begin{equation}\label{v_minoreuguale}
  \begin{split}
  (-\Delta)^{\frac{\alpha}{2}}v^{\delta}_{\lambda} +(v^{\delta}_\lambda)^{\frac{1}{q-1}}-(-\Delta)^{\frac{\alpha}{2}}V\le   -c_1\lambda|x|^{-(\delta+\alpha)}+c_2\lambda^{\frac{1}{q-1}} |x|^{-\frac{d-\alpha}{q-1}} \le 0 \quad \text{in}\ \overline{B_R}^{c},
  \end{split}
  \end{equation}
provided $\delta$ satisfies $d-\alpha<\delta<(d-\alpha)/(q-1)-\alpha$.
 Such  $\delta$ exists again by the restriction $q<\frac{2d-\alpha}{d}$.
  Hence, by  positivity of $u_V$ (Proposition \ref{u_pos}) we can find $\lambda_0>0$ small such that \eqref{v_minoreuguale} holds and  
  \begin{equation}\label{lamda_sopra}
  \sup_{x\in \overline{B_R}}v^{\delta}_{\lambda_0}(x)\le \inf_{x\in \overline{B_R}
  }u(x).
  \end{equation}
  Finally, choosing $\lambda\le \lambda_0$ satisfying \eqref{v_minoreuguale} yields 
  \begin{equation}
  	 v^{\delta}_{\lambda}\le u_V\quad \text{in}\ \overline{B_R}.
  \end{equation} 
 By Lemma \ref{comparison} we infer 
  \begin{equation}\label{eq_17}
  v^{\delta}_{\lambda_0}\le u_V\quad \text{in}\ \R^d,
  \end{equation}
 that combined with \eqref{u_upp_range_1} yields the thesis.
  \end{proof}

\begin{lemma}\label{range_2}
Assume that  \hyperlink{condA}{$(\mathcal{A})$} holds and  $\frac{2d-\alpha}{d}<q<\frac{2d+\alpha}{d+\alpha}$. If
 \begin{equation}\label{laplac_V_0}
 (-\Delta)^{\frac{\alpha}{2}}V(x)\lesssim \frac{1}{|x|^{\frac{\alpha}{2-q}}}\quad \text{as}\ |x|\rightarrow +\infty,
 \end{equation}
 then, 
 $$u_V(x)\simeq \frac{1}{|x|^{\frac{\alpha(q-1)}{2-q}}}\quad \text{as}\ |x|\rightarrow +\infty.  $$ 
  \end{lemma}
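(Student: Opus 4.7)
The plan is to mirror the structure of Lemma~\ref{range_1}, but now use a barrier whose fractional Laplacian and whose nonlinearity \emph{both} decay at the critical rate $|x|^{-\alpha/(2-q)}$ appearing in hypothesis~\eqref{laplac_V_0}. Set
$$\beta := \frac{\alpha(q-1)}{2-q},$$
and observe that the range $\frac{2d-\alpha}{d}<q<\frac{2d+\alpha}{d+\alpha}$ is exactly equivalent to $d-\alpha<\beta<d$. Moreover one has the two key identities $\beta+\alpha = \alpha/(2-q)$ and $\beta/(q-1)=\alpha/(2-q)$. Hence, for $g_\beta(x) := (1+|x|^2)^{-\beta/2}$, Lemma~\ref{laplaciano2} yields $(-\Delta)^{\alpha/2}g_\beta(x)\simeq -|x|^{-\alpha/(2-q)}$ as $|x|\to\infty$, while $g_\beta^{1/(q-1)}(x)\simeq |x|^{-\alpha/(2-q)}$ as well.

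With the ansatz $v_\lambda := \lambda g_\beta$, a direct pointwise computation will give
$$
(-\Delta)^{\alpha/2}v_\lambda(x) + v_\lambda^{1/(q-1)}(x) = \bigl(c_2\lambda^{1/(q-1)} - c_1\lambda + o(1)\bigr)\,|x|^{-\alpha/(2-q)} \quad\text{as } |x|\to\infty,
$$
for positive constants $c_1,c_2$ independent of $\lambda$. Since $q<2$ forces $1/(q-1)>1$, tuning $\lambda$ makes either the nonlinearity or the Laplacian dominant, and the two-sided barriers are built accordingly.

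For the \emph{upper bound}, I would first pick $R$ large enough that the asymptotic estimate above and the bound~\eqref{laplac_V_0} both hold in $\overline{B_R}^{c}$, then fix $\lambda_1$ so large that $c_2\lambda_1^{1/(q-1)}-c_1\lambda_1$ strictly exceeds the constant in~\eqref{laplac_V_0} and also $v_{\lambda_1}\ge u_V$ on $\overline{B_R}$ (possible since $u_V$ is bounded and continuous by Corollary~\ref{V_cont}); then $v_{\lambda_1}$ is a supersolution of~\eqref{PDE_lap_frac} in $\Omega=\overline{B_R}^{c}$, and Lemma~\ref{comparison} yields $u_V\le v_{\lambda_1}\lesssim |x|^{-\beta}$. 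For the \emph{lower bound}, I would pick $\lambda_0>0$ so small that $c_2\lambda_0^{1/(q-1)}-c_1\lambda_0<0$ and $v_{\lambda_0}\le u_V$ on $\overline{B_R}$ (possible by strict positivity and continuity of $u_V$, Proposition~\ref{u_pos}); this makes $v_{\lambda_0}$ a subsolution since $(-\Delta)^{\alpha/2}v_{\lambda_0}+v_{\lambda_0}^{1/(q-1)}\le 0\le (-\Delta)^{\alpha/2}V$ on $\overline{B_R}^{c}$ (using the sign hypothesis in~$(\mathcal{A})$), and Lemma~\ref{comparison} gives $u_V\ge v_{\lambda_0}\gtrsim |x|^{-\beta}$.

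The main technical obstacle will be checking the functional-analytic prerequisites of Lemma~\ref{comparison}: one must verify $v_\lambda\in \mathring{H}^{\alpha/2}(\R^d)\cap L^{q'}(\R^d)$ and $v_\lambda^{1/(q-1)}\in \mathring{H}^{-\alpha/2}(\R^d)$. These reduce to integrability of explicit powers; in particular the inclusion $g_\beta^{1/(q-1)}\in L^{2d/(d+\alpha)}\hookrightarrow \mathring{H}^{-\alpha/2}(\R^d)$ uses precisely $q>\frac{2d}{d+\alpha}$. A further minor technicality is the identification of the pointwise fractional Laplacian of $g_\beta$ (on which Lemma~\ref{laplaciano2} operates) with the weak distributional one used by Lemma~\ref{comparison}, which is justified by Remark~\ref{laplac_punt_weak}.
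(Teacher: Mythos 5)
Your proposal is correct and follows essentially the same route as the paper: the same barrier $v_\lambda=\lambda(1+|x|^2)^{-\beta/2}$ with $\beta=\tfrac{\alpha(q-1)}{2-q}$, the same use of Lemma~\ref{laplaciano2} and Remark~\ref{laplac_punt_weak}, the same tuning of $\lambda$ (large for the supersolution via boundedness of $u_V$, small for the subsolution via Proposition~\ref{u_pos}), and the same appeal to Lemma~\ref{comparison} on $\overline{B_R}^{c}$. Your observation that for the lower bound one can drop the $(-\Delta)^{\alpha/2}V$ term entirely because of the sign hypothesis in $(\mathcal{A})$ is a small, legitimate simplification of the paper's inequality~\eqref{PDE_37_2}.
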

  \begin{proof}
  Let $\lambda$ be a positive constant and  $v_{\lambda}(x)=\lambda (1+|x|^{2})^{-\frac{\alpha(q-1)}{2(2-q)}}$. From Lemma \ref{laplaciano2} and  \cite{garofalo}*{Proposition $2.15$}, $(-\Delta)^{\frac{\alpha}{2}}v_\lambda$ is a continuous function such that
  \begin{equation*}
      (-\Delta)^{\frac{\alpha}{2}}v_\lambda(x) \simeq- |x|^{-\frac{\alpha(q-1)}{(2-q)}-\alpha}\in L^{\frac{2d}{d+\alpha}}(\overline{B_1}^{c}).
  \end{equation*}
  Again by \cite{entire}*{Corollary 1.4} we therefore have that $v_\lambda\in\mathring{H}^{\frac{\alpha}{2}}(\bbr^d)\cap L^{q'}(\bbr^d)$. From Lemma \ref{laplaciano2}, there exist $R_1>0$ large, $c_1,c_3<0$, $c_2>0$  such that for all $\lambda>0$ large enough
  \begin{equation}\label{PDE_37}
  \begin{split}
  (-\Delta)^{\frac{\alpha}{2}}v_{\lambda}+bv^{\frac{1}{q-1}}_{\lambda}-(-\Delta)^{\frac{\alpha}{2}}V \ge \left(\lambda c_1+ \lambda^{\frac{1}{q-1}} c_2+c_3\right)|x|^{-\frac{\alpha}{2-q}}\ge 0\quad \text{in}\ \overline{B_{R_1}}^{c}.
  \end{split}
  \end{equation}
Taking into account  boundedness of $u_V$, there exists $\lambda_1$ large  such that \eqref{PDE_37} holds and 
$$\inf_{x\in \overline{B_{R_1}}}v_{\lambda_1}(x)\ge \sup_{x\in \overline{B_{R_1}}}u_V(x).$$
In view of Lemma \ref{comparison} applied to $\Omega=\overline{B_{R_1}}^c$, we get
$$u_V(x)\le v_{\lambda_1}(x)\quad \text{in}\ \R^d. $$
Similarly, again from Lemma \ref{laplaciano2},  there exist $R_2>0$ large such that for all $\lambda>0$ sufficiently small 
\begin{equation}\label{PDE_37_2}
	\begin{split}
		(-\Delta)^{\frac{\alpha}{2}}v_{\lambda}+bv^{\frac{1}{q-1}}_{\lambda}-(-\Delta)^{\frac{\alpha}{2}}V \le 0\quad \text{in}\ \overline{B_{R_2}}^{c}.
	\end{split}
\end{equation}
Then, by Proposition \ref{u_pos} we can choose $\lambda_2>0$  small such that \eqref{PDE_37_2} is valid and 
$$\sup_{x\in \overline{B_{R_2}}}v_{\lambda_1}(x)\le \inf_{x\in \overline{B_{R_2}}}u_V(x).$$
From Lemma \ref{comparison} we deduce that
$$u_V(x)\ge v_{\lambda_2}(x)\quad \text{in} \ \R^d.$$
This concludes the proof of Lemma \ref{range_2}.
\end{proof}

Before investigating the  range $\frac{2d+\alpha}{d+\alpha}<q< 2$ we study the critical cases $q_1=\frac{2d-\alpha}{d}$ and $q_2=\frac{2d+\alpha}{d+\alpha}$. In the first case we will require some extra restrictions on the parameters arising from the following technical lemma. We mention that Lemma \ref{alfa_grande} extends \cite{graphene}*{Lemma 4.7}.
\begin{lemma}\label{alfa_grande}
Assume that either $b=1$, $\alpha\in (0,2)$ and $d>\alpha$, or $b>1$, $\alpha\in (1,2)$ and $d> \alpha+1$. Let $U\in C^2(\bbr^d)$ be a decreasing function of the form 
\begin{equation*}
U(|x|):=\begin{cases} |x|^{-(d-\alpha)}(\log(e|x|))^{-b}& \mbox{if }|x|\ge 1, \\ v(|x|) & \mbox{if } |x|<1,
\end{cases}
\end{equation*}
where $v\in C^2(\overline{B_1})$ is positive and radially decreasing. Then, 
\begin{equation}\label{asy_1}
	(-\Delta)^{\frac{\alpha}{2}}U (|x|)\simeq -\frac{1}{|x|^d(\log|x|)^{b+1}}\quad as\ |x|\rightarrow +\infty. 
\end{equation}

\end{lemma}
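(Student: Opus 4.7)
My plan is to apply the Ferrari integral representation from Theorem \ref{lapl_ferrari} to $U$ and extract the leading-order asymptotic via a binomial expansion in the small parameter $\log\tau/\log r$. The key structural fact is that $\phi(\rho):=\rho^{-(d-\alpha)}$ is $\alpha$-harmonic in $\bbr^d\setminus\{0\}$; after writing $U=\phi\cdot g$ with $g(\rho):=(\log(e\rho))^{-b}$ for $\rho\ge 1$, the pure-power contribution in the Ferrari kernel cancels identically, and the entire leading behaviour is driven by the logarithmic factor $g$.

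First I check the hypotheses of Theorem~\ref{lapl_ferrari}: $U\in C^2(\overline{\bbr_+})$ by construction, and the integrand $|U(r)|\,r^{d-1}(1+r)^{-(d+\alpha)}$ decays like $r^{-d-1}(\log r)^{-b}$ at infinity, so the growth condition holds. Setting $s:=\log\tau$, $L:=\log(er)$, and $\tilde\mu(s)\,ds:=\mu(e^s)e^s\,ds$ where $\mu(\tau):=\tau(\tau^2-1)^{-1-\alpha}H(\tau)$, one checks $\tilde\mu(s)\sim s^{-1-\alpha}$ as $s\to 0^+$ and $\tilde\mu(s)\sim e^{-\alpha s}$ as $s\to\infty$. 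I split the Ferrari integral at $\tau=r$ (i.e.\ $s=L-1$), since $U(r/\tau)$ switches from the power-log formula to $v$ there. On the outer tail $\tau>r$, the crude bound $|U(r/\tau)|\le\|v\|_{L^\infty(\overline{B_1})}$ combined with $\int_r^\infty\tau^{-(d-\alpha)}\mu(\tau)\,d\tau\lesssim r^{-d}$ yields a contribution of order $O(r^{-d-\alpha})$ to $(-\Delta)^{\alpha/2}U(r)$, which is strictly of higher order than $r^{-d}(\log r)^{-b-1}$.

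On the main range $\tau\in[1,r]$, the homogeneity relations $\phi(r\tau)=\phi(r)\tau^{-(d-\alpha)}$ and $\phi(r/\tau)=\phi(r)\tau^{d-\alpha}$ collapse the Ferrari bracket to $\phi(r)F(s,L)$, where
\begin{equation*}
F(s,L):=e^{-(d-\alpha)s}\bigl[L^{-b}-(L+s)^{-b}\bigr]+\bigl[L^{-b}-(L-s)^{-b}\bigr].
\end{equation*}
On $s\in[0,L/2]$, a binomial expansion in $t=s/L$ gives
\begin{equation*}
F(s,L)=\frac{bs}{L^{b+1}}\bigl[e^{-(d-\alpha)s}-1\bigr]+O\!\bigl(s^{2}L^{-b-2}\bigr)
\end{equation*}
uniformly in $s$, while on $s\in[L/2,L-1]$ the weight $\tilde\mu(s)\lesssim e^{-\alpha s}\le e^{-\alpha L/2}$ is super-polynomially small in $L$ and $F$ remains bounded (since $L-s\ge 1$ forces $(L-s)^{-b}\le 1$). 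Using $\int_0^\infty s^2\tilde\mu(s)\,ds<\infty$ (the factor $s^{1-\alpha}$ near zero is integrable because $\alpha<2$, and the exponential decay controls infinity), I obtain
\begin{equation*}
\int_0^{L-1}F(s,L)\,\tilde\mu(s)\,ds = \frac{bK}{L^{b+1}}+O(L^{-b-2}),\qquad K:=\int_0^\infty s\bigl[e^{-(d-\alpha)s}-1\bigr]\,\tilde\mu(s)\,ds,
\end{equation*}
where $K$ is an absolutely convergent, strictly negative constant. Since $r^{-\alpha}\phi(r)=r^{-d}$, combining the two regions produces
\begin{equation*}
(-\Delta)^{\frac{\alpha}{2}}U(r)\sim c_{d,\alpha}\,bK\,r^{-d}(\log r)^{-(b+1)}\qquad\text{as }r\to\infty,
\end{equation*}
matching the claimed $\simeq -r^{-d}(\log r)^{-(b+1)}$.

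The main obstacle I anticipate is the precise justification that the second-order Taylor remainder and the boundary region $s\in[L/2,L-1]$ contribute only $O(L^{-b-2})$, especially because the implicit constants in the binomial expansion grow with $b$ (the next coefficient is $b(b+1)/2$). I expect the restrictions $\alpha\in(1,2)$ and $d>\alpha+1$ in the regime $b>1$ to enter precisely here: they appear to be needed to ensure that higher-order remainders in the expansion of $(L\pm s)^{-b}$, integrated against the singular weight $\tilde\mu$ near the origin and against the slow polynomial decay away from it, produce only corrections of size $L^{-b-2}$ rather than contaminating the leading $L^{-b-1}$ asymptotic.
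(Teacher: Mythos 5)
Your argument is correct and follows a genuinely different route. Both proofs start from the Ferrari representation \eqref{Ferrari_integral}, but you exploit a structural cancellation that the paper never invokes: writing $U=\phi\cdot g$ with $\phi(\rho)=\rho^{-(d-\alpha)}$, the $\alpha$-harmonicity identity $1-\tau^{-(d-\alpha)}+(1-\tau^{d-\alpha})\tau^{-(d-\alpha)}=0$ collapses the Ferrari bracket on $\tau\in[1,r]$ exactly to $\phi(r)F(s,L)$, and the logarithmic substitution $s=\log\tau$, $L=\log(er)$ then reduces the leading-order asymptotic to a single uniform second-order Taylor expansion in $s/L$, everything funnelling into the one convergent negative constant $K=\int_0^\infty s[e^{-(d-\alpha)s}-1]\tilde\mu(s)\,ds$. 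The paper instead splits at $\tau=2$ and $\tau=r$ and proves the two sides of the $\simeq$ by separate arguments: a Lagrange mean-value bound plus an incomplete-Gamma limit (eq.~\eqref{term_2}) for the lower bound, which needs $\alpha>1$; and the positivity of $\lim_{\tau\to 1^+}(\tau^{-1}-\tau^{-d+\alpha})/(\tau-1)=d-\alpha-1$ for the upper bound, which needs $d>\alpha+1$. Your closing worry is misplaced: neither restriction enters your argument. The Taylor-with-remainder bound $|F(s,L)-bL^{-b-1}s[e^{-(d-\alpha)s}-1]|\le C_b\,s^2L^{-b-2}$ holds uniformly on $s\in[0,L/2]$ with $C_b$ depending only on the \emph{fixed} parameter $b$; both $\int_0^\infty s^2\tilde\mu(s)\,ds<\infty$ and the absolute convergence and strict negativity of $K$ require only $\alpha\in(0,2)$ and $d>\alpha$; the zone $s\in[L/2,L-1]$ is killed by $\tilde\mu(s)\lesssim e^{-\alpha s}$; and the outer tail $\tau>r$ is $O(r^{-d-\alpha})=o(r^{-d}L^{-(b+1)})$. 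So your method in fact proves \eqref{asy_1} under the weaker hypotheses $b\ge 1$, $\alpha\in(0,2)$, $d>\alpha$, and yields the stronger precise asymptotic $\sim c_{d,\alpha}bK\,r^{-d}(\log r)^{-(b+1)}$. As the paper itself notes in Remark~\ref{lower_42}, each extra restriction in the lemma corresponds to one of the two one-sided bounds the paper's technique can deliver; your calculation shows they are artifacts of that technique, not intrinsic to the result.
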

\begin{proof}
We start proving the lower bound.  We split the right hand side of  \eqref{Ferrari_integral} in Theorem \ref{lapl_ferrari} (up to the constant $c_{d,\alpha}$) into three parts $I^{2}_{1}, I^r_{2}$ and $I^{\infty}_r$ respectively, 
\begin{equation}
   I^{2}_1=\int_{\tau=1}^2(\cdot),\ I^{r}_2=\int_{\tau=2}^r(\cdot),\ I^{\infty}_r=\int_{\tau=r}^{\infty}(\cdot).
\end{equation}
For the convenience of the reader we also define 
\begin{equation}\label{phi_37}
    \Phi(\tau,r):=U(r)-U(r\tau)+\left(U(r)-U(r/\tau)\right)\tau^{-d+\alpha},
\end{equation}
and we recall that for the entire proof $r$ is assumed to be large enough (at least $r>2$).
We perform the proof by the following steps:
\begin{itemize}
	\item[$(i)$] We prove that there exists $M>0$ independent on $\tau,r$ such that
	\begin{equation}\label{primo_int}
		r^{d-\alpha}\Phi(\tau,r)\ge -\frac{M\log(e^{b+2}r)}{(\log(er))^{b+2}}(\tau-1)^2\quad \forall \tau\in [1,2];
	\end{equation}
\item[$(ii)$] We prove that 
 \begin{equation}\label{minore_00}
	\Phi(\tau,r)\le 0\quad \forall \tau\in [1,2];
\end{equation}
\item[$(iii)$] We prove that 
\begin{equation}\label{point_3_38}
	I^{r}_2\simeq -\frac{1}{r^{d}(\log(r))^{b+1}}, \quad I^{\infty}_r=o\left(\frac{1}{r^{d}(\log(r))^{b+1}}\right)\quad \text{as}\ r\to +\infty.
\end{equation}
\end{itemize}
Let's start with $(i)$, whose proof is similar to the one of \cite{graphene}*{Lemma 4.7}.

If we set $A:=\log(er)$ and $\tau=e^{x}$, the inequality \eqref{primo_int} becomes equivalent to 
\begin{equation}\label{primo_int_2}
\frac{M}{A^{b+1}}(e^x-1)^2+\left[\frac{1}{A^b}-\frac{1}{(A-x)^b}+e^{(-d+\alpha)x}\left(\frac{1}{A^b}-\frac{1}{(A+x)^b}\right)\right]e^{-(1+\alpha)x}\ge 0.
\end{equation}
Inequality \eqref{primo_int_2} is trivially verified for $x=0$.\\
\indent Assume $x\neq 0$.
In this case \eqref{primo_int_2} holds if there exists $M>0$ such that 
\begin{equation}
M\ge \frac{A^{b+1}}{x^2}\left[\frac{1}{(A-x)^b}-\frac{1}{A^b}+e^{x(\alpha-d)}\left(\frac{1}{(A+x)^b}-\frac{1}{A^b}\right)\right]e^{-x(1+\alpha)}:=F(x,A).
\end{equation}
Since by definition $x\in [0,\log(2)]$, we only need to focus on $x$ small. As a matter of fact, assume $x\ge \varepsilon>0$. 
Then, 
\begin{equation*}
\begin{split}
    |F(x,A)| & \le \frac{A}{\varepsilon^2}\left|\left[\frac{1}{(1-\frac{x}{A})^b}-1+e^{-x(d-\alpha)}\left(\frac{1}{(1+\frac{x}{A})^b}-1\right)\right]\right|\\
    & \le \frac{A}{\varepsilon^2}\left(\frac{1}{(1-\frac{x}{A})^b}-\frac{1}{(1+\frac{x}{A})^b}\right)\le \frac{\log(2)}{\varepsilon^2}\left[\frac{A}{x}\left(\frac{1}{(1-\frac{x}{A})^b}-\frac{1}{(1+\frac{x}{A})^b}\right)\right]:=G\left(\frac{x}{A}\right).
\end{split}
\end{equation*}
Now, it's convenient to set $t:=x/A$. Then, if $A\le L$ for some $L>0$ we clearly obtain boundedness of $F$. It remains to show what happens when $A$ goes to infinity (or equivalently  when $t$ goes to zero). In the latter case we have that 
\begin{equation}
    \lim_{t\to 0}G(t)=\frac{2b\log(2)}{\varepsilon^2}<+\infty
\end{equation}
Consequently, to obtain boundedness of $F$ we only need to focus on $x$ going to zero.
To this aim, we rewrite $F$ as 
\begin{equation*}
    \begin{split}
        F(x,A)=F_1(x,A)+F_2(x,A),
    \end{split}
\end{equation*}
where \begin{equation*}
    \begin{split}
    F_1(x,A)& =\frac{A}{x^2}\left(\frac{1}{(1-\frac{x}{A})^b}+\frac{1}{(1+\frac{x}{A})^b}-2\right)e^{-x(d-\alpha)},\\
    F_2(x,A) & = \frac{A}{x^2}(e^{-x(d+1)}-1)\left(\frac{1}{(1+\frac{x}{A})^b}-1\right).
    \end{split}
\end{equation*}
Since $A\ge 1$, we have
\begin{equation*}
    |F_1(x,A)|\le \left(\frac{A}{x}\right)^2\left|\frac{1}{(1-\frac{x}{A})^b}+\frac{1}{(1+\frac{x}{A})^b}-2\right|=G_1\left(\frac{x}{A}\right).
\end{equation*}
Let's set again $t:=x/A$. Then,  if $t\ge \varepsilon>0$ the function $F_1$ defined above is clearly bounded. On the other hand, 
$$\lim_{t\to 0}G_1(t)=b(b+1)<+\infty,$$
proving that $F_1$ is bounded. 
Next, we notice that the function $F_2$ can be written as the product
$$ F_2(x,A)=G_2(x)\cdot G_3\left(\frac{x}{A}\right)$$
where
\begin{equation*}
    G_2(x)=\frac{e^{-x(d+1)}-1}{x},
\end{equation*}
and 
\begin{equation*}
    G_3\left(\frac{x}{A}\right)=\frac{A}{x}\left(\frac{1}{(1+\frac{x}{A})^b}-1\right).
\end{equation*}
Thus, it's enough to deduce boundedness as $x$ goes to zero and $A$ goes to infinity. In this case, it's clear that $G_2$ is bounded as $x$ goes to zero and, by setting again $t=x/A$, we obtain
$$\lim_{t\to 0}G_3(t)=-b,$$
infering boundedness of $G_3$ as well.
In view of the above arguments, there exists a positive costant $M$ satisfying \eqref{primo_int}.

Next, we prove $(ii)$ for sufficiently large $r$,
% the function $\Phi$ defined in \eqref{phi_37} satisfies the inequality
%\begin{equation}\label{minore_00}
%	\Phi(\tau,r)\le 0\quad \forall \tau\in [1,2],
%\end{equation}
or equivalently
\begin{equation}\label{minore_0}
	r^{d-\alpha}\Phi(\tau,r)=\frac{1}{\log(er)^b}-\frac{1}{\log(er/\tau)^b}+\left(\frac{1}{\log(er)^b}-\frac{1}{\log(er\tau)^b}\right)\tau^{-d+\alpha}\le 0\quad \forall \tau\in [1,2].
\end{equation}
By simple computations, \eqref{minore_0} is equivalent to
\begin{equation}\label{ineq_tau_small}
	\left(\frac{\log(er)^b-\log(er\tau)^b}{\log(er\tau)^b-\log(er)^b}\right)\frac{\log(er\tau)^b}{\log(er/\tau)^b}\ge \tau^{-d+\alpha}\quad \forall \tau\in [1,2].
\end{equation}
If we set $s:=\frac{\log(\tau)}{\log(er)}$, the left hand side of \eqref{ineq_tau_small} becomes
\begin{equation}\label{s_variable}
	h(s):=\left(\frac{1-(1-s)^b}{(1+s)^b-1}\right)\left(\frac{1+s}{1-s}\right)^b=1+(b+1)s+o(s)\quad \text{as}\ s\rightarrow 0^+.
\end{equation}
In particular, there exists $\bar{s}>0$ such that $h(s)\ge 1$ provided $s\le \bar{s}$. Therefore, if we choose $r$ large enough such that 
$$\frac{\log(2)}{\log(er)}\le \bar{s}, $$ we infer

\begin{equation}\label{quasi_finito}
	\frac{\log(\tau)}{\log(er)}\le \frac{\log(2)}{\log(er)}\le \bar{s}\quad \forall \tau\in [1,2].
\end{equation}
The inequality \eqref{quasi_finito} implies 
\begin{equation*}
	\left(\frac{\log(er)^b-\log(er\tau)^b}{\log(er\tau)^b-\log(er)^b}\right)\frac{\log(er\tau)^b}{\log(er/\tau)^b}\ge 1 \ge  \tau^{-d+\alpha}\quad \forall\tau\in [1,2],
\end{equation*}
from which \eqref{minore_00} follows.

Next, we investigate point $(iii)$. To this aim, we start by estimating the quantity $I^r_2$. Assume first $b>1$.
Then, we have
\begin{equation}\label{ineq_39}
\begin{split}
& I^{r}_{2}=\\
& =r^{-d}\int_{\tau=2}^{r}\!\left[\frac{1}{(\log(er))^b}-\frac{1}{(\log(er/\tau))^b}\!+\!\tau^{-d+\alpha}\left(\frac{1}{(\log(er))^b}-\frac{1}{(\log(er\tau))^b}\right)\! \right ]\!(\tau-1)^{-1-\alpha}F(\tau)d\tau \\
& \ge  r^{-d}\int_{\tau=2}^{r} \left[\frac{1}{(\log(er))^b}-\frac{1}{(\log(er/\tau))^b}\right]\!(\tau-1)^{-1-\alpha}F(\tau)d\tau\\
& \gtrsim r^{-d}\int_{\tau=2}^{r} \left[\frac{1}{(\log(er))^b}-\frac{1}{(\log(er/\tau))^b}\right]\tau^{-1-\alpha}d\tau=r^{-d}\int_{\tau=2}^{r}\left[g(1)-g(\tau)\right]\tau^{-1-\alpha}d\tau.
\end{split}
\end{equation}
Hence,  by Lagrange Theorem, 
\begin{equation}\label{lagrange_39}
\begin{split}
|g(\tau)-g(1)|& \le \sup_{t\in [1,\tau]}|g'(t)|(\tau-1)=\sup_{t\in [1,\tau]}\left(\frac{b}{t(\log(er)-\log(t))^{b+1}}\right) \cdot (\tau-1)\\
&\le   \frac{b(\tau-1)}{(\log(er)-\log(\tau))^{b+1}}.
\end{split}
\end{equation}
Then, putting together \eqref{ineq_39} with \eqref{lagrange_39}
\begin{equation*}
\begin{split}
I^{r}_{2} & \gtrsim -r^{-d}\int_{\tau=2}^{r}\frac{\tau^{-\alpha}}{(\log(er)-\log(\tau))^{b+1}}d\tau=-\frac{r^{-d}}{(\log(er))^{b+1}}\int_{\tau=2}^{r}\frac{\tau^{-\alpha}}{\left(1-\frac{\log(\tau)}{\log(er)}\right)^{b+1}}d\tau\\
& -\frac{r^{-d}}{(\log(er))^{b+1}}\int_{x=\log(2)}^{\log(r)}\frac{e^{(-\alpha+1)x}}{\left(1-\frac{x}{\log(er)}\right)^{b+1}}dx,
\end{split}
\end{equation*}
where $\tau=e^x$. Then, if $\alpha>1$, an application of dominated convergence theorem yields that
\begin{equation}\label{term_2}
	\begin{split}
		\lim_{r\to +\infty}\int_{x=\log(2)}^{\log(r)}\frac{e^{(-\alpha+1)x}}{\left(1-\frac{x}{\log(er)}\right)^{b+1}}dx=\int_{\log(2)}^{\infty}e^{(-\alpha+1)x}dx=\frac{2^{-(\alpha-1)}}{\alpha-1}.
	\end{split}
\end{equation}
From  \eqref{term_2} 
we infer that 
\begin{equation}\label{1_upper}
I^{r}_2\gtrsim -\frac{1}{r^{d}(\log(r))^{b+1}}\quad \text{as}\ r\to +\infty.  
\end{equation}
In what follows we shall prove the converse inequality of \eqref{1_upper}.

If we define the functions $f_{1}(\tau,r)=\frac{1}{(\log(er/\tau))^{b}}$ and $f_2(\tau,r)=\frac{1}{(\log(er\tau))^{b}}$,  by Lagrange Theorem, there exist $t_{\tau,r}, \bar{t}_{\tau,r}\in (1,\tau)$ such that
\begin{equation}\label{lagrange}
\begin{split}
r^{d-\alpha}\Phi(\tau,r)& = \frac{1}{(\log(er))^b}-\frac{1}{(\log(er/\tau))^b}+\tau^{-d+\alpha}\left(\frac{1}{(\log(er))^b}-\frac{1}{(\log(er\tau))^b}\right)\\
&=f_{1}(1,r)-f_{1}(\tau,r)+\tau^{-d+\alpha}\left(f_{2}(1,r)-f_{2}(\tau,r)\right)\\
&= -\frac{b(\tau-1)}{t_{\tau,r}(\log(er)-\log(t_{\tau,r}))^{b+1}}+\tau^{-d+\alpha} \frac{b(\tau-1)}{\bar{t}_{\tau,r}(\log(er)+\log(\bar{t}_{\tau,r}))^{b+1}}\\
& \le -\frac{b(\tau-1)}{\tau (\log(er))^{b+1}}+\tau^{-d+\alpha}\frac{b(\tau-1)}{(\log(er))^{b+1}}.
\end{split}
\end{equation}
In the sequel, we denote by $F$ the function
$$F(\tau):=\tau (\tau+1)^{-1-\alpha}H(\tau),$$ where $H(\tau)$ is the function defined in Theorem \ref{lapl_ferrari},  eq. \eqref{Ferrari_integral}. 
We further recall that $F(\tau)$ is continuous, positive and bounded on the whole of $\R^d$.
Then, if $d> \alpha+1$,   from \eqref{lagrange} we obtain

\begin{equation*}
\begin{split}
& r^{d-\alpha}\int_{\tau=1}^{r}\Phi(\tau,r)(\tau-1)^{-1-\alpha}F(\tau)d\tau\le \\
& \le -\frac{b}{(\log(er))^{b+1}}\int_{\tau=1}^{r}(\tau-1)^{-\alpha}\tau^{-1}F(\tau)d\tau +\frac{b}{(\log(er))^{b+1}}\int_{\tau=1}^{r}(\tau-1)^{-\alpha}F(\tau) {\tau}^{-d+\alpha}d\tau\\
& = -\frac{b}{(\log(er))^{b+1}}\left[\int_{\tau=1}^{r}(\tau-1)^{-\alpha}(\tau^{-1}-\tau^{-d+\alpha})F(\tau)d\tau\right].
\end{split}
\end{equation*}
Since by assumption $\alpha<2$, $d>\alpha+1$ we have that
$$\lim_{\tau\to 1^{+}}\frac{\tau^{-1}-\tau^{-d+\alpha}}{\tau-1}=d-\alpha-1>0$$ and 
$$  \int_{\tau=1}^{1+\varepsilon}(\tau-1)^{-\alpha}(\tau^{-1}-\tau^{-d+\alpha})F(\tau)d\tau<+\infty,\quad \varepsilon>0.$$
Finally, (from boundedness  and positivity of $F$) we get
$$0< \int_{\tau=1}^{r}(\tau-1)^{-\alpha}(\tau^{-1}-\tau^{-d+\alpha})F(\tau)d\tau\le \int_{\tau=1}^{\infty}(\tau-1)^{-\alpha}(\tau^{-1}-\tau^{-d+\alpha})F(\tau)d\tau<\infty.$$
We have therefore proved that 
\begin{equation}\label{I_2_r}
    I^{r}_2\lesssim -\frac{1}{r^{d}(\log(r))^{b+1}}\quad \text{as}\ r\rightarrow +\infty
\end{equation}
provided $d> \alpha+1$. Then, 
by combining \eqref{1_upper} with \eqref{I_2_r} we obtain 
\begin{equation}\label{I_2_over}
    I^{r}_2\simeq -\frac{1}{r^{d}(\log(r))^{b+1}}\quad \text{as}\ r\to +\infty.
\end{equation}
Now, we further claim that
\begin{equation}\label{term_3}
	I^{\infty}_{r}=o\left(\frac{1}{r^d(\log(r))^{b+1}}\right)\quad \text{as}\ r\rightarrow +\infty
\end{equation}
which will complete the proof of $(i)-(ii)-(iii)$.
Indeed, 
$$r^d|I^{\infty}_r|\lesssim r^{d-\alpha}\int_{\tau=r}^{+\infty}|\Phi(\tau,r)|\tau^{-1-\alpha}d\tau \le  $$
\begin{equation*}
	\begin{split}
		& \le r^{d-\alpha}U(r)\int_{\tau=r}^{+\infty}\tau^{-1-\alpha}d\tau+r^{d-\alpha}\int_{\tau=r}^{+\infty}U(r\tau)\tau^{-1-\alpha}d\tau+r^{d-\alpha}\int_{\tau=r}^{+\infty}(v(r/\tau)-U(r))\tau^{-1-d}d\tau\\
		&   \lesssim  r^{d-\alpha}U(r)\int_{\tau=r}^{+\infty}\tau^{-1-\alpha}d\tau+r^{d-\alpha}\int_{\tau=r}^{+\infty}\tau^{-1-d}d\tau \lesssim  r^{-\alpha}\quad \text{as}\ r\rightarrow +\infty.
	\end{split}
\end{equation*}

%Then, we conclude that
%there exists $R_0>0$ such that
%\begin{equation}\label{1_lower}
%(-\Delta)^{\frac{\alpha}{2}}U (|x|)\lesssim -\frac{1}{|x|^{d}(\log|x|)^{b+1}}\quad \text{if}\ |x|>R_0 .
%\end{equation}
Validity of \eqref{asy_1} is therefore obtained since, by combining $(i)-(ii)-(iii)$, 
\begin{equation}
-\frac{1}{r^{d}(\log(r))^{b+1}}+o\left(\frac{1}{r^{d}(\log(r))^{b+1}}\right)\lesssim	I^2_1+I^r_2+I^\infty_r \lesssim -\frac{1}{r^{d}(\log(r))^{b+1}}+o\left(\frac{1}{r^{d}(\log(r))^{b+1}}\right)
\end{equation}

Assume now $b=1$. In this case, the function $\phi(\tau,r)\le 0$ for every $\tau\in [1,r]$ and moreover it can be directly computed the limit 
\begin{equation}\label{limite_b=1}
	\begin{split}
	\lim_{r\to +\infty}(\log(r))^2\int_{\tau=2}^{r}\phi(\tau,r)\tau^{-1-\alpha}d\tau =\frac{2^{-d}}{d^2}+\frac{2^{-d}\log(2)}{d}-\frac{2^{-\alpha}(1+\alpha\log(2))}{\alpha^2}.
	\end{split}
\end{equation}
 See \cite{graphene}*{Lemma 4.7} for an explicit computation in the case $d=2$ and $\alpha=1$. From \eqref{limite_b=1} we obtain the thesis.
\end{proof}
\remark \label{lower_42} Note that,  if $b\ge 1$, $\alpha\in (1,2)$, $d\ge 2$ we obtained the following lower bound on the decay given by 
$$(-\Delta)^{\frac{\alpha}{2}}U(|x|)\gtrsim -\frac{1}{|x|^{d}(\log|x|)^{b+1}}\quad \text{as}\ |x|\to +\infty, $$
without the further restriction $d>\alpha+1$ stated in Lemma \ref{alfa_grande}.
Similarly, if $b\ge 1$, $\alpha\in (0,2)$ and $d>\alpha+1$ we have 
$$(-\Delta)^{\frac{\alpha}{2}}U(|x|)\lesssim -\frac{1}{|x|^{d}(\log|x|)^{b+1}}\quad \text{as}\ |x|\to +\infty,$$
without the restriction $\alpha\in (1,2)$ stated in Lemma \ref{alfa_grande}. These facts will play a role (see Remark \ref{remark_soglia1})  in Corollary \ref{L_1_bound} where only an upper bound on the decay rate of the minimizer $\rho_V$ is needed.
\vspace{0.1cm}

The following result prove the asymptotic decay for another class of functions with logarithmic correction. The purporse of Lemma \ref{2_limit} is to find asymptotic decay of the minimizer $\rho_V$ when $q=\frac{2d+\alpha}{d+\alpha}$ (see Lemma \ref{soglia_2}).

\begin{lemma}\label{2_limit}
Let $U\in C^2(\bbr^d)$ be a decreasing function of the form 
\begin{equation}
U(|x|):=\begin{cases} |x|^{-d}(\log(e|x|))^{\frac{d}{\alpha}}& \mbox{if }|x|\ge 1, \\ v(|x|) & \mbox{if } |x|<1,
\end{cases}
\end{equation}
where $v\in C^2(\overline{B_1})$ is positive and radially decreasing.  Then, 
$$(-\Delta)^{\frac{\alpha}{2}}U(x)\simeq -\frac{(\log|x|)^{\frac{d+\alpha}{\alpha}}}{|x|^{d+\alpha}}\quad as\ |x|\rightarrow +\infty. $$
\end{lemma}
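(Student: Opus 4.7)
The plan is to mimic the proof of Lemma \ref{alfa_grande}, adapted to the new profile $U(r) = r^{-d}(\log(er))^{d/\alpha}$ for $r \ge 1$. I would apply the Ferrari representation from Theorem \ref{lapl_ferrari} to write
\begin{equation*}
(-\Delta)^{\alpha/2}U(r) = c_{d,\alpha}\, r^{-\alpha} \int_1^\infty \Phi(\tau, r)\, \tau(\tau^2 - 1)^{-1-\alpha}H(\tau)\, d\tau,
\end{equation*}
where $\Phi(\tau, r) := U(r) - U(r\tau) + (U(r) - U(r/\tau))\tau^{-d+\alpha}$, and split the integral as $I_1^2 + I_2^r + I_r^\infty$ exactly as in the previous lemma.

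For the dominant range $\tau \in [2, r]$ (so that both $r\tau, r/\tau \ge 1$), with $L := \log(er)$ a direct computation gives
\begin{equation*}
r^d \Phi(\tau, r) = L^{d/\alpha}(1 + \tau^{-d+\alpha}) - \tau^{-d}(L + \log\tau)^{d/\alpha} - \tau^\alpha(L - \log\tau)^{d/\alpha}.
\end{equation*}
The crucial observation is that the last term alone produces the logarithmic growth: using $H(\tau) \sim \tau^\alpha$ as $\tau \to \infty$, so that the weight $\tau(\tau^2-1)^{-1-\alpha}H(\tau)$ behaves like $\tau^{-1-\alpha}$, the substitution $u = L - \log\tau$ yields
\begin{equation*}
-\int_2^r \tau^\alpha (L - \log\tau)^{d/\alpha}\tau^{-1-\alpha}\,d\tau = -\int_1^{L - \log 2} u^{d/\alpha}\,du \simeq -\tfrac{\alpha}{d+\alpha}\,L^{(d+\alpha)/\alpha},
\end{equation*}
while the other three pieces of $r^d\Phi$ integrate only to $O(L^{d/\alpha})$, a factor of $\log r$ smaller. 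Combined with the overall prefactor $r^{-\alpha}$, this gives the claimed $\simeq -r^{-(d+\alpha)}(\log r)^{(d+\alpha)/\alpha}$.

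The contributions from the other two ranges are lower-order. For $I_1^2$, the $C^2$-regularity of $U$ on $[r/2, 2r]$ yields $|\Phi(\tau, r)| \lesssim (\tau - 1)^2 \sup_{[r/2, 2r]}|U''| \lesssim (\tau-1)^2 r^{-d-2}(\log r)^{d/\alpha}$, so its contribution to $r^{d+\alpha}(-\Delta)^{\alpha/2}U(r)$ is of order $(\log r)^{d/\alpha}$, negligible. For $I_r^\infty$, using $|U(r\tau)|, |U(r/\tau)| \le \|v\|_\infty + U(1)$ for $\tau \ge r$ together with the integrability of $\tau(\tau^2-1)^{-1-\alpha}H(\tau)$ at infinity gives a bound of order $r^{-\alpha}$ after multiplication by the prefactor, again negligible.

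The main obstacle is controlling $r^d\Phi(\tau, r)$ uniformly near $\tau = r$, where $L - \log\tau = O(1)$ and the naive Taylor expansion of $(L - \log\tau)^{d/\alpha}$ around $L$ breaks down; this is where the leading contribution concentrates. I would avoid the Taylor expansion entirely for the critical fourth term by performing the substitution $u = L - \log\tau$ directly (as above), and would handle the other three terms by partitioning $[2, r]$ into $[2, \sqrt{r}]$ (where $\log\tau / L \le 1/2$ and Taylor estimates are uniform) and $[\sqrt{r}, r]$ (where crude pointwise bounds suffice to stay within the $O(L^{d/\alpha})$ error). This parallels the three-step breakdown of Lemma \ref{alfa_grande}: bounding $|\Phi|$ on $[1, 2]$ via Taylor, verifying the sign of $\Phi$ there via an $h(s) \ge 1$-type computation, and performing the careful asymptotic analysis on $[2, r]$ and the tail $[r, \infty)$.
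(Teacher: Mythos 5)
Your computation of the leading asymptotic of $I_2^r$ --- isolating the fourth piece $-\tau^{\alpha}(L-\log\tau)^{d/\alpha}$, substituting $u=L-\log\tau$ so that $\int_2^r(L-\log\tau)^{d/\alpha}\tfrac{d\tau}{\tau}=\int_1^{L-\log 2}u^{d/\alpha}\,du\sim\tfrac{\alpha}{d+\alpha}L^{(d+\alpha)/\alpha}$, and checking that the three remaining pieces contribute only $O(L^{d/\alpha})$ --- is correct and more transparent than the paper's route, which decomposes the Hypergeometric weight ${}_2F_1(a,b,c,\tau^{-2})=1+W(\tau)$ and evaluates the ``$1$'' part in closed form via incomplete Gamma functions, arriving at the same constant $-\tfrac{\alpha}{d+\alpha}$. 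Your $I_1^2$ estimate is also sound in conclusion (the intermediate inequality should carry a factor $r^2$ from $\partial_\tau^2[U(r\tau)]=r^2U''(r\tau)$, giving $|\Phi|\lesssim(\tau-1)^2r^{-d}(\log r)^{d/\alpha}$, but the final order $(\log r)^{d/\alpha}$ you state matches this corrected bound).

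There is, however, a genuine gap in your treatment of $I_r^\infty$. Bounding $|U|\le\|v\|_\infty+U(1)$ only gives $|\Phi(\tau,r)|\lesssim 1$, hence $\int_r^\infty|\Phi|\,\tau^{-1-\alpha}\,d\tau\lesssim r^{-\alpha}$ and, with the prefactor $r^{-\alpha}$, at best $|I_r^\infty|\lesssim r^{-2\alpha}$. Since $\alpha<2\le d$ throughout, this is \emph{not} $o\bigl(r^{-d-\alpha}(\log r)^{(d+\alpha)/\alpha}\bigr)$: the ratio $r^{-2\alpha}\big/\bigl(r^{-d-\alpha}(\log r)^{(d+\alpha)/\alpha}\bigr)=r^{d-\alpha}(\log r)^{-(d+\alpha)/\alpha}\to\infty$. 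To close the argument you must use the actual decay $U(r)\simeq r^{-d}(\log r)^{d/\alpha}$ together with the factor $\tau^{-d+\alpha}$ attached to the $U(r/\tau)$ term: write
\begin{equation*}
|\Phi(\tau,r)|\le U(r)\bigl(1+\tau^{-d+\alpha}\bigr)+U(r\tau)+\|v\|_{L^\infty}\,\tau^{-d+\alpha},
\end{equation*}
so that $\int_r^\infty|\Phi|\,\tau^{-1-\alpha}\,d\tau\lesssim U(r)\,r^{-\alpha}+r^{-d}\lesssim r^{-d-\alpha}(\log r)^{d/\alpha}+r^{-d}$, and after multiplying by $r^{-\alpha}$ one gets $|I_r^\infty|\lesssim r^{-d-\alpha}+r^{-d-2\alpha}(\log r)^{d/\alpha}$, which is indeed $o\bigl(r^{-d-\alpha}(\log r)^{(d+\alpha)/\alpha}\bigr)$ --- this is exactly the estimate the paper records in its tail bound.
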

\begin{proof}
Similarly to what has been done in Lemma \ref{alfa_grande}, Theorem  \ref{lapl_ferrari} plays a key role. As a matter of fact, in view of \cite{Ferrari}*{Theorem $1.1$}, we can write the following 
\begin{equation}\label{int_laplaciano}
(-\Delta)^{\frac{\alpha}{2}}U(r)=c_{d,\alpha}r^{-\alpha}\int_{\tau=1}^{+\infty}\Phi(\tau,r)G(\tau)d\tau,
\end{equation}
where $G$ can be written in terms of the Gaussian hypergeometric function
$$G(\tau)=\frac{(2\pi)^{\frac{d}{2}}}{\Gamma(d/2)}\tau^{-1-\alpha} {}_{2}F_1(a,b,c,\tau^{-2}),$$
with parameters 
\begin{equation*}
a  =\frac{d+\alpha}{2},\quad b= 1+\frac{\alpha}{2},\quad 
c= \frac{d}{2}.
\end{equation*}
As we have already done for the proof of Lemma \ref{alfa_grande}, we define 
\begin{equation*}
    \begin{split}
        \Phi(\tau,r):=U(r)-U(r\tau)+\left(U(r)-U(r/\tau)\right)\tau^{-d+\alpha},
    \end{split}
\end{equation*}
and we split the integral \eqref{int_laplaciano} into three parts $I^{2}_{1}, I^r_{2}$ and $I^{\infty}_r$ respectively
\begin{equation}
	I^{2}_1=\int_{\tau=1}^2(\cdot),\ I^{r}_2=\int_{\tau=2}^r(\cdot),\ I^{\infty}_r=\int_{\tau=r}^{\infty}(\cdot).
\end{equation}
 Furthermore,   
since 
$${}_{2}F_1(a,b,c,\tau^{-2})=1+\frac{ab}{\tau^2}+o(\tau^{-3})\quad \text{as}\ \tau\to +\infty,$$
for $\tau\ge 2$ we decompose the function ${}_{2}F_1(a,b,c,\tau^{-2})$ as
\begin{equation}\label{split_W}
	{}_{2}F_1(a,b,c,\tau^{-2})=1+W(\tau),
\end{equation}
where $W(\tau)$ is a continuous function such that
\begin{equation}\label{decay_W}
W(\tau)=o(\tau^{-1})\quad \text{as}\ \tau\to +\infty.
\end{equation}
Taking into account \eqref{split_W}, the quantity $I^{r}_{2}$ in turn splits as
\begin{equation}
\begin{split}
I^{r}_{2}& =\frac{(2\pi)^{\frac{d}{2}}}{\Gamma(d/2)} r^{-\alpha}\int_{\tau=2}^{r}\Phi(\tau,r)\tau^{-1-\alpha}d\tau+\frac{(2\pi)^{\frac{d}{2}}}{\Gamma(d/2)}r^{-\alpha}\int_{\tau=2}^{r}\Phi(\tau,r)\tau^{-1-\alpha}W\left(\tau\right)d\tau\\
& =I^r_{2,1}+I^r_{2,2}.
\end{split}
\end{equation}
By performing an explicit computation, if we denote again by $\Gamma(\cdot, \cdot)$ the incomplete Gamma function we have that
\begin{equation}\label{central_limit}
\begin{split}
	&\int_{\tau=2}^{r}\Phi(\tau,r)\tau^{-1-\alpha}d\tau=\\
	& (d + \alpha)^{-\frac{d+\alpha}{\alpha}} r^{-d}\log(er)^{d+\alpha}
	\left(-\Gamma\left(\frac{d+\alpha}{\alpha}, (d + \alpha) ( \log(2er)\right)+
	\Gamma\left(\frac{d+\alpha}{\alpha}, (d + \alpha) (1 + 2 \log(r)\right)\right) \\
	& -
	\frac{\alpha}{d+\alpha} r^{-d} (-1 + \log(er/2))^{\frac{d+\alpha}{
	\alpha}}) +
  \frac{2^{- d}r^{-2d} (-2^d + r^d)}{d}\log(er)^{\frac{d}{\alpha}} + \frac{2
	r^{-d} (2^{-\alpha} - r^{-\alpha})}{\alpha} \log(er)^{\frac{d}{\alpha}}\\
	&=  -
	\frac{\alpha}{d+\alpha} r^{-d} (-1 + \log(er/2))^{\frac{d+\alpha}{
			\alpha}})+o(r^{-d}\log(er)^{\frac{d+\alpha}{\alpha}}) \quad \text{as}\ r\rightarrow +\infty.
\end{split}
	\end{equation}
Hence, from \eqref{central_limit} we conclude the following
\begin{equation}\label{central_limit_2}
\lim_{r\to+\infty} \frac{r^d}{\log(er)^{\frac{d+\alpha}{\alpha}}}\int_{\tau=2}^{r}\Phi(\tau,r)\tau^{-1-\alpha}d\tau=	-\frac{\alpha}{d+\alpha}.
\end{equation}
Furthermore,  we claim that 
\begin{equation}\label{I_2_2}
	I^{r}_{2,2}=o\left(\frac{\log(er)^{\frac{d+\alpha}{\alpha}}}{r^{d+\alpha}}\right)\quad \text{as}\ r \rightarrow +\infty. 
\end{equation}
To derive \eqref{I_2_2}, by simple estimates
\begin{equation}
\begin{split}
r^{\alpha}|I^{r}_{2,2}|
& \le  r^{-d}(\log(er))^{\frac{d}{\alpha}}\int_{\tau=2}^{\infty}\tau^{-1-\alpha}|W(\tau)|d\tau+2^{\frac{d}{\alpha}} r^{-d}(\log(er))^{\frac{d}{\alpha}}\int_{\tau=2}^{\infty}\tau^{-1-\alpha}|W(\tau)|d\tau\\
& +r^{-d}(\log(er))^{\frac{d}{\alpha}}\int_{\tau=2}^{\infty}\tau^{-1-d}|W(\tau)|d\tau+Cr^{-d}(\log(er))^{\frac{d}{\alpha}}\int_{\tau=2}^{\infty}\tau^{-1}|W(\tau)|d\tau\\
& \lesssim r^{-d}(\log(er))^{\frac{d}{\alpha}}=o\left(\frac{(\log(er))^{\frac{d+\alpha}{\alpha}}}{r^{d}}\right)\quad \text{as}\ r\rightarrow +\infty,
\end{split}
\end{equation}
where  we used continuity of $W$ and \eqref{decay_W}. 
This proves the claim.

Next we focus on $\tau\in [1,2]$. Here,  it is useful to write 
$G$ in the form 
$$G(\tau)=(\tau-1)^{-1-\alpha}F(\tau) $$
where $F(\tau)$ is continuous, bounded and never zero in $[1,2]$.
In this way, by Taylor's formula there exists $\xi_\tau\in (1,\tau)$ such that 
\begin{equation}
\begin{split}
r^{\alpha}I_{1,2} & = \int_{\tau=1}^{2}\Phi(\tau,r)(\tau-1)^{-1-\alpha}F(\tau)d\tau\\
& = \int_{\tau=1}^{2}\frac{\partial^2 }{\partial \tau^2}\Phi(\tau,r)_{|\tau=1}(\tau-1)^{1-\alpha}F(\tau)d\tau+\int_{\tau=1}^{2}\frac{\partial^3 }{\partial \tau^3}\Phi(\tau,r)_{|\tau=\xi_\tau}(\tau-1)^{2-\alpha}F(\tau)d\tau.
\end{split}
\end{equation}
To conclude, by an explicit computation,
\begin{equation}\label{add_1}
	\sup_{\tau\in [1,2]}\left|\frac{\partial^3 }{\partial \tau^3}\Phi(\tau,r)\right|\lesssim \frac{(\log(er))^{\frac{d}{\alpha}}}{r^{d}}\quad \text{as}\ r\to +\infty,
\end{equation}
and 
\begin{equation}\label{add_2}
	\left|\frac{\partial^2 }{\partial \tau^2}\Phi(\tau,r)_{|\tau=1}\right|\lesssim \frac{(\log(er))^\frac{d}{\alpha}}{r^{d}} \quad \text{as}\ r\to +\infty.
\end{equation}
From \eqref{add_1} and \eqref{add_2} we derive that
\begin{equation}\label{I_1_2}
I_{1,2}=o\left(\frac{(\log(er))^{\frac{d+\alpha}{\alpha}}}{r^{d+\alpha}}\right)\quad \text{as}\ r \rightarrow +\infty. 
\end{equation}

Assume now $\tau\in [r,+\infty[$. In this case,
\begin{equation}\label{add_3}
\begin{split}
\Phi(\tau,r) & =r^{-d}(\log(er))^{\frac{d}{\alpha}}+\tau^{-d+\alpha}\left(r^{-d}(\log(er))^{\frac{d}{\alpha}}-v(r) \right)-(r\tau)^{-d}(\log(er\tau))^{\frac{d}{\alpha}}\\
& \le  r^{-d}( \log(er))^{\frac{d}{\alpha}}+\underbrace{F(r,\tau)}_{<0}<U(r).
\end{split}
 \end{equation}

Inequality \eqref{add_3} leads to
$$(-\Delta)^{\frac{\alpha}{2}}U(r)\lesssim -\frac{(\log(er))^{\frac{d+\alpha}{\alpha}}}{r^{d+\alpha}}+o\left(\frac{(\log(er))^{\frac{d+\alpha}{\alpha}}}{r^{d+\alpha}}\right)\quad \text{as}\ r\rightarrow +\infty.$$
In order to conclude,  let's also notice that 
\begin{equation}\label{add_4}
|I^{\infty}_r|  \lesssim r^{-\alpha}\int_{\tau=r}^{\infty}|\Phi(\tau,r)|\tau^{-1-\alpha}d\tau \lesssim \frac{\log(er))^{\frac{d}{\alpha}}}{r^{d+2\alpha}}+\frac{1}{r^{d+\alpha}}+\frac{(\log(er))^{\frac{d}{\alpha}}}{r^{2d+\alpha}}.
\end{equation}
Finally, by combining \eqref{central_limit_2}, \eqref{I_2_2}, \eqref{I_1_2} and \eqref{add_4} we deduce
$$(-\Delta)^{\frac{\alpha}{2}}U(r)\simeq - \frac{(\log(er))^{  \frac{d+\alpha}{\alpha}}}{r^{d+\alpha}} \quad \text{as}\ r\rightarrow +\infty.$$
This concludes the proof.
\end{proof}

%Now, by using Lemma \ref{alfa_grande} and  \ref{2_limit} we have all the ingredients to find asymptotic properties of our minimizer.

In the following two Lemmas (Lemma \ref{soglia_1} and \ref{soglia_2}), we finally derive the decay of the minimizer $\rho_V$ at the critical values $q=\frac{2d-\alpha}{d}$ and $q=\frac{2d+\alpha}{d+\alpha}$. For the particular case $d=2$, $\alpha=1$ and $q=\frac{3}{2}$ we refer to \cite{graphene}*{Proposition 4.8}.
\begin{lemma}\label{soglia_1}
Assume that \hyperlink{condA}{$(\mathcal{A})$} holds and $q=\frac{2d-\alpha}{d}$. Assume that either $\alpha\in (1,2)$ and $d>\alpha+1$ or $q=\tfrac{3}{2}$. If 
 \begin{equation}\label{laplac_V_1}
 (-\Delta)^{\frac{\alpha}{2}}V(x)\lesssim \frac{1}{|x|^{d}(\log|x|)^{\frac{d}{\alpha}}}\quad as\ |x|\to +\infty,
 \end{equation}
 then
 \begin{equation}\label{u_46}
     u_V(x)\simeq \frac{1}{|x|^{d-\alpha}(\log|x|)^{\frac{d-\alpha}{\alpha}}}\quad as\ |x|\rightarrow +\infty. 
    \end{equation}
 \end{lemma}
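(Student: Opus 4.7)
The plan is to sandwich $u_V$ between two scaled copies of a single radial logarithmic barrier via the comparison principle of Lemma \ref{comparison} applied on $\Omega = \overline{B_R}^{c}$. The critical value $q = \tfrac{2d-\alpha}{d}$ is characterised by $\tfrac{1}{q-1} = \tfrac{d}{d-\alpha}$, so that a profile $U(r) \simeq r^{-(d-\alpha)}(\log r)^{-(d-\alpha)/\alpha}$ produces $U^{1/(q-1)} \simeq r^{-d}(\log r)^{-d/\alpha}$, matching both the assumed decay of $(-\Delta)^{\alpha/2} V$ and, by Lemma \ref{alfa_grande}, the decay of $(-\Delta)^{\alpha/2} U$. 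I therefore take $U$ to be the function of Lemma \ref{alfa_grande} with $b = (d-\alpha)/\alpha$; the case split in the hypothesis of Lemma \ref{soglia_1} ($q = 3/2$, giving $d = 2\alpha$ and $b = 1$; or $\alpha \in (1,2)$ and $d > \alpha+1$, placing one in the regime $b > 1$) matches the dichotomy in Lemma \ref{alfa_grande}, yielding
\begin{equation*}
(-\Delta)^{\alpha/2} U(x) \simeq -\,|x|^{-d}(\log |x|)^{-d/\alpha} \quad \text{as } |x| \to \infty.
\end{equation*}
A direct check using these asymptotics shows $U \in \mathring{H}^{\alpha/2}(\mathbb{R}^d) \cap L^{q'}(\mathbb{R}^d)$ and $U^{1/(q-1)} \in L^{2d/(d+\alpha)}(\mathbb{R}^d) \subset \mathring{H}^{-\alpha/2}(\mathbb{R}^d)$, with Remark \ref{laplac_punt_weak} allowing the pointwise and weak fractional Laplacians of $\lambda U$ to be identified.

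Combining the above decay with \eqref{laplac_V_1}, there exist $R > 0$ large and constants $c_1, c_2, c_3 > 0$ such that for every $\lambda > 0$ and $|x| \ge R$,
\begin{equation*}
(-\Delta)^{\alpha/2}(\lambda U) + (\lambda U)^{1/(q-1)} - (-\Delta)^{\alpha/2} V \; \ge \; \bigl(-\,c_1 \lambda + c_2\, \lambda^{d/(d-\alpha)} - c_3\bigr)\, |x|^{-d}(\log |x|)^{-d/\alpha}.
\end{equation*}
Since $d/(d-\alpha) > 1$, choosing $\lambda$ large turns the bracket positive, making $\lambda U$ a supersolution on $\overline{B_R}^{c}$; enlarging $\lambda$ further so that $\lambda U \ge u_V$ on $\overline{B_R}$ (possible by boundedness of $u_V$ and strict positivity of $U$ on $\overline{B_R}$), Lemma \ref{comparison} gives $u_V \le \lambda U$ on $\mathbb{R}^d$. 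Symmetrically, since $(-\Delta)^{\alpha/2} V \ge 0$ by $(\mathcal{A})$, an analogous pointwise upper bound for the same expression by $\bigl(-c_1' \lambda + c_2' \lambda^{d/(d-\alpha)}\bigr)\,|x|^{-d}(\log|x|)^{-d/\alpha}$ holds; for small $\lambda > 0$ the linear term dominates, making $\lambda U$ a subsolution on $\overline{B_R}^{c}$, and strict positivity $u_V > 0$ from Proposition \ref{u_pos} lets me shrink $\lambda$ further to ensure $\lambda U \le u_V$ on $\overline{B_R}$. A second application of Lemma \ref{comparison} yields $u_V \ge \lambda U$ on $\mathbb{R}^d$, and combining the two bounds proves \eqref{u_46}.

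The heavy lifting has already been done in Lemma \ref{alfa_grande}; the residual delicacy is twofold. First, the critical exponent $q = \tfrac{2d-\alpha}{d}$ places both $U$ and $U^{1/(q-1)}$ exactly at the borderline for membership in $\mathring{H}^{\pm \alpha/2}(\mathbb{R}^d)$, and the logarithmic weight in the barrier is \emph{precisely} what secures integrability at infinity—any weaker log factor would fail. Second, the choice $b = (d-\alpha)/\alpha$ must fall within the admissible cases of Lemma \ref{alfa_grande}, which is what the dichotomy in the statement of Lemma \ref{soglia_1} encodes, with the boundary case $d = 2\alpha$ (i.e., $q = 3/2$) handled separately via the $b = 1$ branch.
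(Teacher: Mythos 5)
Your proof follows the paper's argument almost verbatim: the same barrier $U$ from Lemma \ref{alfa_grande} with $b = \tfrac{d-\alpha}{\alpha}$, the same scaled super- and subsolutions $\lambda U$ on $\overline{B_R}^{c}$, Remark \ref{laplac_punt_weak} to pass from pointwise to weak Laplacian, and two applications of Lemma \ref{comparison}. You merely write out explicitly the bracket $-c_1\lambda + c_2\lambda^{d/(d-\alpha)} - c_3$ where the paper defers to the argument of Lemma \ref{range_2}; the sign analysis ($d/(d-\alpha)>1$, so large $\lambda$ gives a supersolution and small $\lambda$ a subsolution) is correct, as are the integrability checks for $U$ and $U^{1/(q-1)}$.

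One assertion in your last paragraph is not quite right, though: you claim the dichotomy ``$\alpha\in(1,2)$ and $d>\alpha+1$'' places one in the regime $b>1$ of Lemma \ref{alfa_grande}. In fact $b>1$ is equivalent to $d>2\alpha$, and for $\alpha\in(1,2)$ one has $\alpha+1<2\alpha$, so $d>\alpha+1$ does \emph{not} imply $d>2\alpha$. Concretely, $d=3$ and $\alpha\in(3/2,2)$ satisfy the hypothesis of Lemma \ref{soglia_1} but give $b=(3-\alpha)/\alpha<1$, a regime not covered by either branch of Lemma \ref{alfa_grande}. This is a deficiency shared with the paper's own proof, which likewise invokes Lemma \ref{alfa_grande} at $b=(d-\alpha)/\alpha$ without checking $b\ge1$; for a complete argument one would need either to extend Lemma \ref{alfa_grande} to $b\in(0,1)$ or to exclude the window $\alpha+1<d<2\alpha$ from the statement.
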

 \begin{proof}
 Let $\lambda>0$. We define $v_{\lambda}(x)=\lambda U(|x|)$ where $U(|x|)$ is the barrier function defined in Lemma  \ref{alfa_grande}, and  $b=\frac{d-\alpha}{\alpha}$.
First of all we notice that $(-\Delta)^{\frac{\alpha}{2}}v_\lambda\in C_b(\bbr^d)\cap L^{1}(\bbr^d)\subset \mathring{H}^{-\frac{\alpha}{2}}(\bbr^d)$. In particular,  $v_\lambda=I_{\alpha}*(-\Delta)^{\frac{\alpha}{2}}v_\lambda\in \mathring{H}^{\frac{\alpha}{2}}(\R^d)$. Moreover, it's easy to see that $v_\lambda\in L^{q'}(\bbr^d)$. 
 Now, we argue like in the proof of Lemma \ref{range_2}. 
 Consequently,  we can find $R>0$ sufficiently large such that for all $\lambda>0$ large enough
  \begin{equation*}
  \begin{split}
  (-\Delta)^{\frac{\alpha}{2}}v_{\lambda}+ v^{\frac{d}{d-\alpha}}_{\lambda}-(-\Delta)^{\frac{\alpha}{2}}V\ge 0\quad \text{in}\ \overline{B_R}^{c} ,
  \end{split}
  \end{equation*}
  and 
  \begin{equation}
     \lambda\inf_{x\in \overline{B_R}}U(|x|)\ge \sup_{x\in \overline{B_R}}u(x) .
    \end{equation}
  Thus, from Lemma \ref{comparison} applied to $\Omega=\overline{B_R}^{c}$ we obtain
  \begin{equation}\label{sup_2}
 u(x)\le v_{\lambda_1}(x)\quad \text{in}\ \mathbb{R}^d.
 \end{equation}
  On the other hand, there exist $R_0>0$ large, such that for all $\lambda>0$ sufficiently small we have
   \begin{equation*}
  \begin{split}
   (-\Delta)^{\frac{\alpha}{2}}v_{\lambda}+b v^{\frac{d}{d-\alpha}}_{\lambda}-(-\Delta)^{\frac{\alpha}{2}}V
   \le 0\quad \text{in}\ \overline{B_{R_0}}^{c}
   \end{split}
  \end{equation*}
  and 
  \begin{equation}\label{reason_lambda}
  \sup_{x\in \overline{B_{R_0}}}v_{\lambda}(x)\le  \inf_{x\in \overline{B_{R_0}}}u(x).
  \end{equation}
Again from Lemma \ref{comparison}  we have 
\begin{equation}\label{sup_1}
	u(x)\ge v_{\lambda}(x)\quad \text{in}\ \bbr^d.
\end{equation}
By putting together inequality \eqref{sup_2} with \eqref{sup_1} we get \eqref{u_46}.
 \end{proof}

\remark\label{remark_soglia1}
Note that, in view of Remark \ref{lower_42}, we still obtain a lower bound (respectively upper bound) on the decay rate of $u_V$ if $\alpha\in (1,2)$ (respectively $d>\alpha+1$). Namely, under the assumptions  \hyperlink{condA}{$(\mathcal{A})$} and $q=\frac{2d-\alpha}{d}$ the following occur:
\begin{itemize}
\item[$(i)$]	If either  $\alpha\in (1,2)$ and $d\ge 2$, or $q=\tfrac{3}{2}$ then
\begin{equation}\label{uineq_46}
	u_V(x)\lesssim \frac{1}{|x|^{d-\alpha}(\log|x|)^{\frac{d-\alpha}{\alpha}}}\quad as\ |x|\rightarrow +\infty;
\end{equation}
\item[$(ii)$] If either  $\alpha\in (0,2)$ and $d>\alpha+1$, or $q=\tfrac{3}{2}$ then
\begin{equation}\label{uinq_462}
	u_V(x)\gtrsim \frac{1}{|x|^{d-\alpha}(\log|x|)^{\frac{d-\alpha}{\alpha}}}\quad as\ |x|\rightarrow +\infty.
\end{equation}
\end{itemize}

\vspace{0.2cm}
Up to our knowledge, the following asymptotic result is new and was never observed before.
 \begin{lemma}\label{soglia_2}
Assume that \hyperlink{condA}{$(\mathcal{A})$} holds and $q=\frac{2d+\alpha}{d+\alpha}$. If 
\begin{equation}\label{laplac_V_2}
(-\Delta)^{\frac{\alpha}{2}}V(x)\lesssim \frac{(\log(e|x|))^{\frac{d+\alpha}{\alpha}}}{|x|^{d+\alpha}}\quad  as\ |x|\to +\infty,
\end{equation}
then 
$$u_V(x)\simeq \frac{(\log|x|)^{\frac{d}{\alpha}}}{|x|^{d}}\quad as\ |x|\rightarrow +\infty. $$
\end{lemma}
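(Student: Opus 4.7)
\vspace{0.3cm}

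\noindent \textbf{\large{Proof proposal for Lemma \ref{soglia_2}}.}

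The plan is to mimic the super/sub-solution and comparison-principle strategy used in Lemmas \ref{range_2} and \ref{soglia_1}, but with the logarithmically corrected profile of Lemma \ref{2_limit} playing the role of the barrier. The crucial observation is that $q = \frac{2d+\alpha}{d+\alpha}$ gives $\frac{1}{q-1} = \frac{d+\alpha}{d}$, and for $U(|x|) = |x|^{-d}(\log(e|x|))^{d/\alpha}$ one has
$$U(|x|)^{1/(q-1)} \simeq \frac{(\log|x|)^{(d+\alpha)/\alpha}}{|x|^{d+\alpha}} \simeq -(-\Delta)^{\alpha/2} U(|x|) \quad \text{as } |x| \to +\infty.$$
Thus the linear and nonlinear contributions produced by $U$ in \eqref{PDE_lap_frac} are asymptotically of the same order, which is precisely what makes $q=\frac{2d+\alpha}{d+\alpha}$ a critical exponent and forces the logarithmic correction in the decay. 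The assumption \eqref{laplac_V_2} is exactly the matching upper bound on $(-\Delta)^{\alpha/2} V$ needed so that the right-hand side of \eqref{PDE_lap_frac} does not exceed this common profile.

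I then set $v_\lambda := \lambda U$ and verify, via Lemma \ref{2_limit} together with Remark \ref{laplac_punt_weak}, that $v_\lambda \in \mathring{H}^{\alpha/2}(\R^d) \cap L^{q'}(\R^d)$. Combining the sharp asymptotic of Lemma \ref{2_limit} with the direct computation of $U^{1/(q-1)}$, there exist positive constants $c_1, c_2$ such that
$$(-\Delta)^{\alpha/2} v_\lambda(x) + v_\lambda(x)^{1/(q-1)} = \bigl(-c_1 \lambda + c_2 \lambda^{(d+\alpha)/d} + o(1)\bigr) \cdot \frac{(\log|x|)^{(d+\alpha)/\alpha}}{|x|^{d+\alpha}} \quad \text{as } |x| \to +\infty.$$
For the upper bound on $u_V$, combining this with \eqref{laplac_V_2} reduces the supersolution inequality in an exterior domain $\overline{B_R}^c$ to making $-c_1 \lambda + c_2 \lambda^{(d+\alpha)/d}$ exceed the constant from \eqref{laplac_V_2}, which is achieved by choosing $\lambda = \lambda_1$ sufficiently large, since $\tfrac{d+\alpha}{d} > 1$. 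For the lower bound, the subsolution inequality follows from $(-\Delta)^{\alpha/2} V \ge 0$ (guaranteed by $(\mathcal{A})$) once $-c_1 \lambda + c_2 \lambda^{(d+\alpha)/d}$ is made negative, which is achieved by choosing $\lambda = \lambda_2$ sufficiently small, since $\lambda^{\alpha/d} \to 0$ as $\lambda \to 0^+$.

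To conclude, positivity and continuity of $u_V$ from Proposition \ref{u_pos} and Corollary \ref{V_cont} let me further increase $\lambda_1$ and decrease $\lambda_2$ so that $v_{\lambda_2} \le u_V \le v_{\lambda_1}$ holds on $\overline{B_R}$; Lemma \ref{comparison} applied to $\Omega = \overline{B_R}^c$ then propagates both orderings to all of $\R^d$, yielding $u_V(x) \simeq U(|x|)$ as $|x| \to +\infty$, which is exactly the statement. The main obstacle is entirely front-loaded into the sharp two-sided asymptotic of Lemma \ref{2_limit}: without matching constants in that expansion, the cancellation between $-c_1 \lambda$ and $+c_2 \lambda^{(d+\alpha)/d}$ could not be tuned in $\lambda$, and the comparison step would produce rates off by logarithmic factors. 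Once that expansion is available, the remainder of the argument is a routine adaptation of the critical-case strategy of Lemma \ref{soglia_1}.
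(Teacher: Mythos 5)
Your proposal is correct and follows exactly the route the paper takes: the paper's own proof is a one-line reference to Lemma~\ref{2_limit} and to "the same lines of Lemma~\ref{soglia_1}," and you have simply fleshed that out with the $\lambda$-tuning and the comparison-principle application to $\Omega=\overline{B_R}^c$. The algebra confirming that $\frac{1}{q-1}=\frac{d+\alpha}{d}$ makes $v_\lambda^{1/(q-1)}$ and $-(-\Delta)^{\alpha/2}v_\lambda$ asymptotically comparable is the key observation, and the monotonicity in $\lambda$ (term of order $\lambda^{(d+\alpha)/d}$ dominating for large $\lambda$, subdominant for small $\lambda$) is exactly what drives the super/subsolution construction. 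One small inaccuracy worth flagging: Lemma~\ref{2_limit} provides only a two-sided estimate $(-\Delta)^{\alpha/2}U \simeq -|x|^{-(d+\alpha)}(\log|x|)^{(d+\alpha)/\alpha}$ (in the paper's $\simeq$ sense, i.e., up to multiplicative constants $0<a_1\le a_2$), not a single sharp asymptotic; your display writing the combined quantity as $(-c_1\lambda + c_2\lambda^{(d+\alpha)/d}+o(1))$ times the profile with a single $c_1$ and equality is an overstatement. This is harmless, since for the supersolution you only need the lower constant $a_1$ in $-(-\Delta)^{\alpha/2}U\gtrsim P(x)$ ... actually the upper constant $a_2$ ... and for the subsolution the other one, and the $\lambda$-tuning works with either; but the argument should be phrased with inequalities rather than asymptotic equality.
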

 \begin{proof}
 The proof uses Lemma \ref{2_limit} and follows the same lines of Lemma \ref{soglia_1}. 
 \end{proof}
 
We now investigate the range $\frac{2d+\alpha}{d+\alpha}<q<2$.
 \begin{lemma}\label{max_decay}
Assume $\gamma>d$. Let $U\in C^2(\bbr^d)$ be a decreasing function of the form 
\begin{equation}
U(|x|):=\begin{cases} |x|^{-\gamma}& \mbox{if }|x|\ge 1, \\ v(|x|) & \mbox{if } |x|<1,
\end{cases}
\end{equation}
where $v\in C^2(\overline{B_1})$ is positive and radially decreasing. Then, 
$$(-\Delta)^{\frac{\alpha}{2}}U(x)\simeq -\frac{1}{|x|^{d+\alpha}}\quad as\ |x|\rightarrow +\infty. $$
 \end{lemma}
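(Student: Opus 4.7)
The plan is to mimic the strategy of Lemmas \ref{alfa_grande} and \ref{2_limit}: apply Ferrari's representation (Theorem \ref{lapl_ferrari}) to express $(-\Delta)^{\frac{\alpha}{2}}U(r)$ for $r$ large as a one-dimensional integral, split the domain of integration into three pieces, and extract the leading-order $r^{-d-\alpha}$ behaviour. With $G(\tau):=\tau(\tau^{2}-1)^{-1-\alpha}H(\tau)$, $H(\tau)\sim\tau^{\alpha}$ at infinity, and $\Phi(\tau,r):=U(r)-U(r\tau)+(U(r)-U(r/\tau))\tau^{-d+\alpha}$ as in the previous lemmas, one writes
$$(-\Delta)^{\frac{\alpha}{2}}U(r)=c_{d,\alpha}\,r^{-\alpha}\bigl(I_{1}^{2}+I_{2}^{r}+I_{r}^{\infty}\bigr),\qquad I_{a}^{b}:=\int_{a}^{b}\Phi(\tau,r)G(\tau)\,d\tau,$$
and argues that, for $r$ large, only the last two pieces contribute at leading order, each with a negative sign.

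The first step would be to dispose of $I_{1}^{2}$. For $r>2$, both $r\tau$ and $r/\tau$ lie in $[1,\infty)$ for $\tau\in[1,2]$, so $U$ acts as the pure power and $\Phi(\tau,r)=r^{-\gamma}\phi(\tau)$ with $\phi(\tau):=1-\tau^{-\gamma}+(1-\tau^{\gamma})\tau^{-d+\alpha}$ vanishing to second order at $\tau=1$. Since $G(\tau)\lesssim(\tau-1)^{-1-\alpha}$ near $\tau=1$ and $\alpha<2$, Taylor's formula yields $I_{1}^{2}=O(r^{-\gamma})$; as $\gamma>d$, the contribution $r^{-\alpha}I_{1}^{2}$ is $o(r^{-d-\alpha})$.

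For $\tau\in[2,r]$ the explicit power-law formula for $\Phi$ still applies, and the dominant term is $-r^{-\gamma}\tau^{\gamma-d+\alpha}$. Combining with $G(\tau)=\tau^{-1-\alpha}(1+o(1))$ at infinity one obtains
$$I_{2}^{r}=-r^{-\gamma}\int_{2}^{r}\tau^{\gamma-d-1}\,d\tau+O(r^{-\gamma})=-\frac{r^{-d}}{\gamma-d}+O(r^{-\gamma}).$$
In the tail $I_{r}^{\infty}$, however, $r/\tau\le 1$ forces $U(r/\tau)=v(r/\tau)$, with $v$ continuous and strictly positive on $[0,1]$; the dominant term in $\Phi$ becomes $-v(r/\tau)\tau^{-d+\alpha}$, and the substitution $s=r/\tau$ transforms the integral into
$$I_{r}^{\infty}=-r^{-d}\int_{0}^{1}v(s)s^{d-1}\,ds\,\bigl(1+o(1)\bigr)+o(r^{-d}),$$
whose coefficient is strictly positive. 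Summing the three contributions yields $(-\Delta)^{\frac{\alpha}{2}}U(r)\simeq -r^{-d-\alpha}$, as required.

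The hardest part will be controlling the subleading corrections coming from the replacement of $H(\tau)$ by its asymptotic $\tau^{\alpha}$ and from the exact shape of $(\tau^{2}-1)^{-1-\alpha}$, uniformly across the three regimes. Concretely, the change of variables $s=r/\tau$ in $I_{r}^{\infty}$ must be justified by dominated convergence, using the uniform bound $v(s)\le v(0)$ on $[0,1]$ together with a quantitative bound on the error $G(\tau)-\tau^{-1-\alpha}$ as $\tau\to\infty$. A secondary check is to confirm that both leading-order coefficients $(\gamma-d)^{-1}$ and $\int_{0}^{1}v(s)s^{d-1}\,ds$ are strictly positive, so that no cancellation can occur between $I_{2}^{r}$ and $I_{r}^{\infty}$.
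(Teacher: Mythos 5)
Your proposal follows the paper's proof of Lemma \ref{max_decay} almost exactly: same Ferrari representation, same three-piece splitting, same Taylor bound for $I_1^2$, same dominant term $-r^{-\gamma}\tau^{\gamma-d+\alpha}$ on $[2,r]$, and the same observation that the tail is driven by $-v(r/\tau)\tau^{-d+\alpha}$. The only cosmetic difference is that you extract the sharp tail coefficient $\int_0^1 v(s)s^{d-1}\,ds$ by the substitution $s=r/\tau$, whereas the paper is content with the two-sided bound $U(1)/d \le\cdot\le U(0)/d$; either way the conclusion $(-\Delta)^{\alpha/2}U(x)\simeq -|x|^{-d-\alpha}$ follows since both leading contributions are strictly negative.
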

 \begin{proof}
 As usual we use Theorem \ref{lapl_ferrari} and  we split the integral on the right hand side of \eqref{Ferrari_integral} (up to the constant $c_{d,\alpha}$) into three parts  $I^{2}_{1}, I^r_{2}$ and $I^{\infty}_r$ respectively
 \begin{equation}
 	I^{2}_1=\int_{\tau=1}^2(\cdot),\ I^{r}_2=\int_{\tau=2}^r(\cdot),\ I^{\infty}_r=\int_{\tau=r}^{\infty}(\cdot).
 \end{equation}
First of all let's notice that for every $\tau\in [1,r]$
\begin{equation}\label{tau_48}
\begin{split}
\phi(\tau,r)&=U(r)-U(r\tau)+\left(U(r)-U(r/\tau)\right)\tau^{-d+\alpha}\\
&=r^{-\gamma}(1-\tau^{-\gamma}-\tau^{-d+\alpha+\gamma}+\tau^{-d+\alpha}).
%\\
%& =r^{-\gamma}\Phi(\tau).
\end{split}
\end{equation}
Next, we prove that 
\begin{equation}\label{phi_minore_0}
	\phi(\tau,r)\le 0\quad \forall \tau\in [1,r].
\end{equation}
By \eqref{tau_48},  \eqref{phi_minore_0} 
%is equivalent to
%$$ \phi(\tau,r)=r^{-\gamma}\left(1-\tau^{-\gamma}+\tau^{-d+\alpha}(1-\tau^{\gamma})\right)\le 0$$
is equivalent to 
$$\tau^{-\gamma}\le \tau^{-d+\alpha},$$
that is true by the assumption on $\gamma.$
Thus, 
\begin{equation}\label{eq_47_bis}
\begin{split}
    I^{r}_2 &=r^{-\gamma-\alpha}\int_{\tau=2}^{r}\left(1-\tau^{-\gamma}+\tau^{-d+\alpha}(1-\tau^{\gamma})\right)(\tau-1)^{-1-\alpha}F(\tau)d\tau\\
    & \simeq r^{-\gamma-\alpha}\int_{\tau=2}^{r}\left(1-\tau^{-\gamma}+\tau^{-d+\alpha}(1-\tau^{\gamma})\right)\tau^{-1-\alpha}d\tau\\
    & =-\frac{r^{-d-\alpha}}{\gamma-d}+o(r^{-d-\alpha})\quad \text{as}\ r\rightarrow +\infty,
\end{split}
\end{equation}
where by  $F$ we denoted the function
$$F(\tau)=\tau (\tau+1)^{-1-\alpha}H(\tau) ,$$
and $H$ is defined in \cite{Ferrari}*{Theorem $1.1$}.
Furthermore, by Taylor's formula,  there exists  $\xi_\tau\in [1,\tau]$ such that 
\begin{equation}\label{primoint}
    I^{2}_1=r^{-\gamma-\alpha}\int_{\tau=1}^{2}\frac{\phi''(1)}{2}(\tau-1)^{-1-\alpha}F(\tau)d\tau+r^{-\gamma-\alpha}\int_{\tau=1}^{2}\frac{\phi'''(\xi_\tau)}{6}(\tau-1)^{2-\alpha}F(\tau)d\tau.
\end{equation}
Next, from the fact that
\begin{equation*}
    \sup_{t\in [1,2]}|\phi'''(t)|<\infty,
\end{equation*}
 \eqref{primoint} implies 
\begin{equation}\label{stima_mezzo}
I^{2}_1=o(r^{-d-\alpha})\quad \text{as}\ r\rightarrow +\infty.
\end{equation}
Moreover, by definition of $U$, 
\begin{equation}
	\begin{split}
	I^{\infty}_r & =r^{-\gamma-\alpha}\int_{\tau=r}^{\infty}(1-\tau^{-\gamma}+\tau^{-d+\alpha})(\tau-1)^{-1-\alpha}F(\tau)d\tau-r^{-\alpha}\int_{\tau=r}^{\infty}U(r/\tau)(\tau-1)^{-1-\alpha}F(\tau)d\tau\\
	& = -r^{-\alpha}\int_{\tau=r}^{\infty}U(r/\tau)(\tau-1)^{-1-\alpha}F(\tau)d\tau+o(r^{-d-\alpha})\quad \text{as}\ r\to +\infty.
	\end{split}
\end{equation}
As a consequence, since $U$ is decreasing and $(\tau-1)^{-1-\alpha}F(\tau)$ is a positive function such that $(\tau-1)^{-1-\alpha}F(\tau)\simeq \tau^{-1-\alpha}$ as $\tau\to +\infty$, we deduce the two sided estimate
\begin{equation}
-U(0) r^{-\alpha}\int_{\tau=r}^{\infty}\tau^{-d-1}d\tau+o(r^{-d-\alpha})\lesssim	I^{\infty}_r\lesssim - U(1)r^{-\alpha}\int_{\tau=r}^{\infty}\tau^{-d-1}d\tau+o(r^{-d-\alpha}).
\end{equation}
The above two sided bound yields
\begin{equation}\label{stima_fine}
I^{\infty}_r\simeq -r^{-d-\alpha}+o(r^{-d-\alpha})\quad \text{as}\ r\rightarrow +\infty.
\end{equation}
Putting together \eqref{eq_47_bis}, \eqref{stima_mezzo} and \eqref{stima_fine} we derive the thesis.

 \end{proof}
 As a consequence,  we formulate the following result. For the sake of simplicity, we shorten the proof since it follows the same lines of the proof of Lemma \ref{soglia_1} or \ref{soglia_2}.
 \begin{lemma}\label{m_minore}
Assume that \hyperlink{condA}{$(\mathcal{A})$} holds and  $\frac{2d+\alpha}{d+\alpha}<q<2$. If
\begin{equation}\label{laplac_V_3}
(-\Delta)^{\frac{\alpha}{2}}V(x) \lesssim \frac{1}{|x|^{d+\alpha}}\quad  as\ |x|\to +\infty,
\end{equation}
then
$$u_{V}(x)\simeq \frac{1}{|x|^{(d+\alpha)(q-1)}}\quad as\ |x|\rightarrow +\infty. $$
\end{lemma}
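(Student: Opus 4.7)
The plan is to employ the barrier $U$ from Lemma \ref{max_decay} with exponent $\gamma:=(d+\alpha)(q-1)$. The hypothesis $\frac{2d+\alpha}{d+\alpha}<q<2$ translates precisely into the double inequality $d<\gamma<d+\alpha$: the left bound is what Lemma \ref{max_decay} requires, while the right one gives the key identity $\gamma/(q-1)=d+\alpha$. Setting $v_\lambda:=\lambda U$, this identity forces the three relevant quantities to share the same decay order at infinity: Lemma \ref{max_decay} yields $(-\Delta)^{\frac{\alpha}{2}}v_\lambda\simeq -\lambda|x|^{-(d+\alpha)}$, the power term satisfies $v_\lambda^{\frac{1}{q-1}}=\lambda^{\frac{1}{q-1}}|x|^{-(d+\alpha)}$ for $|x|\ge 1$, and by hypothesis $0\le (-\Delta)^{\frac{\alpha}{2}}V\lesssim |x|^{-(d+\alpha)}$.

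Before invoking comparison I would check that $v_\lambda\in \mathring{H}^{\frac{\alpha}{2}}(\bbr^d)\cap L^{q'}(\bbr^d)$: Lemma \ref{max_decay} together with \cite{garofalo}*{Proposition $2.15$} shows $(-\Delta)^{\frac{\alpha}{2}}v_\lambda$ is continuous and decays like $|x|^{-(d+\alpha)}$, hence lies in $L^{\frac{2d}{d+\alpha}}(\bbr^d)\subset\mathring{H}^{-\frac{\alpha}{2}}(\bbr^d)$; Remark \ref{laplac_punt_weak} then places $v_\lambda$ in $\mathring{H}^{\frac{\alpha}{2}}(\bbr^d)$, and $L^{q'}$ integrability follows from $\gamma q'>d$. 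Combining the three asymptotic estimates yields, for some $R_0>0$,
\[
(-\Delta)^{\frac{\alpha}{2}}v_\lambda+v_\lambda^{\frac{1}{q-1}}-(-\Delta)^{\frac{\alpha}{2}}V=\bigl(\lambda^{\frac{1}{q-1}}-C_1\lambda-C_2+o(1)\bigr)|x|^{-(d+\alpha)} \quad \text{in } \overline{B_{R_0}}^{c},
\]
with positive constants $C_1,C_2$ independent of $\lambda$.

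Because $\frac{1}{q-1}>1$ in the regime $q<2$, the sign of the bracket is controlled by $\lambda$. For $\lambda=\lambda_1$ sufficiently large the power term dominates, the expression is positive in some $\overline{B_{R_1}}^{c}$, and $v_{\lambda_1}$ is a supersolution of \eqref{PDE_lap_frac} there; boundedness of $u_V$ lets me further enlarge $\lambda_1$ so that $v_{\lambda_1}\ge u_V$ on $\overline{B_{R_1}}$, and Lemma \ref{comparison} applied on $\Omega=\overline{B_{R_1}}^{c}$ yields $u_V\le v_{\lambda_1}\lesssim |x|^{-\gamma}$. Symmetrically, for $\lambda=\lambda_2$ sufficiently small the bracket is negative, and since $(-\Delta)^{\frac{\alpha}{2}}V\ge 0$ the whole left-hand side is $\le (-\Delta)^{\frac{\alpha}{2}}V$ in some exterior ball; strict positivity of $u_V$ from Proposition \ref{u_pos} allows $\lambda_2$ to be chosen small enough that $v_{\lambda_2}\le u_V$ on the corresponding boundary sphere, and Lemma \ref{comparison} produces the matching lower bound $u_V\gtrsim |x|^{-\gamma}$.

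The only genuinely delicate point is the calibration of powers of $\lambda$: the assumption $q<2$ is used crucially because it makes $\lambda^{\frac{1}{q-1}}$ superlinear in $\lambda$, so the same one-parameter family $v_\lambda=\lambda U$ furnishes both supersolutions (via large $\lambda$) and subsolutions (via small $\lambda$). Once this balance is set, the argument is a direct transcription of the proofs of Lemmas \ref{soglia_1} and \ref{soglia_2}, with the polynomially decaying barrier from Lemma \ref{max_decay} replacing the logarithmic barriers used there.
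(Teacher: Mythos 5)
Your proof is correct and matches the approach the paper indicates; the paper's own proof of this lemma is just a one-line pointer to the barrier $U$ of Lemma \ref{max_decay} with $\gamma=(d+\alpha)(q-1)$ together with ``argue as in Lemma \ref{soglia_1}'', and you have supplied exactly the details that reference compresses. Your observation that $q<2$ is precisely what makes $\lambda\mapsto\lambda^{1/(q-1)}$ superlinear, so the single family $v_\lambda=\lambda U$ produces supersolutions for large $\lambda$ and subsolutions for small $\lambda$, is the correct reading of the calibration that drives both this lemma and Lemma \ref{range_2}.
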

\begin{proof}
First of all we notice that the function $U(x)$ defined in Lemma \ref{max_decay}, with $\gamma=(d+\alpha)(q-1)$, belongs to $\mathring{H}^{\frac{\alpha}{2}}(\bbr^d)\cap L^{q'}(\bbr^d)$. Then, arguing as we did for example in the proof of Lemma \ref{soglia_1} we obtain the thesis.
\end{proof}

\subsection*{Linear case: $q=2$.}
As we have pointed out in Remark \ref{easy_remark}, under the assumptions \hyperlink{condA}{$(\mathcal{A})$} we have positivity of the minimizer $\rho_V$. Moreover, using some well known results we can prove the precise asymptotic decay of $\rho_V$.
%\begin{definition}\label{set_48}
%	We define the set of assumptions $(\mathcal{A}')$ as the set  \hyperlink{condA}{$(\mathcal{A})$} with $q=2$ instead of $\frac{2d}{d+\alpha}<q<2$.
%\end{definition}
\begin{lemma}\label{q=2}
	Assume \hyperlink{condA}{$(\mathcal{A})$} holds. If $q=2$ and 
	$$(-\Delta)^{\frac{\alpha}{2}}V(x)\lesssim \frac{1}{|x|^{d+\alpha}}\quad \text{as}\ |x|\to +\infty$$
	then
	$$\rho_V(x)\simeq \frac{1}{|x|^{d+\alpha}}\quad \text{as}\ |x|\to +\infty.$$
\end{lemma}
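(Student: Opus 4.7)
Since $q=2$ the function $\mathrm{sign}(\rho_V)|\rho_V|^{q-1}$ coincides with $\rho_V$, so Proposition \ref{V ristretto} tells us $\rho_V\in\mathring{H}^{\frac{\alpha}{2}}(\mathbb{R}^d)$ weakly solves the \emph{linear} equation
\begin{equation*}
(-\Delta)^{\frac{\alpha}{2}}\rho_V+\rho_V=(-\Delta)^{\frac{\alpha}{2}}V\quad\text{in } \mathring{H}^{-\frac{\alpha}{2}}(\mathbb{R}^d),
\end{equation*}
and $\rho_V\ge 0$ by Proposition \ref{prop_intro}. As in the proof of Proposition \ref{u_pos} (with $c=1$), let $\mathcal{K}_1$ denote the positive, radially nonincreasing Green kernel of $(-\Delta)^{\frac{\alpha}{2}}+1$ provided by \cite{positive solutions}*{Lemma 4.2}, which is smooth off the origin and satisfies
\begin{equation*}
\mathcal{K}_1(r)\lesssim r^{-(d-\alpha)}\ \text{as}\ r\to 0^+,\qquad \mathcal{K}_1(r)\simeq r^{-(d+\alpha)}\ \text{as}\ r\to+\infty.
\end{equation*}
Combining Corollary \ref{prod_integr} with the argument in Proposition \ref{u_pos} yields the convolution representation
\begin{equation*}
\rho_V(x)=\int_{\mathbb{R}^d}\mathcal{K}_1(|x-y|)\,(-\Delta)^{\frac{\alpha}{2}}V(y)\,dy,
\end{equation*}
the integral being absolutely convergent because $(-\Delta)^{\frac{\alpha}{2}}V\in L^1_{loc}(\mathbb{R}^d)$ and decays like $|y|^{-(d+\alpha)}$ at infinity by hypothesis.

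\textbf{Upper bound.} Fix $R_0>0$ with $(-\Delta)^{\frac{\alpha}{2}}V(y)\le C_0|y|^{-(d+\alpha)}$ for $|y|>R_0$, and set $M:=\|(-\Delta)^{\frac{\alpha}{2}}V\|_{L^1(B_{R_0})}$. For $|x|>2R_0$ split $\int_{\mathbb{R}^d}=\int_{B_{R_0}}+\int_{B_{R_0}^c}$. In the inner region $|x-y|\ge|x|/2$, so that part is bounded by $M\sup_{|y|\le R_0}\mathcal{K}_1(|x-y|)\lesssim|x|^{-(d+\alpha)}$. The outer region is the convolution of two functions decaying as $|y|^{-(d+\alpha)}$ at infinity; decomposing into $\{|y|\le|x|/2\}$, $\{|x-y|\le|x|/2\}$ and the complementary region where both $|y|,|x-y|\ge|x|/2$ (and using $\mathcal{K}_1(r)\lesssim r^{-(d-\alpha)}$ near the origin to handle the singular part) gives $|x|^{-(d+\alpha)}$ in each piece. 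This yields $\rho_V(x)\lesssim|x|^{-(d+\alpha)}$.

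\textbf{Lower bound.} Since $V\not\equiv 0$ and $(-\Delta)^{\frac{\alpha}{2}}V\ge 0$, there exists $R_1>0$ with $m:=\int_{B_{R_1}}(-\Delta)^{\frac{\alpha}{2}}V>0$. For $|x|>2R_1$ and $y\in B_{R_1}$ we have $|x-y|\le\frac{3}{2}|x|$, and by the radially nonincreasing character of $\mathcal{K}_1$ together with its asymptotics, $\mathcal{K}_1(|x-y|)\gtrsim|x|^{-(d+\alpha)}$ uniformly in $y\in B_{R_1}$. Thus
\begin{equation*}
\rho_V(x)\ge\int_{B_{R_1}}\mathcal{K}_1(|x-y|)(-\Delta)^{\frac{\alpha}{2}}V(y)\,dy\gtrsim m\,|x|^{-(d+\alpha)}.
\end{equation*}
The only nonroutine point is the convolution representation itself, and this is exactly the mechanism already employed in Proposition \ref{u_pos}; the rest is a classical two-sided convolution estimate in the spirit of \cite{positive solutions}*{Lemma 4.3}.
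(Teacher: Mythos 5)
Your proof is correct and takes a genuinely different route. The paper's argument is a barrier/comparison one: for the lower bound it invokes \cite{positive solutions}*{Lemma 4.2} to produce a positive solution of $(-\Delta)^{\frac{\alpha}{2}}v+v=0$ in $\overline{B_1}^c$ with decay $\gtrsim|x|^{-(d+\alpha)}$ and then applies Lemma \ref{comparison}; for the upper bound it bootstraps through the auxiliary comparison lemmas from \cite{asymptotics} (Lemmas A.1, A.3, 3.5). You instead exploit the fact that when $q=2$ the nonlinearity collapses ($g_1\equiv 0$ in the notation of Proposition \ref{u_pos}), so the convolution identity $\rho_V=\mathcal{K}_1*(-\Delta)^{\frac{\alpha}{2}}V$ established there becomes exact, and you then estimate the convolution of two functions decaying like $|y|^{-(d+\alpha)}$ directly. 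Your method is more self-contained (it reuses the mechanism already set up in Proposition \ref{u_pos} rather than importing external comparison lemmas), while the paper's method is the one that generalizes naturally to the sublinear and superlinear regimes, where the right-hand side depends nonlinearly on $u_V$ and no clean convolution identity is available. One small point to make explicit: your lower bound needs the \emph{two-sided} estimate $\mathcal{K}_1(r)\simeq r^{-(d+\alpha)}$ as $r\to+\infty$, whereas the paper's displayed bound \eqref{green} is only stated as an upper bound; the two-sided estimate is indeed standard (see \cite{Frank}*{Lemma C.1} or \cite{positive solutions}*{Lemma 4.2}) but should be cited.
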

\begin{proof}
	Since by assumption $(-\Delta)^{\frac{\alpha}{2}}V\ge 0$, the function $u_V(=\rho_V)$  weakly solves 
	$$(-\Delta)^{\frac{\alpha}{2}}u_V+u_V\ge 0\quad \text{in}\ \overline{B_1}^c.$$ 
	Now, from \cite{positive solutions}*{Lemma 4.2} there exists a positive, continuous function $v$ belonging to $H^{\alpha}(\R^d)$ weakly and pointwisely solving the equation
	\begin{equation*}
		(-\Delta)^{\frac{\alpha}{2}}v+v=0\quad \text{in}\ \overline{B_1}^c,
	\end{equation*}
	and $v(x)\gtrsim \frac{1}{|x|^{d+\alpha}}$ as $|x|\to +\infty$.
\end{proof}
In particular, from positivity of $u_V$, for all $\lambda>0$ small enough  the function $v_\lambda(x)=\lambda v(x)$ satisfies
\begin{equation}
	(-\Delta)^{\frac{\alpha}{2}}v_\lambda+v_\lambda=0\quad \text{in}\ \overline{B_1}^c
\end{equation}
and $v_\lambda\le u_V$ in $B_1$. Then, by Lemma \ref{comparison} we obtain that 
\begin{equation}\label{eq_51_1}
	\frac{1}{|x|^{d+\alpha}}\lesssim v_\lambda(x)\le u_V(x)\quad \text{as}\ |x|\to +\infty.
\end{equation}

On the other hand, taking into account the decay assumptions on $(-\Delta)^{\frac{\alpha}{2}}V$, the function $u_V$ weakly solves 
\begin{equation}\label{weak_ball}
	(-\Delta)^{\frac{\alpha}{2}}u_V+u_V\le \frac{C}{(1+|x|^2)^{\frac{d+\alpha}{2}}}\quad \text{in}\ \overline{B_R}^c
\end{equation}
for some positive constant $C$, and $R>0$ sufficiently large.
Then, by \cite{asymptotics}*{Lemma A.1} there exists $v\in H^{\frac{\alpha}{2}}(\R^d)$ such that $v=u_V$ in $B_R$ and  weakly sastisfying 
\begin{equation}\label{ineq_53}
	(-\Delta)^{\frac{\alpha}{2}}v+v=\frac{C}{(1+|x|^2)^{\frac{d+\alpha}{2}}}\quad \text{in}\ \overline{B_R}^c.
\end{equation}
By combining \eqref{weak_ball} with \eqref{ineq_53} we obtain that the function $U_V:=u_V-v$ weakly solves 
\begin{equation}
	\begin{cases} (-\Delta)^{\frac{\alpha}{2}}U_V+U_V\le 0\quad &\mbox{in}\ \overline{B_R}^c, \\ U_V=0 & \mbox{in }\!  \overline{B_R}.
	\end{cases}
\end{equation}
Then, by the comparison principle (see e.g. \cite{asymptotics}*{Lemma A.3}) we conclude that $U_V\le 0$ in $\R^d$.
Finally, (see \cite{asymptotics}*{Lemma 3.5 and A.1}) we recall that $v$ satifisfies 
\begin{equation}\label{decay_for_v}
	\limsup_{|x|\to +\infty}|x|^{d+\alpha}v(x)<\infty
\end{equation}
The thesis follows by putting together \eqref{eq_51_1} with \eqref{decay_for_v}
\subsection*{Superlinear case: $q>2$.}

In the following subsection, we first show how to recover positivity of $u_V$ (and so of $\rho_V)$ when $q\in (2,+\infty)$.
This result is strongly related to the non local nature of the fractional Laplace operator $(-\Delta)^{\frac{\alpha}{2}}$. Indeed, such result fails in the local case of the Laplacian $-\Delta$ (see \cite{Diaz}*{Corollary $1.10$, Remark $1.5$}). We refer  also to \cite{positive solutions}*{Theorem 1.3} for a similar argument.
%\begin{definition}\label{a_primo} We define the set of assumptions $(\mathcal{A}'')$ as the set \hyperlink{condA}{$(\mathcal{A})$} with $q> 2$ instead of $\frac{2d}{d+\alpha}<q<2$.
%\end{definition}
\begin{proposition}\label{superlinear}
	Assume that  \hyperlink{condA}{$(\mathcal{A})$}  holds and $q>2$.  Then $\rho_V(x)>0$ for all $x\in \bbr^d$. 
\end{proposition}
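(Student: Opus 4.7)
The plan is to argue by contradiction, exploiting the fact that the fractional Laplacian at a global minimum of a non-negative function is non-positive and, crucially, vanishes only when the function is identically zero. Under the assumptions of the proposition, Proposition \ref{prop_intro} yields $u_V\ge 0$, hence (since $u_V=\rho_V^{q-1}$) also $\rho_V\ge 0$. If I can then produce a point $x_0$ with $\rho_V(x_0)=0$ and evaluate \eqref{PDE_lap_frac} at $x_0$ pointwise, I obtain the contradiction
$$0\ge (-\Delta)^{\frac{\alpha}{2}}u_V(x_0)=(-\Delta)^{\frac{\alpha}{2}}V(x_0)\ge 0,$$
with equality in the left inequality forcing $u_V\equiv 0$, which is incompatible with $V\not\equiv 0$ via the Euler--Lagrange equation \eqref{eq01}.

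The first step of the plan is therefore a \emph{regularity} step: I need $u_V\in \mathcal{L}^{1}_{\alpha}(\R^d)\cap C^{\alpha+\varepsilon'}_{loc}(\R^d)$ for some $\varepsilon'>0$, so that \eqref{sing_laplac} converges pointwise and, by \cite{garofalo}*{Proposition $2.15$}, $(-\Delta)^{\frac{\alpha}{2}}u_V$ is continuous. The inclusion $u_V\in \mathring{H}^{\frac{\alpha}{2}}(\R^d)\subset \mathcal{L}^{1}_{\alpha}(\R^d)$ handles the weight condition. For the Hölder regularity, I plug the pointwise Euler--Lagrange identity $u_V=V-I_\alpha *\rho_V$ (valid via \eqref{u_quasi_ovunque} in the subcritical regime and via \eqref{distributional_20} together with Proposition \ref{prop_23} in the remaining cases) into Corollary \ref{non_local_regularity} and, when the standard cascade saturates, into Remark \ref{regul_q_big}, which is precisely designed to push the Hölder exponent of $I_{\alpha}*\rho_V$ strictly above $\alpha$ in the superlinear regime $q>2$. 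Combined with the hypothesis $V\in C^{\alpha+\varepsilon}_{loc}$, this yields $u_V\in C^{\alpha+\varepsilon'}_{loc}(\R^d)$. By Remark \ref{laplac_punt_weak} the pointwise and weak fractional Laplacians agree, so \eqref{PDE_lap_frac} holds pointwise everywhere; note also that $(-\Delta)^{\frac{\alpha}{2}}V$ is pointwise defined and continuous for the same reason.

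The second step is the contradiction itself. Assume by contradiction that $u_V(x_0)=0$ at some $x_0\in\R^d$. Since $u_V\ge 0$ is continuous, $x_0$ is a global minimum of $u_V$, so formula \eqref{laplac_for_intro} yields
$$(-\Delta)^{\frac{\alpha}{2}}u_V(x_0)=-C_{d,\alpha}\int_{\R^d}\frac{u_V(y)}{|x_0-y|^{d+\alpha}}\,dy\le 0,$$
with strict inequality unless $u_V\equiv 0$. Evaluating \eqref{PDE_lap_frac} at $x_0$ and using $u_V(x_0)^{1/(q-1)}=0$ gives $(-\Delta)^{\frac{\alpha}{2}}u_V(x_0)=(-\Delta)^{\frac{\alpha}{2}}V(x_0)\ge 0$. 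Both inequalities together force $u_V\equiv 0$, hence $\rho_V\equiv 0$. Testing \eqref{eq01} with an arbitrary $\varphi\in \mathcal{H}_\alpha$ then yields $\langle \varphi,V\rangle=0$, contradicting $V\not\equiv 0$.

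The main obstacle is the regularity step: a naive iteration of Corollary \ref{non_local_regularity} stalls in the superlinear range $q>2$ before reaching the Hölder threshold $\alpha$ needed to evaluate the singular integral \eqref{sing_laplac} pointwise. This is exactly the reason for introducing Remark \ref{regul_q_big}, which upgrades $I_\alpha *\rho_V$ to $C^{0,\beta}_{loc}$ for some $\beta>\alpha$; once this improved regularity is in hand, the non-local strong minimum argument above closes cleanly, and the assumption $V\in C^{\alpha+\varepsilon}_{loc}$ is used only to transfer this regularity to $u_V$ itself via the Euler--Lagrange identity.
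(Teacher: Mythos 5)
Your proposal is correct and follows essentially the same route as the paper: bootstrap Hölder regularity of $u_V$ past the threshold $\alpha$ using Remark \ref{regul_q_big}, promote the weak Euler--Lagrange equation to a pointwise identity, and then contradict non-negativity of $(-\Delta)^{\frac{\alpha}{2}}V$ at a putative zero $x_0$ of $u_V$ via the non-local strong minimum principle for $(-\Delta)^{\frac{\alpha}{2}}$. Two small remarks for precision. First, the paper identifies $(-\Delta)^{-\frac{\alpha}{2}}\rho_V$ with $I_\alpha*\rho_V$ across \emph{all} regimes of $q$ simply by non-negativity of $\rho_V$ and Corollary \ref{Lq_loc}/\ref{prod_integr}, so there is no need to distinguish the supercritical case via Proposition \ref{prop_23}. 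Second, Remark \ref{laplac_punt_weak} states the implication in the \emph{opposite} direction (pointwise $\Rightarrow$ weak, under $C^2$ and decay hypotheses); the step you actually need — a weak solution of sufficient local regularity solves the equation pointwise — is what the paper gets from \cite{X.book}*{Corollary 2.2.29} together with \cite{garofalo}*{Proposition 2.15}. Your explicit treatment of the degenerate case $u_V\equiv 0$ (ruled out by $V\not\equiv 0$ via the Euler--Lagrange equation) is a welcome clarification that the paper leaves implicit.
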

\begin{proof}
 The idea is to obtain enough regularity in order to use the singular integral representation  \eqref{sing_laplac} for the fractional Laplacian. Notice that non negativity of $\rho_V$ implies that  $I_\alpha*\rho_V$ is always a convergent integral (see again \cite{BB}*{Corollaries 5.2, 5.3}). In what follows, we prove that $u_V$ defined by \eqref{U},  belongs to $C^{\alpha+\varepsilon}_{loc}(\bbr^d)$ for some $\varepsilon>0$ (see \eqref{holder_integrable} for the definition of the latter space). In particular, from such regularity, $u_V$ is proved to be not only a weak (and so distributional) solution of \eqref{PDE_lap_frac} but also a pointwise solution of the latter.
% $$(-\Delta)^{\frac{\alpha}{2}}u_V+u^{\frac{1}{q-1}}_V=(-\Delta)^{\frac{\alpha}{2}}V\quad \text{in}\ \R^d,$$

 First of all we notice that, from non negativity of $\rho_V$ and boundedness of $V$, the Euler--Lagrange equation \eqref{quasi_ovunque_sec_4} implies boundedness of $\rho_V$ and $I_{\alpha}*\rho_V$. Hence, by \cite{Silvestre}*{Proposition $2.9$} we have the following
 \begin{itemize}
 	\item[$(i)$] If $\alpha\le 1$   then $I_\alpha*\rho_V\in C^{0,\beta}(\bbr^d)$ for every $\beta<\alpha$;
 	\item [$(ii)$] If $\alpha>1$ then $I_\alpha*\rho_V\in C^{1,\beta}(\bbr^d)$ for every $\beta<\alpha-1$.
 \end{itemize}
	\vspace{0.1cm}
	\noindent \underline{Case $0<\alpha<1$.}
%	By regularity of $\rho_V$ (see e.g. Corollary \ref{reg_non_pos}, \ref{q=d_over_alpha} and \ref{V_cont}), we can see that  $\rho_V\in L^{q}(\R^d)\cap L^{\infty}(\bbr^d)$. Then,
	 In this case, by arguing as in Corollary \ref{non_local_regularity} and Remark \ref{regul_q_big}, we can improve $(i)$ up  to  $I_\alpha*\rho_V\in C^{0,\beta}_{loc}(\bbr^d)$ for every $\beta\in (0,\beta_0)$, where $\beta_0$ is defined as 
	\begin{equation*}
		\beta_0=\min\left\{1,\frac{\alpha(q-1)}{q-2}\right\}.
	\end{equation*}
	In particular, since $\tfrac{\alpha(q-1)}{q-2}>\alpha$ and  $V$ is regular, the Euler---Lagrange equation \eqref{u_quasi_ovunque} implies that $u_V\in C^{0,\gamma}_{loc}(\bbr^d)$ for some $\alpha<\gamma<1$. Thus, by \cite{garofalo}*{Proposition $2.15$}, formula \eqref{sing_laplac} holds true and $(-\Delta)^{\frac{\alpha}{2}}u_V\in C(\bbr^d)$.
	\vspace{0.3cm}
	
	\noindent\underline{Case $1<\alpha<2$.}
	By  \cite{Silvestre}*{Proposition $2,9$} we obtain that $I_\alpha*\rho_V\in C^{1,\beta}(\bbr^d)$ for every $\beta\in (0,\alpha-1)$. Thus, from \eqref{u_quasi_ovunque} and regularity of $V$ we have that $u_V\in C^{1,\beta}_{loc}(\bbr^d)$ for every $\beta\in (0,\alpha-1)$. Now we consider three  subcases. 
	\begin{itemize}
		\item[$(i)$] If $\alpha+\frac{1}{q-1}<2$, we first apply Proposition \ref{holder_riesz} obtaining that $I_\alpha*\rho_V\in C^{1,\beta_1}_{loc}(\bbr^d)$ with $\beta_1=\alpha+\frac{1}{q-1}-1$. Then, putting together \eqref{u_quasi_ovunque} with regularity of $V$, we derive that $u_V\in C^{1,\min\left\{\beta_1, \alpha-1 +\varepsilon \right\}}_{loc}(\bbr^d)$. Since $\beta_1>\alpha-1$, \cite{garofalo}*{Proposition $2.15$} again implies that formula \eqref{sing_laplac} holds.
		\item[$(ii)$] If $\alpha+\frac{1}{q-1}=2$, we can choose $0<\delta<\frac{1}{q-1}$ obtaining (arguing as in the above case) that $I_{\alpha}*\rho_V\in C^{1,\gamma^*}(\bbr^d)$, where $\gamma^*:=\alpha+\frac{1}{q-1}-1-\delta$. Then $u_V\in C^{1,\min\left\{\gamma^*, \alpha-1+\varepsilon\right\}}_{loc}(\bbr^d)$.
		\item [$(iii)$] If $2<\alpha+\frac{1}{q-1}<3$, arguing as in case $(i)$, we deduce that $I_\alpha*\rho_V\in  C^{2,\beta_2}_{loc}(\R^d)$ where $\beta_2=\alpha+\frac{1}{q-1}-2$. Then, taking into account \eqref{u_quasi_ovunque} and regularity of $V$, we conclude again validity of \eqref{sing_laplac} for $(-\Delta)^{\frac{\alpha}{2}}u_V$.
	\end{itemize}
	Note that since $q>2$ and $\alpha\in (1,2)$, cases $(i),(ii)$ or $(iii)$ are the only admissible ones.
	\vspace{0.3cm}
	
	\noindent \underline{Case $\alpha=1$.} 
	This case follows by a similar argument.
	%In this case, from Corollary \ref{rho regularity} or \cite{holderloc}*{Theorem $1.3$, (I)}, we obtain that $I_1*\rho_V\in C^{0,\beta}_{loc}(\bbr^d)$ for every $\beta\in (0,1)$. Now, again the Euler-Lagrange equation \eqref{u_quasi_ovunque} implies that $\rho_V\in C^{0,\frac{\beta}{q-1}}_{loc}(\bbr^d)$ from which $I_1*\rho_V\in C^{1,\frac{\beta}{q-1}}_{loc}(\bbr^d)$. Hence, $u_V\in C^{1,\frac{\beta}{q-1}}_{loc}(\bbr^d)$ and \eqref{sing_laplac} holds.

As a consequence of the above analysis, all of $u_V, I_{\alpha}*\rho_V$ and $V$ belong to $C^{\alpha+\varepsilon}_{loc}(\R^d)\cap \mathcal{L}^{1}_{\alpha}(\R^d)$.  Next, we prove that $u_V$ is positive in the whole of $\R^d$. Assume by contradiction that there exists $x_0$ such that $u_V(x_0)=0$ and consider a ball $B$ around $x_0$. Then, by \cite{X.book}*{Corollary  $2.2.29$} applied to $f=(-\Delta)^{\frac{\alpha}{2}}V-u^{\frac{1}{q-1}}_V\in L^{1}_{loc}(\R^d)$, (and continuity of the functions involved) we have that
	\begin{equation}\label{pointwise_PDE}
		(-\Delta)^{\frac{\alpha}{2}}u_V(x)=-u^{\frac{1}{q-1}}_V(x)+(-\Delta)^{\frac{\alpha}{2}}V(x)\quad \forall x\in B,
	\end{equation}
	where in \eqref{pointwise_PDE} the operator $(-\Delta)^{\frac{\alpha}{2}}$ is understood in the sense of \eqref{sing_laplac}.
	In particular, from non negativity of $u_V$, we have that
	\begin{equation}
		\label{neg_laplac}
		(-\Delta)^{\frac{\alpha}{2}}u_V(x_0)=-\frac{C_{d,\alpha}}{2}\int_{\bbr^d}\frac{u_V(x_0+y)+u_V(x_0-y)}{|y|^{d+\alpha}}dy<0.
	\end{equation}
	On the other hand, by definition of $x_0$, \eqref{pointwise_PDE}, \eqref{neg_laplac} and non negativity of $(-\Delta)^{\frac{\alpha}{2}}V$,  we obtain
	\begin{equation}
		0\le (-\Delta)^{\frac{\alpha}{2}}V(x_0)<0,
	\end{equation}
	leading to a contradiction. We have therefore proved that $u_V$ (and so $\rho_V$) is positive. This concludes the proof.
\end{proof}
\begin{lemma}\label{lower_bound_48}
	Assume that  \hyperlink{condA}{$(\mathcal{A})$} holds and $q>2$. If  $$(-\Delta)^{\frac{\alpha}{2}}V(x)\lesssim \frac{1}{|x|^{d+\alpha}}\quad \text{as}\ |x|\to +\infty$$ 
	%	and 
	%	\begin{equation}\label{V_not_small}
		%		\liminf_{|x|\to +\infty}|x|^{d+\alpha}(-\Delta)^{\frac{\alpha}{2}}V(x)>0
		%		\end{equation}
then 
	\begin{equation}\label{lower_48}
	u_V(x)\simeq \frac{1}{|x|^{(d+\alpha)(q-1)}}\quad \text{as}\ |x|\to +\infty.
	\end{equation}
\end{lemma}
\begin{proof}
	Let  $\gamma=(d+\alpha)(q-1)$ and $U$ be the function defined by
	\begin{equation}\label{U_49}
		U(x)=\begin{cases} v(|x|) & \mbox{if }|x|\le 1\\ |x|^{-\gamma}, & \mbox{if }|x|>1,
		\end{cases}
	\end{equation}
	where $v\in C^2(\overline{B_1})$ is such that $U$ is $C^2(\R^d)$ and radially decreasing.  
	By  Lemma \ref{max_decay}, there exists $c>0$ and $\overline{R}>0$, such that 
	\begin{equation}\label{ineq_bis_49}
		(-\Delta)^{\frac{\alpha}{2}}U(x)\le -\frac{c}{|x|^{d+\alpha}}\quad \text{if}\ |x|\ge \overline{R}.
	\end{equation}
%	Next, we claim that there exists $R>0$ sufficiently large and $\lambda=\lambda(R)$ such that 
%	\begin{equation}\label{ineq_claim_49}
%		(-\Delta)^{\frac{\alpha}{2}}v_{\lambda,R}+(v_{\lambda,R})^{\frac{1}{q-1}}\le (-\Delta)^{\frac{\alpha}{2}}V\quad \text{in}\ \R^d,
%	\end{equation}
Next, we claim that there exists $R>0$ large, $\lambda=\lambda(R)>0$ such that 
\begin{equation}\label{claim_40}
	\begin{cases} 	&(-\Delta)^{\frac{\alpha}{2}}v_{\lambda(R),R}+(v_{\lambda(R),R})^{\frac{1}{q-1}}\le 0  \ \quad  \mbox{if}\ |x|> 1,\\ &  v_{\lambda(R),R}(x)\le u_V(x)\quad \quad \quad \qquad\ \qquad \, \mbox{if } |x|\le 1.
	\end{cases}
\end{equation}
	where $v_{\lambda,R}$ is defined by 
	\begin{equation}\label{def_48}
		v_{\lambda,R}(x)=\lambda U(Rx),
	\end{equation}
	and $U$ is defined in \eqref{U_49}.
	It's well known (cf. \cite{garofalo}*{Lemma $2.6$}) that 
	\begin{equation}\label{risc_49}
		(-\Delta)^{\frac{\alpha}{2}}v_{\lambda,R}(x)=\lambda R^\alpha ((-\Delta)^{\frac{\alpha}{2}} U)(Rx).
	\end{equation}
	Then, if $R\ge \max\left\{\overline{R},1\right\}$, by combining \eqref{ineq_bis_49} with \eqref{risc_49} we infer 
	\begin{equation}\label{ineq_49_minore}
		\begin{split}
			(-\Delta)^{\frac{\alpha}{2}}v_{\lambda,R}(x)+(v_{\lambda,R}(x))^{\frac{1}{q-1}} & \le \left(-\frac{c \lambda}{ R^d}+\frac{\lambda^{\frac{1}{q-1}}}{R^{d+\alpha}}\right) \frac{1}{|x|^{d+\alpha}}\quad \text{in}\ \overline{B_1}^{c}\\
			& \le 0 \quad \text{in}\ \overline {B_1}^{c},
		\end{split}
	\end{equation}
	provided $\lambda\ge (c^{-1} R^{-\alpha})^{\frac{q-1}{q-2}}$. In particular, if we consider $\lambda=\lambda(R)=(c^{-1} R^{-\alpha})^{\frac{q-1}{q-2}}$ then the inequality \eqref{ineq_49_minore} holds true. 
	Furthermore, if $|x|\le 1$, by monotonicity of $U$ and Proposition \ref{superlinear} we have that 
	\begin{equation*}
		v_{\lambda(R),R}(x)=\lambda(R)U(Rx)\le (c^{-1}R^{-\alpha})^{\frac{q-1}{q-2}}U(0)\le \inf_{x\in \overline{B_1}}u_V(x),
	\end{equation*}
provided $R$ is large enough.
%	by definition of $\lambda^{*}$ and \eqref{risc_49}, there exists $R_0\ge \max\{\overline{R},1\}$ such that for all $R\ge R_0$ we have 
%	\begin{equation}\label{ineq_49_tris}
%		\begin{split}
%			(-\Delta)^{\frac{\alpha}{2}}v_{\lambda^{*},R}+(v_{\lambda^{*},R})^{\frac{1}{q-1}} & = \lambda^{*} R^\alpha ((-\Delta)^{\frac{\alpha}{2}} U)(Rx)+(\lambda^{*})^{\frac{1}{q-1}} U^{\frac{1}{q-1}}(Rx)\\
%			& = R^{-\frac{\alpha}{q-2}} \left( c^{-\frac{q-1}{q-2}} ((-\Delta)^{\frac{\alpha}{2}} U)(Rx)+c^{-\frac{1}{q-2}}U^{\frac{1}{q-1}}(Rx)\right)\\
%			& \le   R^{-\frac{\alpha}{q-2}}  \left( c^{-\frac{q-1}{q-2}} \left\|(-\Delta)^{\frac{\alpha}{2}} U\right\|_{L^{\infty}(\R^d)}+c^{-\frac{1}{q-2}}\left\|U\right\|_{L^{\infty}(\R^d)}^{\frac{1}{q-1}}\right)\\
%			& \le \delta \le (-\Delta)^{\frac{\alpha}{2}}V\quad \text{in}\  B_1.
%		\end{split}
%	\end{equation}
	Putting together \eqref{ineq_49_minore} with the above inequality we obtain \eqref{claim_40}. Next, it's easy to verify that for all $R>0$ the function $v_{\lambda(R),R}$ belongs to $ \mathring{H}^{\frac{\alpha}{2}}(\R^d)\cap L^{q'}(\R^d)$ and $v^{\frac{1}{q-1}}_{\lambda(R),R}\in \mathring{H}^{-\frac{\alpha}{2}}(\R^d)\cap L^{q}(\R^d)$. Then, by applying Lemma \ref{comparison} to $\Omega=\R^d$, we obtain that
	\begin{equation}\label{first_40} u_V(x)\ge v_{\lambda(R),R}(x)\quad \forall x\in \R^d,
		\end{equation}
	provided $R$ is large enough.
	Next, we prove the upper bound. To this aim we employ  again the function $v_{\lambda,R}$ defined in \eqref{def_48} and we claim that there exists $R>0$ small, $\lambda=\lambda(R)>0$ and $\tilde{R}=\tilde{R}(R)$ such that 
	\begin{equation}\label{claim_41}
	\begin{cases} 	&(-\Delta)^{\frac{\alpha}{2}}v_{\lambda(R),R}+(v_{\lambda(R),R})^{\frac{1}{q-1}}\ge (-\Delta)^{\frac{\alpha}{2}}V  \ \quad  \mbox{if}\ |x|> \tilde{R}(R),\\ &  v_{\lambda(R),R}(x)\ge u_V(x)\quad \quad \quad \qquad\ \qquad\qquad\quad\ \, \mbox{if } |x|\le  \tilde{R}(R).
	\end{cases}
\end{equation}
By  Lemma \ref{max_decay}, there exists $c_1>0$ and $R_1>0$, such that 
\begin{equation}\label{ineq_bis_49_2}
	(-\Delta)^{\frac{\alpha}{2}}U(x)\ge -\frac{c_1}{|x|^{d+\alpha}}\quad \text{if}\ |x|\ge R_1.
\end{equation}
Furthermore, by the decay assumption on $(-\Delta)^{\frac{\alpha}{2}}V$ there exists $c_2>0$ and $R_2>0$ such that 
\begin{equation}\label{v_41}
	(-\Delta)^{\frac{\alpha}{2}}V(x)\le \frac{c_2}{|x|^{d+\alpha}}\quad \text{if}\ |x|\ge R_2.
\end{equation}
Hence, by combining \eqref{risc_49}, \eqref{ineq_bis_49_2} with \eqref{v_41} we infer that 
\begin{equation}\label{lower_41}
\begin{split}
	(-\Delta)^{\frac{\alpha}{2}}v_{\lambda, R}+(v_{\lambda,R})^{\frac{1}{q-1}}-(-\Delta)^{\frac{\alpha}{2}}V\ge \frac{1}{|x|^{d+\alpha}}\left(-\frac{\lambda c_1}{R^d}+\frac{\lambda^{\frac{1}{q-1}}}{R^{d+\alpha}}-c_2\right)\quad \text{if}\ |x|> \max\left\{\frac{1}{R}, \frac{R_1}{R}, R_2\right\}.
\end{split}
\end{equation}
By an explicit computation, if we choose $\lambda=\lambda(R)=(c^{-1}_1 R^{-\frac{\alpha}{2}})^{\frac{q-1}{q-2}}$, there exists $\bar{R}$ small such that for every $R\le \bar{R}$ the right hand side of \eqref{lower_41} is positive. In particular, we can assume that $R$ is small enough such that  $\max\left\{\frac{1}{R}, \frac{R_1}{R}, R_2\right\}=R^{-1}\max\left\{R_1,1\right\}$. Furthermore, if $|x|\le R^{-1}\max\left\{R_1,1\right\}$, monotonicity of $U$ yields have that 
\begin{equation}
	v_{\lambda(R),R}(x)= \lambda(R)U(Rx)\ge (c^{-1}_1 R^{-\frac{\alpha}{2}})^{\frac{q-1}{q-2}} \left(\max\left\{R_1,1\right\}\right)^{-(d+\alpha)(q-1)}\ge \|u_V\|_{L^{\infty}(\R^d)},
\end{equation}
provided $R$ is small enough. We have therefore proved that \eqref{claim_41} holds with $\tilde{R}(R)=R^{-1}\max\left\{R_1,1\right\}$ and $R$ sufficiently small. Hence, again by applying Lemma \ref{comparison} we conclude that there exists $R>0$ small such that 
\begin{equation}\label{second_41}
	u_V(x)\le v_{\lambda(R),R}(x)\quad \forall x\in \R^d.
\end{equation}
By combining \eqref{first_40} with \eqref{second_41} we derive \eqref{lower_48}.
\end{proof}
%\remark \label{remark_52final}
%Note that, for the proof of Proposition \ref{lower_bound_48} the assumptions $V\in C^{\alpha+\ve}_{loc}(\R^d)$ is not necessary and we can assume for example $V$ to be continuous. Moreover,  we can easily prove that for $q> 2$
%\begin{equation}\label{last_remark}
%	\rho_V(x)\lesssim \frac{1}{|x|^{\frac{d+\alpha}{q-1}}}\quad \text{as}\ |x|\to+\infty.
%\end{equation}
%Indeed, since $u_V$ vanishes at infinity, we deduce that there exists $R>0$ such that 
%\begin{equation*}
%	(-\Delta)^{\frac{\alpha}{2}}u_V+u_V\le (-\Delta)^{\frac{\alpha}{2}}V\quad \text{in}\ B^{c}_R.
%\end{equation*}
%Next, since $u_V\in L^{q'}(\R^d)\cap L^{\infty}(\R^d)$ and $q> 2$, $u_V\in L^{2}(\R^d)$. In particular, $u_V\in H^{\frac{\alpha}{2}}(\R^d)$. Then, the conclusion follows by  the same argument provided for the proof of Lemma \ref{q=2}.
%\remark Note that for example the function $V_Z$ defined by \eqref{t_1} satisfies the assumptions of Proposition \ref{lower_bound_48}.

\subsection*{Proof of Theorems \ref{decay_intro} and \ref{decay_intro2}.}
\remark\label{restrizione_deca} For the sake of clarity, we  highlights below all the decay assumptions on $(-\Delta)^{\frac{\alpha}{2}}V$ required to prove Lemmas \ref{range_1},  \ref{range_2}, \ref{soglia_1},  \ref{soglia_2}, \ref{m_minore}, \ref{q=2} and  \ref{lower_bound_48}: %and Remark \ref{remark_52final}:
\begin{itemize} 
	\item  If $\frac{2d}{d+\alpha} <q<\frac{2d-\alpha}{d}$ we require  \eqref{lim_g} and \eqref{laplac_integr};
	\item If $q=\frac{2d-\alpha}{d}$ and, either $\alpha\in (1,2)$ and $d>\alpha+1$ or $q=\tfrac{3}{2}$, we require $$(-\Delta)^{\frac{\alpha}{2}}V(x)\lesssim \frac{1}{|x|^{d}(\log|x|)^{\frac{d}{\alpha}}}\quad \text{as}\ |x|\to +\infty;$$  
	
	\item If $\frac{2d-\alpha}{d}<q<\frac{2d+\alpha}{d+\alpha}$ we require
	$$(-\Delta)^{\frac{\alpha}{2}}V(x)\lesssim \frac{1}{|x|^{\frac{\alpha}{2-q}}}\quad \text{as}\ |x|\to +\infty;$$
	\item If $q=\frac{2d+\alpha}{d+\alpha}$ we require $$(-\Delta)^{\frac{\alpha}{2}}V(x)\lesssim \frac{(\log|x|)^{\frac{d+\alpha}{\alpha}}}{|x|^{d+\alpha}}\quad \text{as}\ |x|\to +\infty;$$
	\item If $q>\frac{2d+\alpha}{d+\alpha}$ we require 
	$$ (-\Delta)^{\frac{\alpha}{2}}V(x)\lesssim \frac{1}{|x|^{d+\alpha}}\quad \text{as}\ |x|\to +\infty. $$
\end{itemize}
\indent In particular, if \begin{equation}\label{uiversal_decay}
	(-\Delta)^{\frac{\alpha}{2}}V(x)\lesssim \frac{1}{|x|^{d+\alpha}}\quad \text{as}\ |x|\to +\infty,
\end{equation}
all of the above assumptions are satisfied and so, for simplicity, in the statement of Theorems \ref{decay_intro}, \ref{decay_intro2} we require \eqref{uiversal_decay} instead of distinguishing between the different regimes of $q$. 

\vspace{0.3cm}

\noindent \textbf{\large{Proof of Theorem \ref{decay_intro}}.}
\begin{proof}
\noindent
By combining \eqref{laplac_V_fast} with Lemma \ref{newton_theorem} we deduce that $V\in L^{\infty}(\R^d)$. Furthermore,  by combining the results of Lemmas \ref{range_1},  \ref{range_2}, \ref{soglia_1},  \ref{soglia_2}, \ref{m_minore}, \ref{q=2} and \ref{lower_bound_48} we obtain the desired decay estimates $(i),(ii),(iii),(iv)$ and $(v)$. It remains only to prove that 
\begin{equation}\label{sharp_decay_bis}
	\lim_{|x|\to +\infty}|x|^{\frac{d-\alpha}{q-1}}\rho_V(x)=\left[A_{\alpha}\left(\big| \! \big|(-\Delta)^{\frac{\alpha}{2}}V\big| \!\big|_{L^{1}(\mathbb{R}^d)}-\left\|\rho_V\right\|_{L^{1}(\mathbb{R}^d)}\right)\right]^{\frac{1}{q-1}}
\end{equation}
provided $\frac{2d}{d+\alpha}<q<\frac{2d-\alpha}{d}$.
 To this aim, by the decay assumption \eqref{uiversal_decay}, Lemma \ref{newton_theorem} implies that
 \begin{equation}\label{V_1}
 	V(x)= \frac{A_\alpha\big| \! \big|(-\Delta)^{\frac{\alpha}{2}}V\big| \!\big|_{L^{1}(\mathbb{R}^d)}}{|x|^{d-\alpha}}+o\left(\frac{1}{|x|^{d-\alpha}}\right) \quad \text{as}\ |x|\rightarrow +\infty.
 \end{equation}
 Moreover, $(i)$ ensures that in such regimes of $q$ the minimizer $\rho_V$ satisfies the assumption of Lemma \ref{newton_theorem}. In particular, 
 \begin{equation}\label{rho_1}
 	(I_{\alpha}*\rho_V)(x)= \frac{A_\alpha\left\|\rho_V\right\|_{L^{1}(\mathbb{R}^d)}}{|x|^{d-\alpha}}+o\left(\frac{1}{|x|^{d-\alpha}}\right)\quad \text{as}\ |x|\rightarrow +\infty. 
 \end{equation}
 Combining \eqref{V_1} and \eqref{rho_1} with the Euler--Lagrange equation \eqref{rho_puntuale}
 we obtain the following asymptotic expansion for $\rho_V$
\begin{equation}\label{uni_decau}
	\rho_V(x)=\frac{\left[A_{\alpha}\left(\big| \! \big|(-\Delta)^{\frac{\alpha}{2}}V\big| \!\big|_{L^{1}(\mathbb{R}^d)}-\left\|\rho_V\right\|_{L^{1}(\mathbb{R}^d)}\right)\right]^{\frac{1}{q-1}}}{|x|^{\frac{d-\alpha}{q-1}}}+o\left(\frac{1}{|x|^{\frac{d-\alpha}{q-1}}}\right)\quad \text{as}\ |x|\rightarrow +\infty.
\end{equation}
In order to conlude it's enough to notice that, by positivity of $\rho_V$ and validity of $(i)$, the coefficient of $|x|^{-\frac{d-\alpha}{q-1}}$ in the right hand side of \eqref{uni_decau} must be positive, concluding the proof.
\end{proof}
Next, we apply Theorem \ref{decay_intro} to estimate the $L^1$--norm of the minimizer. As before, the decay assumption on $(-\Delta)^{\frac{\alpha}{2}}V$ in Corollary \ref{L_1_bound} can be weakened accordingly to the range of $q$.

\begin{corollary}\label{L_1_bound}
	Assume that \hyperlink{condA}{$(\mathcal{A})$} holds. If  $$(-\Delta)^{\frac{\alpha}{2}}V(x)\lesssim \frac{1}{|x|^{d+\alpha}}\quad as\ |x|\to +\infty$$
	then the following possibilities hold: 
	\begin{itemize}
		\item[$(i)$] If  $ \frac{2d}{d+\alpha}<q<\frac{2d-\alpha}{d}$  then 
		$$\left\|\rho_V\right\|_{L^{1}(\mathbb{R}^d)}<\big| \! \big|(-\Delta)^{\frac{\alpha}{2}}V\big| \!\big|_{L^{1}(\mathbb{R}^d)}; $$
		\item[$(ii)$] If $q=\frac{2d-\alpha}{d}$ and,  either $1< \alpha<2$ or $q=\tfrac{3}{2}$,  then
		$$\left\|\rho_V\right\|_{L^{1}(\mathbb{R}^d)}=\big| \! \big|(-\Delta)^{\frac{\alpha}{2}}V\big| \!\big|_{L^{1}(\mathbb{R}^d)};$$
		\item[$(iii)$] If $q>\frac{2d-\alpha}{d}$ then 
		$$\left\|\rho_V\right\|_{L^{1}(\mathbb{R}^d)}=\big| \! \big|(-\Delta)^{\frac{\alpha}{2}}V\big| \!\big|_{L^{1}(\mathbb{R}^d)}.$$
	\end{itemize}
\end{corollary}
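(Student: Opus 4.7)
The plan is to exploit the pointwise Euler--Lagrange equation
\begin{equation*}
\rho_V^{q-1}(x)=V(x)-(I_\alpha*\rho_V)(x),\qquad x\in\R^d,
\end{equation*}
which is valid since $\rho_V\ge 0$ (by Propositions \ref{u_pos}, \ref{lower_bound_48} and Remark \ref{easy_remark}, together with continuity coming from Corollary \ref{V_cont}), and to match the leading terms of $V$ and $I_\alpha*\rho_V$ as $|x|\to\infty$ by means of Lemma \ref{newton_theorem}.

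First I would check that $\rho_V\in L^{1}(\R^d)$ in each of the three cases, by inspecting the decay rates of Theorem \ref{decay_intro} (augmented by Remark \ref{remark_soglia1} for the logarithmic case and Remark \ref{remark_52final} for $q>2$): in (i) the two-sided estimate $\rho_V\simeq|x|^{-(d-\alpha)/(q-1)}$ is integrable at infinity since $q<(2d-\alpha)/d$ is exactly the condition $(d-\alpha)/(q-1)>d$; in (ii) the log-corrected upper bound $\rho_V\lesssim|x|^{-d}(\log|x|)^{-d/\alpha}$ is integrable because $d/\alpha>1$; in (iii) the relevant upper bounds, namely $|x|^{-\alpha/(2-q)}$ for $q\in(\tfrac{2d-\alpha}{d},\tfrac{2d+\alpha}{d+\alpha})$, the log-corrected $|x|^{-(d+\alpha)}(\log|x|)^{(d+\alpha)/\alpha}$ at the critical exponent, $|x|^{-(d+\alpha)}$ for $\tfrac{2d+\alpha}{d+\alpha}<q\le 2$, and $|x|^{-(d+\alpha)/(q-1)}$ for $2<q<\tfrac{2d+\alpha}{d}$ from Remark \ref{remark_52final}, each yield integrability exactly in the stated range.

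Next, since $(-\Delta)^{\alpha/2}V$ is nonnegative, in $L^1(\R^d)$ (from the decay hypothesis), and controlled by a radially nonincreasing function with vanishing $|x|^{d}\cdot(\,\cdot\,)$ at infinity, Lemma \ref{newton_theorem} gives
\begin{equation*}
V(x)=\frac{A_\alpha\|(-\Delta)^{\alpha/2}V\|_{L^{1}(\R^d)}}{|x|^{d-\alpha}}+o\!\left(|x|^{-(d-\alpha)}\right).
\end{equation*}
Using that $\rho_V$ is itself dominated by a radial nonincreasing function with the same property (in view of the above decay estimates), the same lemma applied to $\rho_V$ yields
\begin{equation*}
(I_\alpha*\rho_V)(x)=\frac{A_\alpha\|\rho_V\|_{L^{1}(\R^d)}}{|x|^{d-\alpha}}+o\!\left(|x|^{-(d-\alpha)}\right).
\end{equation*}
Subtracting, the Euler--Lagrange identity becomes
\begin{equation*}
\rho_V^{q-1}(x)=\frac{A_\alpha\bigl(\|(-\Delta)^{\alpha/2}V\|_{L^{1}}-\|\rho_V\|_{L^{1}}\bigr)}{|x|^{d-\alpha}}+o\!\left(|x|^{-(d-\alpha)}\right).
\end{equation*}

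The conclusion then follows by comparing this with the actual decay of $\rho_V^{q-1}$. In case (i), Theorem \ref{decay_intro}(i) gives $\rho_V^{q-1}\simeq|x|^{-(d-\alpha)}$ with a strictly positive limit, forcing $\|(-\Delta)^{\alpha/2}V\|_{L^{1}}-\|\rho_V\|_{L^{1}}>0$, hence the strict inequality. In cases (ii) and (iii) I would check case by case, using the inequality $\alpha(q-1)/(2-q)>d-\alpha\Leftrightarrow q>(2d-\alpha)/d$ together with Theorem \ref{decay_intro}(ii)--(v), Lemma \ref{q=2} and Remark \ref{remark_52final}, that $\rho_V^{q-1}$ always decays strictly faster than $|x|^{-(d-\alpha)}$; the leading coefficient above must therefore vanish, yielding equality of the two $L^{1}$--norms. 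The main technical point is the $q>2$ portion of case (iii), where only an upper bound on $\rho_V$ is available: one must ensure both $L^{1}$--integrability and a rate of decay for $\rho_V^{q-1}$ faster than $|x|^{-(d-\alpha)}$, which is precisely what Remark \ref{remark_52final} provides in the range $2<q<(2d+\alpha)/d$.
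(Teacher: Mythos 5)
Your proposal is correct and follows essentially the same strategy as the paper's proof: apply Lemma \ref{newton_theorem} to both $(-\Delta)^{\alpha/2}V$ and $\rho_V$, plug the resulting expansions into the pointwise Euler--Lagrange equation to extract the leading coefficient $A_\alpha\bigl(\|(-\Delta)^{\alpha/2}V\|_{L^1}-\|\rho_V\|_{L^1}\bigr)$, then use the decay rates from Theorem \ref{decay_intro}, Remarks \ref{lower_42}--\ref{remark_soglia1}, and Remark \ref{remark_52final} to conclude that this coefficient is strictly positive in case $(i)$ and must vanish in cases $(ii)$--$(iii)$. The paper simply invokes the already-derived expansion \eqref{uni_decau_bis} from the proof of Theorem \ref{decay_intro} rather than rederiving it, and is somewhat terser about the integrability checks, but the substance is identical.
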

\begin{proof}
	Proceding as in the proof of Theorem \ref{decay_intro}, 
	we obtain \begin{equation}\label{uni_decau_bis}
		\rho_V(x)=\frac{\left[A_{\alpha}\left(\big| \! \big|(-\Delta)^{\frac{\alpha}{2}}V\big| \!\big|_{L^{1}(\mathbb{R}^d)}-\left\|\rho_V\right\|_{L^{1}(\mathbb{R}^d)}\right)\right]^{\frac{1}{q-1}}}{|x|^{\frac{d-\alpha}{q-1}}}+o\left(\frac{1}{|x|^{\frac{d-\alpha}{q-1}}}\right)\quad \text{as}\ |x|\rightarrow +\infty.
	\end{equation}
	Thus, under the assumptions of Lemma \ref{range_1}, again from positivity of $\rho_V$  we conclude validity of $(i)$.
	On the other hand, 
Theorem \ref{decay_intro}--$(ii)$--$(iii)$--$(iv)$--$(v)$ and Remarks \ref{lower_42}--\ref{remark_soglia1}  tell us that $\rho_V$ decays faster than $|x|^{-\frac{d-\alpha}{q-1}}$ at infinity and so the coefficient of $|x|^{-\frac{d-\alpha}{q-1}}$ in the right hand side of \eqref{uni_decau_bis} is zero.
%
%Similarly, if $2<q<\frac{2d+\alpha}{d}$, from Remark \ref{remark_52final} the function $\rho_V\in L^{1}(\R^d)$ satisfies the assumptions of Lemma \ref{newton_theorem}. Then, \eqref{uni_decau_bis} is still valid and, by the same reason as above, we conclude again that $\left\|\rho_V\right\|_{L^{1}(\mathbb{R}^d)}=\big| \! \big|(-\Delta)^{\frac{\alpha}{2}}V\big| \!\big|_{L^{1}(\mathbb{R}^d)}.$
This completes the proof.
\end{proof}
In what follows, before studying the case when the minimizer is sign changing,  we apply Corollary \ref{L_1_bound} to an important  family of potentials $V$.
\begin{corollary}\label{V_z}
	Let $d\ge 2$, $0<\alpha<d$ and $V_Z$ be the function defined by
	\begin{equation}\label{t_1}
		V_Z(x)=\frac{Z  A_\alpha}{(1+|x|^2)^{\frac{d-\alpha}{2}}},
	\end{equation}
	where $Z$ is positive constant and $A_\alpha$ defined in \eqref{coulomb}. The following statements hold: 
	\begin{itemize}
		\item[$(i)$] If $\frac{2d}{d+\alpha}<q<\frac{2d-\alpha}{d}$ then
		$$ 0<\left\|\rho_{V_Z}\right\|_{L^{1}(\bbr^d)}<Z;$$
		\item[$(ii)$] If $q=\frac{2d-\alpha}{d}$ and, either $1<\alpha<2$ or $q=\tfrac{3}{2}$, then
		$$\left\|\rho_{V_Z}\right\|_{L^{1}(\bbr^d)}=Z;$$
		\item[$(iii)$] If $q>\frac{2d-\alpha}{d}$ then 
		$$\left\|\rho_{V_Z}\right\|_{L^{1}(\bbr^d)}=Z.$$
	\end{itemize}
\end{corollary}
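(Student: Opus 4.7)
My plan is to reduce Corollary \ref{V_z} to a direct application of Corollary \ref{L_1_bound}, where the key computation is establishing that $\|(-\Delta)^{\alpha/2} V_Z\|_{L^1(\mathbb{R}^d)} = Z$. Once this is known, cases (ii) and (iii) are immediate consequences of Corollary \ref{L_1_bound}(ii)-(iii), and case (i) follows from Corollary \ref{L_1_bound}(i) combined with a simple non-triviality argument for $\rho_{V_Z}$.

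First, I would verify that $V_Z$ satisfies all the hypotheses of Corollary \ref{L_1_bound}. The potential $V_Z$ is smooth, bounded, and non-trivial. Membership in $\mathring{H}^{\frac{\alpha}{2}}(\mathbb{R}^d)$ follows from its decay rate and smoothness. The non-negativity of $(-\Delta)^{\alpha/2} V_Z$, its local integrability, and the crucial decay estimate $(-\Delta)^{\alpha/2} V_Z(x) \lesssim |x|^{-(d+\alpha)}$ as $|x| \to \infty$ all follow from Appendix Lemma \ref{general_power} (which extends Lemma \ref{laplaciano2} to the critical exponent $\beta = d - \alpha$ needed here). The paper already notes in its introduction that $V_Z$ is a canonical example meeting these requirements.

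Second, I would compute $\|(-\Delta)^{\alpha/2} V_Z\|_{L^1}$ by matching asymptotic expansions. Set $f := (-\Delta)^{\alpha/2} V_Z$, which by the previous step is non-negative, radially non-increasing, and decays like $|x|^{-(d+\alpha)}$, hence lies in $L^1(\mathbb{R}^d)$. Since $V_Z \in \mathring{H}^{\alpha/2}(\mathbb{R}^d)$ solves $(-\Delta)^{\alpha/2} V_Z = f$ weakly, it coincides with $(-\Delta)^{-\alpha/2} f$; under our decay/sign hypotheses this is identified with the Riesz potential $I_\alpha * f$ by \eqref{riesz_pote} (and Corollary \ref{Lq_loc}). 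Applying Lemma \ref{newton_theorem} to $f$ therefore yields
\[
V_Z(x) = (I_\alpha * f)(x) = \frac{A_\alpha \, \|f\|_{L^1(\mathbb{R}^d)}}{|x|^{d-\alpha}} + o\!\left(\frac{1}{|x|^{d-\alpha}}\right) \quad \text{as } |x| \to \infty.
\]
Comparing this with the explicit expansion coming from \eqref{t_1},
\[
V_Z(x) = \frac{Z A_\alpha}{|x|^{d-\alpha}} + o\!\left(\frac{1}{|x|^{d-\alpha}}\right) \quad \text{as } |x| \to \infty,
\]
and equating the leading coefficients forces $\|f\|_{L^1(\mathbb{R}^d)} = Z$.

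Finally, Corollary \ref{L_1_bound} yields case (ii) and case (iii) at once, since $\|\rho_{V_Z}\|_{L^1} = \|(-\Delta)^{\alpha/2} V_Z\|_{L^1} = Z$. For case (i), Corollary \ref{L_1_bound}(i) gives $\|\rho_{V_Z}\|_{L^1} < Z$, and the strict lower bound $\|\rho_{V_Z}\|_{L^1} > 0$ is obtained by noting that $\rho_{V_Z} \not\equiv 0$ (otherwise the Euler--Lagrange relation \eqref{distribution_19} would force $V_Z \equiv 0$) together with non-negativity of $\rho_{V_Z}$ from Proposition \ref{prop_intro} (indeed positivity from Proposition \ref{u_pos}), and the integrability of $\rho_{V_Z}$ from Theorem \ref{decay_intro}(i) since $(d-\alpha)/(q-1) > d$ in this range. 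The main technical obstacle is the non-negativity and sharp decay of $(-\Delta)^{\alpha/2} V_Z$, which is precisely what the Appendix computation (Lemma \ref{general_power}) is designed to handle; everything else in the argument is routine asymptotics.
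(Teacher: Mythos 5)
Your overall strategy (reduce to Corollary \ref{L_1_bound} and compute $\|(-\Delta)^{\frac{\alpha}{2}}V_Z\|_{L^1}$ by matching the leading asymptotics of $V_Z$ against Lemma \ref{newton_theorem}) is the same as the paper's, and the second and third steps of your argument are correct. However, your first step has a genuine gap: you claim that the non--negativity and the $|x|^{-(d+\alpha)}$ decay of $(-\Delta)^{\frac{\alpha}{2}}V_Z$ ``follow from Appendix Lemma \ref{general_power} (which extends Lemma \ref{laplaciano2} to the critical exponent $\beta=d-\alpha$ needed here)''. This is false on its face: Lemma \ref{general_power} is stated only for $\gamma>0$ with $\gamma\neq d-\alpha$, and the exponent of $V_Z$ is exactly $\gamma=d-\alpha$. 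That exclusion is not cosmetic. For $\gamma=d-\alpha$ the function $|x|^{-\gamma}$ is (up to a constant) the Riesz kernel, which is $\alpha$--harmonic away from the origin; inspecting $\Phi(\tau,r)=r^{-\gamma}(1-\tau^{-\gamma}-\tau^{-d+\alpha+\gamma}+\tau^{-d+\alpha})$ shows it vanishes identically when $\gamma=d-\alpha$, so the tail contributes nothing and the sign and decay of $(-\Delta)^{\frac{\alpha}{2}}U$ at infinity are dictated entirely by the near--origin profile $v$ --- a generic barrier lemma cannot deliver the sign $(-\Delta)^{\frac{\alpha}{2}}V_Z\ge 0$, nor the sharp rate $|x|^{-(d+\alpha)}$, without using the precise algebraic form of $V_Z$.

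The paper closes exactly this gap by invoking the classical identity for the Aubin--Talenti bubble: by \cite{critical_exp}*{Theorem 1} (see also \cite{minimizer}*{Theorem 1.1}), $V_Z\in\mathring{H}^{\frac{\alpha}{2}}(\R^d)$ and
$$(-\Delta)^{\frac{\alpha}{2}}V_Z(x)=C(d,\alpha,Z)\,V_Z^{\frac{d+\alpha}{d-\alpha}}(x),$$
which is manifestly non--negative, radially non--increasing, and $\simeq |x|^{-(d+\alpha)}$ at infinity, so all hypotheses of Corollary \ref{L_1_bound} hold. From there the paper concludes, just as you do, via $A_\alpha\|(-\Delta)^{\frac{\alpha}{2}}V_Z\|_{L^1(\R^d)}=\lim_{|x|\to\infty}|x|^{d-\alpha}V_Z(x)=ZA_\alpha$. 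To repair your proof, replace the appeal to Lemma \ref{general_power} with this explicit conformal identity (or with any direct computation of $(-\Delta)^{\frac{\alpha}{2}}(1+|x|^2)^{-\frac{d-\alpha}{2}}$); everything else you wrote then goes through.
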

\begin{proof}
	By \cite{critical_exp}*{Theorem $1$}, (see also \cite{minimizer}*{Theorem $1.1$}) we deduce that 
	$V_Z$ defined by \eqref{t_1} belongs to $\mathring{H}^{\frac{\alpha}{2}}(\bbr^d)$ and solves 
	\begin{equation*}
		(-\Delta)^{\frac{\alpha}{2}}V_Z(x)=C(d,\alpha,Z)V_Z^{\frac{d+\alpha}{d-\alpha}}(x),
	\end{equation*}
	where  $C(d,\alpha,Z)$ is a  suitable normalisation costant. In particular, $V_Z$ satisfies all the conditions of Corollary \ref{L_1_bound}.
	The thesis follows combining
	\begin{equation*}
		A_\alpha\big| \! \big|(-\Delta)^{\frac{\alpha}{2}}V_Z\big| \!\big|_{L^{1}(\mathbb{R}^d)}=\lim_{|x|\to +\infty}|x|^{d-\alpha}V_Z(x)= Z A_\alpha,
	\end{equation*}
	with Corollary \ref{L_1_bound}.
\end{proof}

%\remark Note that, in Corollaries \ref{L_1_bound} and \ref{V_z}, we restricted the range of $q$ up to $\frac{2d+\alpha}{d}$ since, by Remark \ref{remark_52final}, eq. \eqref{last_remark}, such restriction ensures that the minimizer $\rho_V$ is in $L^{1}(\R^d)$ and that $I_{\alpha}*\rho_V$ behaves like $|x|^{\alpha-d}$ at infinity (cf. Lemma \ref{newton_theorem}).

\vspace{0.2cm}
Next, we deduce asymptotic upper bounds for the absolute value of the minimizer ($|\rho_V|$) without requiring any sign restriction. In particular, notice that the upper bounds are the same as the one proved in Theorem \ref{decay_intro}.

\vspace{0.3cm}

\noindent \textbf{\large{Proof of Theorem \ref{decay_intro2}}.}
\begin{proof}
	By the decay assumption \eqref{laplac_V_fast_2} on $(-\Delta)^{\frac{\alpha}{2}}V$ and the regularity of $V$, we conclude that $V=I_{\alpha}* ((-\Delta)^{\frac{\alpha}{2}}V)$. In particular $V(x)\lesssim (1+|x|)^{\alpha-d}$ (cf. Lemma \ref{newton_theorem}). Moreover, again by an adaptation of \cite{Vincenzo}*{Theorem $3.3$} we infer that
%	\begin{equation}\label{kato_modulus}
%		(-\Delta)^{\frac{\alpha}{2}}|u_V|\le \text{sign}(u_V) f\quad \text{in}\ \mathcal{D}'(\R^d).
%	\end{equation}
%	Thus, from \eqref{kato_modulus} we obtain the inequality 
	\begin{equation}
		(-\Delta)^{\frac{\alpha}{2}}|u_V|+|u_V|^{\frac{1}{q-1}}\le |(-\Delta)^{\frac{\alpha}{2}}V|\quad \text{in}\ \mathcal{D}'(\R^d).
	\end{equation}
	Note that, by combining the regularity of $V$ with \eqref{laplac_V_fast_2}  we further have $|(-\Delta)^{\frac{\alpha}{2}}V|\in L^{\frac{2d}{d+\alpha}}(\R^d)\subset \mathring{H}^{-\frac{\alpha}{2}}(\R^d)$. Moreover, since by assumption  $|\rho_V|=|u_V|^{\frac{1}{q-1}}\in \mathring{H}^{-\frac{\alpha}{2}}(\R^d)$  and $|u_V|\in \mathring{H}^{\frac{\alpha}{2}}(\R^d)$ (see \eqref{scalar_H}), the function $|u_V|$ weakly solves  (by density)
	\begin{equation}\label{PDE_52}
		(-\Delta)^{\frac{\alpha}{2}}|u_V|+|u_V|^{\frac{1}{q-1}}\le |(-\Delta)^{\frac{\alpha}{2}}V|\quad \text{in}\ \mathring{H}^{-\frac{\alpha}{2}}(\R^d).
	\end{equation}
	Notice that the PDE obtained by taking the equality in \eqref{PDE_52} satisfies a weak comparison princinple in the spirit of Lemma \ref{comparison}.
	Then, if $q\neq 2$ it's enough to use the same upper barries provided in Lemmas \ref{range_1}, \ref{range_2}, \ref{soglia_2}, \ref{m_minore}, \ref{lower_bound_48} and Remark \ref{remark_soglia1}. 
	On the other hand, If $q=2$ we can argue as in Lemma \ref{q=2}.
\end{proof}
\remark As we have already noticed for the proof of Theorem \ref{decay_intro}, the decay assumption \eqref{laplac_V_fast_2} can be replaced with the ones of Lemmas \ref{range_1},  \ref{range_2}, \ref{soglia_1},  \ref{soglia_2} and  \ref{m_minore} accordingly to the range of $q$.

\appendix
\section{Appendix}\label{Appendix}
For the sake of completess, we provide in the following lemma the asymptotic decay for the fractional Laplacian of a power--type function for any negative exponent different from $\alpha-d$. The proof is similar to the one of Lemma \ref{max_decay}.
 \begin{lemma}\label{general_power}
	Let $d\ge 2$ and $0<\alpha<2$, $\gamma>0$ and $\gamma\neq d-\alpha$. Let $U\in C^2(\bbr^d)$ be a decreasing function of the form 
	\begin{equation}
		U(|x|):=\begin{cases} |x|^{-\gamma}& \mbox{if }|x|\ge 1, \\ v(|x|) & \mbox{if } |x|<1,
		\end{cases}
	\end{equation}
	where $v\in C^2(\overline{B_1})$ is positive and radially decreasing. Then the following possibilities hold:
	\begin{itemize}
		\item[$(i)$] If $0<\gamma<d-\alpha$ then	$$(-\Delta)^{\frac{\alpha}{2}}U(x)\simeq \frac{1}{|x|^{\gamma+\alpha}}\quad as\ |x|\rightarrow +\infty; $$
		\item[$(ii)$] If $d-\alpha<\gamma<d$ then  
		$$(-\Delta)^{\frac{\alpha}{2}}U(x)\simeq - \frac{1}{|x|^{\gamma+\alpha}}\quad as\ |x|\rightarrow +\infty; $$
		\item[$(iii)$] If $\gamma=d$ then
		$$(-\Delta)^{\frac{\alpha}{2}}U(x)\simeq - \frac{\log(|x|)}{|x|^{d+\alpha}}\quad as\ |x|\rightarrow +\infty; $$
		\item[$(iv)$] If $\gamma>d$ then 
		$$(-\Delta)^{\frac{\alpha}{2}}U(x)\simeq -\frac{1}{|x|^{d+\alpha}}\quad as\ |x|\rightarrow +\infty. $$
\end{itemize}
In particular, if $\gamma>\frac{d-\alpha}{2}$ then $U\in \mathring{H}^{\frac{\alpha}{2}}(\R^d)$.
\end{lemma}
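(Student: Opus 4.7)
The plan is to extend the strategy of Lemma~\ref{max_decay}: apply the radial integral representation from Theorem~\ref{lapl_ferrari},
\begin{equation*}
(-\Delta)^{\frac{\alpha}{2}}U(r) = c_{d,\alpha}\, r^{-\alpha}\int_{1}^{\infty}\Phi(\tau,r)F(\tau)\,d\tau,
\end{equation*}
with $\Phi(\tau,r) := U(r)-U(r\tau)+(U(r)-U(r/\tau))\tau^{-d+\alpha}$ and $F(\tau) := \tau(\tau^{2}-1)^{-1-\alpha}H(\tau)$, which is positive, behaves like $(\tau-1)^{-1-\alpha}$ as $\tau\to 1^{+}$, and like $\tau^{-1}$ as $\tau\to\infty$. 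I split the integral as $\int_{1}^{2}+\int_{2}^{r}+\int_{r}^{\infty}$; on $[1,r]$ both $r\tau$ and $r/\tau$ lie in $\{|x|\ge 1\}$ (for $r$ large), so the power-law formula applies to both $U(r\tau)$ and $U(r/\tau)$.

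A direct computation then produces the factorization
\begin{equation*}
\Phi(\tau,r) = r^{-\gamma}\phi(\tau),\qquad \phi(\tau) := (1-\tau^{-\gamma})(1-\tau^{\gamma+\alpha-d}),\qquad \tau\in[1,r],
\end{equation*}
which immediately identifies the sign of $\Phi$ on $(1,r]$ as the sign of $d-\alpha-\gamma$. A Taylor expansion at $\tau=1$ gives $\phi(\tau)\sim \gamma(d-\alpha-\gamma)(\tau-1)^{2}$, so $\phi(\tau)F(\tau)\sim(\tau-1)^{1-\alpha}$, which is integrable near $\tau=1$. The large-$\tau$ behavior of $\phi(\tau)F(\tau)$ determines the asymptotics of the main piece $\int_{1}^{r}\phi(\tau)F(\tau)\,d\tau$ and splits into four regimes: in case (i) the integrand tends to $\tau^{-1-\alpha}$ and $\int_{1}^{\infty}\phi F$ converges to a strictly positive constant; in case (ii) it behaves like $-\tau^{\gamma-d-1}$ with $\gamma-d-1<-1$, yielding a finite strictly negative limit; in case (iii) the same tail is $-\tau^{-1}$, producing a $-\log r$ divergence; in case (iv) the integral grows like $-r^{\gamma-d}/(\gamma-d)$.

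Multiplying by $r^{-\gamma}$ and the prefactor $r^{-\alpha}$ then yields the claimed two-sided asymptotics: $\pm r^{-\gamma-\alpha}$ in (i)--(ii), $-\log r\cdot r^{-d-\alpha}$ in (iii), and $-r^{-d-\alpha}$ in (iv). The tail piece $\int_{r}^{\infty}\Phi(\tau,r)F(\tau)\,d\tau$ is controlled using boundedness and monotonicity of the inner profile $v$: since $r/\tau\le 1$ implies $U(r/\tau)=v(r/\tau)\le v(0)$, and $U(r\tau)\le(r\tau)^{-\gamma}$, bounding $F(\tau)\lesssim\tau^{-1-\alpha}$ gives $r^{-\alpha}|I_{r}^{\infty}|\lesssim r^{-d-\alpha}$, which is genuinely subdominant in cases (i)--(iii) and of matching order in case (iv), where a sign-consistent bounding as in Lemma~\ref{max_decay} prevents cancellation. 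The main obstacle is guaranteeing the two-sided $\simeq$, i.e., that the leading constant is strictly nonzero; this follows from the strict positivity of $F$ together with the fact that, by the factorization above, $\phi$ keeps a strict sign on $(1,\infty)$ in each of the four regimes. Finally, the additional claim that $U\in \mathring{H}^{\frac{\alpha}{2}}(\R^{d})$ when $\gamma>(d-\alpha)/2$ reduces to verifying convergence of the Gagliardo seminorm of $U$, a routine computation via polar coordinates using the positivity of the integrand.
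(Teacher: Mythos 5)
Your proof follows essentially the same strategy as the paper's: apply Theorem~\ref{lapl_ferrari}, split the integral at $\tau=2$ and $\tau=r$, and read off the four regimes from the large-$\tau$ behaviour of $\Phi(\tau,r)$ times the kernel. The one genuine improvement over the paper's argument is the factorization
\[
\Phi(\tau,r)=r^{-\gamma}\bigl(1-\tau^{-\gamma}\bigr)\bigl(1-\tau^{\gamma+\alpha-d}\bigr),\qquad \tau\in[1,r],
\]
which the paper leaves in expanded form. It makes the sign of $\Phi$, the quadratic vanishing $\phi(\tau)\sim\gamma(d-\alpha-\gamma)(\tau-1)^2$ at $\tau=1$ (hence integrability of $\phi F$ near $1$, rendering a separate Taylor estimate of $I_1^2$ unnecessary), and the tail exponent all visible at once. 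The strict-sign argument for the nonvanishing of the leading constant in regimes (i)--(ii) is clean and correct.

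Two small points. First, you state that the kernel $F(\tau)=\tau(\tau^2-1)^{-1-\alpha}H(\tau)$ behaves like $\tau^{-1}$ at infinity; since $H(\tau)\sim\tau^\alpha$, the correct rate is $\tau^{-1-\alpha}$, which is in fact what you use in the rest of the argument, so this is only a typo. Second, your tail estimate $r^{-\alpha}|I_r^\infty|\lesssim r^{-d-\alpha}$ is not uniform across the four regimes: on $[r,\infty)$ the term $U(r)=r^{-\gamma}$ in $\Phi$, paired with the $\tau^{-1-\alpha}$ kernel, produces an additional contribution $\lesssim r^{-\gamma-2\alpha}$, which dominates $r^{-d-\alpha}$ precisely in case~(i) where $\gamma<d-\alpha$. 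This does not harm the conclusion (one still has $r^{-\gamma-2\alpha}=o(r^{-\gamma-\alpha})$), but the stated bound should be $r^{-\alpha}|I_r^\infty|\lesssim r^{-\gamma-2\alpha}+r^{-d-\alpha}$. Finally, for the membership $U\in\mathring{H}^{\alpha/2}(\R^d)$ when $\gamma>(d-\alpha)/2$ the paper does not compute the Gagliardo seminorm directly; it deduces $(-\Delta)^{\alpha/2}U\in L^{2d/(d+\alpha)}(\R^d)\subset\mathring{H}^{-\alpha/2}(\R^d)$ from the decay rates just established together with continuity of $(-\Delta)^{\alpha/2}U$, then invokes Remark~\ref{laplac_punt_weak}. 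Both routes give the same threshold; the paper's avoids a double integral.
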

\begin{proof}
	If $\gamma>d$ we directly refer to Lemma \ref{max_decay}. If not, 
	as we did in the previous section, we apply Theorem \ref{lapl_ferrari} and  we split the integral on the right hand side of \eqref{Ferrari_integral} (up to the constant $c_{d,\alpha}$) into three terms  $I^{2}_{1}, I^r_{2}$ and $I^{\infty}_r$ respectively
	\begin{equation}
		I^{2}_1=\int_{\tau=1}^2(\cdot),\ I^{r}_2=\int_{\tau=2}^r(\cdot),\ I^{\infty}_r=\int_{\tau=r}^{\infty}(\cdot).
	\end{equation}
	First of all let's notice that for every $\tau\in [1,r]$
	\begin{equation}\label{phi_bh_bis}
		\begin{split}
			\phi(\tau,r)&=U(r)-U(r\tau)+\left(U(r)-U(r/\tau)\right)\tau^{-d+\alpha}\\
			&=r^{-\gamma}(1-\tau^{-\gamma}-\tau^{-d+\alpha+\gamma}+\tau^{-d+\alpha}).
			%\\
			%& =r^{-\gamma}\Phi(\tau).
		\end{split}
	\end{equation}
	From  \eqref{phi_bh_bis} we see that 
	\begin{itemize}
		\item[$(1)$] If $\gamma>d-\alpha$ then $\phi(\tau,r)\le 0\quad \forall \tau\in [1,r]$;
\item[$(2)$] If $\gamma<d-\alpha$ then $\phi(\tau,r)\ge 0\quad \forall \tau\in [1,r]$.
\end{itemize}
	Thus, by arguing as in the proof of Lemma \ref{max_decay}, if $r$ is sufficiently large we obtain the following possibilities:
	\begin{equation}\label{eq_47_bis_bis}
		\begin{split}
			I^{r}_2 &=r^{-\gamma-\alpha}\int_{\tau=2}^{r}\left(1-\tau^{-\gamma}+\tau^{-d+\alpha}(1-\tau^{\gamma})\right)(\tau-1)^{-1-\alpha}F(\tau)d\tau\\
			& \simeq r^{-\gamma-\alpha}\int_{\tau=2}^{r}\left(1-\tau^{-\gamma}+\tau^{-d+\alpha}(1-\tau^{\gamma})\right)\tau^{-1-\alpha}d\tau\\
			& =\begin{cases}r^{-\gamma-\alpha}\left[\frac{2^{-\alpha}}{\alpha}-\frac{2^{-\gamma-\alpha}}{\gamma+\alpha}+\frac{2^{\gamma-d}}{\gamma-d}+\frac{2^{-d}}{d}\right]+o(r^{-\gamma-\alpha}), & \mbox{if } \gamma<d \\ -r^{-d-\alpha} \log(r)+o(r^{-d-\alpha} \log(r)), & \mbox{if }\gamma=d.
			\end{cases}
		\end{split}
	\end{equation}
	Furthermore, if we denote by  $F$  the function
	$$F(\tau)=\tau (\tau+1)^{-1-\alpha}H(\tau) ,$$
	and $H$ is defined in \cite{Ferrari}*{Theorem $1.1$},   there exists  $\xi_\tau\in [1,\tau]$ such that 
	\begin{equation}\label{primoint_bis}
		I^{2}_1=r^{-\gamma-\alpha}\left[\int_{\tau=1}^{2}\frac{\phi''(1)}{2}(\tau-1)^{-1-\alpha}F(\tau)d\tau+\int_{\tau=1}^{2}\frac{\phi'''(\xi_\tau)}{6}(\tau-1)^{2-\alpha}F(\tau)d\tau\right].
	\end{equation}
In particular, 
	\begin{equation}\label{stima_mezzo_bis}
	I^{2}_1=\begin{cases} C_\gamma r^{-\gamma-\alpha}, & \mbox{if}\ \gamma<d\\
		o(r^{-d-\alpha} \log(r)), & \mbox{if }\gamma=d,
	\end{cases}
\end{equation}
where $C_\gamma$ is the coefficient of $r^{-\gamma-\alpha}$ in \eqref{primoint_bis}. Note that we stressed the dependence on $\gamma$ since from $(1)$--$(2)$ we have that $C_\gamma$ is negative (respectively positive) if $\gamma>d-\alpha$ (respectively $\gamma<d-\alpha$).
	Next, similarly to the proof of Lemma \ref{max_decay} we obtain that 
	\begin{equation}\label{stima_fine_bis}
	I_r^{\infty}=\begin{cases} o(r^{-\gamma-\alpha}), & \mbox{if } \gamma<d \\ o(r^{-d-\alpha} \log(r)), & \mbox{if }\gamma=d
	\end{cases}
	\end{equation}
Putting together \eqref{eq_47_bis_bis}, \eqref{stima_mezzo_bis} and \eqref{stima_fine_bis} we derive the asymptotic decay.

Finally, if $\gamma>\frac{d-\alpha}{2}$, from the decay rates just proved and continuity of $(-\Delta)^{\frac{\alpha}{2}}U$ (see again \cite{garofalo}*{Proposition 2.15}),   it's straightforward to see that $(-\Delta)^{\frac{\alpha}{2}}U\in L^{\frac{2d}{d+\alpha}}(\R^d)\subset \mathring{H}^{-\frac{\alpha}{2}}(\R^d)$. The conclusion follows for example from Remark \cite{entire}*{Corollary 1.4}.
	
\end{proof}

%\remark Note that if $1<\alpha<2$, by \cite[Theorem $1.3$]{holderloc} we obtain that $I_\alpha*\rho_V\in C^{1,\beta}_{loc}(\bbr^d)$ for every $\beta\in (0,\alpha-1)$. However, to use the pointwise formula \eqref{sing_laplac} we now need $u_V\in C^{1,1}_{loc}(\bbr^d)$ but such regularity is not achievable by using the Euler-Lagrange equation \eqref{rho_puntuale} since the function $x\mapsto x^{\frac{1}{q-1}}$ is not differentiable for $q>2$.\\

\vspace{0.3cm}

\noindent\textbf{Acknowledgments.} This research was funded by EPSRC Maths DTP 2020. The author is very thankful to Prof. V. Moroz for providing helpful remarks and feedbacks.

\end{document}